\DeclareOldFontCommand{\tt}{\normalfont\ttfamily}{\mathtt}
\xpretocmd{\NAT@citex}{\ifNAT@swa\else\fi}
\theoremstyle{plain}
\newtheorem{lemma}{Lemma}
\newcounter{steps}
\newcommand{\defeq}{\coloneqq}
\newcommand{\NP}{\mathcal{NP}}
\newcommand{\ie}{i.e.\@\xspace}
\newcommand{\eg}{e.g.\@\xspace}
\newcommand{\Eg}{E.g.\@\xspace}
\newcommand{\subjecto}{\textrm{s.\,t.}} 
\newtheorem{remark}{Remark}
\newcommand{\formatabbreviation}[1]{\emph{#1}}
\newglossaryentry{IBM}{name={\formatabbreviation{IBM}}, description={}}
\newglossaryentry{QuEra}{name={\formatabbreviation{QuEra}}, description={}}
\newglossaryentry{RymaxOne}{name={\formatabbreviation{RymaxOne}}, description={}}
\newglossaryentry{D-Wave}{name={\formatabbreviation{D-Wave}}, description={}}
\newglossaryentry{Google}{name={\formatabbreviation{Google}}, description={}}
\newglossaryentry{IQM}{name={\formatabbreviation{IQM}}, description={}}
\newglossaryentry{Rigetti}{name={\formatabbreviation{Rigetti}}, description={}}
\newglossaryentry{Xanadu}{name={\formatabbreviation{Xanadu}}, description={}}
\newglossaryentry{Pasqal}{name={\formatabbreviation{Pasqal}}, description={}}
\newglossaryentry{Quantinuum}{name={\formatabbreviation{Quantinuum}}, description={}}
\newglossaryentry{Honeywell}{name={\formatabbreviation{Honeywell QS}}, description={}}
\newglossaryentry{IonQ}{name={\formatabbreviation{IonQ}}, description={}}
\newglossaryentry{QuEra-OPT}{name={\mbox{NA-OPT}}, description={}}
\newglossaryentry{D-Wave-OPT}{name={\mbox{QA-OPT}}, description={}}
\newglossaryentry{IBM-OPT}{name={\mbox{QAOA-OPT}}, description={}}
\newglossaryentry{IBM-SIM-OPT}{name={\mbox{SIM-OPT}}, description={}}
\newglossaryentry{Gurobi}{name={\formatabbreviation{Gurobi}}, description={}}
\newglossaryentry{Aquila}{name={\formatabbreviation{Aquila}}, description={}}
\newglossaryentry{Advantage}{name={\formatabbreviation{Advantage}}, description={}}
\newglossaryentry{IBMCusco}{name={\formatabbreviation{ibm\_cusco}}, description={}}
\newglossaryentry{IBMNazca}{name={\formatabbreviation{ibm\_nazca}}, description={}}
\newacronym{DFJ}{DFJ}{Dantzig-Fulkerson-Johnson}
\newacronym{MTZ}{MTZ}{Miller-Tucker-Zemlin}
\newacronym{QAP}{QAP}{quadratic assignment problem}
\newacronym{LBOP}{LBOP}{linear binary optimization problem}
\newacronym{OR}{OR}{Operations Research}
\newacronym{QC}{QC}{quantum computer}
\newacronym{QPU}{QPU}{quantum processing unit}
\newacronym{CPU}{CPU}{central processing unit}
\newacronym{GPU}{GPU}{graphics processing unit}
\newacronym{NISQ}{NISQ}{noisy intermediate-scale quantum}
\newacronym{FTQC}{FTQC}{fault-tolerant quantum computing}
\newacronym{QEC}{QEC}{quantum error correction}
\newacronym{MIP}{MIP}{mixed integer program}
\newglossaryentry{QUBO}{
 name={QUBO},
 short={QUBO},
 description={quadratic unconstrained binary optimization problem},
 first={quadratic unconstrained binary optimization problem},
 plural={QUBOs},
 firstplural={quadratic unconstrained binary optimization problems (QUBOs)}
}
\newglossaryentry{PUBO}{
	name={PUBO},
	short={PUBO},
	description={polynomial unconstrained binary optimization problem},
	first={polynomial unconstrained binary optimization problem},
	plural={PUBOs},
	firstplural={polynomial unconstrained binary optimization problems (PUBOs)}
}
\newglossaryentry{TSP}{
 name={TSP},
 short={TSP}, 
 description={traveling salesperson problem},
 first={traveling salesperson problem (TSP)},
 plural={TSPs}
}
\newglossaryentry{MIS}{
	name={MIS},
	short={MIS},
	description={maximum independent set problem},
	first={maximum independent set problem (\glsentrytext{MIS})},
	plural={MIS},
	descriptionplural={maximum independent set problem},
	firstplural={maximum independent set problems (\glsentryplural{MIS})}
	}
\newglossaryentry{UD-MIS}{
	name={UD-MIS},
	short={UD-MIS},
	description={maximum independent set problem on a unit disk graph},
	first={maximum independent set problem on a unit disk graph (\glsentrytext{UD-MIS})},
	plural={UD-MIS},
	descriptionplural={maximum independent set problems on unit disk graphs},
	firstplural={maximum independent set problems on unit disk graphs (\glsentryplural{UD-MIS})}
	}
\newglossaryentry{MWIS}{
	name={MWIS},
	short={MWIS},
	description={maximum-weight independent set problem},
	first={maximum-weight independent set problem (\glsentrytext{MWIS})},
	plural={MWIS},
	descriptionplural={maximum-weight independent set problem},
	firstplural={maximum-weight independent set problem (\glsentryplural{MWIS})}
	}
\newglossaryentry{UD-MWIS}{
	name={UD-MWIS},
	short={UD-MWIS},
	description={maximum-weight independent set problem on a unit disk graph},
	first={maximum-weight independent set problem on a unit disk graph (\glsentrytext{UD-MWIS})},
	plural={MIS},
	descriptionplural={maximum-weight independent set problems on unit disk graphs},
	firstplural={maximum-weight independent set problems on unit disk graphs (\glsentryplural{UD-MWIS})}
	}
\newacronym{MaxCut}{MaxCut}{weighted maximum cut problem}
\newacronym{ILP}{ILP}{integer linear program}
\newacronym{AQC}{AQC}{adiabatic quantum computing}
\newacronym{GQC}{GQC}{gate-based quantum computing}
\newacronym{SPSA}{SPSA}{simultaneous perturbation stochastic approximation}
\newacronym{QAA}{QAA}{quantum adiabatic algorithm}
\newacronym{QA}{QA}{quantum annealing}
\newacronym{VQA}{VQA}{variational quantum algorithm}
\newacronym{QAOA}{QAOA}{quantum approximate optimization algorithm}
\definecolor{notecolorbg}{HTML}{ffe0b3}
\definecolor{notecolorfg}{HTML}{b30000}
\author[a]{Alexey Bochkarev\thanks{Corresponding author: Alexey Bochkarev (\href{mailto:a.bochkarev@rptu.de}{a.bochkarev@rptu.de})}}
\author[b]{Raoul~Heese}
\author[a]{Sven~Jäger}
\author[c]{Philine~Schiewe}
\author[a,b]{Anita~Schöbel}
\date{\today}
\affil[a]{Department of Mathematics, RPTU Kaiserslautern-Landau, Kaiserslautern, 67663, Germany}
\affil[b]{Fraunhofer Institute for Industrial Mathematics ITWM, Kaiserslautern, 67663, Germany}
\affil[c]{Department of Mathematics and Systems Analysis, Aalto University, Espoo, 02150, Finland}
\title{Quantum computing for~discrete optimization: a~highlight of~three technologies.}
\begin{document}
\maketitle

\vspace*{-1.5cm}
\begin{abstract}
  Quantum optimization has emerged as a promising frontier of quantum computing, providing novel numerical approaches to mathematical optimization problems. %
  The main goal of this paper is to facilitate interdisciplinary research between the \gls{OR} and quantum computing communities by \revI{helping \gls{OR} scientists to build initial intuition for-, and offering them a hands-on gateway to} quantum-powered methods \revI{in the context} of discrete optimization. %
  To this end, we consider three quantum-powered optimization approaches that make use of different types of quantum hardware available on the market. To illustrate these approaches, we solve three classical optimization problems: the Traveling Salesperson Problem, Weighted Maximum Cut, and Maximum Independent Set. With a general \gls{OR} audience in mind, we attempt to provide an intuition behind each approach along with key references, describe the corresponding high-level workflow, and highlight crucial practical considerations. In particular, we emphasize the importance of problem formulations and device-specific configurations, and their impact on the amount of resources required for computation (where we focus on the number of qubits). These points are illustrated with a series of experiments on three types of quantum computers: a neutral atom machine from \gls{QuEra}, a quantum annealer from \gls{D-Wave}, and gate-based devices from \gls{IBM}.\vspace{0.5\baselineskip}

  \noindent\textbf{Keywords:} Combinatorial optimization, Heuristics.
\end{abstract}

\glsresetall

\section{Introduction}\label{sec:intro}
Discrete optimization is central to many problems in \gls{OR}, often arising in
efficient organization of complex systems across various domains: logistics,
supply chain management, transportation, finance, healthcare, and more. This led
to the development of a vast variety of computational methods, with many
state-of-the-art algorithms implemented in advanced stand-alone
solvers, such as \citet{gurobi}, SCIP \citep{scip}, and others. Still, practical
problems often require substantial resources, as the
solution space grows exponentially with the problem size.

Quantum computing offers an alternative computation model, but both the
methodology and the hardware are in early stages of development, as compared to
classical\footnote{We use the term ``classical'' throughout this paper to distinguish non-quantum technologies from quantum technologies. In this sense, a ``classical computer'' is a conventional computing device that uses non-quantum principles of information processing.} computing. It is unclear to what extent, if at all, \gls{OR} will
benefit from quantum technology in the near future. There are reasons to believe that it might
yield a significant speed-up, especially for discrete
optimization \citep{preskill2018}, although no practical (exponential) quantum
advantage has yet been demonstrated for an optimization
problem \citep{hoefler2023}. We think that to be able to assess potential
opportunities at this early stage, \gls{OR} experts might want to familiarize
themselves with the topic to a certain degree. However, the necessary background
knowledge significantly differs from a typical discrete optimization specialist
training. This is mainly because the language used in the literature naturally
draws on the works from quantum physics, making it less accessible to the
general mathematical readership.

Existing literature already provides overviews of the current state of research.
A discussion of possible synergies between \gls{OR} and quantum information
science along with further research directions is presented by
\citet{parekh2023}. \Citet{klug2024} and \citet{au-yeung2023} discuss quantum optimization; 
a more comprehensive overview is provided by \citet{abbas2024}.
The current state and prospects of \glspl{QC} are discussed by \citet{scholten2024}.
There is also a significant body of literature
devoted to numerical investigation and benchmarking of quantum-powered
approaches, for example \citet{lubinski2024} and \citet{koch2025quantumoptimizationbenchmarklibrary}.

This paper differs fundamentally from the overviews mentioned above. Rather than providing a general overview, we focus on specific quantum devices and types of discrete optimization problems, and seek to achieve three goals: convey the underlying intuition of each approach, provide the necessary references for a deeper understanding, and discuss relevant workflows and key practical considerations.
An important aspect that we take into account is the required number of qubits for optimization tasks, as it is an easy-to-understand yet important metric.
Our findings highlight the fact that
the number of qubits required to solve an optimization problem depends not only
on the number of binary variables, but also on the problem structure,
formulation, and the specific quantum device. We illustrate this aspect with a
series of experiments.

\label{q:4}
\revI{
As with any scientific work, this paper represents the state of the art at the time of writing. Quantum computing is a rapidly evolving field, with continuous advances in both hardware and algorithms. Although specific hardware implementations may change and improved algorithms may emerge, we believe that the basic principles discussed here will continue to be relevant for understanding the advances and challenges in quantum optimization in the near future. By analyzing current technologies, we aim to provide insights that will be valuable both now and as the field continues to evolve.}

The paper is structured as follows. In \cref{sec:QC and opt}, we provide a brief introduction to quantum optimization, where we focus on the
motivation for applying quantum computing to discrete optimization.
Subsequently, we present three distinct quantum technologies in \Cref{sec:quantum workflows},
focusing on their optimization workflows. In \cref{sec:qubits}, we
discuss the application of these workflows to three specific discrete
optimization problems, highlighting the necessary number of qubits in connection
to the chosen computational approach. This discussion is further illustrated
with numerical results in \cref{sec:use cases}. Finally, we conclude with a summary and brief outlook in \Cref{sec:conclusion}.

\section{Quantum optimization}\label{sec:QC and opt}
\revI{%
A \glsentryfull{QC} is a computational device that harnesses quantum physics to process information~\citep{nielsen2010}.
Quantum information processing is fundamentally different from its classical counterpart, and offers the possibility of performing certain computations beyond the classically achievable performance. This makes \glspl{QC} a potentially powerful tool for solving optimization problems. However, while the potential is significant, improvements on both the algorithmic and hardware side are currently needed to realize an advantage for practical applications~\citep{abbas2024}.
Despite these obstacles, we believe this somewhat early stage is a particularly good moment to study the topic from an \gls{OR} perspective, in order to better understand current limitations and assess possible future opportunities.
In the following, we lay the foundation for such an investigation. For this purpose, we outline how \glspl{QC} work and motivate their use for optimization tasks in light of the selected quantum computing technologies.}

\revI{%
  It is important to note that quantum physics can be described using different mathematical formalisms, each based on different principles, but each equally capable of accurately predicting experimental results \citep{styer2002}. One of the most widely used approaches in quantum computing involves the linear algebraic descriptions, where the states of a quantum system are represented by vectors in Hilbert space, and the state transformations are described in terms of linear operators (matrices). For a rigorous treatment of this framework, we refer the reader to standard textbooks, such as \citet{nielsen2010} or \citet{mermin2007}.}

\revI{In the scope of the present work, we can only provide a high-level summary of the underlying concepts. Therefore, we use slightly oversimplified explanations where necessary and try to omit the mathematical background of quantum physics where possible. More in-depth information can be found in the cited references.}

\subsection{Quantum computing}\label{sec:QC}
\revI{%
A \gls{QC} is typically a highly complex device with many different interoperating components, with the \gls{QPU} being the core information processing unit within a \gls{QC}, in analogy to the \gls{CPU} within a conventional computer. There are generally two modes of operation for a \gls{QPU}: the \emph{analog mode} and the \emph{digital mode}, both of which are considered in \cref{sec:quantum workflows}.
In analog mode, computations are realized with a continuous control of
an underlying quantum system, similar to how quantum systems behave in nature due to the laws of quantum physics.
This idea is closely related to classical analog computing, where the natural behavior of physical systems is exploited to store and process information \citep{MacLennan2009,zangeneh2021,wu2022,ulmann2024}.
The most popular approach for this mode of operation, \gls{AQC}, is discussed in \cref{sec:QAA}.
The digital mode, on the other hand, is based on
discrete controls, so-called \emph{gates} (in analogy to classical logical
gates, such as AND or XOR), and is therefore also commonly referred
to as \gls{GQC}. Gates are effectively a discretization of the underlying
continuous controls, making the digital mode an additional level of abstraction over the analog mode. Notably, \cite{nannicini2020} attempts to provide a relatively compact and self-contained introduction to gate-based quantum computing, without relying on the ideas from physics.}

\revI{Regardless of whether it is operated in analog or digital mode, a \gls{QPU} works with quantum bits, or \emph{qubits}, to process information. These qubits exhibit a much more complex behavior than classical bits.
The main difference is that a classical bit has, by definition, two binary states, typically denoted by $0$ and $1$.
A qubit, on the other hand, can attain infinitely many states because its state space is a continuous spectrum of possible configurations.
Based on the underlying theory of quantum physics, each state in this continuous state space can be identified with two coordinates. Due to the state space topology, a commonly used interpretation is to identify the two coordinates as angles that address a point on the surface of a unit sphere. This interpretation allows visualizing the state space of a qubit in form of the so-called Bloch sphere, as shown in \cref{fig:bloch}.}
\begin{figure}
  \begin{minipage}{0.25\textwidth}
    \includegraphics[width=\textwidth]{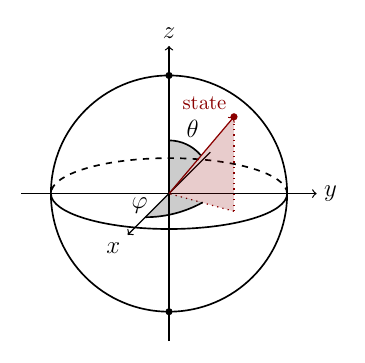}
  \end{minipage}\hfill%
  \begin{minipage}{0.75\textwidth}
    \revI{Every possible qubit state is represented by a point on the surface of this
    (three-dimensional) sphere. The axes of the underlying Cartesian coordinate
    system are labeled with $x$, $y$, and $z$. Two angles, $\theta \in [0,\pi]$
    and $\varphi \in [0,2 \pi)$, are sufficient to identify any state, as shown
    for an example state. The north pole and south pole of the sphere represent
    two classical states in analogy to the two states of a classical bit.}
  \end{minipage}
  \caption{\label{fig:bloch}\revI{The Bloch sphere: a visualization of the state space
    of a single qubit.}}
\end{figure}

\revI{%
This fundamental difference between the state spaces of classical bits and qubits is crucial to understand the difference between classical and quantum information processing. In simple terms, one could argue that the information encoding of a qubit is much \emph{denser} than of a classical bit. In fact, a single qubit could in principle store an infinite amount of information with its infinitely large state space \citep{nielsen2010}. However, this would require an infinitely precise qubit control, which is not feasible in practice. And even if we could achieve infinitely precise control over qubits, fundamental principles of quantum mechanics still limit the amount of information that can be accessed. This is formalized by the \emph{Holevo bound}, which states that the amount of classical information that can be extracted from $n$ qubits is at most equivalent to that of $n$ classical bits \citep{nielsen2010}.
The advantage of quantum computing therefore lies less in the efficient storage of data than in its efficient processing. However, being able to encode information in a dense way does not necessarily mean that processing such information is automatically efficient. In contrast, a major challenge for quantum computing is precisely how to exploit the large state space of qubits for efficient computations.}

\revI{Performing computations is equivalent to processing data, and the question arises as to how this can be done with a qubit.
Since information is encoded using the principles of quantum physics, any interaction with this information also takes place on the quantum level. In this context, there is one important feature that strongly distinguishes classical from quantum information processing: the non-deterministic nature of quantum physics. Non-determinism affects how the information that has been stored in a qubit can be read out. For a classical bit, this is a trivial task since any bit that has been written can also be read in the same way without any loss of information. The situation is different for qubits. Reading information from a qubit, also known as \emph{measuring} the qubit, means to interact with it on a quantum level. According to the laws of quantum physics, such a measurement is always a probabilistic process that reveals a binary outcome. The two possible measurement outcomes can for example be labeled $0$ or $1$ in agreement with the states of a classical bit.}

\revI{The chance of measuring either $0$ or $1$ depends on the state of the qubit before the measurement. In other words, each qubit state represents a distinct probability distribution that determines its probabilistic measurement process. Again, the Bloch sphere can be used to develop an understanding for this highly counter-intuitive behavior. The north pole and the south pole of the Bloch sphere represent the only two states with an effectively deterministic measurement process. The north pole is also called the \emph{ground state} and the south pole the \emph{excited state} of the qubit (for reasons we explain further below). By definition, measuring a qubit in the excited state always yields $1$ and, conversely, measuring a qubit in the ground state always yields $0$. Both state are therefore also referred to as \emph{classical states} in analogy to the two states of a classical bit. For any other state on the Bloch sphere (also called \emph{superposition} states), the probability distribution of measurement outcomes depends on its spherical distance to the north pole representing the ground state (or, conversely, the south pole representing the excited state). The closer it is to a respective pole, the more probable the corresponding outcome. In other words, the closer a state is located to one of the poles, the more biased is the measurement. All states on the northern hemisphere have a bias towards $0$, all states on the southern hemisphere have a bias towards $1$, and all states on the equator have an equal chance of a measurement outcome of $0$ or $1$, as for an ideal coin flip.}

\revI{It is important to clarify that this non-deterministic behavior is not the result of technical limitations, but an intrinsic property of quantum physics~\citep{bera2017}. In addition to being non-deterministic, measurements also are inherently \emph{destructive}. This means that when a measurement yields an outcome of $0$ ($1$), the state of the qubit is changed from its original state into the ground (exited) state and a subsequent measurement will always result in the same measurement outcome. In other words, the information encoded in a qubit is lost after measuring it once. The significance of single measurements may therefore be limited for quantum information processing, and it is often the case that quantum computations need to be repeated many times to get meaningful measurement results and algorithmic designs that can operate on finite samples (\ie, measurement results) of the underlying joint probability distribution. We also discuss this limitation in \cref{sec:quantum workflows}.}

\revI{To this point, we have mostly focused on a single qubit, but meaningful quantum information processing requires multiple qubits in the same sense as multiple bits are required for meaningful classical information processing. Similar to how information is stored more densely in qubits than in classical bits, the interaction between qubits is also much more complicated than the interaction between classical bits.}
\label{q:22}%
\revIII{First and foremost, multi-qubit systems can exhibit \emph{entanglement}, a unique quantum phenomenon in which the state of one qubit cannot be described independently of the others. As a result, operations performed on one qubit may instantaneously alter the state of the entire quantum system. This is fundamentally different from classical information processing, where each bit can always be flipped independently of all others.}
\revI{Entanglement is a key feature used in many quantum algorithms. It enables \emph{quantum parallelism}~\citep{markidis2024}, a distinct computing paradigm for quantum computers which is not equivalent to mere parallel computation, as different branches of computation can actually interact, an effect also known as \emph{interference}.
Quantum parallelism does not necessarily lead to a computational advantage, but can be used to design quantum algorithms that may benefit from entanglement.
As a prototypical example, the Deutsch-Josza algorithm~\citep{nielsen2010} determines a global property of a Boolean function
using a single quantum evaluation.}

\revI{In analogy to the measurement of a single qubit, the measurement of a multi-qubit system behaves non-deterministically, however, the results of measurements of the entangled qubits are correlated. Generally, the measurement results of an $n$-qubit system can therefore be viewed as samples drawn from a joint probability distribution with a number of parameters exponential in $n$. This may (or may not) address the
limitations of classical systems in some cases, when an exponentially large
state space leads to practical intractability of the problem at hand.}

\subsection{Quantum hardware}\label{sec:qhardware}
\revI{In addition to the fundamental challenges of quantum computing due to the unique computational model, an additional practical challenge lies in the technological limitations of the hardware. \Glspl{QC} are currently very error-prone and have extremely limited resources, which is why they are also sometimes called \gls{NISQ} devices. 
A long-term goal for hardware development is \gls{FTQC}, where the effects of hardware noise are completely eliminated using methods such as \gls{QEC}. While there is active research and continuous improvement \citep{gottesman2022,katabarwa2024}, fault-tolerant computations are not yet feasible at any practical scale. Therefore, current algorithms must take these hardware-induced errors and limitations into account.}

\revI{The physical realization of qubits requires quantum systems that can attain at least two distinct quantum states, which are used to represent $0$ and $1$. In practice, these states are typically chosen as so-called \emph{energy states} (which describe the energy of a system), since energy levels in many quantum systems naturally occur in discrete levels.
For a single-qubit system, we have referred to them as the ground state and the excited state, respectively, a terminology that becomes clear in this context: the ground state corresponds to the lower energy state of the qubit (representing $0$), while the excited state corresponds to a higher energy state (representing $1$). These energy levels provide a practical and intuitive basis for labeling the two poles of the Bloch sphere.
Computations can then be realized by shifting qubit states between different energy levels. For a multi-qubit system, while a measurement still yields a single binary digit per qubit, the system is usually configured in a problem-dependent way to rearrange its energy levels. The usual goal is for a measurement of the ground state (\ie, the minimum-energy state) not to yield a trivial bitstring of zeroes, but reveal some meaningful information about the problem.
In other words, the ground state of a multi-qubit system can be used to encode information.
We will revisit this topic in \cref{sec:QAA}.}

\revI{There are many different quantum hardware providers competing for the best solution and offering different kinds of devices based on different kinds of technologies that may each have their unique advantages and disadvantages. Most devices can be accessed remotely via an online interface or platform, eliminating the need for physical proximity between \gls{QC} and user. Practically usable \glspl{QC} include
superconducting devices (\eg, by \gls{IBM}, \gls{D-Wave}, \gls{Google},
\gls{IQM}, and \gls{Rigetti}), photonic devices (\eg, by \gls{Xanadu}), neutral
atom devices (\eg, by \gls{QuEra}, \gls{Pasqal}, and \gls{RymaxOne}), and trapped ion devices
(\eg, by \gls{Quantinuum}, \gls{Honeywell}, and \gls{IonQ}), just to name a few~\citep{Gyongyosi2019}.
In \cref{sec:quantum workflows}, we consider three quantum-powered optimization approaches running on distinct \glspl{QC} to illustrate how differently operating devices can still be summarized under a unified workflow.}

\revI{Throughout this work, we use the term \emph{logical qubit} to refer to an abstract (idealized) qubit at the algorithmic level of the underlying quantum computing model, but independent of the hardware implementation.%
  \footnote{\revI{We do not use the term \emph{logical qubit} as it is commonly used in the context of \gls{FTQC} \citep{zhao2022}.}
  } Conversely, a \emph{physical qubit} refers to the actual implementation on a hardware device, which may differ in its physical nature from device to device. A central question in quantum optimization is whether the quantum hardware of choice has a sufficient number of physical qubits to process a given task. In practice, determining the required number of qubits typically involves a two-step process. First, the mathematical optimization problem must be translated into an abstract quantum algorithm, which requires a specific number of logical qubits. This number typically scales with the size of the optimization problem. Second, the abstract algorithm must then be mapped to a hardware-specific realization, which requires a certain number of physical qubits. This number is often significantly higher than the number of logical qubits, as we will describe in \cref{sec:rydberg,sec:QA,sec:QAOA}.}

\subsection{Quantum advantage}\label{sec:QAdv}
\revI{There is evidence that also without \gls{FTQC}, quantum computations can be practically relevant for certain use cases \citep{kim2023}. However, identifying such use cases is not trivial. Because of the different computational model, a one-to-one
implementation of classical algorithms on \glspl{QC} usually do not lead to performance gains (in fact, the opposite is true). Specialized solutions for carefully selected problem classes are needed to make the most of the hardware \citep{harrow2017}.
In this sense, the role of a \gls{QPU} can be compared to that of a \gls{GPU}. A \gls{GPU} has a clear advantage over a \gls{CPU} in certain specialized tasks, such as parallel numerical computation. Likewise, a \gls{QPU} has the potential to offer significant benefits in solving specific quantum-suited problems. However, like a \gls{GPU}, a \gls{QPU} cannot be expected to be a general-purpose problem solver.}

\revI{Demonstrating that a quantum computer can solve a problem significantly better than a classical computer (typically implying a superpolynomial speedup) is also known as a \emph{quantum advantage} or \emph{quantum supremacy} \citep{harrow2017}.
There is theoretical and experimental evidence for quantum advantage in tasks such as \emph{random circuit sampling}~\citep{hangleiter2023}, but the practical utility of these tasks remains limited.
Moreover, there are quantum algorithms that have a purely theoretical advantage when run on noise-free hardware. For example, Grover's search algorithm~\citep{GroverSTOC} is mathematically proven to provide a quadratic speedup to unstructured search problems, but there has not been any experimental verification of significant scale yet.}
\Citet{preskill2018} identified three main reasons why a quantum advantage could potentially be achieved with sufficient technological advancement:

\begin{enumerate}
  \item There are examples of mathematical problems that are believed hard to
	solve for classical computers, and for which efficient quantum-powered
	algorithms exist (\eg, factoring integers and finding discrete
	logarithms, as studied in \citealp{shor1997}).
  \item There are quantum states that are relatively easy to prepare on a
	\gls{QC} such that measuring them is equivalent to the sampling of
	random numbers from a particular probability distribution, which is
	impossible to achieve efficiently by classical means~\citep{harrow2017}.
  \item No efficient classical algorithm is known that is able to simulate
	a \gls{QC} efficiently at arbitrary scales, which is essentially due to the
	exponentially large state space of multi-qubit systems.
\end{enumerate}

\revI{It is not immediately obvious how these points translate to practical
applications in the \gls{OR} context. In general, it can be expected that problems that can be solved efficiently by classical methods today are likely to remain in the domain of classical computing.
On the other hand, problems that are currently intractable or that require an excessive amount of computational resources are promising candidates for a quantum algorithm.
Among these, discrete optimization is a particularly notable application domain \citep{koch2025quantumoptimizationbenchmarklibrary}, which justifies to study this topic from an \gls{OR} perspective.
Despite this premise, no clear proof of a quantum advantage for optimization has been achieved yet \citep{abbas2024}.
Although obtaining a quantum advantage remains a key goal, it is not a strict requirement for achieving practical benefits. Another perspective is that quantum computers only need to perform reliable computations beyond the scope of brute-force classical computations to achieve a practical benefit, which is referred to as \emph{quantum utility} \citep{kim2023}.}

\revI{Quantum computations do not necessarily have to be seen as isolated from classical computations. The term \emph{hybrid quantum-classical} is used for algorithms, where quantum and classical computations are performed together, typically in an iterative fashion. The \gls{QC} can then be used to solve only the specific tasks for which it is suitable. This makes hybrid quantum-classical algorithms particularly promising for near-term applications on \gls{NISQ} devices. We present an example in \cref{sec:QAOA}.
There is evidence that hybrid quantum-classical algorithms can also enable a polynomial runtime improvement. For example,
\citet{creemers2025} focus on applying Grover's search to achieve a quadratic speedup in the
context of algorithms such as hybrid quantum-classical branch-and-bound.}

\revI{From a broader methodological perspective, one can identify several
  paradigms for designing potentially promising quantum algorithms. Their
  detailed description is beyond the scope of this paper, but some high-level
  discussion is given by \citet{nielsen2010}, and an in-depth overview in the
  context of discrete optimization is presented by \citet{abbas2024}. We take
  another approach here and focus on a specific example of a key idea from physics
  in \cref{sec:QAA}, which motivates a group of actively developed
  quantum optimization methods discussed in more detail further.}

\subsection{Quantum Adiabatic Algorithm}\label{sec:QAA}

\revI{We start with the outline of a central quantum computational strategy that serves as an umbrella concept for the three quantum-powered optimization approaches we consider in this paper. This strategy, the so-called \gls{QAA}~\citep{farhi2000,Albash2018}, is based on the principle of solving optimization problems by leveraging the natural evolution of quantum systems. \Gls{QAA} founds on the computational model of \gls{AQC}, a form of analog quantum computing.}\footnote{\label{q:23}%
  \revIII{While adiabatic and digital quantum computing are often presented as distinct, they are theoretically equivalent up to polynomial overheads. This means that each algorithm in one model can be translated into the other \citep{Albash2018}. In practice, some quantum computations such as \gls{QAA} are more naturally expressed in the adiabatic framework, whereas others align better with the digital framework.}%
}

\revI{The \Gls{QAA} relies on the \emph{adiabatic theorem} of quantum physics, which, in simplified terms, guarantees that if a quantum system is in its ground state (\ie, the minimum energy state) and its ``energy landscape'' (comprised of all possible energy states including the ground state) is changed ``slowly enough,'' it will remain in the ground state.
In other words, the system may slowly ``evolve'' from one ground state to another ground state in a different energy landscape. The necessary pace of the changes in the system depends on the size of the so-called \emph{minimum energy gap}, the smallest energy difference between the ground state and the second lowest energy state throughout the entire evolution process.}\footnote{\label{q:24}%
\revIII{The adiabatic theorem requires that the evolution runtime must be large on the timescale set by $(1/\Delta^{2})$, where $\Delta$ denotes the minimum energy gap \citep{Albash2018}. In practice, this means that $\Delta$ must also be non-zero, which means that energy levels must not cross \citep{Zhang2014}. Otherwise, achieving an adiabatic transition is fundamentally impossible.}%
} \revI{The evolution of a quantum state that complies with the adiabatic theorem is called \emph{adiabatic evolution}.}

\revI{The energy landscape of a multi-qubit quantum system can be used to encode information. The ground state, as the lowest-energy configuration within this landscape, therefore carries information as well.}\footnote{%
  \label{fn:nozeros}\revIII{It is thus not surprising that a measurement of the ground state does not necessarily yield a trivial all-zero bitstring, as such an outcome would imply no information gain.}}
\revI{At the core of \gls{QAA} lies the ability to store and extract information from the ground state of a quantum system by carefully shaping and controlling its energy landscape, taking into account the adiabatic theorem.
The conceptual idea can (from a hardware-agnostic perspective) be outlined as follows:
First, the multi-qubit system of the \gls{QC} is initialized in a known and easy-to-prepare ground state. By design, this is a generic ground state, which is in particular independent of the optimization problem of interest. Then, over time, the system parameters (which are hardware-specific) are slowly tuned to achieve a controlled adiabatic evolution. The goal of this controlled evolution is to gradually shape an energy landscape that is a one-to-one representation of the optimization landscape of the problem of interest. This means that each candidate solution of the optimization problem can be associated with a quantum state and the corresponding objective value with an energy. After a successful adiabatic evolution, the system is by definition still in the ground state, but this new ground state now represents an optimal solution to the optimization problem.
Hence, by measuring this state, the solution can be found. The surprising insight is that it is not necessary to know the optimal solution to be able to prepare the final ground state that contains this information.
This is only possible because quantum physics itself is used to discover the solution in the sense of an analog computation.}

\label{q:25}%
\revI{The \gls{QAA} is inherently different from classical methods because it can benefit from quantum features such as superposition and tunneling, which allows the system to explore many possible solutions ``simultaneously'' and to ``escape'' from local energy minima.}\footnote{\label{q:25fn}%
    \revIII{The role of entanglement in the performance of \gls{QAA} and even how best to characterize and measure it in this context remains an open area of research topic \citep{Albash2018}.}%
  } 
\revI{As a consequence, the \gls{QAA} has the potential to outperform classical methods. However, this presumption has not been demonstrated for problems of practical scale yet, and is the subject of ongoing research \citep{abbas2024}.}

\revI{Indeed, there are three significant challenges for practical applications. First, a technical requirement for an optimization problem to be solvable with this approach is that the energy landscape of the quantum system must be tuneable in such a way that it becomes a one-to-one representation of the corresponding optimization landscape of the problem. In effect, this means that only hardware-specific (or ``natural'') problem classes can be solved on every quantum device. In order to solve a problem from a different problem class, it must first be mapped onto the natural problem class through suitable modeling. If this is not possible, the problem cannot be solved with the device at hand.}

\revI{Second, the minimum energy gap is usually undeterminable in practice since determining it requires knowledge of the ground state energy, which is the optimal solution to the optimization problem to be found by the computation. Therefore, an estimated minimum energy gap (\eg, based on empirical results) is typically used instead to determine the timescale of the evolution process. However, this does not necessarily ensure an adiabatic behavior.}

\revI{Third, there is no guarantee that the complex quantum dynamics behind the analog computation reveal the desired result in all cases. Typically, the final state of the quantum system is a superposition state such that repeating the entire process might yield different measurement results each time, corresponding to new solution candidates. In an idealized scenario, these solution candidates would all be optimal solutions to the underlying problem (each with the same minimal energy). In practice, however, quantum fluctuations, non-optimal tunneling, an insufficiently slow evolution speed (due to an unfavorable estimate for the minimum energy gap), and hardware-related uncertainties of \gls{NISQ} devices lead to a final state that is in a superposition of sub-optimal (and potentially optimal) solutions. In other words, measuring the final state may reveal sub-optimal solutions, making the \gls{QAA} effectively a heuristic. It is therefore common to collect data from multiple algorithm runs, usually referred to as \emph{shots}, to choose the best outcome.}

\section{Quantum optimization workflows}\label{sec:quantum workflows}
\revI{In this paper, we examine three different quantum-powered optimization approaches, all of which are based on the conceptional idea of \gls{AQC} and, more specifically, the \gls{QAA} from \cref{sec:QAA}.
They differ not only in the chosen hardware and algorithmic realization, but also in their applicability to specific problem classes. Definitions for these problem classes are provided in \cref{sec:qubits}. Our goal is to illustrate how different quantum computing strategies can be used for practical \gls{OR} applications. All of the algorithms we consider here are well-known in the literature, we particularly do not invent new methods to achieve better results, but focus on the performance of existing methods. We consider the three following approaches (see also \zcref{app:runs summary} for more details).}

\begin{enumerate}
	\item \textbf{\gls{QuEra-OPT}} (\cref{sec:rydberg}): We use a quantum algorithm that can be understood as a realization of a \gls{QAA} close to the original concept of analog adiabatic evolution. As quantum hardware, we use \gls{Aquila} from \gls{QuEra}~\citep{wurtz2023}, an analog device operating with trapped neutral atoms. It can be accessed commercially online via the \emph{AWS Braket} cloud service~\citep{AWS2020}. The task is to solve \glspl{UD-MIS}, the natural problem class of the \gls{Aquila} device.
	\item \textbf{\gls{D-Wave-OPT}} (\cref{sec:QA}): We use a quantum algorithm known as \gls{QA}. In simple terms, it can be understood as a practice-oriented variant of a \gls{QAA} that may potentially involve a non-adiabatic evolution. As quantum hardware, we use the \gls{D-Wave} \gls{Advantage} quantum annealer~\citep{mcgeoch2020a}, an analog device operating with superconducting qubits. It is commercially accessible online via the \emph{Leap} cloud service~\citep{dwave2024}. The task is to solve \glspl{QUBO}, the natural problem class of the \emph{Advantage} device.
	\item \textbf{\gls{IBM-OPT}} (\cref{sec:QAOA}): We use a hybrid quantum-classical algorithm known as \gls{QAOA}. In simple terms, it can be understood as a digitized approximation of a \gls{QAA}. As quantum hardware, we use two gate-based devices from \gls{IBM}, \gls{IBMCusco} and \gls{IBMNazca}, which operate with superconducting qubits. \Gls{IBM} devices are commercially accessible online via the \emph{IBM Quantum} cloud service~\citep{ibm2023}. The task is to solve \glspl{QUBO}, although variants of \gls{QAOA} are suitable for different problem classes.
\end{enumerate}

\revI{All three approaches share a unified workflow in three high-level steps, as outlined in \cref{fig:workflows}:}

\begin{enumerate}
  \item[(I)] The given problem $P$ first has to be \emph{modeled} in form of a suitable
  problem class.
  \item[(II)] This problem formulation is then used to specify the \emph{configuration} of the algorithm and hardware.
  \item[(III)] Finally, the actual \emph{solution process} takes place, usually involving a series of quantum hardware computations and additional classical computations, (\eg, pre- and post-processing).
\end{enumerate}

\begin{figure}[ht]
	\centering
	\resizebox{0.9\linewidth}{!}{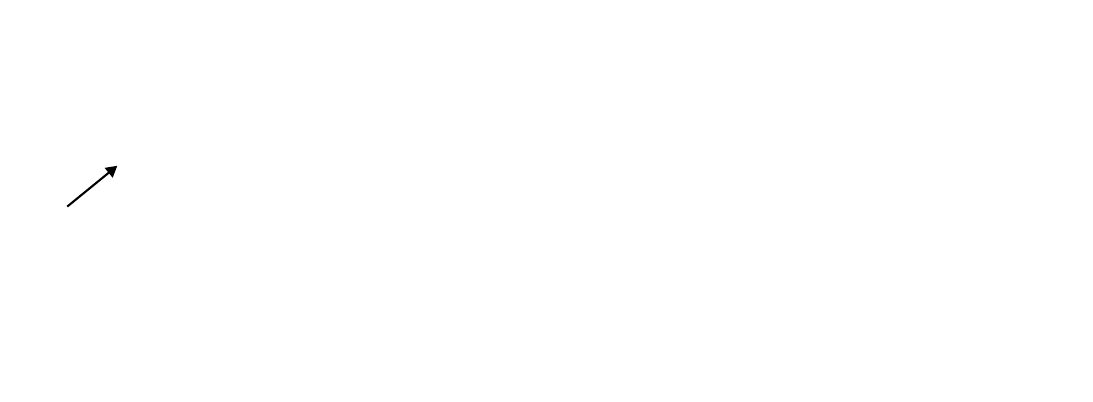}
	\caption{
		\revI{
			Unified workflow for the three quantum-powered optimization approaches \gls{QuEra-OPT}, \gls{D-Wave-OPT}, and \gls{IBM-OPT} that consists of three high-level steps: (I) Modeling, (II) Configuration, and (III) Solution. All three approaches utilize \gls{QAA}-inspired techniques.
		}
	}\label{fig:workflows}\zlabel{fig:workflows}
\end{figure}

In the following, we provide a brief outline of the three approaches in the
context of these three high-level steps. We limit \revIII{ourselves to the description}\label{q:the} of
the fundamental principles and provide further references for more details.

\subsection[\gls{QuEra-OPT}: \gls{UD-MIS} and \gls{QAA}]{\texorpdfstring{\Gls{QuEra-OPT}: solving \glspl{UD-MIS} with an analog \gls{QC} (\gls{QuEra})}{NA-OPT: solving UD-MIS with an analog QC (QuEra)}}\label{sec:rydberg}
Our first approach, \gls{QuEra-OPT}, makes use of the analog device \gls{Aquila} from \gls{QuEra}~\citep{wurtz2023} to solve \glspl{UD-MIS}~\citep{pichler2018,ebadi2021,coelho2022,ebadi2022} by exploiting the so-called \emph{Rydberg blockade effect}, a physical law originating from quantum physics. A recent review of the technology is given by \citet{wintersperger2023}.

\paragraph{Hardware.}\label{q:Rydberg-hw}
The \gls{Aquila} device operates with an array of rubidium atoms, which are trapped in a vacuum cell by lasers and can be arranged on a two-dimensional plane using optical tweezers.
The atoms realize so-called \emph{Rydberg qubits}, which possess
two energy states: the non-Rydberg state and the Rydberg state.
The Rydberg blockade effect makes it energetically favorable
that atoms within a certain distance from each other, the so-called \emph{Rydberg blockade radius}, are not simultaneously in a Rydberg state. This effect can be used to solve
\glspl{UD-MIS}, as explained in the following.

\paragraph{Algorithm.}\label{q:Rydberg-algo}
\revIII{We use a realization of a \gls{QAA} and follow the unified workflow from \cref{sec:quantum workflows}. A device-specific scheme is summarized in~\cref{fig:qopt-rydberg}.
For practical reasons, the \gls{Aquila} device
can only hold up to \num{256}~atoms in the array at once, which translates to at most \num{256}~nodes in the \gls{UD-MIS} instance. Moreover, the geometric configurations of the atoms array, and hence possible \gls{UD-MIS} graphs, are restricted to a square
two-dimensional lattice that also enforces a minimum distance between all nodes.
If the geometry of a graph from a \gls{UD-MIS} instance does not comply to these requirements, it has to be transformed first in the configuration step, which may or may not be possible depending on the instance.}

\begin{figure}[ht]
	\captionbox{Workflow of the \gls{QuEra-OPT} approach: solving \glspl{UD-MIS} with \gls{QAA}.\label{fig:qopt-rydberg}}[0.475\textwidth]{
		\includegraphics[width=\linewidth]{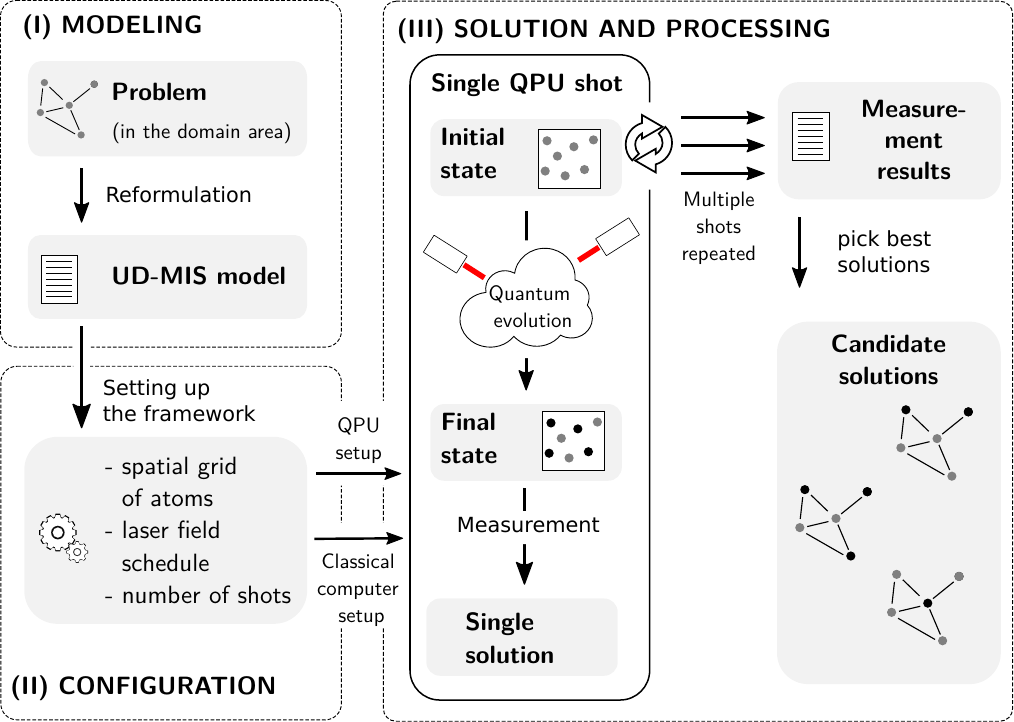}
	}\hfill
	\captionbox{Workflow of the \gls{D-Wave-OPT} approach: solving \glspl{QUBO} with \gls{QA}.\label{fig:qopt-qa}}[0.475\textwidth]{
		\includegraphics[width=\linewidth]{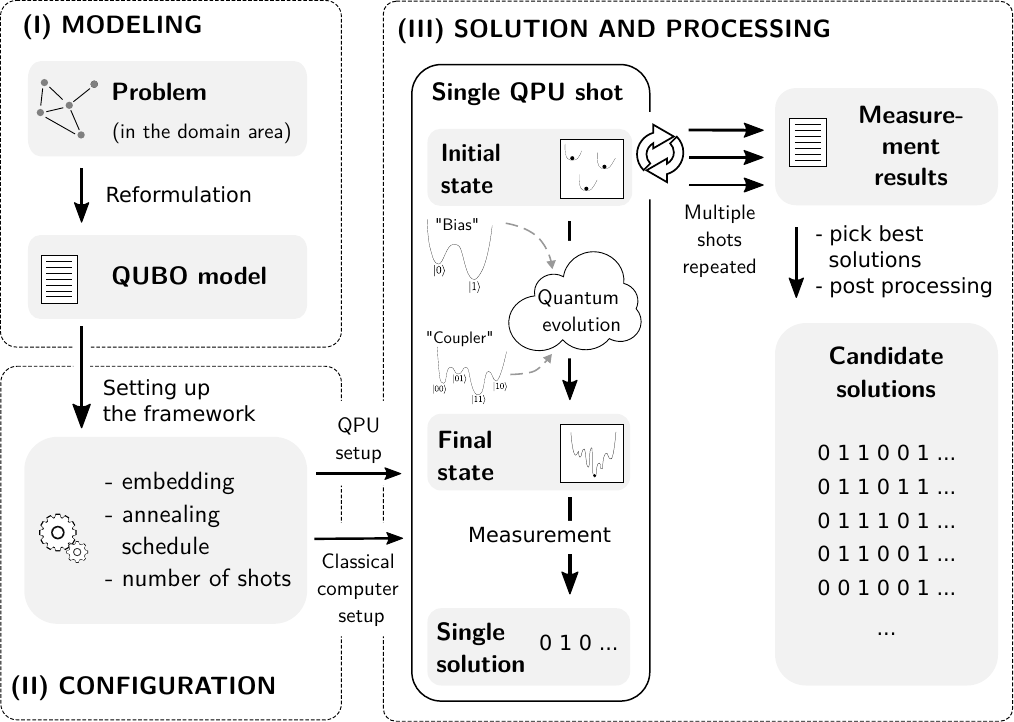}
	}
\end{figure}

\revIII{To get an intuitive understanding of how the adiabatic algorithm works, we first revisit the structure of unit disk graphs.
Presuming nodes that are localized on an Euclidean plane, a unit disk graph has an edge between two vertices if and only if the distance between them is at most one. In other words, the relative location of nodes determines their connectivity.
Therefore, provided the unit disk graph of a \gls{UD-MIS} instance, we can associate an atom with each node and arrange the atoms on the two-dimensional plane
according to the corresponding (scaled) node coordinates to achieve a one-to-one spatial correspondence between graph nodes and atoms.
The Rydberg blockade radius of the system is tuned to match the unit disk radius such that atoms that correspond to connected nodes lie within the Rydberg blockade radius.}

\label{q:26}%
\revIII{With this problem encoding, a \gls{QAA} can be realized by evolving the system in a controlled fashion with an external laser field.
To this end, we start from a problem-agnostic
energy landscape, where the ground state corresponds to a quantum system in which all atoms are in a non-Rydberg state.
Slowly, the controls are adjusted to arrive at a problem-specific energy landscape
while the positions of the atoms remain unchanged.
The problem-specific ground state corresponds to a quantum system in which as many atoms as possible are in a Rydberg state (which might be ambiguous).
The Rydberg blockade effect---implemented by the atom locations---is intended to ensure the constraints of the \gls{MIS}, which require that neighboring (\ie, connected) nodes cannot both be part of the resulting node set. As a consequence, the ground state of the quantum system encodes a \gls{UD-MIS} solution: the atoms in the Rydberg state indicate the resulting node set.}

\Citet{serret2020} estimate the necessary size of a neutral atom array to yield
an advantage over classical approximation algorithms for \gls{UD-MIS}, and 
\citet{wurtz2022} provide an overview of the connections between the \gls{MIS}
and many \gls{OR} problems in this context. Note that neutral atom arrays can in principle also be used to solve other
problem classes as well, which is an active field of research
\citep{nguyen2023}. For instance, a conceptional idea for a non-unit disk
(non-blockade-based) framework was suggested by \citet{goswami2023}.
Using a hardware feature called local detuning, \gls{Aquila} can also be used to solve the \gls{UD-MWIS}
in complete analogy to \gls{UD-MIS}.\footnote{At the time of performing our hardware experiments, this feature was not fully supported yet and \gls{UD-MWIS} is therefore not considered in our numerical evaluations.}

\paragraph{Number of qubits needed.}\label{q:rydberg-qubits}
\revIII{The number of logical qubits necessary to solve a \gls{UD-MIS} instance is equal to the number of graph nodes, which also directly translates to the number of physical qubits.
In addition to the number of nodes, a \gls{UD-MIS} graph must satisfy certain constraints to be encodable as an atom array on \gls{Aquila}. These include feasible placement of the corresponding atoms on a grid (with a size of around $16 \times 16$) and a unit disk size within a fixed range. Detailed hardware requirements are provided in \citet{wurtz2023}. We refer to any \gls{UD-MIS} instance that meets these criteria as a \emph{hardware-compliant} \gls{UD-MIS} instance.
In practice, choosing a suitable geometric arrangement of the atoms for an accurate problem representation might pose additional challenges. While this issue was never binding in our numerical experiments, for more connected problem instances it might become a problem. In a sense, a neutral-atoms-based \gls{QPU} can provide qubit \emph{connectivity} only up to a certain limit. Investigating this effect might constitute a potential direction for further research.}

\revII{While the same \gls{UD-MIS} graph can be
	represented by different atom geometries, there are engineering constraints on the
	distances between the atoms and physical implications for each specific atomic
	configuration. For instance, positioning atoms approximately within the
	blockade radius of other atoms (not too close and not too far) might result in
	undesirable outcomes, due to the underlying physics of the protocol. Some best
	practices and discussion of the details of the underlying technology can be
	found in \cite{wurtz2023}.}

As the decision version of the \gls{UD-MIS}
problem is $\NP$-complete \citep{clark1990}, an arbitrary problem from
$\NP$ can be reduced to it with a polynomial overhead in the instance size. While the reduction following from the general proof goes via non-deterministic Turing machines, more direct reductions can exist for specific problems. We exemplarily use the results from \citet{nguyen2023}, who proposed a transformation that maps a \gls{QUBO} instance
with $N$ variables to an equivalent \gls{MWIS} instance over a unit disk graph with at
most $4N^2$ nodes, as summarized in
\cref{lm:Rydberg-qubits}.
\begin{lemma}{\cite[][Section V.C]{nguyen2023}}\label{lm:Rydberg-qubits}
  A \revIII{hardware-compliant} \gls{UD-MIS} instance with $N$ nodes can be solved on a neutral-atom-based \gls{QC} with $N$ qubits. An arbitrary \gls{QUBO} instance with $N$
  binary variables can be encoded as a \gls{UD-MWIS} instance with at most $4N^{2}$
  nodes, and hence, \revIII{assuming the geometric hardware constraints are met}\footnote{\revIII{Generally speaking, not any \gls{UD-MWIS} instance will be hardware-compliant. This comes from the fact that the grid necessary to place the $4N^{2}$ atoms might in principle require more space than that available due to the hardware constraints stemming from the optical properties of the hardware. For example, a dense \gls{QUBO} instance with 5 variables requires \num{100} atoms on a graph that spans almost the full $16 \times 16$ grid of \gls{Aquila}~\citep{nguyen2023}. However, in our dataset, such constraints were never binding, and it was the number of qubits that constituted a limiting factor to our instance sizes.}}, requires a neutral-atom-based \gls{QC} with $4N^{2}$ physical qubits.

\end{lemma}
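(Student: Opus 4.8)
The plan is to treat the two assertions separately, since the first is essentially a restatement of the encoding already described above, while the second rests on an explicit combinatorial reduction. For the first claim I would argue directly from the atom-per-node encoding of the \emph{Algorithm} paragraph: given a hardware-compliant unit disk graph on $N$ nodes, place one rubidium atom at the (scaled) coordinates of each node and tune the Rydberg blockade radius to match the unit disk radius. By construction two atoms lie within the blockade radius precisely when the corresponding nodes are adjacent, so the blockade effect enforces the independence constraint, and the problem-specific ground state (maximizing the number of atoms in the Rydberg state) encodes a maximum independent set. Since exactly one physical atom, and hence one physical qubit, is used per node, the logical and physical qubit counts both equal $N$.

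For the second claim I would follow the gadget-based reduction of \citet{nguyen2023}. First write the \gls{QUBO} over $x \in \{0,1\}^N$ in standard form with linear coefficients $Q_{ii}$ and pairwise coefficients $Q_{ij}$, and recall that on a unit disk graph independence corresponds to the blockade constraint while node weights correspond to local detunings, so that a \gls{MWIS} instance realizes a weighted Ising/\gls{QUBO} objective. The linear terms map directly to node weights; the genuine difficulty lies in the quadratic terms, because a dense \gls{QUBO} couples all $\binom{N}{2}$ variable pairs, whereas a unit disk graph admits edges only between geometrically close atoms. The resolution is to represent each logical variable by a ``wire'' of atoms whose blockade interactions force a consistent $0/1$ value along the wire (a copy gadget), and to realize each pairwise coupling by a small crossing gadget placed where two wires meet. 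Laying the $N$ variables out as a grid of horizontal and vertical wires yields $O(N^2)$ intersection points; since each wire segment and each crossing gadget uses a bounded number of atoms, a careful count gives the stated bound of at most $4N^2$ nodes. Choosing the weights (detunings) so that the \gls{UD-MWIS} optimum is in exact correspondence with the \gls{QUBO} minimizer, the first claim then supplies $4N^2$ physical qubits, provided the resulting geometry is hardware-compliant.

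The hard part will be the correctness and accounting of the gadget construction: one must verify that the copy gadgets rigidly enforce equality of all atoms representing the same logical variable, that the crossing gadgets faithfully reproduce an arbitrary two-body coupling without introducing spurious ground states, and that the weights can be set so the (possibly degenerate) \gls{MWIS} recovers the \gls{QUBO} optimum, all while keeping the layout embeddable as a unit disk graph. Establishing this rigorously is precisely the content of \citet{nguyen2023}, Section V.C, so I would either reconstruct their gadgets and verify the $4N^2$ tally, or, as the lemma is attributed to that work, cite the construction and concentrate the argument on the qubit-count bookkeeping and the caveat that the $4N^2$-atom grid need not fit within the fixed hardware array.
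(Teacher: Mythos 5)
Your proposal is correct and matches the paper's treatment: the paper gives no proof of this lemma at all, deriving the first claim from the one-atom-per-node encoding described in the hardware/algorithm discussion and attributing the $4N^2$ bound entirely to the cited construction in \citet{nguyen2023}, Section~V.C. Your sketch of the wire/crossing-gadget reduction goes beyond what the paper writes down, but it is a faithful outline of the cited construction, and your closing decision to cite rather than reconstruct it is exactly what the paper does.
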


\subsection[D-Wave-OPT: QUBO and QA]{\texorpdfstring{\Gls{D-Wave-OPT}: solving \glspl{QUBO} with a quantum annealer (\gls{D-Wave})}{QA-OPT: solving QUBOs with a quantum annealer (D-Wave)}}\zlabel{sec:QA}\label{sec:QA}
Our second approach, \gls{D-Wave-OPT}, makes use of the \gls{Advantage} quantum
annealer from \gls{D-Wave}~\citep{mcgeoch2020a} to solve
\glsfirstplural{QUBO}. \revIII{We review the specific formulation later, in
  \cref{sec:qubits}.} \Gls{Advantage} is a special-purpose device designed for \gls{QA}~\citep{finnila1994,das2005,morita2008,hauke2020}.

\paragraph{Hardware.}\label{q:QA-hw}
The \gls{Advantage} device is based on superconducting technology in which superconducting loops are used to physically realize qubits. The quantum system can be controlled via so-called \emph{couplers} and \emph{biases}. The bias
of each qubit is a control which allows making it more energetically favorable for this qubit to end up in
a specific state (either the ground, or the excited state). A coupler
controls the interaction between two \revIII{qubits. It}\label{q:QA-hardware} allows increasing or decrease the energy contribution of qubit correlations. For technical reasons, couplers are only implemented
between physically adjacent qubits, which means that there is a limit to the
interactions that can be realized. We will revisit this topic further below.

\paragraph{Algorithm.}\label{q:QA-algo}
Since \gls{NISQ} hardware is often too noisy to support sufficiently slow and stable adiabatic evolution and the minimum energy gap required for adiabaticity is usually unknown, \gls{QA} can be considered as a more practical variant of \gls{QAA}, which does not strictly rely on adiabatic evolution.
It is based on the same premise as \gls{QAA} by starting from the ground state of a problem-agnostic energy landscape and ending in the ground state of a problem-specific landscape.
However, it is usually implemented under conditions where adiabaticity may not be preserved due to noise or the finite runtime of the hardware. Consequently, it is clearly a heuristic method that relies on sufficiently many shots to produce reliable results.

Here, we use a realization of a \gls{QA} and follow the \revIII{unified workflow from \cref{sec:quantum workflows}. A device-specific scheme is summarized in~\cref{fig:qopt-qa}.}
Specifically, to solve a given \gls{QUBO} instance, we first encode the problem on the device by associating each binary optimization variable with a qubit. The initial energy landscape is chosen in a problem-agnostic way such that its ground state is a balanced superposition state of every qubit, for which an immediate measurement would return a uniformly random candidate solution. 
Then, using a predefined annealing schedule, \revIII{the system is driven to a problem-specific energy landscape with a ground state that represents a candidate solution.}\label{q:QA-R3R11} To this end, the biases and couplers are tuned to encode the \gls{QUBO} coefficients. Finally, a measurement reveals a solution candidate. This process is typically repeated many times to generate an ensemble of solutions.

The biases and couplers of the \gls{Advantage} device have a limited effective resolution to encode the \gls{QUBO} coefficients, which means that a \gls{QUBO} can only be expected to be solved efficiently if its coefficients have a limited dynamic range (ratio between largest and smallest values; see \citealp{muecke2025}). Furthermore, the \gls{Advantage} device has \num{5760}~qubits, which represents the largest possible \gls{QUBO} instance that can be encoded. However, choosing a one-to-one correspondence between optimization variables and qubits for the encoding, as described above, is in fact not possible for all instances. Due to the limited availability of couplers, the effective number of required qubits can be much larger than the number of variables. This is an important practical issue, which is explained in more detail in the following.

\paragraph{Number of qubits needed.}\label{q:no-qubits-QA}

For a given \gls{QUBO} instance, the number of logical qubits corresponds to the number of binary optimization variables. 
The logical qubits need to be encoded with the physical qubits of the quantum annealer, which means that each logical qubit has to be assigned to a physical qubit and each correlation between logical qubits has to be assigned to a coupler.\label{q:27}
As already mentioned above, the \gls{Advantage} device only supports couplers between specific physical qubits \revIII{(in which case they are called \emph{connected})}.
Therefore, a straightforward encoding of a sufficiently large and dense \gls{QUBO} instance may in fact require non-existing couplers.
To overcome this limitation,
a logical qubit can also be encoded as a \emph{chain} comprising several
physical qubits \citep{venegasandraca2018} that are strongly coupled with each other.
The encoding task then corresponds to the graph-theoretic problem of finding
a \emph{minor embedding}~\citep{choi2008} of the \emph{problem graph} (where nodes are \gls{QUBO} variables and edges correspond to non-zero \gls{QUBO} coefficients) into the \emph{topology graph} of the device (where nodes are physical qubits and edges exist for every coupler). In this context, the chains are usually called
the \emph{branch sets} associated to the nodes of the problem graph.
If possible, chains are to be avoided
because \emph{chain breaks} are an important type of errors occurring in quantum annealers, where physical qubits 
representing the same logical qubit attain different states. Some \emph{post-processing} techniques to recover consistent solutions
have been proposed~\citep{pelofske2020}.
\Citet{gilbert2024conference} conducted experiments on crafted \gls{QUBO} instances whose problem graphs are subgraphs of the hardware topology, allowing for embedding without additional overhead.

\revII{
The \gls{Advantage} device is designed according to the so-called \emph{Pegasus} topology~\citep{dattani2019a,boothby2020}, and the aforementioned works imply a polynomial-time algorithm for embedding a complete graph into this topology graph.
We extend this result with the following lemma, formulated in terms of the number
of physical qubits required for a given \gls{QUBO} instance. It is a direct
consequence of the formal description of the topology by \citet{boothby2020},
see \zcref{app:Pegasus} for further details.}
  
\begin{lemma}\label{lm:pegasus-qubits}\zlabel{lm:pegasus-qubits}
  The problem graph~$G_Q$ of a \gls{QUBO} instance with $N$ variables can be embedded into a Pegasus graph~$G_T$ with
  \[N_{QA}(N)\defeq 24\Bigl\lceil\frac{N+10}{12}\Bigr\rceil\Bigl\lceil\frac{N-2}{12}\Bigr\rceil \le \frac{(N+21)(N+9)}{6}\]
  nodes such that the corresponding \gls{QUBO} instance
  can be solved on a quantum annealer with Pegasus topology
  using $N_{QA}(N)$ physical qubits. If $G_{Q}$ is a subgraph of the
  Pegasus graph, $N$ qubits suffice.
\end{lemma}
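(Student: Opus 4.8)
The plan is to reduce the statement to the embedding of a complete graph and then read off the qubit count from the explicit coordinate description of the Pegasus topology. The problem graph $G_Q$ of a \gls{QUBO} instance on $N$ variables has an edge exactly for each nonzero quadratic coupling, so $G_Q$ is always a subgraph of the complete graph $K_N$. Since a minor embedding of a graph $H$ restricts to a minor embedding of any subgraph of $H$ (simply discard the unused branch sets and couplers), it suffices to exhibit a minor embedding of $K_N$ into a Pegasus graph $G_T=P_m$ and to count the physical qubits it consumes. I would work with $P_m$ in the normalization of \citet{boothby2020}, labeling its vertices by orientation (horizontal or vertical qubits) together with lattice position and offset, and keeping its three coupler families (internal, external, and odd); in this convention $P_m$ has exactly $24m(m-1)$ vertices, matching the \num{5760} qubits of \gls{Advantage} at $m=16$.

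First I would construct an explicit clique minor embedding following the standard ``native clique'' pattern adapted to Pegasus. Each logical variable is assigned a branch set (chain) built from a run of horizontal qubits linked, through the internal and odd couplers of $P_m$, to a run of vertical qubits, so that each chain spans a full horizontal band and a full vertical band of the lattice. The layout is arranged so that for every pair $i\ne j$ the horizontal band of chain $i$ crosses the vertical band of chain $j$, and at each crossing an external coupler supplies the required $K_N$ edge between the two branch sets. Counting the available bands shows that $P_m$ accommodates this pattern precisely for cliques up to $K_{12m-10}$, so the smallest admissible size is $m=\lceil (N+10)/12\rceil$. A direct bookkeeping of chain lengths then gives the qubit count: using $\lceil (N+10)/12\rceil=\lceil (N-2)/12\rceil+1$, the embedding occupies all $24m(m-1)=24\lceil (N+10)/12\rceil\lceil (N-2)/12\rceil=N_{QA}(N)$ vertices of $P_m$, as claimed.

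It remains to derive the closed-form bound and to handle the subgraph case. Applying $\lceil a/12\rceil\le (a+11)/12$ with $a=N+10$ and $a=N-2$ yields $\lceil (N+10)/12\rceil\le (N+21)/12$ and $\lceil (N-2)/12\rceil\le (N+9)/12$, so $N_{QA}(N)\le 24\cdot\frac{(N+21)(N+9)}{144}=\frac{(N+21)(N+9)}{6}$. If $G_Q$ is already a subgraph of $P_m$, the identity map is an embedding with singleton branch sets, so $N$ physical qubits suffice and no chains are needed. I expect the main obstacle to be the first step: one must verify, against the precise coordinate and coupler rules of \citet{boothby2020}, both that each chain is internally connected and that every pair of chains meets at a genuine external coupler, while keeping the total qubit count tight at $24m(m-1)$. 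These verifications are routine but delicate, which is why I would defer the full details to \zcref{app:Pegasus}.
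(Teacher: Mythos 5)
Your proposal is correct and follows essentially the same route as the paper: reduce the problem graph to $K_N$, invoke the native clique-minor construction from the formal Pegasus description of \citet{boothby2020} (which realizes $K_{12m-10}$ as a minor of $P_m$), solve $12m-10\ge N$ to get $m=\lceil (N+10)/12\rceil$, and read off $|P_m|=24m(m-1)=24\lceil (N+10)/12\rceil\lceil (N-2)/12\rceil=N_{QA}(N)$, with the closed-form bound and the subgraph case handled exactly as you do. One factual slip worth correcting: the clique embedding of $K_{12m-10}$ does \emph{not} occupy all $24m(m-1)$ vertices of $P_m$ (the branch sets cover only a fraction of the qubits), but this does not affect the lemma, since the stated count $N_{QA}(N)$ refers to the size of the host Pegasus graph --- the hardware that must be available --- rather than the number of qubits actually consumed by chains.
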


\label{q:9}%
\revII{In practice, certain physical qubits of a quantum annealer
	might turn out to be ``broken'' as a result of an imperfect construction process, which
	means that these qubits cannot be used for computations.
	This is also the case for the \gls{Advantage} device.
	As a consequence, the effective topology reduces to a subgraph of the
	Pegasus topology graph, in which the nodes corresponding to broken qubits are removed.
	Deciding whether a given problem graph is a minor of
	such an irregular topology graph is $\NP$-complete \citep{lobe2021a}, and the
	embedding problem is usually solved heuristically~\citep{cai2014,zbinden2020}.
	Such heuristics typically aim to find embeddings that use few physical qubits
	while still maintaining some redundancy to strengthen the coupling of the
	chains and reduce the probability of errors~\citep{pelofske2024}.
	Some research works study the probability that a random problem graph can be successfully embedded into a fixed topology graph (corresponding to a fixed hardware configuration). This is known as the \emph{embedding probability}~\citep{sugie2021}.
	For further information and recent research results, we refer to \citet{zbinden2020,sugie2021,gomeztejedor2025,sinno2025}.}

\subsection[QAOA-OPT: QUBO and QAOA]{\texorpdfstring{\Gls{IBM-OPT}: solving \glspl{QUBO} with gate-based \glspl{QC}  (\gls{IBM})}{QAOA-OPT: solving QUBOs with gate-based QCs (IBM)}}\label{sec:QAOA}
Our third approach, \gls{IBM-OPT}, makes use of the gate-based \glspl{QC} from \gls{IBM} to solve \glspl{QUBO} with \gls{QAOA}~\citep{farhi2014,grange2022,blekos2024}. For our numerical experiments, we have made use of two of the 127-qubit devices \gls{IBMCusco} and \gls{IBMNazca}.

\paragraph{Hardware.}\label{q:QAOA-hw}
The \gls{IBM} devices are based on superconducting technology. As universal gate-based \glspl{QC}, they can in principle run any gate-based quantum algorithm, which can be seen as a three-step process. First, all qubits are prepared in the ground state. Then, a sequence of gates is applied, each corresponding to a discrete control that alters the quantum state of the multi-qubit system. Finally, a measurement of the resulting state yields an outcome. A shot involves performing all these steps to obtain a single bitstring. A sequence of gates acting on a set of qubits is referred to as a \emph{quantum circuit} and provides all necessary instructions to operate a gate-based \gls{QC}. In other words, a quantum circuit controls the evolution from an initial quantum state to a final one, which is then measured.

\paragraph{Algorithm.}
We consider \gls{QAOA} to solve \glspl{QUBO}%
\footnote{\label{q:fn-PUBO}%
 \revIII{In addition to solving \glspl{QUBO}, variants of \gls{QAOA} can also take into account constraints~\citep{hadfield2018,hadfield2019,fuchs2022,Fuchs2024lxmixersqaoa,bucher2025}. Furthermore, \gls{QAOA} can also be used to solve \glspl{PUBO} \citep{grange2022}.}%
},
which is a hybrid quantum-classical algorithm that can be understood as an attempt to digitize adiabatic evolution in the sense of a \emph{digital \gls{QAA}}.
As a special implementation of a \gls{VQA}~\citep{cerezo2021,grange2022,blekos2024}, the key concept of \gls{QAOA} is that of a \emph{parameterized circuit}, a quantum circuit with gates that depend on real-valued parameters. Depending on the choice of parameters, executing a parameterized circuit on a \gls{QC} will provide different measurement outcomes.
The goal of \gls{QAOA} is to iteratively adjust the circuit parameters to guide the quantum system towards states that represent optimal or near-optimal solutions of the underlying optimization problem. This is realized along the lines of a typical \gls{QAA} by starting from a problem-agnostic ground state, a balanced superposition state of every qubit, and then applying a sequence of (parameterized) gates to arrive at the problem-specific state that encodes the solution. In contrast to an analog \gls{QAA}, \gls{QAOA} only mimics the adiabatic transition (which may or may not yield an approximation) using a discrete sequence of gates instead of a continuous control.
We follow the \revIII{unified workflow from \cref{sec:quantum workflows}, where a device-specific scheme is summarized in~\cref{fig:QAOA}.}\label{q:QAOA-highlight} Note that in the solution step, we perform not a single series of shots as in the two previous approaches, but a hybrid quantum-classical optimization loop.

\begin{figure}[ht]
 \centering
 \includegraphics[width=0.7\textwidth]{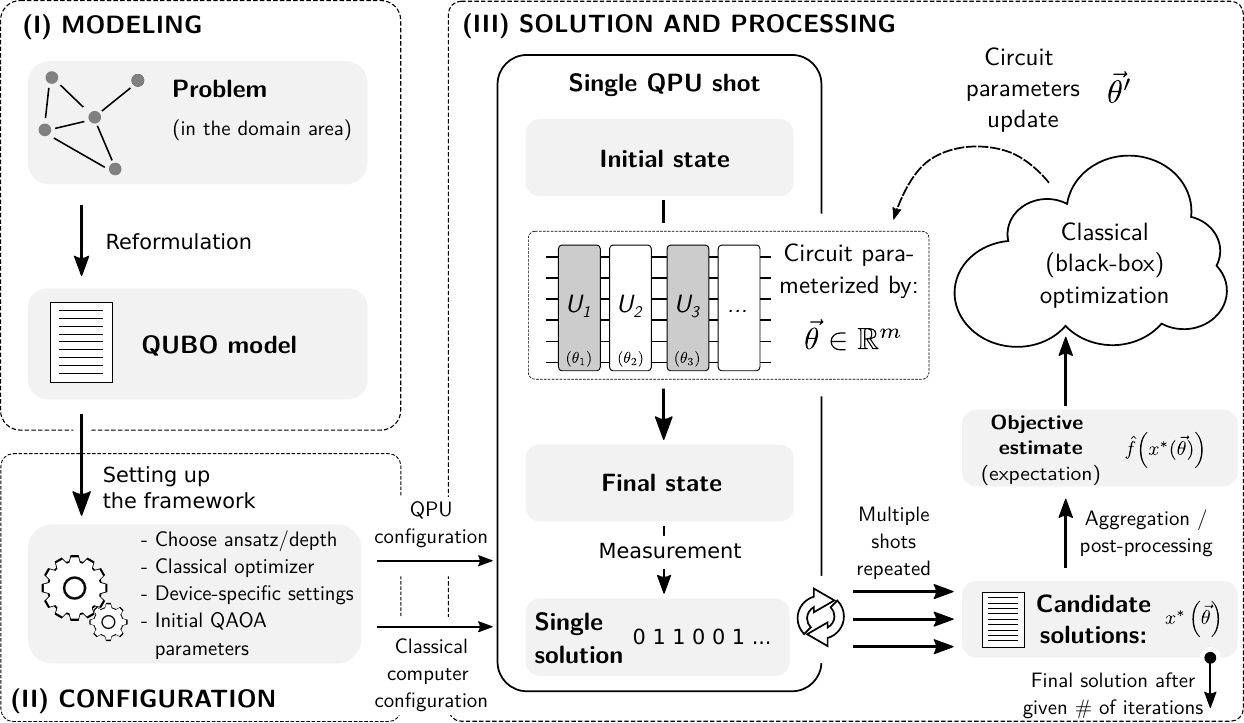}
 \caption{Workflow of the \gls{IBM-OPT} approach: solving \glspl{QUBO} with \gls{QAOA}.\label{fig:QAOA}}
\end{figure}

In the following, we explain \gls{QAOA} in more detail.
For a given \gls{QUBO} instance, each qubit encodes one optimization variable. The quantum circuit is then chosen in such a way that it represents a parameterized approximation of the annealing process. The specific circuit choice is also known as \emph{ansatz}, its \emph{depth} is a fixed parameter that denotes the number of repeating circuit elements (\ie, single gates or certain sequences of gates) in the ansatz. On the one hand, choosing a more complex ansatz (\eg, with greater depth) enables the circuit to explore more possibilities to mimic the adiabatic schedule. On the other hand, this exploration also requires tuning more parameters, which becomes computationally challenging and typically requires much more efforts to converge to a solution. These two opposing factors must be weighed against each other by the user to select a suitable ansatz.

The role of the \gls{QC} in the hybrid \gls{QAOA} setup is to run parameterized circuits and provide the measurement results, which can then be used to estimate the expectation value of the system's energy. The classical computer, on the other hand, optimizes the circuit parameters based on the quantum measurements and proposes updated parameters that aim to reduce the expected energy.

Summarized, the task of the classical computer is to effectively perform a black-box optimization under uncertainties. The objective function to be minimized is the expected energy of the quantum system and the optimization variables are the real-valued circuit parameters. In principle, this problem can be solved with any suitable classical optimization algorithm. (\Eg, gradient descent methods, \gls{SPSA}, and so on. See Section D by~\cite{cerezo2021} for a compact overview.) It is a black-box optimization because the problem structure is effectively unknown. The classical optimizer can query the \gls{QC} to provide the expected energy for a given choice of circuit parameters. Gradients of the objective function can be obtained using finite differences or specialized techniques~\citep{crooks2019}. Uncertainties arise because of two reasons: first, the expected energy is only estimated with finite samples (each representing a series of shots) and, second, \gls{NISQ} devices suffer from hardware-related uncertainties. Since evaluations on a \gls{QC} are costly, it is typically desired to find sufficiently good circuit parameters with as few iterations (or shots) as possible. Once the optimal parameters have been found, the \gls{QC} can be used to sample solution candidates.

For our \gls{IBM-OPT} approach, we only consider a foundational implementation of
\gls{QAOA} to explore the basic performance.
However, from a practical perspective, fine-tuning the algorithm is an important but challenging aspect. For example, there are separate works on circuit parameter optimization \citep{zhou2020a} and initial parameter values \citep{sack2021,sack2023}.
Improvements to the original algorithm include extensions of the aggregation function \citep{barkoutsos2020}, counterdiabatic driving \citep{chandarana2022}, and warmstarts \citep{egger2021}.

In addition, there are two general technical challenges when running quantum circuits on \gls{IBM} devices that also apply to all \gls{QC} evaluations within \gls{QAOA}:
\begin{enumerate}
 \item Limited gates: Only specific one-qubit and two-qubit gates, the so-called \emph{basis gates}, can be executed. The set of basis gates is universal in the sense that any other gate can be decomposed into a sequence of basis gates.
 \item Limited connectivity: Two-qubit gates can only be executed for specific pairs of physical qubits according to the prescribed hardware connectivity. These gaps can be bridged by the use of additional gates.
\end{enumerate}
To overcome these obstacles, a classical preprocessing step called \emph{transpilation} is necessary to transform a given quantum circuit into a hardware-compliant quantum circuit.
In fact, there are infinitely many hardware-compliant quantum circuits that lead to the same measurement outcomes on an idealized (\ie, noise-free) \gls{QC}. On actual \gls{IBM} devices, however, some mathematically equivalent circuits will perform better than others. For example, circuits with fewer basis gates typically perform better. Moreover, each physical qubit exhibits an individual level of noise, which might also change over time~\citep{baheri2022}. Therefore, the practical performance of a quantum circuit may also depend on which physical qubits are used for its implementation. Thus, the effective goal of the transpilation is to find a hardware-compliant and well-performing representation of a given quantum circuit~\citep{li2019,wilson2020,hua2023,nation2023,waring2024,quantum7010002}.
Essentially a classical optimization problem, transpilation within \gls{IBM-OPT} is solved in the \gls{IBM} software package \emph{Qiskit}~\citep{ibmquantum2023,javadiabhari2024} with multi-step heuristics that include mapping logical qubits to physical qubits, decomposing all gates into basis gates, routing qubits with additional gates according to hardware connectivity, and an overall circuit optimization to improve the performance.
\revIII{Note that similarly to the previous two approaches, qubit connectivity manifests as a separate issue here.}

\paragraph{Number of qubits needed.}\label{q:QAOA-qubits}

In analogy to \gls{D-Wave-OPT}, the number of logical qubits for \gls{IBM-OPT} corresponds to the number of binary optimization variables for a given \gls{QUBO} instance, which can then also be translated one-to-one into physical qubits, as summarized in \cref{rem:QAOA-qubits}.
\begin{remark}\label{rem:QAOA-qubits}
  An arbitrary \gls{QUBO} instance with $N$ variables that is solved with
  \gls{QAOA} requires $N$ physical qubits. While transpilation might affect the
  output quality, it does not necessarily require additional physical qubits.
\end{remark}

\section{Application to selected problems}\label{sec:qubits}\zlabel{sec:qubits}

In this section, we focus on three classes of optimization problems: \gls{UD-MIS}, the \gls{MaxCut}, and the \gls{TSP}. All three are graph-based problems, meaning that each instance is defined on an undirected graph~$G := (V, E)$, where~$V$ is the set of nodes and~$E \subseteq \binom{V}{2}$ is the set of edges (with no duplicates or self-loops).
For each class, we present a formal problem definition as a \gls{QUBO} and study the required number of physical qubits it takes to solve
them with the three quantum-powered optimization approaches from \cref{fig:workflows}.
Solving the
same \gls{QUBO} instance may require
a different number of physical qubits
for each approach, depending on the problem structure and
device implementation details. 

\label{q:6}%
\revI{We emphasize that we do not present novel ways of modeling the problems,
	nor do we provide a comprehensive overview of possible quantum-aware modeling
	approaches for optimization problems. In fact, finding suitable formulations
	of classical optimization problems that allow an effective treatment with
	quantum optimization is a task known as \emph{problem encoding} and constitutes a
	research direction of its own. For more general discussions of different
	formulations for a wide range of optimization problems in the context of
	quantum computing and \glspl{QUBO}, see for example
	\cite{lucas2014,dominguez2023,glover2022}. Furthermore, \citet{ruan2020b},
	\citet{gonzalez-bermejo2022,salehi2022}, and \cite{codognet2024} focus on \gls{TSP},
	while \citet{hadfield2021} considers more general classes of functions and aims to
	provide a ``design toolkit of quantum optimization''. \Citet{hadfield2017}
	discuss encoding constraints without penalty terms, specifically in the
	context of \gls{QAOA}. \cite{padmasola2025solvingtravelingsalesmanproblem} focus on \gls{TSP},
	but compare and contrast a few other quantum technologies. Possible problem encoding strategies are diverse, and we limit ourselves here to common formulations suitable in the context of the considered quantum-powered optimization approaches.}

We start by reviewing \glspl{QUBO}. An instance is defined by
a symmetric matrix $Q \in \mathbb{R}^{N\times N}$ as:
\begin{align} \label{eq:QUBO} \zlabel{eq:QUBO}
	\min_x\quad x^{T} Q x, \quad x\in\{0,1\}^{N}
\end{align}
with $N$ binary decision variables $x \defeq (x_{1},\ldots, x_{N})$.
Note that the requirement to represent problems in this form is in
fact not too restrictive. \Citet{lucas2014} discusses so-called \emph{Ising
  formulations}, which are very close to \gls{QUBO}, for many combinatorial
optimization problems, including the 21 $\NP$-hard problems from \citeauthor{karp1975}'s \citeyearpar{karp1975} list.
A tutorial by \citet{glover2022} focuses on formulating
combinatorial optimization problems as \glspl{QUBO}.

\subsection{\glsentrytext{UD-MIS}}\label{sec:UD-MIS}\zlabel{sec:UD-MIS}

Given an undirected graph~$G:=(V, E)$,
the \gls{MIS} problem consists of finding a subset of nodes of maximum cardinality such that no two nodes in the subset are adjacent.
The problem can be modeled as an \gls{ILP} using one binary variable~$x_i$ for each node~$i \in V$, indicating whether it constitutes a part of the solution:
\begin{align}
	\max\quad & \sum_{i\in V} x_{i} &&  \label{eq:MIS-ILP} \\
	\subjecto \quad & x_{i} + x_{j} \leq 1 &&\text{ for all } \{i,j\}\in E,\nonumber\\
	& x_{i} \in \{0,1\} &&\text{ for all } i\in V. \nonumber
\end{align}
If the graph $G$ is a unit disk graph, the problem is also called \gls{UD-MIS}. In a unit disk graph, each node corresponds to a point in the plane, and an edge exists between two nodes if and only if the Euclidean distance between their corresponding points is at most one. In general, given a graph from an arbitrary \gls{MIS} instance, it is $\NP$-hard to decide whether it can be represented as a unit disk graph or not \citep{breu1998}. Heuristics such as a force-based approach \citep{coelho2022} can be used to find a unit disk representation. \revII{Alternatively, \gls{MIS} on an arbitrarily connected graph can also be transformed into a \gls{UD-MWIS} by increasing the graph size \citep{nguyen2023,bombieri2025}.} The \Gls{MWIS} problem is a generalization of \gls{MIS}, which takes into account a non-negative weight~$w_i \in \mathbb{R}_{\geq 0}$ for each node~$i \in V$. A \Gls{MWIS} instance is therefore defined by a weighted graph~$G:=(V, E, w)$ and reduces to a \Gls{MIS} instance for units weights.\footnote{As before, if the graph $G$ is a unit disk graph, the problem is called \gls{UD-MWIS}.}
In analogy to formulation~\eqref{eq:MIS-ILP}, the \Gls{MWIS} problem can be formulated as:
\begin{align}
	\max\quad & \sum_{i \in V} w_i x_{i} &&  \label{eq:WMIS-ILP} \\
	\subjecto \quad & x_{i} + x_{j} \leq 1 &&\text{ for all } \{i,j\}\in E,\nonumber\\
	& x_{i} \in \{0,1\} &&\text{ for all } i\in V. \nonumber
\end{align}

In the following, we only consider the unweighted problem class.
To obtain an equivalent \gls{QUBO} for formulation~\eqref{eq:MIS-ILP}, we can introduce constraints as quadratic penalty terms corresponding to all edges $\{i,j\} \in E$ and switch to a minimization, which results in:
\begin{flalign}\label{eq:UDMIS-QUBO}
  \max_{x\in\{0,1\}^{|V|}}\quad \sum_{i\in V} x_{i} - M\sum_{\{i,j\}\in E}x_{i}x_{j}
  = -\min_{x\in\{0,1\}^{|V|}} x^{T} Q x, ~ \text{ where } Q_{ij} =
  \begin{cases}
    -1 &\text{ if } i=j,\\
    M/2  &\text{ if } \{i,j\}\in E,\\
    0 &\text{ otherwise.}
  \end{cases}
\end{flalign}
In the definition of $Q_{ij}$, we consider all ordered pairs $i,j$ to obtain a symmetric matrix.
\label{q:17}%
\revIII{This formulation is equivalent to~\eqref{eq:MIS-ILP} for large
  enough $M$ (\eg, $M = |V| + 1$, see \zcref{app:big-M}).}\label{q:17text}
This yields the necessary number of
binary variables, as stated in the following lemma.

\begin{lemma}\label{lm:UDMIS-qubits}
  For a graph $G=(V, E)$, the \gls{UD-MIS} instance given by formulation~\eqref{eq:MIS-ILP}
  can be solved on a neutral-atom-based machine with $N \defeq |V|$ physical qubits. The
  respective \gls{QUBO} formulation \eqref{eq:UDMIS-QUBO} requires $N$
  logical qubits. Therefore, it can be solved on a quantum annealer with Pegasus
  topology using at most $24\lceil\frac{N+10}{12}\rceil \lceil\frac{N-2}{12}\rceil$ physical qubits
  or on a general gate-based \gls{QC} with $N$ physical qubits.
\end{lemma}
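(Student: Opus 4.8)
The plan is to obtain the statement as a direct corollary of the three qubit-count results already established for the individual workflows, since \cref{lm:UDMIS-qubits} merely tracks a single UD-MIS instance through each of the three approaches and reports the resulting physical-qubit requirement. The only substantive ingredient---that the QUBO in \eqref{eq:UDMIS-QUBO} faithfully encodes the MIS formulation \eqref{eq:MIS-ILP}---has already been supplied by the big-$M$ argument (equivalence for, \eg, $M = |V|+1$), so I would take the QUBO reformulation as given and reduce the proof to bookkeeping over logical and physical qubits.

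First I would fix the problem size: formulation \eqref{eq:MIS-ILP} uses exactly one binary variable $x_i$ per node $i \in V$, so the instance has $N \defeq |V|$ binary variables, and the QUBO \eqref{eq:UDMIS-QUBO} accordingly acts on $x \in \{0,1\}^{|V|}$; this identifies $N$ as the common size feeding all three hardware bounds and already yields the $N$ logical qubits claimed for the QUBO. For the neutral-atom machine I would invoke the first part of \cref{lm:Rydberg-qubits}, under which each of the $N$ nodes maps one-to-one onto an atom, so $N$ physical qubits suffice and the logical and physical counts coincide. For the quantum annealer I would apply \cref{lm:pegasus-qubits} to the QUBO on $N$ variables, whose problem graph embeds into a Pegasus graph on $24\lceil\frac{N+10}{12}\rceil\lceil\frac{N-2}{12}\rceil$ nodes, giving exactly the stated upper bound. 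Finally, for the gate-based device I would cite \cref{rem:QAOA-qubits}, by which a QUBO on $N$ variables needs $N$ physical qubits under \gls{QAOA}, transpilation affecting circuit quality but not the qubit count.

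I do not expect a genuine obstacle here; the only points requiring care are to apply all three results to the \emph{same} $N$ and to attribute each overhead correctly---the neutral-atom and gate-based bounds are exact one-to-one, whereas only the annealer incurs the quadratic Pegasus-embedding blow-up, reflecting its limited coupler connectivity. I would also make explicit the hypothesis inherited from \cref{lm:Rydberg-qubits}, namely that the UD-MIS graph be hardware-compliant (feasible grid placement and a unit-disk radius within the admissible range), which is implicitly assumed when the lemma asserts solvability on the neutral-atom machine.
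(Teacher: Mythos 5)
Your proposal is correct and matches the paper's own proof, which simply cites \cref{lm:Rydberg-qubits,lm:pegasus-qubits,rem:QAOA-qubits} and applies each to the same $N=|V|$; your elaboration of the bookkeeping and the explicit mention of the hardware-compliance hypothesis are faithful to what those cited results require.
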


\begin{proof}
  Follows from \cref{lm:Rydberg-qubits,lm:pegasus-qubits,rem:QAOA-qubits}.
\end{proof}

\subsection{\glsentrytext{MaxCut}}\label{sec:MaxCut}

Given an undirected graph~$G = (V, E, w)$, where $w_{ij} = w_{ji} \in \mathbb{R}_{\geq 0}$ denotes the weight of the edge $\{i,j\}\in E$, the \gls{MaxCut} problem consists of finding a partition of the node set~$V$ into two disjoint subsets that maximizes the total weight of the edges connecting nodes in different subsets.
This can be reformulated as the following \gls{ILP}:
follows.
\begin{align}
	\max \quad & \sum_{\{i,j\}\in E} e_{ij}w_{ij}, \label{eq:MaxCut-ILP}\\
	\subjecto\quad & e_{ij} \leq x_{i} + x_{j} \text{ for all } \{i,j\} \in E, \label{c:e1}\\
	& e_{ij} \leq 2 - (x_{i} + x_{j}) \text{ for all } \{i,j\}\in E,\label{c:e2}\\
	& x_{j}, e_{ij} \in \{0, 1\} \text{ \revIII{ for all }} \{i,j\}\in E. \nonumber
\end{align}
Here, variables $x_{j}$ denote which set in the partition vertex $j$
belongs to, and the constraints~\eqref{c:e1} and \eqref{c:e2} ensure that in an
optimal solution, the variable $e_{ij} = 1$ if and only if $\{i,j\}$ is in the
cut (\ie, exactly one of $x_{i}$ and $x_{j}$ equals $1$), and $0$ otherwise.
We assume that $i<j$ for all double-indexed variables~$e_{ij}$.

\label{q:18}%
\revIII{Based on the formulation given, \eg, in the textbook chapter of \citet{laurent1997a}}, the problem~\eqref{eq:MaxCut-ILP} can be posed as a \gls{QUBO}:
\begin{align}\label{eq:MaxCut-QUBO}
	\max_{x\in\{0,1\}^n} \sum_{\{i,j\}\in E} w_{ij}(x_i + x_j - 2x_i x_j) =
  -\min_{x\in\{0,1\}^{n}} x^{T} Q x,  \text{ where }
  Q_{ij} =
  \begin{cases}
    \sum\limits_{k:\{i,k\}\in E} (-w_{ik}) &\text{ if } i=j, \\
    w_{ij} &\text{ if } \{i,j\} \in E, \\
    0 &\text{ otherwise.}
  \end{cases}
\end{align}
Whenever $x_{i}=1$ and $x_{j}=0$ (or vice versa), the term
$w_{ij}(x_{i}+x_{j} - 2x_{i}x_{j})$ contributes $w_{ij}$ to the objective.
Otherwise, if $x_{i}=x_{j}$, the term's contribution will be zero.

\begin{lemma}\label{lm:MaxCut-QUBO}
  For a graph $G=(V, E, w)$, the \gls{QUBO} formulation of
  the \gls{MaxCut} problem \eqref{eq:MaxCut-QUBO}, requires $N \defeq |V|$ logical qubits. Therefore, it can be
  solved on a quantum annealer with Pegasus topology using at most
  $24\lceil\frac{N+10}{12}\rceil\lceil\frac{N-2}{12}\rceil$ physical qubits, on a neutral-atom-based device using $4N^2$ physical qubits, or on a general gate-based device
  using $N$ physical qubits.
\end{lemma}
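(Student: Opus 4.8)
The plan is to proceed exactly as in the proof of \cref{lm:UDMIS-qubits}: first pin down the number of logical qubits directly from the formulation, and then invoke the three hardware-specific results to obtain the physical-qubit counts for each device. The only point that genuinely requires attention—and where this lemma differs from the \gls{UD-MIS} case—is verifying that the \gls{QUBO} \eqref{eq:MaxCut-QUBO} really uses only $N=|V|$ binary variables, since the auxiliary edge variables $e_{ij}$ appearing in the \gls{ILP} \eqref{eq:MaxCut-ILP} have been eliminated in the reformulation.

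First I would confirm the variable count. The objective of \eqref{eq:MaxCut-QUBO} is $\sum_{\{i,j\}\in E} w_{ij}(x_i + x_j - 2x_i x_j)$, which depends solely on the vertex variables $x=(x_1,\dots,x_N)$ with $x_i\in\{0,1\}$ for each $i\in V$. The case analysis already supplied before the statement—that the summand contributes $w_{ij}$ precisely when $x_i\neq x_j$ and $0$ when $x_i=x_j$—shows that this quadratic form faithfully encodes the cut weight, so no slack or edge variables survive. Hence the instance is a genuine \gls{QUBO} of the form \eqref{eq:QUBO} with the symmetric matrix $Q$ displayed in \eqref{eq:MaxCut-QUBO}, using exactly $N=|V|$ binary variables, i.e.\ $N$ logical qubits.

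With the logical-qubit count fixed, the three physical-qubit bounds follow directly. The problem graph $G_Q$ has $N$ nodes, so by \cref{lm:pegasus-qubits} it embeds into a Pegasus graph using at most $24\lceil\frac{N+10}{12}\rceil\lceil\frac{N-2}{12}\rceil$ physical qubits on a quantum annealer. By the \gls{QUBO}-to-\gls{UD-MWIS} reduction of \cref{lm:Rydberg-qubits}, the same instance can be solved on a neutral-atom device using $4N^2$ physical qubits. Finally, \cref{rem:QAOA-qubits} provides the one-to-one correspondence between optimization variables and physical qubits on a gate-based device, yielding $N$ physical qubits.

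I do not anticipate a substantive obstacle: once the variable count is settled, the statement is a direct corollary of \cref{lm:Rydberg-qubits,lm:pegasus-qubits,rem:QAOA-qubits}. If anything, the most error-prone step is the bookkeeping in the first paragraph—ensuring that the passage from the \gls{ILP} \eqref{eq:MaxCut-ILP} to the \gls{QUBO} \eqref{eq:MaxCut-QUBO} did not silently retain or introduce additional binary variables, which (in contrast to \gls{TSP}-style formulations) it does not here.
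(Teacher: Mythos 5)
Your proposal is correct and follows essentially the same route as the paper, which proves the lemma as a direct consequence of \cref{lm:Rydberg-qubits,lm:pegasus-qubits} and Remark~\ref{rem:QAOA-qubits}. The only difference is that you make explicit the (correct) observation that the \gls{QUBO} \eqref{eq:MaxCut-QUBO} retains only the $N$ vertex variables and none of the auxiliary edge variables from \eqref{eq:MaxCut-ILP}, a point the paper leaves implicit.
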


\begin{proof}
 Follows from \cref{lm:Rydberg-qubits,lm:pegasus-qubits}, and Remark~\ref{rem:QAOA-qubits}.
\end{proof}

Note that for \gls{MaxCut}, the number of binary variables of the \gls{QUBO} is only
linear in the number of nodes, while for the \gls{ILP} it may be quadratic, making
this problem particularly suitable for \glspl{QC}.

\subsection{\glsentrytext{TSP}}\label{sec:TSP}

Given a complete undirected graph $G=(V,E,w)$, where $w_{ij} = w_{ji} \in \mathbb{R}_{\geq 0}$ denotes the weight of the edge $\{i,j\}\in E$ and $N \defeq |V|$ the number of nodes, the \gls{TSP} seeks for a shortest possible route in
terms of the cumulative edge weight that visits each vertex exactly once and
returns to the start node. On classical computers, the \gls{TSP} is typically solved by
sophisticated \gls{ILP} techniques, combined with heuristics to find a good
initial solution~\citep{applegate2006}. Different \gls{ILP} formulations
have been developed for this problem, which makes it convenient to use it as an
example to compare and contrast a few alternative formulations in the context of
quantum computing.

\paragraph{Dantzig-Fulkerson-Johnson formulation.}%

The classical formulation \citep{dantzig1954} used in most \gls{OR} textbooks
implies binary variables~$x_{ij}$, $i < j$, for all edges $\{i,j\} \in E$, indicating that an edge
is traversed by the tour:
\begin{align}
  \min & \sum_{i<j} c_{ij} x_{ij},\label{obj:DFJ}\tag{DFJ}\\
  \subjecto & \sum_{j: j>i} x_{ij} + \sum_{j: j<i}x_{ji} = 2 && \text{for all } i=1,\ldots,N,\label{DFJ:visits}\\
       & \sum_{\{i,j\} \in E(S)} x_{ij} \leq |S|-1 && \text{for all } S\subseteq V: 3 \le |S| \le \frac N 2, \label{DFJ:subtour}\\
       &x_{ij} \in \{0,1\}&& \text{for all } i<j,\nonumber
\end{align}
where $E(S)\defeq\bigl\{\{i,j\}\bigm| i,j \in S, i<j\bigr\}$ denotes a
set of all edges between nodes in $S \subseteq V$. There are $N$ degree
constraints \eqref{DFJ:visits}, and if $N > 5$, we also have subtour elimination
constraints \eqref{DFJ:subtour}. Note that inequalities~\eqref{DFJ:subtour} for subsets $S$
of size~$2$ is equivalent to the upper bound of the variables. In the original
formulation by \citet{dantzig1954} given above, we get redundant constraints if
$N$ is even, because for every set $S$ with $|S| = \frac N 2$ we have a
constraint corresponding to $S$ and a constraint corresponding to $V \setminus S$. If we
remove the redundant constraints, \eg, by only taking subsets of size
$\frac N 2$ containing node~1, we are left with
$2^{N-1} - 1 - N - \frac{N(N-1)}{2}$ subtour elimination constraints.

To create an equivalent \gls{QUBO} formulation, one needs to
incorporate constraints into the objective function as penalty terms, involving
binary variables only.
\revIII{As before, the \gls{QUBO} formulation \eqref{eq:TSP-QUBO} will be equivalent to \eqref{obj:DFJ} for large enough $M$, \eg, $M=\max c_{ij}+1$ (see \zcref{app:big-M} for details).}\label{q:19txt}
Each constraint from the family \eqref{DFJ:visits} induces a term of
the form $M (\sum_{j: j>i} x_{ij} + \sum_{j: j<i}x_{ji}-2)^2$, but no new variables. The subtour
elimination constraints~\eqref{DFJ:subtour} require slack variables in binary
representation. A constraint corresponding to subset size $|S|$ requires
$\log_{2}|S| \leq \log_{2} N$ new binary variables to represent the possible integer slack.
Therefore, the family of constraints~\eqref{DFJ:subtour} requires at most
$\bigl(2^{N-1} - 1 - N - \frac{N(N-1)}{2}\bigr)\log_{2}N$ additional binary
variables. More careful calculation presented in \zcref{app:TSP-logical-qubits}
yields the following result.
\begin{lemma}\label{lem:no_qubits_DFJ}\zlabel{lem:no_qubits_DFJ}
 The Dantzig-Fulkerson-Johnson \gls{ILP} can be reformulated as a \gls{QUBO} having
 \[n^{\text{DFJ}}\defeq \frac{N(N-1)}{2} + \sum_{k=3}^{\lceil N/2 \rceil-1} \binom N k \cdot \bigl\lfloor \log_2(k-1) + 1 \bigr\rfloor + \frac{1+(-1)^N}{4} \cdot \binom{N}{\lfloor N/2 \rfloor} \cdot \bigl\lfloor \log_2(N/2-1) + 1 \bigr\rfloor\]
 binary variables. Therefore, it requires $n^{DFJ}$ logical qubits and can be
 solved on a neutral-atom-based \gls{QC} with $4(n^{DFJ})^{2}$ physical
 qubits, a quantum annealer with
 $24\lceil\frac{n^{DFJ}+10}{12}\rceil\lceil\frac{n^{DFJ}-2}{12}\rceil$ physical qubits,
 or on a general gate-based \gls{QC} with $n^{DFJ}$ physical qubits.
\end{lemma}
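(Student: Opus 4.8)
The plan is to carry out a direct variable-counting argument. The \gls{QUBO} is obtained from \eqref{obj:DFJ} by retaining all original binary variables, turning each equality constraint into a squared penalty, and turning each inequality constraint into a squared penalty after padding it with a binary-encoded integer slack. So the final count splits cleanly into \emph{edge variables} plus \emph{slack variables}, and the whole lemma reduces to careful bookkeeping, followed by a routine substitution into the three earlier qubit-count results.

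First I would account for the edge variables. The formulation \eqref{obj:DFJ} uses one binary variable $x_{ij}$ for every edge $\{i,j\}$ with $i<j$ of the complete graph, so there are exactly $\binom{N}{2}=\frac{N(N-1)}{2}$ of them, and these carry over unchanged to the \gls{QUBO}; this is the first summand. Next I would handle the constraints. The degree constraints \eqref{DFJ:visits} are equalities, so each becomes a penalty of the form $M(\sum_{j>i}x_{ij}+\sum_{j<i}x_{ji}-2)^2$ that involves only existing variables and contributes \emph{no} new ones. The subtour-elimination constraints \eqref{DFJ:subtour}, on the other hand, are inequalities $\sum_{\{i,j\}\in E(S)}x_{ij}\le|S|-1$; to render one as an equality amenable to a quadratic penalty, one introduces a non-negative integer slack $s_S$ with $\sum_{\{i,j\}\in E(S)}x_{ij}+s_S=|S|-1$. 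The key observation is the exact range of this slack: since the left sum is $\ge 0$, the largest value $s_S$ must represent is $|S|-1$, so $s_S\in\{0,\dots,|S|-1\}$, and a binary expansion of it needs precisely $\lfloor\log_2(|S|-1)\rfloor+1=\lfloor\log_2(|S|-1)+1\rfloor$ bits. This refines the coarse bound $\log_2|S|\le\log_2 N$ used informally above.

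The delicate step, and the one I expect to be the main obstacle, is counting how many subtour constraints of each size actually survive after removing the redundancy already noted for \eqref{obj:DFJ}. Grouping the subsets $S$ with $3\le|S|\le N/2$ by cardinality $k$, and recalling that under the degree constraints a size-$k$ subset and its complement (of size $N-k$) yield equivalent constraints, I would argue that for every $k$ with $3\le k\le\lceil N/2\rceil-1$ all $\binom{N}{k}$ subsets are kept, since their complements have size $>N/2$ and thus fall outside the admissible range, so no double-counting occurs; whereas for $N$ even and $k=N/2$ the size-$N/2$ subsets pair off into complementary pairs, and only $\tfrac12\binom{N}{N/2}$ representatives remain. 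Encoding this even/odd dichotomy through the indicator $\frac{1+(-1)^N}{4}$ (which equals $\tfrac12$ for even $N$ and $0$ for odd $N$), and multiplying the number of surviving constraints of size $k$ by the per-constraint slack count $\lfloor\log_2(k-1)+1\rfloor$, produces exactly the second and third summands of $n^{\text{DFJ}}$. Adding the edge-variable count then completes the variable tally, with the full arithmetic (in particular the alignment of the summation limits $\lceil N/2\rceil-1$ with the two parity cases and the verification that $\binom{N}{N/2}=\binom{N}{\lfloor N/2\rfloor}$ when $N$ is even) deferred to \zcref{app:TSP-logical-qubits}.

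Finally, having established that the \gls{QUBO} has $n^{\text{DFJ}}$ binary variables and hence needs $n^{\text{DFJ}}$ logical qubits, the physical-qubit counts follow immediately by substituting $N\mapsto n^{\text{DFJ}}$ into \cref{lm:Rydberg-qubits} (yielding $4(n^{\text{DFJ}})^2$ on the neutral-atom device), \cref{lm:pegasus-qubits} (yielding $24\lceil\frac{n^{\text{DFJ}}+10}{12}\rceil\lceil\frac{n^{\text{DFJ}}-2}{12}\rceil$ on a Pegasus annealer), and \cref{rem:QAOA-qubits} (yielding $n^{\text{DFJ}}$ on a gate-based device), which completes the proof.
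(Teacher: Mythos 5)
Your proposal is correct and follows essentially the same route as the paper: the main text already sets up the decomposition into the $\binom{N}{2}$ edge variables, penalty terms with no new variables for the degree constraints, and binary slack variables for the subtour-elimination constraints, and the deferred ``more careful calculation'' is exactly the refined count by subset cardinality that you carry out (all $\binom{N}{k}$ subsets for $3\le k\le\lceil N/2\rceil-1$, half of the size-$N/2$ subsets for even $N$ via the complementary-pair redundancy), followed by the same substitution into \cref{lm:Rydberg-qubits}, \cref{lm:pegasus-qubits}, and \cref{rem:QAOA-qubits}. In particular, your identification of the exact slack range $\{0,\dots,|S|-1\}$, hence $\bigl\lfloor\log_2(|S|-1)+1\bigr\rfloor$ bits per constraint, is precisely the refinement that turns the paper's coarse $\log_2 N$ bound into the stated formula.
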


Finally, we remark that formulation~\eqref{obj:DFJ} is rarely passed completely
to a solver even as an integer program, but is instead used as basis for row
generation approaches. It might be possible to apply a similar approach,
generating penalty terms and slack variables on the fly, in an iterative
quantum-classical hybrid algorithm, which might constitute a potential
direction for further research.

\paragraph{Miller-Tucker-Zemlin formulation.}%

An alternative approach \citep{miller1960} introduces variables $u_j\in\mathbb{N}$ denoting the number of each node $j\in V$ in a tour (\eg, $u_5=7$ means that node $5$ is the seventh city in the tour). We also use variables $x_{ij}$ indicating whether edge $\{i,j\}$ is traversed. This time, however, we also have to keep track of the traversal direction, \ie, we have two different variables $x_{ij}$ and $x_{ji}$ for all $i,j \in V$ with $i \neq j$. Here, $x_{ij} = 1$ indicates that $j$ is visited directly after $i$. Since we assume that a tour starts at node $1$, for all $i \in V,\ j \in V \setminus \{1\}$, we must have $u_j \geq u_i +1$ if $x_{ij}=1$.
Note that such constraint would be violated by any subtour not passing through
node $1$. This yields:
\begin{align}
    \min & \sum_{i=1}^N \sum_{j:j\neq i} c_{ij}x_{ij},\label{obj:MTZ}\tag{MTZ}\\
    \subjecto & \sum_{j: j\neq i} x_{ij} = 1 && \text{for all } i=1,\ldots, N,\nonumber\\
    & \sum_{i: i\neq j} x_{ij} = 1 && \text{for all } j=1,\ldots, N,\nonumber\\
    & u_i - u_j + (N-1) x_{ij} \leq N-2 && \text{for all } 2\leq i, j\leq N,\ i \neq j, \label{eq:MTZse} \\
    &x_{ij}\in\{0,1\}&& \text{for all } 1\leq i, j\leq N,\ i \neq j,\nonumber \\
    &u_j \geq 2 && \text{for all } j=2,\ldots, N \nonumber,
\end{align}
where we assume the distance matrix to be symmetric: $c_{ij}=c_{ji}$. We have $N(N-1)$ binary variables and $(N-1)$ integer variables. However, now there is only a polynomial number of constraints, namely, $2N$ degree constraints and $(N-1)(N-2)$ constraints~\eqref{eq:MTZse} involving the new variables. In the above formulation we did not demand integrality of the $u_j$ variables because it is not necessary and for classical hardware, omitting this constraint may accelerate the solver. In the following transformation to a \gls{QUBO}, we will however assume that the $u_j\in\mathbb{N}$ and, \revIII{again, that $M$ is large enough, \eg, $M=2\max c_{ij}+1$ (see \zcref{app:big-M})}\label{q:20}.

\begin{lemma} \zlabel{lem:no_qubits_MTZ}\label{lem:no_qubits_MTZ} The Miller-Tucker-Zemlin integer program can
  be reformulated as a \gls{QUBO} with
  \[n^{MTZ}\defeq (N-1) \bigl((N-1) \cdot \lfloor \log_2(N-2) \rfloor + 2N - 3\bigr)\] binary
  variables. So, it requires $n^{MTZ}$ logical qubits and can be solved on a
  neutral-atom-based \gls{QC} with $4(n^{MTZ})^{2}$ physical qubits, a quantum
  annealer with $24\lceil\frac{n^{MTZ}+10}{12}\rceil\lceil\frac{n^{MTZ}-2}{12}\rceil$
  physical qubits, or a general gate-based \gls{QC} with $n^{MTZ}$ physical
  qubits.
\end{lemma}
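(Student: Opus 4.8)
The plan is to convert the \eqref{obj:MTZ} integer program into an equivalent QUBO term by term, count the binary variables each piece contributes, and then read the physical-qubit figures off the hardware results already established. I would first sort the variables of \eqref{obj:MTZ} into three groups. (i) The directed arc indicators $x_{ij}$, one for each ordered pair $i\neq j$, are already binary and pass into the QUBO unchanged, contributing $N(N-1)$ variables. (ii) Each order variable $u_j$, $j=2,\dots,N$, is integral with $u_j\in\{2,\dots,N\}$; the substitution $u_j=2+\sum_{k}2^{k}b_{j,k}$ encodes it with $\lfloor\log_2(N-2)\rfloor+1$ bits, i.e.\ $(N-1)\bigl(\lfloor\log_2(N-2)\rfloor+1\bigr)$ variables in total. (iii) The two degree-constraint families are equalities and enter the objective as squared penalties $M\bigl(\sum_{j\neq i}x_{ij}-1\bigr)^{2}$ (and the analogous column sums), introducing no new variables, whereas each of the $(N-1)(N-2)$ subtour inequalities \eqref{eq:MTZse} must be converted to an equality by a fresh nonnegative integer slack $s_{ij}$ that is itself binary-encoded.

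The step I expect to be the crux is pinning down the range of each slack $s_{ij}$, since this fixes its bit cost and hence the dominant term of the tally. Writing \eqref{eq:MTZse} as $u_i-u_j+(N-1)x_{ij}+s_{ij}=N-2$, I would extremize the left-hand expression over the admissible $u_i,u_j\in\{2,\dots,N\}$ and $x_{ij}\in\{0,1\}$: its smallest value is $2-N$ (attained at $u_i=2$, $u_j=N$, $x_{ij}=0$), so the largest slack ever required is $(N-2)-(2-N)=2N-4$ and $s_{ij}\in\{0,\dots,2N-4\}$ needs $\lfloor\log_2(2N-4)\rfloor+1$ bits. Two points demand care here: first, one must verify that the penalty $\bigl(u_i-u_j+(N-1)x_{ij}+s_{ij}-(N-2)\bigr)^{2}$ vanishes exactly on the constraint-satisfying assignments and is strictly positive otherwise; second, one rewrites $2N-4=2(N-2)$ so that the slack bit count is expressed through the same floor $\lfloor\log_2(N-2)\rfloor$ that governs the $u_j$, namely $\lfloor\log_2(N-2)\rfloor+2$ bits per slack.

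With the three tallies in hand, I would sum them, factor out $N-1$, and collect the $\lfloor\log_2(N-2)\rfloor$ terms against the constants; the coefficient of $\lfloor\log_2(N-2)\rfloor$ aggregates to $(N-1)^{2}$ --- one copy from each $u_j$ and one from each of the $N-2$ slacks carrying a given second index $j$ --- and the remaining algebra collapses to the stated $n^{MTZ}$. Finally, the physical-qubit claims follow verbatim from the earlier results, exactly as in \cref{lm:UDMIS-qubits,lm:MaxCut-QUBO,lem:no_qubits_DFJ}: \cref{lm:Rydberg-qubits} maps $n^{MTZ}$ logical qubits to $4(n^{MTZ})^{2}$ neutral-atom qubits, \cref{lm:pegasus-qubits} to $24\lceil\frac{n^{MTZ}+10}{12}\rceil\lceil\frac{n^{MTZ}-2}{12}\rceil$ annealer qubits, and \cref{rem:QAOA-qubits} to $n^{MTZ}$ gate-based qubits.
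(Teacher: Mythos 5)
Your construction follows the paper's proof almost line for line: binary-encode each $u_j$ as $2+\sum_r 2^r b_{j,r}$ with $\lfloor\log_2(N-2)\rfloor+1$ bits, turn the two degree-constraint families into squared penalties with no new variables, and give each inequality \eqref{eq:MTZse} an integer slack ranging over $\{0,\dots,2(N-2)\}$, which costs $\lfloor\log_2(N-2)\rfloor+2$ bits per constraint. Your extremization of $u_i-u_j+(N-1)x_{ij}$ reproduces exactly the slack range the paper states, and the hardware conclusions are read off the same three results (\cref{lm:Rydberg-qubits,lm:pegasus-qubits,rem:QAOA-qubits}).

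The gap is in the final tally. You include the $N(N-1)$ arc variables $x_{ij}$ as group (i) and then assert that the sum of the three groups ``collapses to the stated $n^{MTZ}$.'' It does not: $N(N-1)+(N-1)\bigl(\lfloor\log_2(N-2)\rfloor+1\bigr)+(N-1)(N-2)\bigl(\lfloor\log_2(N-2)\rfloor+2\bigr)=(N-1)\bigl((N-1)\lfloor\log_2(N-2)\rfloor+3(N-1)\bigr)=n^{MTZ}+N(N-1)$; the constant inside the parentheses is $3N-3$, not $2N-3$. The coefficient of $\lfloor\log_2(N-2)\rfloor$ does aggregate to $(N-1)^2$ as you say, but the constant part does not close. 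The paper's own proof arrives at exactly $n^{MTZ}$ because its displayed sum counts only the \emph{newly introduced} binary variables --- the encoding bits of the $u_j$ and the slack bits --- and never adds the $x_{ij}$. So either you must drop group (i) from the count to match the stated formula (reading $n^{MTZ}$ as the number of auxiliary variables, which is what the paper's arithmetic actually computes), or, if the $x_{ij}$ are to be counted as your reading of ``a QUBO with $n^{MTZ}$ binary variables'' reasonably suggests, the total is $n^{MTZ}+N(N-1)$ and your claimed simplification is an arithmetic error. As written, the concluding step of your argument is false and the derivation does not establish the lemma's formula.
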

\begin{proof}
  The first two families of constraints give us $2N$ penalty terms in the
  objective. Each integer variable $u_j$ must be represented by binary
  variables. In this case we can replace $u_j$ by
  $2 + \sum_{r=0}^{\lfloor \log(N-2) \rfloor} 2^r b_{j,r}$, introducing $\lfloor \log(N-2) \rfloor + 1$
  binary variables~$b_{j,r}$. The slack in constraint~\eqref{eq:MTZse} can take values from
  $0$ (if $j$ is the successor of $i$) to $2(N-2)$ (if $u_i = 2$, $u_j = N$). To
  accommodate this range, we need $\lfloor \log_2(N-2)\rfloor + 2$ binary variables. Since
  we have this for every pair of $i,j \in \{2,\dotsc,N\}$ with $i \neq j$, the total
  number of such variables is $(N-1) \cdot (N-2) \cdot (\lfloor \log_2(N-2)\rfloor + 2)$. Therefore,
  the original \gls{ILP} can be transformed into a \gls{QUBO} model with
  $2N+(N-1)(N-2)$ penalty terms, involving
 \[(N-1) \cdot (\lfloor \log_2(N-2) \rfloor + 1) + (N-1) \cdot (N-2) \cdot (\lfloor \log_2(N-2) \rfloor + 2) = (N-1) \bigl((N-1) \lfloor \log_2(N-2) \rfloor + 2N - 3\bigr)\]
 binary variables.
\end{proof}

\paragraph{Quadratic assignment formulation.}

While these \gls{ILP} approaches are very successful on classical computers, a
direct translation of the formulations to \glspl{QUBO} results in a large number
of variables and thus of physical qubits required. In quantum computing, it
makes sense to move away from \gls{ILP}-based approaches and consider alternative
strategies. This is well illustrated by the \gls{TSP}, which actually has a very
natural and compact formulation as a quadratic program, more specifically a
quadratic assignment problem \citep{lawler1963,garfinkel85}. Let us introduce binary variables $y_{ik}$, which
equal one if and only if node $i$ is visited during step $k$ in the tour. Since
we are interested only in tours that return to the original location, assume
without loss of generality that $y_{11}=1$. These ideas yield the following
formulation:
\begin{align}
  \min & \sum_{j=2}^N c_{1j} y_{j2} + \sum_{i=2}^N \bigg(\sum_{\substack{j=2\\j\neq i}}^N c_{ij} \sum_{k=2}^{N-1} y_{ik}y_{j(k+1)} + c_{1i} y_{iN}\bigg),\label{obj:TSP-QUBO}\tag{QAP}\\
  \subjecto &\sum_{k=2}^{N} y_{ik} = 1\quad \textrm{ for all } i=2,\ldots, N.\label{eq:singleMoment}\\
       &\sum_{i=2}^{N} y_{ik} = 1\quad \textrm{ for all } k=2,\ldots, N. \label{eq:singleCity}\\
    &y_{ik}\in\{0,1\},\quad \text{ for all } i=2,\ldots, N \textrm{ and } k=2,\ldots, N \nonumber
\end{align}
Each coefficient $c_{ij}$ encodes the cost of edge $\{i,j\}\in E$. Whenever nodes $i$ and $j$ appear as consecutive steps in the tour, $y_{ik}=y_{j(k+1)}=1$ for some moment $k$, which contributes $c_{ij}$ to the objective. The sums in the first and the last terms in~\eqref{obj:TSP-QUBO} implement the same logic for the first step in the tour (from node~$1$ to $j$), and the last one (returning from node~$i$ to $1$), respectively. Constraints~\eqref{eq:singleMoment} and~\eqref{eq:singleCity} ensure that a solution represents a permutation of cities. It yields the following \gls{QUBO}:
\begin{flalign}
  \min & \sum_{j=2}^N c_{1j} y_{j2} + \sum_{i=2}^N \bigg(\sum_{\substack{j=2\\j\neq i}}^N c_{ij} \sum_{k=2}^{N-1} y_{ik}y_{j(k+1)} + c_{1i} y_{iN}\bigg) +  M\sum_{i=2}^{N}\bigg(\sum_{k=2}^{N} y_{ik} - 1\bigg)^{2} + M  \sum_{k=2}^{N}\bigg(\sum_{i=2}^{N} y_{ik} - 1\bigg)^{2}\label{eq:TSP-QUBO} \\
       &y_{ik}\in\{0,1\},\quad i=2,\ldots, N \textrm{ and } k=2,\ldots, N. \nonumber
\end{flalign}
\revIII{As before, this \gls{QUBO} problem is equivalent to the original
  formulation in the sense that optimal solutions coincide, if the penalty
  coefficient $M$ is chosen to be large enough, \eg,
  $M=\frac{N(N-1)}{2}\max c_{ij}+1$ (see \zcref{app:big-M}).}\label{q:21txt}
  Formulation~\eqref{obj:TSP-QUBO} involves $(N-1)^2$ binary variables, $2(N-1)$
  linear equality constraints, and a quadratic objective with $(N-1)((N-2)^2+2)$
  terms, which we summarize in the following result.
\begin{lemma}\label{lm:TSP-qubits}
  \gls{TSP} over a complete graph $G=(V, E)$ can be formulated as a \gls{QUBO}
  with $(N-1)^{2}$ binary variables. Therefore, it requires $(N-1)^{2}$ logical
  qubits and can be solved on a neutral-atom-based \gls{QC} with $4(N-1)^{4}$
  physical qubits, a quantum annealer with
  $24\lceil\frac{(N-1)^{2}+10}{12}\rceil\lceil\frac{(N-1)^{2}-2}{12}\rceil$ physical qubits, or a
  general gate-based \gls{QC} with $(N-1)^{2}$ physical qubits.
\end{lemma}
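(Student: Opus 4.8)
The plan is to reduce the statement to a single variable count for the quadratic assignment formulation~\eqref{obj:TSP-QUBO} and then invoke the three device-specific results already established, namely \cref{lm:Rydberg-qubits,lm:pegasus-qubits,rem:QAOA-qubits}. The decisive observation is that, in contrast to the Dantzig-Fulkerson-Johnson and Miller-Tucker-Zemlin reformulations of \cref{lem:no_qubits_DFJ,lem:no_qubits_MTZ}, where inequality constraints forced the introduction of slack variables in binary representation, the quadratic assignment formulation has \emph{only} the assignment equalities~\eqref{eq:singleMoment} and~\eqref{eq:singleCity}. These are incorporated into the objective as the squared penalty terms of~\eqref{eq:TSP-QUBO} without introducing any auxiliary binary variables, so the entire variable set is exhausted by the decision variables $y_{ik}$.

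First I would count these directly: the indices run over $i\in\{2,\ldots,N\}$ and $k\in\{2,\ldots,N\}$, where node~$1$ is fixed to step~$1$ by the without-loss-of-generality assumption $y_{11}=1$, giving exactly $(N-1)(N-1)=(N-1)^2$ binary variables. I would also point out that the discussion preceding the statement already certifies that~\eqref{eq:TSP-QUBO} is a genuine QUBO equivalent to~\eqref{obj:TSP-QUBO} for sufficiently large penalty $M$ (e.g. $M=\tfrac{N(N-1)}{2}\max c_{ij}+1$, deferred to \zcref{app:big-M}), so that these $(N-1)^2$ variables indeed suffice and the number of logical qubits equals $(N-1)^2$.

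It then remains to substitute $n=(N-1)^2$ into the three established qubit-count formulas, which is purely mechanical. \cref{lm:Rydberg-qubits} yields a UD-MWIS encoding with at most $4\bigl((N-1)^2\bigr)^2=4(N-1)^4$ nodes, hence $4(N-1)^4$ physical qubits on a neutral-atom device; \cref{lm:pegasus-qubits} yields $24\bigl\lceil\tfrac{(N-1)^2+10}{12}\bigr\rceil\bigl\lceil\tfrac{(N-1)^2-2}{12}\bigr\rceil$ physical qubits on a Pegasus-topology annealer; and \cref{rem:QAOA-qubits} yields $(N-1)^2$ physical qubits on a gate-based device.

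The main obstacle, such as it is, lies not in any substitution but in justifying the variable count, i.e. in confirming that the equality-constraint structure of the quadratic assignment formulation genuinely avoids the slack-variable blow-up seen in the two \gls{ILP}-based formulations. The key point to make explicit is that penalizing an equality constraint $\sum_k y_{ik}=1$ via $M\bigl(\sum_k y_{ik}-1\bigr)^2$ is quadratic in the \emph{existing} variables and introduces nothing new, whereas penalizing an inequality requires encoding an integer slack in binary. Once this is stated, the lemma follows immediately from the three cited results.
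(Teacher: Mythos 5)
Your proposal is correct and matches the paper's own (essentially implicit) argument: the paper likewise counts the $(N-1)^2$ variables $y_{ik}$ of the quadratic assignment formulation, notes that the equality constraints become quadratic penalty terms requiring no auxiliary variables, and then reads off the physical-qubit counts from \cref{lm:Rydberg-qubits}, \cref{lm:pegasus-qubits}, and \cref{rem:QAOA-qubits}. Your added emphasis on why equality constraints avoid the slack-variable blow-up of the DFJ and MTZ formulations is a correct and useful elaboration, but not a different route.
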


\begin{table}[!ht]
{\newcolumntype{Y}{>{\centering\arraybackslash}X}
  \setlength{\abovedisplayskip}{0pt}
  \setlength{\belowdisplayskip}{0pt}
  \setlength{\abovedisplayshortskip}{0pt}
  \setlength{\belowdisplayshortskip}{0pt}
\begin{threeparttable}
   \setlength{\extrarowheight}{0pt}
   \caption{\label{tab:formulations} Number of variables for a \gls{TSP} instance
     with $N$ nodes, depending on the chosen \gls{QUBO} formulation. The corresponding number of required qubits is provided by \cref{lm:TSP-qubits}.}
  \begin{tabularx}{\linewidth}{lYYY}
    \toprule
    & Dantzig-Fulkerson-Johnson \cref{obj:DFJ} & Miller-Tucker-Zemlin \cref{obj:MTZ} & Quadratic assignment \cref{obj:TSP-QUBO}\\
    \midrule
    Constraints: &&&\\
    \quad{}equality & $N$ & $2N$ & $2(N-1)$\\
    \quad{}inequality & $2^{N-1}-1-\frac{N(N+1)}{2}$ & $(N-1)(N-2)$& ---\\[1ex]
    Variables: &&&\\
    \quad{}binary & $N(N-1)/2$ & $N(N-1)$ & $(N-1)^{2}$\\
    \quad{}integer & --- & $N-1$ & ---\\
    \midrule
    \multicolumn{4}{l}{\textbf{QUBO:} the number of variables required}\\
     \quad{}per binary & 1 & 1 & 1 \\
     \quad{}per integer\tnote{(a)} & $\leq \log_2 N$ & $\leq \log_2 N$ & --- \\
    \quad{}total & $O(2^{N}\log_{2}N)$ & $O(N^{2}\log_{2}N)$ & $O(N^2)$\\
     \quad{}exact formula & see \cref{lem:no_qubits_DFJ} & see \cref{lem:no_qubits_MTZ} & see \cref{lm:TSP-qubits}\\
    \midrule
    \multicolumn{4}{l}{\emph{\textbf{Example:} a \gls{TSP} on $N=10$ nodes.}}\\
    Constraints: &&&\\
    \quad{}equality & $\num{10}$ & $\num{20}$ & $\num{18}$\\
    \quad{}inequality & $\num{456}$ & $\num{72}$& ---\\[1ex]
    Variables: &&&\\
    \quad{}binary & $\num{45}$ & $\num{90}$ & $\num{81}$\\
    \quad{}integer & $\num{456}$ & $\num{9}$ & ---\\
    \multicolumn{4}{l}{\textbf{QUBO:} the number of variables required}\\
     \quad{}per binary & 1 & 1 & 1 \\
     \quad{}per integer & $\leq 4$ & $\leq 4$ & --- \\
    \quad{}total & $\sim \num{1100}$ \tnote{(b)} & $\sim \num{400}$ & $\num{81}$\\
    \bottomrule
  \end{tabularx}
  \begin{tablenotes}
    \item[a] This includes slack variables (one per inequality constraint).
    \item[b] The value of $456\times4 + 45=\num{1869}$ is an upper bound, but we
    used a refined estimate based on \cref{lem:no_qubits_DFJ}.
  \end{tablenotes}
\end{threeparttable}}
\end{table}

Although less widespread in
\gls{OR} literature, these three formulations yield \gls{QUBO} instances of significantly different sizes for the same problem, leading to different qubit requirements in a quantum computing context. A summary is provided in \cref{tab:formulations},
where we list the general expressions in terms of the number of nodes
$N=|V|$, and illustrative numerical values for $N=10$. 
For example, a naive,
full implementation of the formulation~\eqref{obj:DFJ} requires the fewest binary variables, but
the exponential number of inequality constraints contributes to an exponential
number of variables in the \gls{QUBO} instance, which translates to more than
a thousand for $N=10$ nodes. Furthermore, the formulation~\eqref{obj:MTZ} reduces the number
of inequality constraints and consequently brings the number of variables in the \gls{QUBO}
instance down to around \num{400}. Finally, the formulation~\eqref{obj:TSP-QUBO}
does require slack variables at all, resulting in a relatively
compact \gls{QUBO} instance with only \num{81} variables.

\subsection{Summary}\label{sec:qubits-summary}

The problem classes we consider in \cref{sec:UD-MIS,sec:MaxCut,sec:TSP} scale differently in terms of the number of binary variables with respect to the graph size. Depending on the chosen quantum-powered optimization approach from \cref{sec:quantum workflows}, the number of binary variables can be associated with a number of physical qubits that are necessary to run the problem.
Upper bounds on the numbers of physical qubits are provided
by~\cref{lm:Rydberg-qubits,lm:pegasus-qubits,rem:QAOA-qubits}. These bounds are not necessarily tight and depend on
properties of the \gls{QUBO} matrix~$Q$, such as sparsity. A more detailed discussion is provided in \zcref{app:instances}.
An overview of the qubit requirements
is presented in \cref{tab:qubits-embeddings}.

In summary, the qubit requirements are significant for all three problem classes, considering the capacities of currently available quantum devices.
Especially for \gls{TSP}, the quadratic scaling leads to very demanding requirements. The situation is better
for \gls{MaxCut} and \gls{UD-MIS}, which both scale only linearly.
For \gls{QuEra-OPT}, both \gls{TSP} and \gls{MaxCut} lead to prohibitively high qubit requirements.
On the other hand, we
were able to solve fairly large \gls{UD-MIS} instances on the neutral-atom-based device, as we will present in the next section.

Overall, given the necessities of the device-dependent setups and the current state of the technology, we see \gls{D-Wave-OPT} as the most practice-ready approach of the three. Similarly, \gls{IBM-OPT} represents a highly promising research direction with great versatility, which poses relatively modest qubit requirements. However, the available hardware devices can only be used to solve problems of a very limited scale. While \Gls{QuEra-OPT} is a feasible approach for \gls{UD-MIS}, its applicability to other problem classes requires a large number of physical qubits.

Beyond the mere qubit requirements, there are other factors that have to be taken into account when choosing a suitable quantum-powered optimization approach. In the next section, we will therefore conduct numerical experiments to evaluate some practical aspects of quantum optimization.

\afterpage{%
\begin{landscape}
  \begin{table}
 \begin{threeparttable}
   \caption{\label{tab:qubits-embeddings}Number of the binary variables and number of physical qubits necessary, depending on the quantum-powered approach and problem class.}
 \begin{tabularx}{\linewidth}{XX|XXX}
   \toprule
   \multicolumn{2}{c}{\textbf{Number of variables}} & \multicolumn{3}{c}{\textbf{Required number of physical qubits}} \\
   Original formulation & \gls{QUBO} & \gls{QuEra-OPT} (\cref{sec:rydberg}) & \gls{D-Wave-OPT} (\cref{sec:QA}) & \gls{IBM-OPT} (\cref{sec:QAOA})\\
   && {\footnotesize \gls{UD-MIS} representation, \citep{nguyen2023}} & {\footnotesize clique embedding (\cref{lm:pegasus-qubits})} & {\footnotesize no reformulation\tnote{(a)}}\\
   \midrule
\multicolumn{5}{l}{\emph{\gls{MIS} (\cref{sec:UD-MIS}) over a graph \revII{with $N$ vertices (examples are given for $N=10$)}, ILP formulation \cref{eq:MIS-ILP}:}}\\[0.5ex]
\begin{minipage}{\linewidth}
	\begin{itemize}\itemsep0pt
		\item $N$ binary variables (\eg: 10)
		\item up to $N(N-1)/2$ inequality constraints (\eg: 45)
	\end{itemize}
\end{minipage} &
\begin{minipage}{\linewidth}
	$N$ binary variables, see Lemma~\ref{lm:UDMIS-qubits}.\par
	(\eg: $10$)
\end{minipage}  & \begin{minipage}{\linewidth}
	Arbitrary graph: $4N^2$\\ (\eg: $\sim 400$ qubits)\vspace{\baselineskip}\par
	Unit disk graph: $N$\par (\eg: $10$ qubits)
\end{minipage}
& $24\lceil\frac{N+10}{12}\rceil\lceil\frac{N-2}{12}\rceil$\par (\eg: $\sim 50$ qubits)
& $N$\par (\eg: 10 qubits) \\
   \midrule
   \multicolumn{5}{l}{\emph{\gls{MaxCut} (\cref{sec:MaxCut}) over a complete graph \revII{with $N$ vertices (examples are given for $N=10$)}, ILP formulation \cref{eq:MaxCut-ILP}:}}\\[0.5ex]
   \begin{minipage}{\linewidth}
     \begin{itemize}\itemsep0pt
       \item $N(N+1)/2$ binary variables (\eg: 55)
       \item $N(N-1)$ inequality constraints (\eg: 90)
     \end{itemize}
   \end{minipage} \vspace{0.25\baselineskip}&
   \begin{minipage}{\linewidth}
     $N$ binary variables, see Lemma~\ref{lm:MaxCut-QUBO}.\par
     (\eg: $10$)
   \end{minipage} & $4N^2$ \par (\eg: $\sim 400$ qubits)
                  & $24\lceil\frac{N+10}{12}\rceil\lceil\frac{N-2}{12}\rceil$\par (\eg: $\sim 50$ qubits)
                  & $N$\par (\eg: 10 qubits) \\[3ex]
   \midrule
\multicolumn{5}{l}{\emph{\gls{TSP} (\cref{sec:TSP}) over a complete graph \revII{with $N$ vertices (examples are given for $N=10$)}. Quadratic assignment formulation \cref{obj:TSP-QUBO}:}}\\[0.5ex]
\begin{minipage}{\linewidth}
	\begin{itemize}\itemsep0pt
		\item $(N-1)^{2}$ binary variables (\eg: 81)
		\item $2(N-1)$ equality constraints (\eg: 18)
	\end{itemize}
\end{minipage}\vspace{0.25\baselineskip} &
\begin{minipage}{\linewidth}
	$(N-1)^{2}$ binary variables, constraints as penalties, see Lemma~\ref{lm:TSP-qubits}.\par
	(\eg: $81$)
\end{minipage} & $4(N-1)^4$\par (\eg: $\sim \num{26000}$ qubits)
& $24\lceil\frac{(N-1)^2 + 10}{12}\rceil\lceil\frac{(N-1)^2-2}{12}\rceil$\par (\eg: $\sim \num{1350}$ qubits)
& $(N-1)^{2}$\par (\eg: 81 qubits) \\
   \midrule
   \multicolumn{2}{l}{\emph{For reference: number of qubits for largest \gls{QC} available}} & $\num{256}$ & $\num{5000}+$& $\num{1000}+$\tnote{(b)}\\
   \bottomrule
 \end{tabularx}
 \begin{tablenotes}
  \item[a] \gls{IBM} gate-based \gls{QC} requires device dependent
  \emph{transpilation} to ensure the necessary qubit connectivity, which does not require additional qubits.
  \item[b] \citet{castelvecchi2023}
 \end{tablenotes}
\end{threeparttable}
\end{table}
\end{landscape}
}  

\section[Numerical results]{Numerical experiments}\label{sec:use cases}

The primary goal of our numerical experiments is not to compare the performance of state-of-the-art classical and quantum optimizers, but rather to illustrate the ``out-of-the-box experience'' with various quantum optimization strategies, and to discuss the associated challenges and opportunities. Informally, this can be seen as a comparison of the user experience \emph{assuming default parameters} across the different approaches. We begin by describing the experimental setup in \cref{sec:setup}, followed by a presentation of the results in \cref{sec:results}. Since the number of considered instances is relatively small (particularly for \gls{IBM-OPT}), our findings should be considered illustrative.

\subsection{Experimental setup}\label{sec:setup}\zlabel{sec:setup}
We compare five optimization approaches across three problem classes, as summarized in \cref{tab:num-approaches}.

\paragraph{Problem instances.} %
We consider the following randomized procedures to generate instances for the three problem classes. More details can be found in \zcref{app:instances}.

\begin{itemize}
\item \textbf{\gls{UD-MIS}:} To generate hardware-compliant instances for \emph{Aquila}, we construct unit disk graphs effectively by sampling the nodes from a grid randomly. (Graph connectivity is then determined by unit disk radius.)
\item \textbf{\gls{MaxCut}:} Instances are generated using the Erd\H{o}s-R\'enyi random graph model~\citep{erdos1959} with randomly selected edges and uniformly random edge weights within specified bounds.
\item \textbf{\gls{TSP}:} We select instances from the TSPLIB~\citep{reinelt1991} and generate smaller \gls{TSP} instances from them by randomly sampling subsets of nodes while preserving edge weights. Each sampled subset forms a complete subgraph, retaining the original distance structure.
\end{itemize}

\paragraph{Optimization approaches.} We consider three types of optimization
approaches:

%
\begin{itemize}
  \item \textbf{Quantum-powered:} The three quantum-powered approaches
  		discussed in \cref{sec:quantum workflows}.
  \item \textbf{Quantum-simulated:} In addition to the \gls{IBM-OPT} approach, we also consider 
  		a simulated variant running entirely on classical hardware.
        To that end, the
        \gls{IBM} \gls{QPU} is replaced by the cloud-based \emph{QASM Simulator} from \gls{IBM}~\citep{javadiabhari2024}.
        We configure the simulator in such a way that it performs a noise-free, idealized simulation of a 32-qubit quantum device.\footnote{The simulator does not model physical noise effects, but it does simulate \emph{shot noise} by performing \num{1000} shots per iteration, thereby introducing a certain level of (pseudo-)randomness.}
        We refer to this approach
        as \gls{IBM-SIM-OPT} in the following.
        \item \textbf{Baseline:} We use the commercial solver by
        \gls{Gurobi} \citep{gurobi} with default parameters.
        The time limit is set to
        5~minutes per instance, or beyond that time until the optimality gap of
        5\% is reached, but no more than 20~minutes.\label{q:baseline-choice}
        \revII{Our choice of models was driven by the aim to take a small
        integer model that would seem as a natural first choice from an \gls{OR}
        perspective. We took this approach for \gls{UD-MIS} and \gls{TSP}, but not
        for the \gls{MaxCut} problem, because the preliminary experiments proved
        that for this problem the quadratic
        formulation could be solved faster by \gls{Gurobi} than the simple \gls{ILP}.         
        Further, note that we are choosing an exact solution approach as a baseline, because our goal is not to compare the state-of-the-art heuristics, but to capture some simple characteristic of instance ``hardness'' and highlight a group of instances that may warrant further investigation.
        We briefly revisit this issue further. See \zcref{app:baselines} for more details.}
\end{itemize}

\label{q:netw}%
\revII{%
For the quantum-powered optimization approaches, we make use of the \glspl{QC} by using the cloud-based services of the respective hardware providers.
To that end, for each use of the \gls{QC}, the corresponding quantum computing task (comprising high-level device instructions) is sent to a service platform, where it is placed in a scheduler queue.
This scheduler is necessary because we rely on publicly available services, which may be accessed by multiple users concurrently.
As a result, significant delays may occur in completing a quantum computing task, depending on hardware utilization at the time of the request.
Especially for \gls{IBM-OPT}, these delays can become significant due to the hybrid nature of the approach, which involves multiple iterative quantum computing tasks.
Although the scheduler can account for hybrid algorithms by adjusting task prioritization, this is only partially effective.
No guaranteed response times were provided on any of the platforms during our experiments.
}

\begin{table}[t]
 	\centering	
 	\caption{Each optimization approach is used to solve a selection of instances across problem classes.}\label{tab:num-approaches}
 	\begin{tabular}{ccc}
 		\toprule
 		Approach          & Type              & Considered problem classes and formulations \\
 		\midrule
 		\gls{QuEra-OPT}   & quantum-powered   & \acrshort{UD-MIS} (analog / native) \\
 		\gls{D-Wave-OPT}  & quantum-powered   & \acrshort{UD-MIS} \eqref{eq:UDMIS-QUBO}, \acrshort{MaxCut} \eqref{eq:MaxCut-QUBO}, \acrshort{TSP} \eqref{eq:TSP-QUBO} \\
 		\gls{IBM-OPT}     & quantum-powered   & \acrshort{UD-MIS} \eqref{eq:UDMIS-QUBO}, \acrshort{MaxCut} \eqref{eq:MaxCut-QUBO}, \acrshort{TSP} \eqref{eq:TSP-QUBO} \\
 		\gls{IBM-SIM-OPT} & quantum-simulated & \acrshort{UD-MIS} \eqref{eq:UDMIS-QUBO}, \acrshort{MaxCut} \eqref{eq:MaxCut-QUBO}, \acrshort{TSP} \eqref{eq:TSP-QUBO} \\
		\gls{Gurobi}     & classical         & \acrshort{UD-MIS} \eqref{eq:MIS-ILP}, \acrshort{MaxCut} \eqref{eq:MaxCut-QUBO}, \acrshort{TSP} \eqref{obj:MTZ} \\
 		\bottomrule
 	\end{tabular}
\end{table}

\subsection[Solutions]{Results}\label{sec:results}\zlabel{sec:results}
The evaluation of our numerical experiments is focused on two key
characteristics for the end user: end-to-end algorithm runtimes and the
resulting solution quality. In the following, we refer to each attempt to solve
a single instance with a specific approach as a \emph{run}. Note that for
\gls{QuEra-OPT} and \gls{D-Wave-OPT}, a run implies aggregation over a sequence of
shots obtained from the respective \glspl{QC}, which yields a single
objective value (using the best value across all samples). 
Furthermore, a run for \gls{IBM-OPT} implies a sequence of quantum-classical iterations, where each quantum iteration involves a sequence of shots, which are used for intermediate calculations and the resulting objective value (again, using the best value across all final samples).
The runtime includes both the
necessary configuration steps performed on a classical computer and the actual
\gls{QC} runtime (total of all shots), but does not include the classical
procedure of the problem reformulation to \gls{UD-MIS} (for \gls{QuEra-OPT}) or to
\gls{QUBO} (for \gls{D-Wave-OPT} and \gls{IBM-OPT}).

For the evaluation presented here, we consider a set of runs where each solution approach attempts to solve each problem instance across all three problem classes exactly once (see also \zcref{app:runs summary}).

\paragraph{Scope.}
The scope of the numerical experiments is visualized in \cref{fig:inst sizes}.
The height of each bar indicates the number of instances of a certain size (horizontal axis) of the respective problem class (in columns) that is to be solved with a specific optimization approach (in rows). In other words, each counted instance represents a run.
\label{q:scope}
The three colors indicate the \revIII{nature} of
the optimization result for each run:
\revII{%
A feasible, though not necessarily optimal, solution
(``success''), an infeasible solution (``infeasible''), or no result at all due to a technical reason (``fail''),
for example because of an early termination due to a timeout or a network error.
In contrast, some runs were not started at all, because the waiting time in the queue exceeded the time frame planned for this study. These non-started runs are not further considered and do not contribute to \cref{fig:inst sizes}.
}%
Selected example for solutions from the quantum-powered approaches are
presented in \zcref{app:sample-outputs}.

\label{q:14}
\revIII{%
	Our original aim was to consider a similar number of instances per problem size across approaches and problem classes. However, the results obtained are not fully balanced in this regard for various reasons.
	First, we did not attempt to solve \gls{MaxCut} and \gls{TSP} instances with the \gls{QuEra-OPT} approach due to the infeasible scaling of their \gls{UD-MIS} reformulations.
	Second, \gls{IBM-SIM-OPT} supports only instances with up to \num{32} binary variables.
	Finally, some minor irregularities in the number of solved instances are attributable to technical and organizational constraints, such as varying queue times and limited computational resources available in the scope of this study. This has primarily an effect on \gls{IBM-OPT} because of the hybrid quantum-classical algorithm, which requires a reoccurring availability of the quantum hardware over sufficiently many iterations, an elaborate overall task. For some instances, the waiting time in the queue exceeded the time frame planned for this study.
}%

We find that the success rates vary significantly between the different approaches.
For the \gls{QuEra-OPT} approach, we only consider \gls{UD-MIS} instances, but achieve a total success rate of $100\%$.
For the \gls{D-Wave-OPT} approach, the total success rate is $75\%$. Smaller instances have a higher success rate than larger instances, where minor embeddings become a significant bottleneck. We will revisit this topic further below.
For the \gls{IBM-OPT} approach, the total success rate is only $21\%$.\footnote{These results do not necessarily characterize the theoretical performance of variational algorithms
	on \gls{IBM} hardware. However, we believe it is a good illustration of the
	point that \gls{QAOA}-based approaches require significantly more work beyond a
	naive implementation in order to achieve meaningful results.}
Among the key
practical reasons for so few instances solved are network connectivity issues
and significantly longer queuing times in comparison with the other approaches.
Most of the instances solved using the \gls{IBM-OPT} approach belong to the \gls{UD-MIS} problem class, which is known to have sparser \gls{QUBO} matrices, see \zcref{app:instances}.
Finally, for the \gls{IBM-SIM-OPT} approach, the total success rate is $65\%$.
The reason why this success rate is higher than for \gls{IBM-OPT}
can be partially attributed to convergence issues of the classical
optimization loop in presence of noise (the simulator is chosen to be noise-free).

\begin{figure}[ht]
	\centering
	\includegraphics[width=0.8\textwidth]{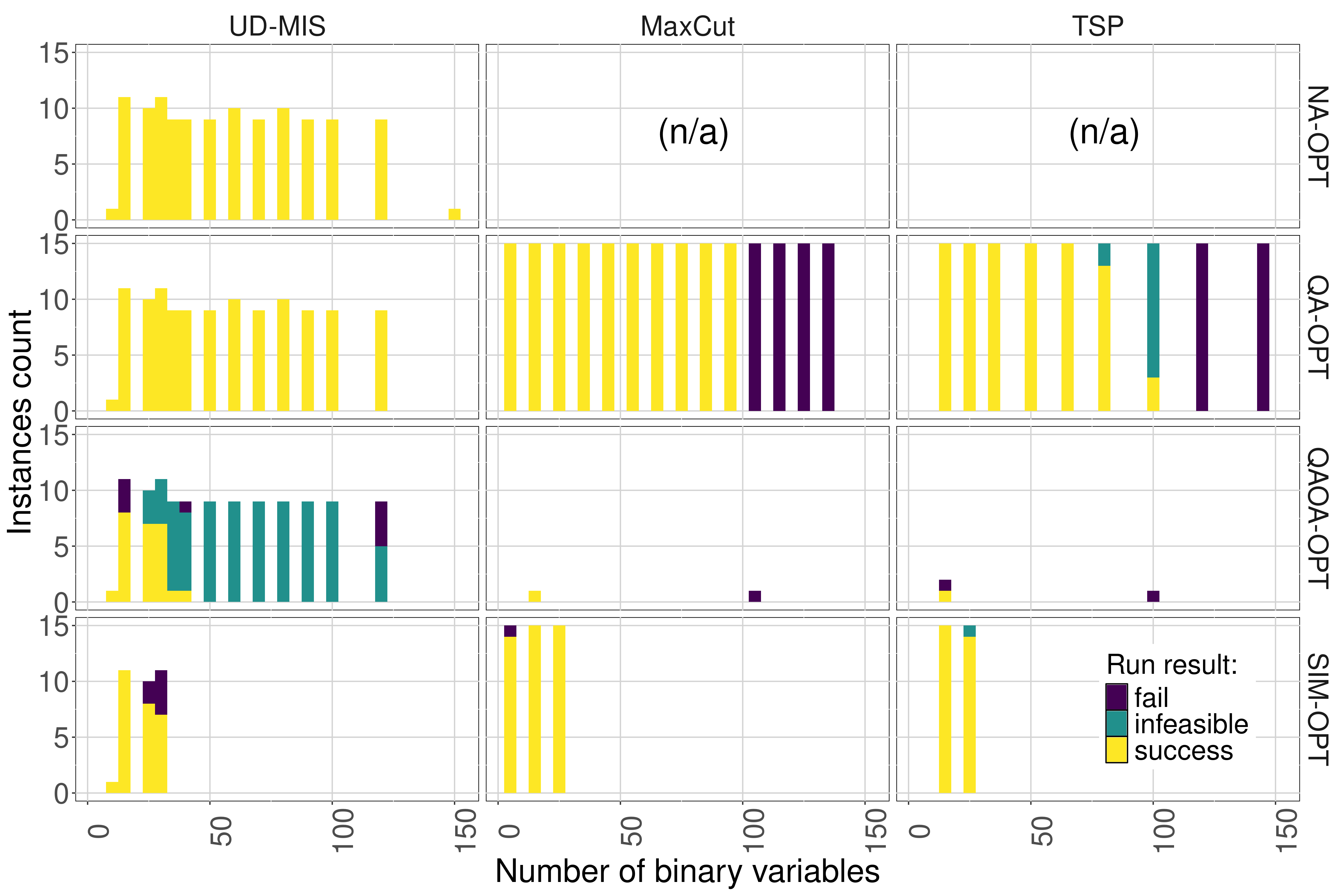}
	\caption{Number of instances across different problem sizes and classes,
		which constitute the runs of our numerical experiments.
      \revII{The colors classify the outcomes of each run: ``success'' (a feasible, but not
        necessarily optimal solution was found), ``infeasible'' (an infeasible solution was found), or ``fail'' (no solution was obtained due to technical issues).
        Runs that did not start at all are not counted here, leading to empty spots in the plots.}
	\revIII{The distribution of runs is unbalanced for various reasons (also leading to blank spots): insufficient qubits to realize \gls{UD-MIS} reformulation on \gls{QuEra-OPT} or certain \gls{QUBO} instances on \gls{IBM-SIM-OPT}, as well as limited computational resources and varying queue times.}
    }\label{fig:inst sizes}\zlabel{fig:inst sizes}
\end{figure}

\paragraph{Baseline results.}
\gls{Gurobi}, as the classical baseline, found a feasible solution for all instances and is therefore not included in \cref{fig:inst sizes}.
Specifically, all
\gls{TSP} instances were all solved to optimality with the largest runtime of about 15~minutes.
Furthermore, the \gls{UD-MIS} instances were all solved to optimality in under 1~second. Larger \gls{MaxCut} instances were more time-consuming to solve. In
total, 84\% of \gls{MaxCut} instances were solved with optimality gap of at most
5\% (more than 99\% with a gap of at most 10\%). Note that there are
specialized classical methods that might yield better runtimes (\eg,
\citealp{rehfeldt2023} for \gls{MaxCut}).\label{q:heuristic}
\revII{Moreover, just imposing a time
  limit on the classical solver already yields a heuristic that takes only a
  fraction of time of the exact method, while ensuring comparable quality of
  solutions: For our selection of instances, \gls{Gurobi} was very successful in finding good
  solutions early on, and spent significant amount of time on improving the
  bounds. (See also \zcref{app:baselines}.)}

\paragraph{Performance metrics.} 
To evaluate the performance of each run, we consider the \emph{relative objective value deviation from the baseline} $R_{f}$ and the \emph{relative runtime deviation from the baseline} $R_{t}$,
\begin{equation} \label{eqn:rel-obj}\zlabel{eqn:rel-obj}
	R_{f} \defeq \frac{f_{q} - f_{c}}{f_{c}} \quad\text{and}\quad R_{t} \defeq \frac{t_{q} - t_{c}}{t_{c}},
\end{equation}
respectively. Here, $f_{q} \in \mathbb{R}$ denotes the resulting objective value from the run of interest and $t_{q} \in \mathbb{R}_{>0}$ its runtime. Analogously, $f_{c} \in \mathbb{R}$ denotes the best objective value found by \gls{Gurobi} for the same instance (which is not necessarily the optimal solution for some \gls{MaxCut} instances), and $t_{c} \in \mathbb{R}_{>0}$ denotes the corresponding runtime. We presume here that $f_{c} > 0$ for all considered instances.

\paragraph{Runtimes.} 
A runtime comparison of the considered approaches to \gls{Gurobi} is presented in \cref{fig:runtime-vs-classic-a}. Each point represents a single run, and the
tilted line in the top left corner reflects the situation where the compared runtimes are
equal. 
For the comparably small problem instances of interest, the runtimes for
quantum-powered approaches \label{q:if-feas}\revIII{(if feasible)} were
mostly longer than the classical baseline (points below the tilted line).
However, there is a small set of problem instances for which the runtime of
\gls{Gurobi} is longer than for the \gls{D-Wave-OPT} approach (points above the tilted line). 
For this subset of ``hard'' \gls{MaxCut} instances, \gls{Gurobi} required more
than 20~seconds for a solution.
The ``hard'' instances are all larger \gls{MaxCut} instances, and investigating their structure
in the context of \gls{OR} applications
might constitute a promising direction for further quantum optimization research.
A detailed view of the ``hard'' instances is shown in \cref{fig:runtime-vs-classic-b}.

\begin{figure}[ht]
	\centering
	\begin{subfigure}[t]{.48\textwidth}
		\centering
		\includegraphics[width=\linewidth]{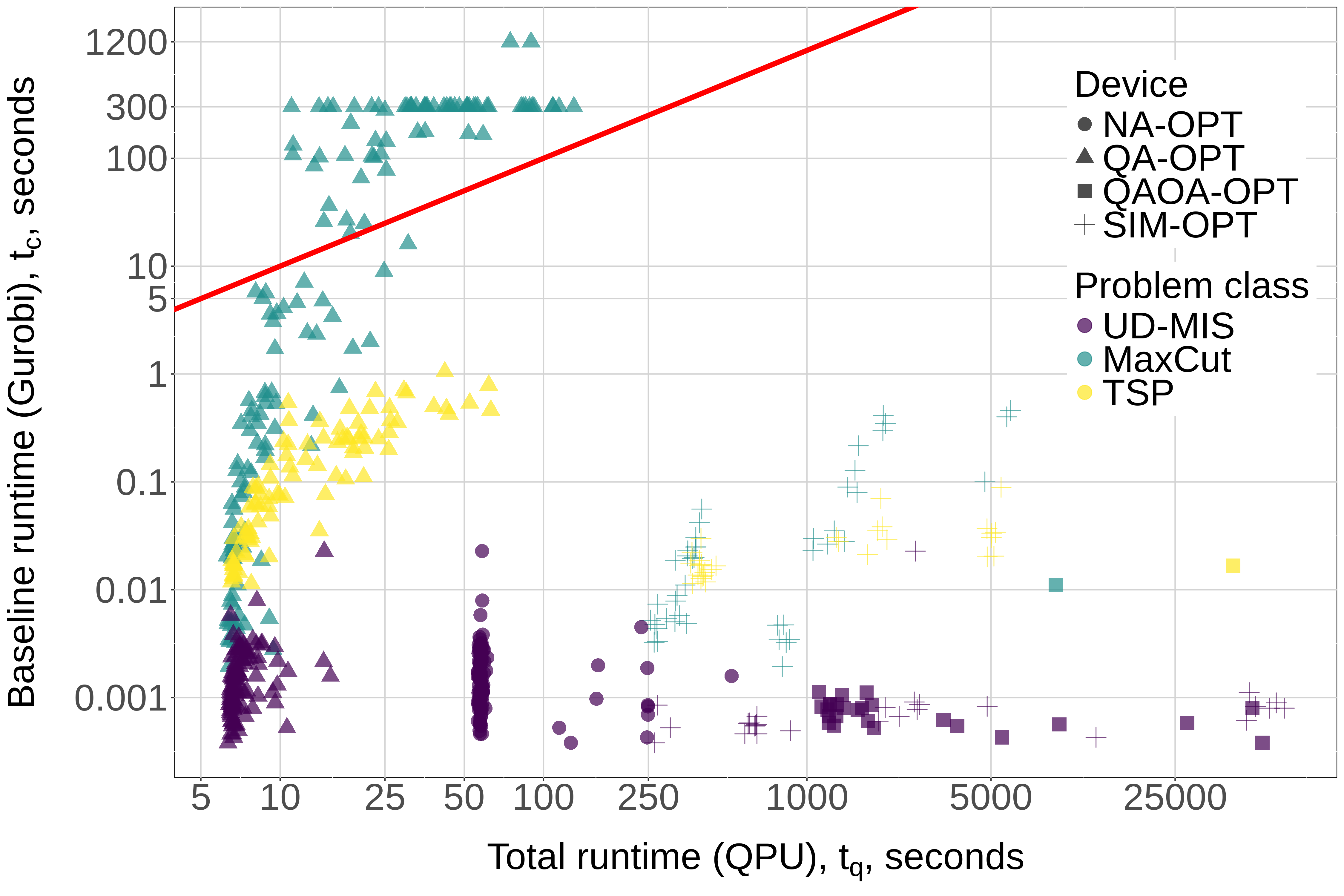}
		\caption{Runtimes for all \revIII{feasible} runs.} \label{fig:runtime-vs-classic-a}
	\end{subfigure}%
	\hspace{5em}
	\begin{subfigure}[t]{.32\textwidth}
		\centering
		\includegraphics[width=\linewidth]{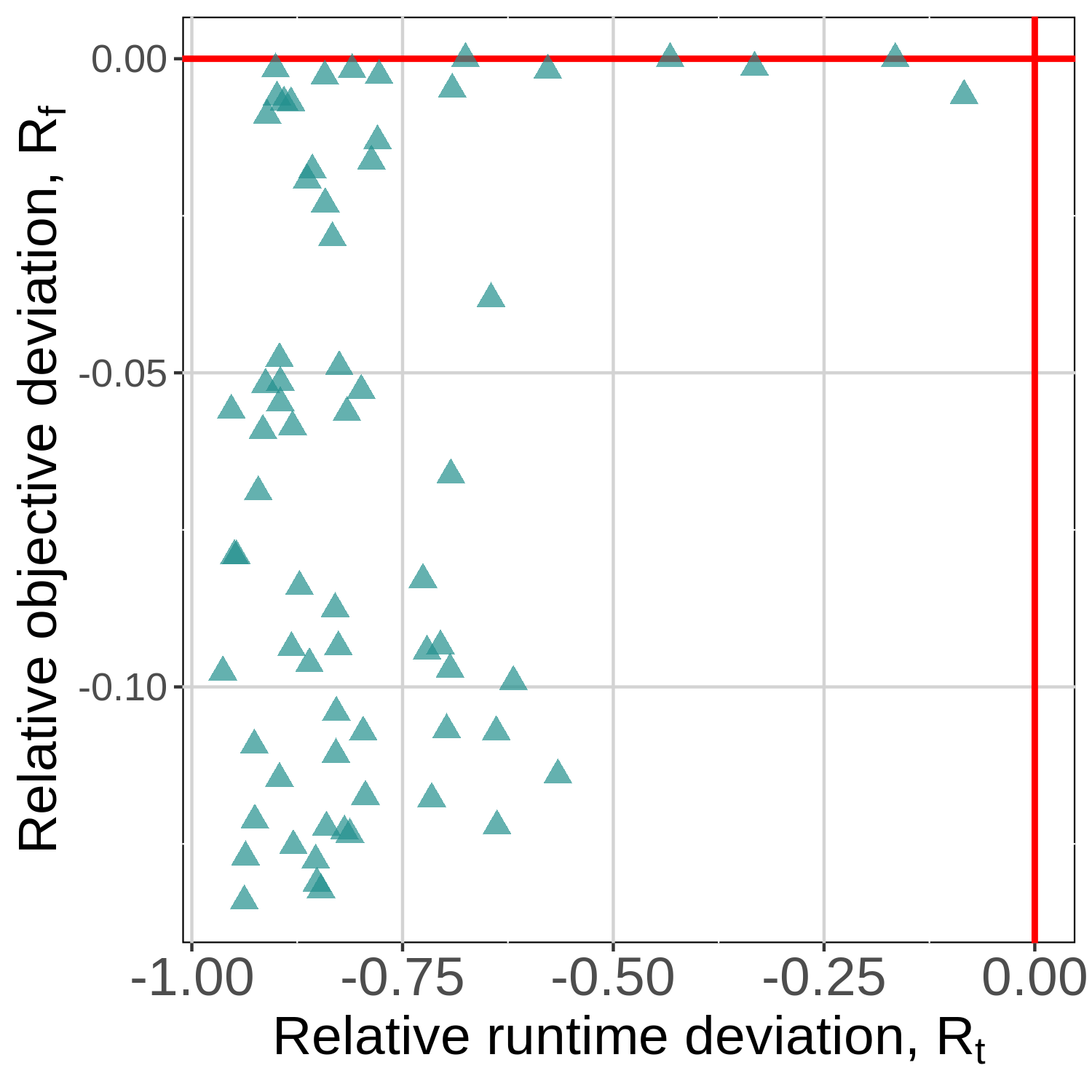}
		\caption{Runtime and objective value deviations, ``hard'' \gls{MaxCut} instances.} \label{fig:runtime-vs-classic-b}
	\end{subfigure}
	\caption{Performance evaluation of the quantum-powered approaches with \gls{Gurobi} as a baseline. We observe in (a) that there are ``hard'' \gls{MaxCut} instances for which the runtime of \gls{Gurobi} is longer than the \gls{D-Wave-OPT} approach and takes more than 20~seconds to solve. These instances are shown in (b).} \label{fig:runtime-vs-classic}
\end{figure}

\paragraph{Objective values.}
A comparison of the objective values from the quantum-powered approaches with the objective values from \gls{Gurobi} is presented in \cref{tab:obj-summary}.
No quantum-powered run has a resulting objective value better than the corresponding \gls{Gurobi} result.
We observe that for both \gls{MaxCut} and \gls{TSP}, the quantum-powered approaches found
many solutions within 25\% of the \gls{Gurobi} solutions.
For the
subset of ``hard'' \gls{MaxCut} instances, \cref{fig:runtime-vs-classic-b}, where \gls{Gurobi} required more
than 20~seconds for a solution, the \gls{D-Wave-OPT} approach was able to find some solutions within 10--15\% from the baseline objective in only half of the respective runtime.\footnote{\label{fn:hard}%
	\revII{The set of instances where \gls{Gurobi} took more than 20 seconds coincided with the set of instances where \gls{D-Wave-OPT} took no more time than \gls{Gurobi}. We refer to such instances as ``hard.''}
}

\label{q:UD-MIS-obj}
\revII{While the proportion of instances that were solved by \gls{D-Wave-OPT} no worse than \gls{Gurobi} is
  comparable across problem classes, the share of instances with a gap within 5\%
  or 10\% for the \gls{UD-MIS} problem is noticeably lower than that for \gls{MaxCut}.
  This might be surprising at first glance, given that the \gls{QUBO}
  matrices for \gls{UD-MIS} instances are significantly less dense. Understanding this effect
  might constitute a possible direction for further research. One possible reason could be the different structure of constraints and their effects of the penalty terms. In particular,
  relatively small deviations of the solution in terms of the Hamming distances
  (number of bitflips) might lead to large relative changes in the objective
  value. Analysis of the solution landscapes in terms of Hamming distances and
  effects of coefficients normalization might therefore constitute reasonable first steps
  towards this direction.}

\revII{Further, each independent set constraint corresponding to a single edge,
  when formulated as a penalty term, is still relatively local and does not
  require many interaction terms in the objective. Therefore, the embedding
  process in the context of \gls{D-Wave-OPT} approach turns out less
  problematic. Formulating and falsifying such hypotheses might constitute a
  viable line of further research. A more careful approach to benchmarking
  involving a quantum annealer is discussed by \cite{lubinski2024}.}

\revII{For the \gls{QuEra-OPT} approach, the structure of constraints might also
  play a role for the performance. As discussed in \cref{sec:rydberg}, the
  implementation of \gls{UD-MIS} instances on neutral-atom devices requires not
  only a hardware-compliant problem graph, but also a suitable geometric
  arrangement of the atoms. These geometric considerations, which directly
  affect the realizability and accuracy of the problem encoding, remain an
  important practical aspect of the \gls{QuEra-OPT} approach~\citep{wurtz2023}.
  The atom placement in our experiments was directly driven by the instance
  generation process. A more sophisticated exploration of alternative placement
  strategies encoding the same \gls{UD-MIS} instance may serve as a strategy to
  further enhance performance. While such a study lies beyond our intention to
  provide an ``out-of-the-box experience,'' it could serve as a starting point
  for future research.}

\revII{In general, studying which characteristics of problem instances make them more or less suitable for quantum-powered optimization constitutes a possible further research direction \citep{koch2025quantumoptimizationbenchmarklibrary}, and the set of problem instances presented here could serve as a starting point for such an analysis. }%

\begin{table}[!tbp]
  \caption{\label{tab:obj-summary}%
    \revIII{Comparison of the objective values from the quantum-powered approaches $f_q$ with the values from \gls{Gurobi} $f_c$, as per \cref{eqn:rel-obj}. ``Total'' is the total number of feasible runs for the given problem class and solution approach, ``within $P$\%'' shows the number of feasible runs with $|R_f|\leq P/100$ and its proportion of the ``Total.''\textsuperscript{(a)} The last column describes the number of feasible runs no worse than the \gls{Gurobi} objective, \ie, with $R_{f} \geq 0$ for \gls{MaxCut} and \gls{UD-MIS}, and $R_{f} \leq 0$ for \gls{TSP}.}%
  }
  \centering
  \begin{tabular}{ll|c|cc|cc|cc|cc}
    \toprule
    \multicolumn{1}{l}{Problem}&\multirow{2}{*}{Approach}&\multicolumn{9}{c}{Number\ of instance runs with relative objective deviations}\\
    \multicolumn{1}{l}{class} && Total&\multicolumn{2}{c}{within $25\%$}&\multicolumn{2}{c}{within $10\%$}&\multicolumn{2}{c}{within $5\%$} & \multicolumn{2}{c}{no worse}\\
    \midrule
UD-MIS & \gls{QuEra-OPT} & $117$ & $ 93$ & $ 79.5\%$ & $ 48$ & $ 41.0\%$ & $ 34$ & $ 29.1\%$ & $30$ & $25.6\%$\tabularnewline
       & \gls{D-Wave-OPT} & $116$ & $116$ & $  100.0\%$ & $ 77$ & $ 66.4\%$ & $ 54$ & $ 46.6\%$ & $46$ & $39.7\%$\tabularnewline
       & \gls{IBM-OPT}   & $ 25$ & $  9$ & $ 36.0\%$ & $  2$ & $  8.0\%$ & $  2$ & $  8.0\%$ & $ 2$ & $ 8.0\%$\tabularnewline
       & \gls{IBM-SIM-OPT}   & $ 27$ & $  8$ & $ 29.6\%$ & $  3$ & $ 11.1\%$ & $  3$ & $ 11.1\%$ & $ 3$ & $11.1\%$\tabularnewline
                                                              \midrule
MaxCut & \gls{D-Wave-OPT} & $149$ & $149$ & $100.0\%$ & $128$ & $ 85.9\%$ & $105$ & $ 70.5\%$ & $50$ & $33.6\%$\tabularnewline
       & \gls{IBM-OPT}   & $  1$ & $  1$ & -- & $  0$ &   --   & $  0$ &   --   & $ 0$ & --\tabularnewline
       & \gls{IBM-SIM-OPT}   & $ 44$ & $ 44$ & $100.0\%$ & $ 44$ & $100.0\%$   & $ 44$ & $100.0\%$   & $20$ & $45.5\%$\tabularnewline
                                                                \midrule
TSP & \gls{D-Wave-OPT} & $ 91$ & $ 61$ & $ 67.0\%$ & $ 49$ & $ 53.8\%$ & $ 46$ & $ 50.5\%$ & $37$ & $40.7\%$\tabularnewline
    & \gls{IBM-OPT}   & $  1$ & $  1$ & --   & $  1$ & --   & $  0$ & --   & $ 0$ & --\tabularnewline
    & \gls{IBM-SIM-OPT}   & $ 29$ & $ 26$ & $ 89.7\%$ & $ 22$ & $ 75.9\%$ & $ 20$ & $ 69.0\%$ & $17$ & $58.6\%$\tabularnewline
    \bottomrule
  \end{tabular}\vspace{0.25\baselineskip}

  \revIII{\footnotesize (a): For example, with the approach \gls{QuEra-OPT}, we obtain feasible solutions for 117 \gls{UD-MIS} instances. Out of these 117 runs, 93 (\ie, 79.5\% of the total of 117) achieve a 25\% deviation from the \gls{Gurobi} objective, and so on.}
\end{table}

\paragraph{Minor embeddings.}\label{q:minor-emb}
To conclude, we report two key aspects of the necessary embedding process within
the \gls{D-Wave-OPT} approach. First, the runtime of a classical computer to
find the embedding and, second, the number of physical qubits necessary to
realize it. As explained in \cref{sec:QA}, the number of required physical
qubits can exceed the number of variables of the underlying \gls{QUBO} due to
the introduction of chains. \revIII{See \zcref{app:Pegasus} for an illustration of a toy
\gls{MaxCut} instance embedding.}

We found in our experiments that the bounds on the necessary numbers of qubits
presented in \cref{lm:pegasus-qubits} and \cref{tab:qubits-embeddings} were not
always tight, \ie, the heuristic algorithm used to find
embeddings~\citep{cai2014} performed better than the upper bound given
in~\cref{lm:pegasus-qubits}, \revIII{with the specific results depending on the
  qubit connectivity required by the problem instances at hand. We quantified
  this effect by assessing the observed relation between the numbers of logical
  qubits $N$ (from the problem instance) and physical qubits $N_{e}$ (from the
  embedding) for the \gls{D-Wave-OPT} approach. Our statistical model, ordinary
  least squares regression, yields estimates for $N_{e} \in O(N^{\hat{\beta}})$,
  where $\hat{\beta}\approx 1.8$ for \gls{MaxCut} and \gls{TSP} instances, and
  $\hat{\beta}\approx 1.1$ for \gls{UD-MIS} instances in our dataset. (See
  \zcref{app:regression} for further details.) This agrees to our expectation
  that the share of nonzero entries in the \gls{QUBO} matrix scales differently
  for different problem classes (see \zcref{app:instances}), which affects the
  embedding overhead. }

The embedding process also entails significant (classical) runtime costs. In our
case, it dominates the solution time for large enough \gls{TSP} and \gls{MaxCut}
instances, as illustrated in \cref{fig:embedding-time-bars}. For sparser
\gls{UD-MIS} instances with comparable numbers of binary variables, finding
embeddings was relatively easy. While the embedding time varies with the number
of binary variables in general, we did not scale the \revII{allotted}\label{q:allotted} annealing time.
Therefore, for large and dense instances, up to 90\% of the total runtime was
spent trying to find an embedding, before starting the actual computation on the
\gls{QC}.

\begin{figure}[ht]
	\centering
	\begin{minipage}[t]{0.3\linewidth}
		\centering \gls{UD-MIS} \\[1ex]
		\includegraphics[width=\textwidth]{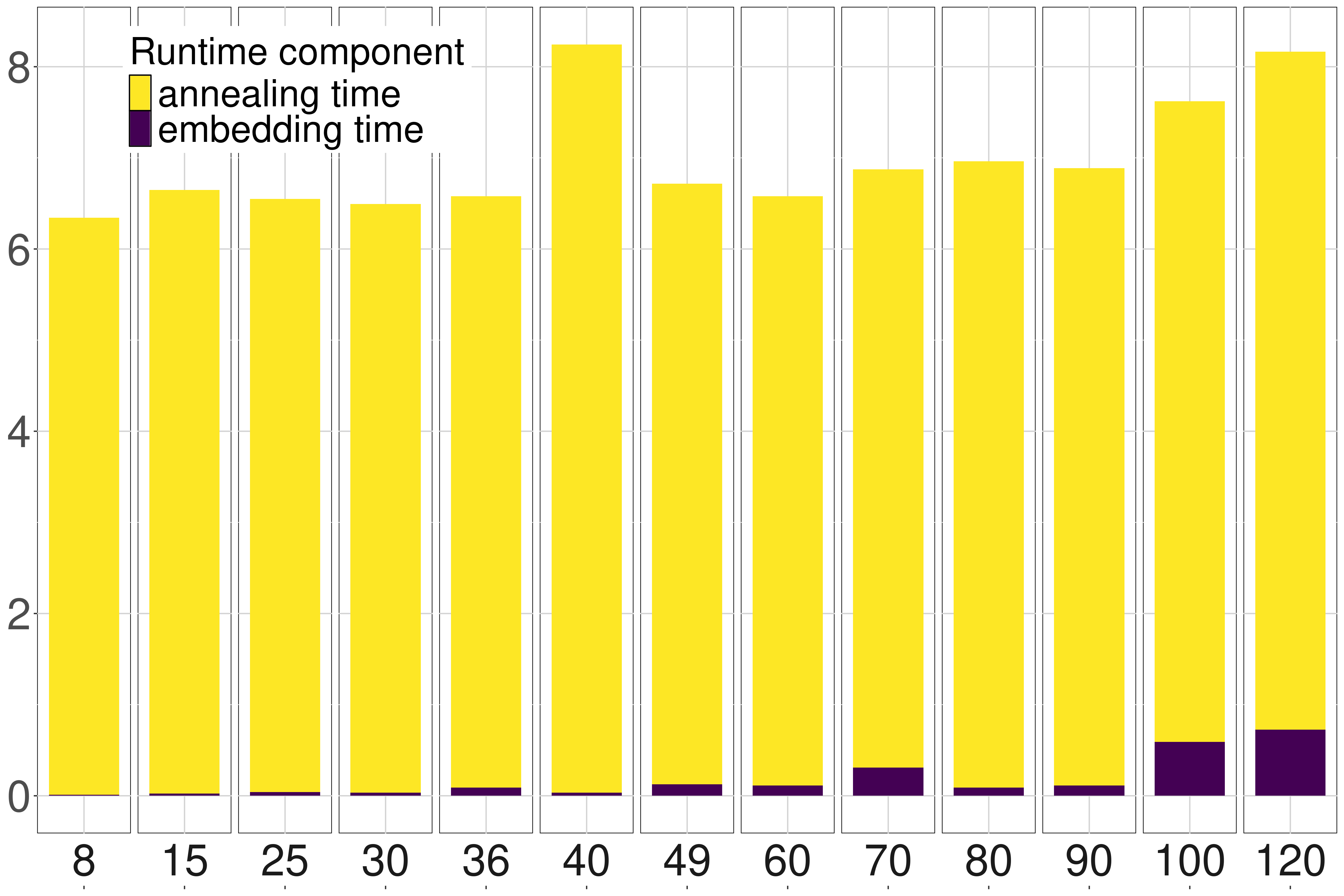}
	\end{minipage}\hfill%
	\begin{minipage}[t]{0.3\linewidth}
		\centering \gls{MaxCut} \\[1ex]
		\includegraphics[width=\textwidth]{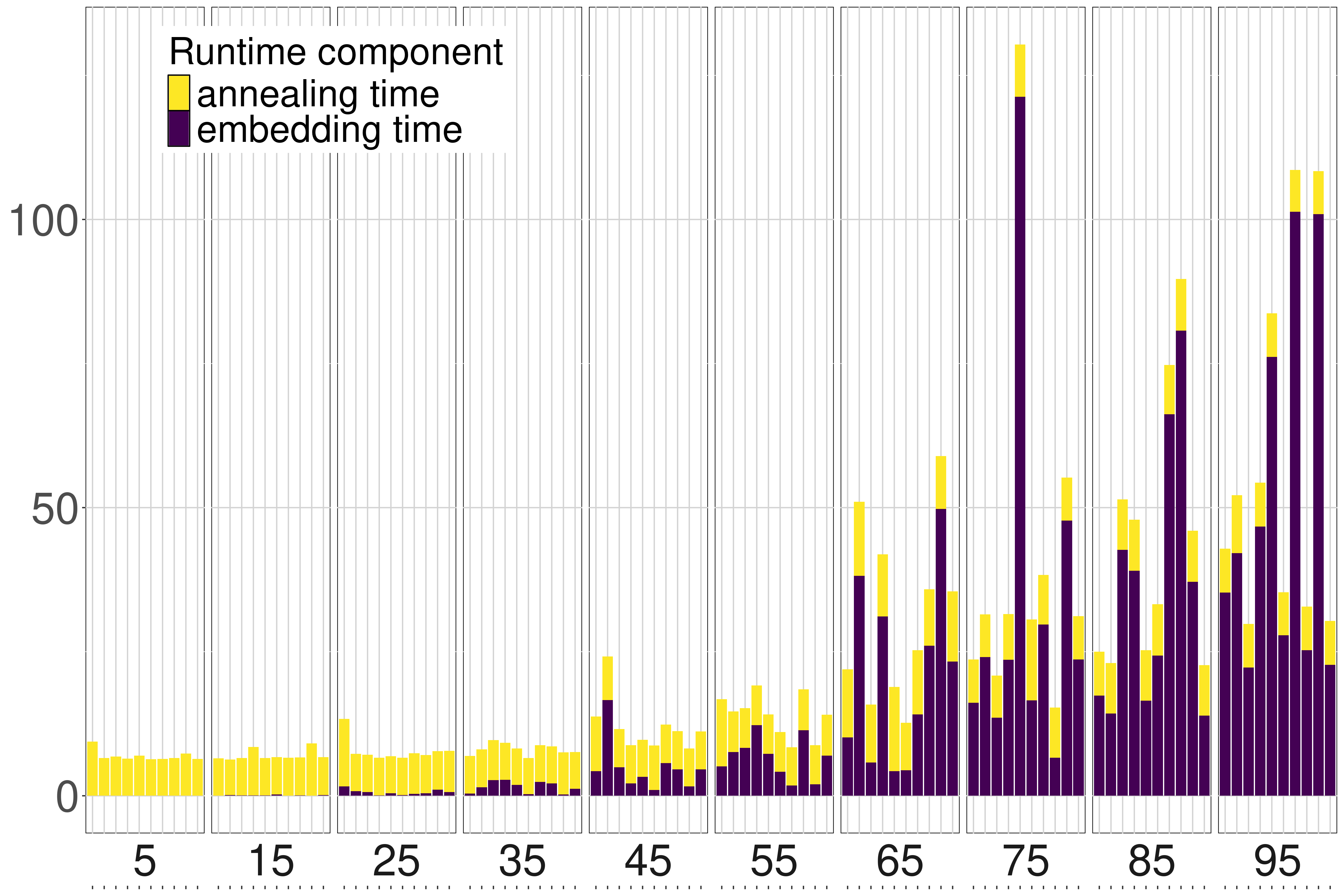}
	\end{minipage}\hfill%
	\begin{minipage}[t]{0.3\linewidth}
		\centering \gls{TSP} \\[1ex]
		\includegraphics[width=\textwidth]{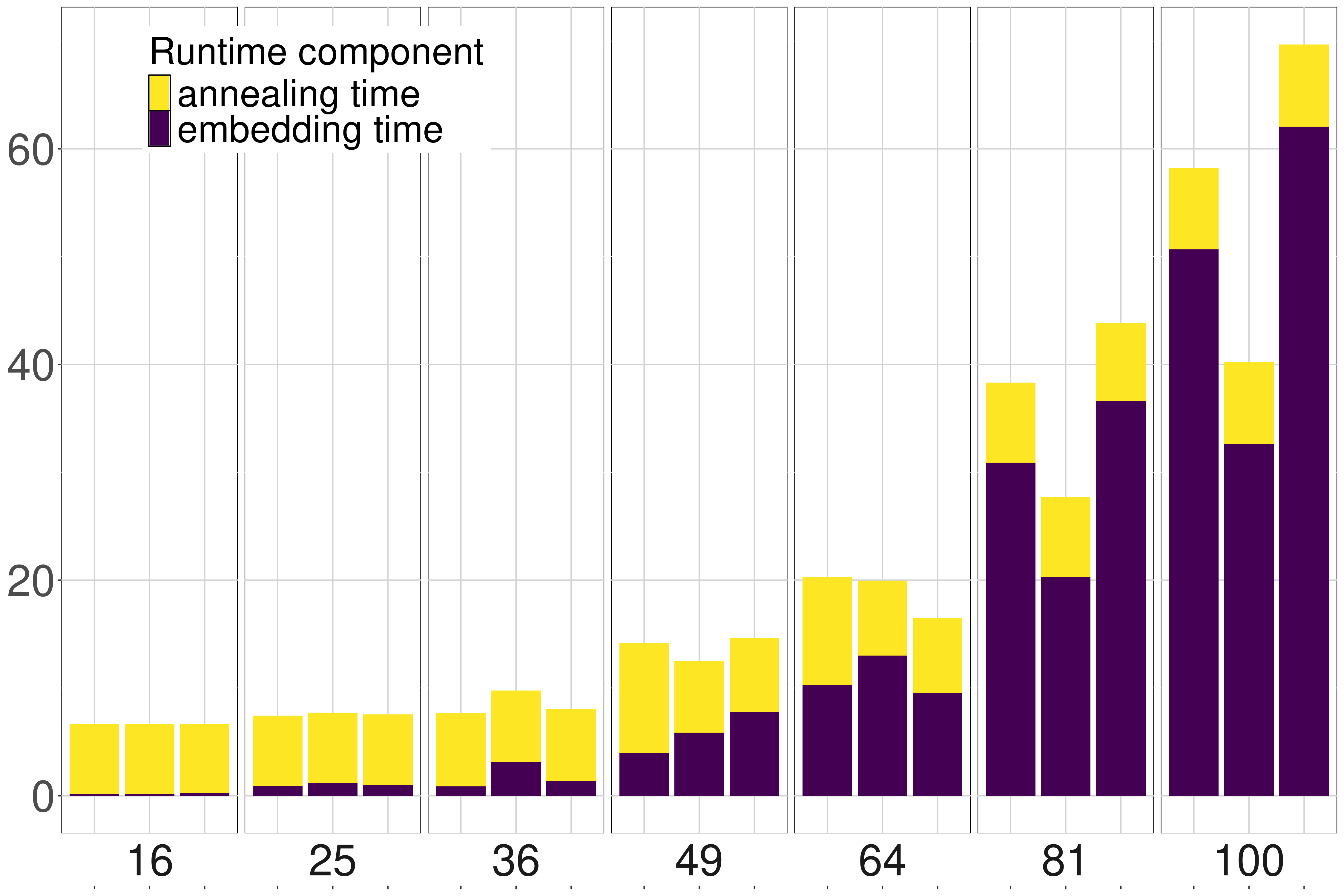}
	\end{minipage}
	\caption{Runtime histograms for the \gls{D-Wave-OPT} approach, per problem class. Each bar represents a
		randomly selected instance with a number of variables as indicated on
		the horizontal axis. Bar height represents the total runtime in seconds, as indicated on the vertical axis
		(different scale for each problem class). The total runtime constitutes the computation of a minor embedding
      (classical computer; dark color) and the \gls{QA} computation (\gls{QC}; light
      color).
      } \label{fig:embedding-time-bars}
\end{figure}

\section{Conclusion}\label{sec:conclusion}

It is difficult to assess which of the currently available quantum computing architectures is superior for \gls{OR} applications. Different quantum algorithms may have different hardware requirements, and device characteristics may affect the details of algorithm implementations. As a result, the development of hardware and the development of algorithms for practical applications go together. For this reason, rather than attempting to benchmark a broad collection of methods, we highlight three specific quantum-powered approaches to solve discrete optimization problems. Specifically, we consider two analog approaches,
\gls{QuEra-OPT} (solving \glspl{UD-MIS} with an analog \gls{QC} based on neutral
atom technology) and \gls{D-Wave-OPT} (solving \glspl{QUBO} with a quantum
annealer based on superconducting technology), as well as a digital approach,
\gls{IBM-OPT} (solving \glspl{QUBO} with a gate-based \gls{QC} based on
superconducting technology). The paper offers three groups of key contributions.

First, we provide a high-level overview of the relevant workflows, aiming at a
general \gls{OR} audience. To the best of our knowledge, this is the first
attempt to describe several different quantum approaches in a uniform framework as sketched in \cref{fig:workflows}. 
We believe this effort will help motivate further
involvement of the \gls{OR} community into quantum optimization research. Fine-tuning of quantum algorithms and careful modeling of optimization problems may allow alternative quantum-powered solution strategies and therefore pose novel research opportunities~\citep{blekos2024}.

Second, we show that the required number of qubits for an optimization problem
is not just a characteristic of the problem itself, but can be significantly
affected by the chosen quantum technology, problem type, and the specific
formulation, as listed in \cref{tab:formulations,tab:qubits-embeddings}.
\label{q:concl}
\revI{As discussed in \cref{sec:qubits}, presenting new problem encodings is beyond the scope if this work, but we would like to emphasize the relevance of this issue in the context of quantum computing.}

Third, we highlight the emergence of auxiliary optimization problems that arise in the course of applied quantum optimization in order to properly configure the respective quantum devices. Typically, such auxiliary problems are solved on a classical computer and could be a source for interdisciplinary collaborations between the quantum computing and \gls{OR} communities. 
As a prototypical example, we discuss the embedding process within the \gls{D-Wave-OPT} approach and analyze the associated runtime and required number of physical qubits. We did not further explore the atom encoding step in the \gls{QuEra-OPT} or the transpilation step in the \gls{IBM-OPT}, both of which represent distinct research directions. In general, improving heuristic methods for generating effective device configurations could enhance the practicality of all three quantum-powered approaches considered here.

Our numerical experiments have not shown a clear quantum advantage.
The quality of the obtained numerical solutions depends on the problem class, but
in general our classical baseline (solving an integer programming model using a
state of the art commercial solver) outperformed the quantum-powered approaches,
given enough time. However, even with all the restrictions of the available
quantum devices, we were able to generate a collection of \gls{MaxCut} instances
that were hard enough for the classical solver, and for which we were able to
obtain heuristic solutions from the quantum device with reasonable quality
faster than using a classical solver, \revIII{although it is not difficult to
  design classical heuristics that would outperform the quantum-powered
  approaches we discuss over our selection of instances.} Our experiments highlight the obvious
fact that the sparsity of the \gls{QUBO} matrix affects the problem complexity, both for
the classical baseline and the quantum-powered approaches.

\label{q:concl2}%
\revIII{We have found that the quantum-powered optimization approaches discussed in this paper provide very different user experiences. While they can be described in a largely unified framework, each presents its own unique strengths and challenges. For example, the \gls{D-Wave-OPT} approach typically produced feasible solutions for instances up to a certain size, provided suitable embeddings could be identified. However, finding embeddings for highly connected problem instances can become very difficult. The \gls{QuEra-OPT} approach was even more robust in terms of results. On the other hand, it was more limited in terms of the number of qubits and required a representation of the problem as a \gls{UD-MIS}, which quickly leads to infeasible resource requirements. Finally, the \gls{IBM-OPT} approach, while the most flexible for algorithm development, required the most effort to establish a reliable computational workflow for technical reasons. These differences highlight the value of continued exploration across all quantum computing architectures. Rather than prematurely favoring one technology over another, it is important to recognize that each may be well suited to different types of \gls{OR} problems. Improving our understanding of their computational capabilities and identifying promising application domains remain important directions for future research.}

\section{Acknowledgements}\label{sec:thanks}
The work was partially funded by the \emph{Research Initiative Quantum Computing
  for Artificial Intelligence (QC-AI)}. Access to quantum hardware was funded by
the \emph{Fraunhofer Quantum Now} program.

\bibliographystyle{elsarticle-harv-sven}
\bibliography{references}

\begin{thebibliography}{115}
\expandafter\ifx\csname natexlab\endcsname\relax\def\natexlab#1{#1}\fi
\providecommand{\url}[1]{\texttt{#1}}
\providecommand{\href}[2]{#2}
\providecommand{\path}[1]{#1}
\providecommand{\DOIprefix}{doi:}
\providecommand{\ArXivprefix}{arXiv:}
\providecommand{\URLprefix}{URL: }
\providecommand{\Pubmedprefix}{pmid:}
\providecommand{\doi}[1]{\href{http://dx.doi.org/#1}{\path{#1}}}
\providecommand{\Pubmed}[1]{\href{pmid:#1}{\path{#1}}}
\providecommand{\bibinfo}[2]{#2}
\ifx\xfnm\relax \def\xfnm[#1]{\unskip,\space#1}\fi
\bibitem[{Abbas et~al.(2024)Abbas, Ambainis, Augustino, Bärtschi, Buhrman,
  Coffrin, Cortiana, Dunjko, Egger, Elmegreen, Franco, Fratini, Fuller, Gacon,
  Gonciulea, Gribling, Gupta, Hadfield, Heese, Kircher, Kleinert, Koch, Korpas,
  Lenk, Marecek, Markov, Mazzola, Mensa, Mohseni, Nannicini, O’Meara, Tapia,
  Pokutta, Proissl, Rebentrost, Sahin, Symons, Tornow, Valls, Woerner,
  Wolf-Bauwens, Yard, Yarkoni, Zechiel, Zhuk \protect\BBA{} Zoufal}]{abbas2024}
\bibinfo{author}{Abbas, A.}, \bibinfo{author}{Ambainis, A.},
  \bibinfo{author}{Augustino, B.}, \bibinfo{author}{Bärtschi, A.},
  \bibinfo{author}{Buhrman, H.}, \bibinfo{author}{Coffrin, C.},
  \bibinfo{author}{Cortiana, G.}, \bibinfo{author}{Dunjko, V.},
  \bibinfo{author}{Egger, D.J.}, \bibinfo{author}{Elmegreen, B.G.},
  \bibinfo{author}{Franco, N.}, \bibinfo{author}{Fratini, F.},
  \bibinfo{author}{Fuller, B.}, \bibinfo{author}{Gacon, J.},
  \bibinfo{author}{Gonciulea, C.}, \bibinfo{author}{Gribling, S.},
  \bibinfo{author}{Gupta, S.}, \bibinfo{author}{Hadfield, S.},
  \bibinfo{author}{Heese, R.}, \bibinfo{author}{Kircher, G.},
  \bibinfo{author}{Kleinert, T.}, \bibinfo{author}{Koch, T.},
  \bibinfo{author}{Korpas, G.}, \bibinfo{author}{Lenk, S.},
  \bibinfo{author}{Marecek, J.}, \bibinfo{author}{Markov, V.},
  \bibinfo{author}{Mazzola, G.}, \bibinfo{author}{Mensa, S.},
  \bibinfo{author}{Mohseni, N.}, \bibinfo{author}{Nannicini, G.},
  \bibinfo{author}{O’Meara, C.}, \bibinfo{author}{Tapia, E.P.},
  \bibinfo{author}{Pokutta, S.}, \bibinfo{author}{Proissl, M.},
  \bibinfo{author}{Rebentrost, P.}, \bibinfo{author}{Sahin, E.},
  \bibinfo{author}{Symons, B.C.B.}, \bibinfo{author}{Tornow, S.},
  \bibinfo{author}{Valls, V.}, \bibinfo{author}{Woerner, S.},
  \bibinfo{author}{Wolf-Bauwens, M.L.}, \bibinfo{author}{Yard, J.},
  \bibinfo{author}{Yarkoni, S.}, \bibinfo{author}{Zechiel, D.},
  \bibinfo{author}{Zhuk, S.}, \bibinfo{author}{Zoufal, C.},
  \bibinfo{year}{2024}.
\newblock \bibinfo{title}{Challenges and opportunities in quantum
  optimization}.
\newblock \bibinfo{journal}{Nat. Rev. Phys.} \bibinfo{volume}{6},
  \bibinfo{pages}{718--735}.
\newblock \DOIprefix\doi{10.1038/s42254-024-00770-9}.
\bibitem[{Albash \protect\BBA{} Lidar(2018)}]{Albash2018}
\bibinfo{author}{Albash, T.}, \bibinfo{author}{Lidar, D.A.},
  \bibinfo{year}{2018}.
\newblock \bibinfo{title}{Adiabatic quantum computation}.
\newblock \bibinfo{journal}{Rev. Mod. Phys.} \bibinfo{volume}{90},
  \bibinfo{pages}{015002}.
\newblock \DOIprefix\doi{10.1103/revmodphys.90.015002}.
\bibitem[{{Amazon Web Services}(2020)}]{AWS2020}
\bibinfo{author}{{Amazon Web Services}}, \bibinfo{year}{2020}.
\newblock \bibinfo{title}{{Amazon Braket}}.
\newblock \URLprefix \url{https://aws.amazon.com/braket/}.
\bibitem[{Applegate et~al.(2006)Applegate, Bixby, Chvátal \protect\BBA{}
  Cook}]{applegate2006}
\bibinfo{author}{Applegate, D.L.}, \bibinfo{author}{Bixby, R.E.},
  \bibinfo{author}{Chvátal, V.}, \bibinfo{author}{Cook, W.J.},
  \bibinfo{year}{2006}.
\newblock \bibinfo{title}{The Traveling Salesman Problem: A Computational
  Study}. volume~\bibinfo{volume}{17} of \textit{\bibinfo{series}{Princeton
  Series in Applied Mathematics}}.
\newblock \bibinfo{publisher}{Princeton University Press},
  \bibinfo{address}{Princeton}.
\newblock \DOIprefix\doi{10.1515/9781400841103}.
\bibitem[{Au-Yeung et~al.(2023)Au-Yeung, Chancellor \protect\BBA{}
  Halffmann}]{au-yeung2023}
\bibinfo{author}{Au-Yeung, R.}, \bibinfo{author}{Chancellor, N.},
  \bibinfo{author}{Halffmann, P.}, \bibinfo{year}{2023}.
\newblock \bibinfo{title}{{NP}-hard but no longer hard to solve? {U}sing
  quantum computing to tackle optimization problems}.
\newblock \bibinfo{journal}{Front. Quantum Sci. Technol.} \bibinfo{volume}{2},
  \bibinfo{pages}{1128576}.
\newblock \DOIprefix\doi{10.3389/frqst.2023.1128576}.
\bibitem[{Baheri et~al.(2022)Baheri, Guan, Chaudhary \protect\BBA{}
  Li}]{baheri2022}
\bibinfo{author}{Baheri, B.}, \bibinfo{author}{Guan, Q.},
  \bibinfo{author}{Chaudhary, V.}, \bibinfo{author}{Li, A.},
  \bibinfo{year}{2022}.
\newblock \bibinfo{title}{Quantum noise in the flow of time: A temporal study
  of the noise in quantum computers}, in: \bibinfo{booktitle}{IEEE 28th Int.
  Symp. On-Line Testing and Robust System Design (IOLTS)},
  pp.~\bibinfo{pages}{1--5}.
\newblock \DOIprefix\doi{10.1109/IOLTS56730.2022.9897404}.
\bibitem[{Barkoutsos et~al.(2020)Barkoutsos, Nannicini, Robert, Tavernelli
  \protect\BBA{} Woerner}]{barkoutsos2020}
\bibinfo{author}{Barkoutsos, P.K.}, \bibinfo{author}{Nannicini, G.},
  \bibinfo{author}{Robert, A.}, \bibinfo{author}{Tavernelli, I.},
  \bibinfo{author}{Woerner, S.}, \bibinfo{year}{2020}.
\newblock \bibinfo{title}{Improving variational quantum optimization using
  {CVaR}}.
\newblock \bibinfo{journal}{Quantum} \bibinfo{volume}{4}, \bibinfo{pages}{256}.
\newblock \DOIprefix\doi{10.22331/q-2020-04-20-256}.
\bibitem[{Bera et~al.(2017)Bera, Acín, Kuś, Mitchell \protect\BBA{}
  Lewenstein}]{bera2017}
\bibinfo{author}{Bera, M.N.}, \bibinfo{author}{Acín, A.},
  \bibinfo{author}{Kuś, M.}, \bibinfo{author}{Mitchell, M.W.},
  \bibinfo{author}{Lewenstein, M.}, \bibinfo{year}{2017}.
\newblock \bibinfo{title}{Randomness in quantum mechanics: philosophy, physics
  and technology}.
\newblock \bibinfo{journal}{Rep. Prog. Phys.} \bibinfo{volume}{80},
  \bibinfo{pages}{124001}.
\newblock \DOIprefix\doi{10.1088/1361-6633/aa8731}.
\bibitem[{Blekos et~al.(2024)Blekos, Brand, Ceschini, Chou, Li, Pandya
  \protect\BBA{} Summer}]{blekos2024}
\bibinfo{author}{Blekos, K.}, \bibinfo{author}{Brand, D.},
  \bibinfo{author}{Ceschini, A.}, \bibinfo{author}{Chou, C.H.},
  \bibinfo{author}{Li, R.H.}, \bibinfo{author}{Pandya, K.},
  \bibinfo{author}{Summer, A.}, \bibinfo{year}{2024}.
\newblock \bibinfo{title}{A review on quantum approximate optimization
  algorithm and its variants}.
\newblock \bibinfo{journal}{Phys. Rep.} \bibinfo{volume}{1068},
  \bibinfo{pages}{1--66}.
\newblock \DOIprefix\doi{10.1016/j.physrep.2024.03.002}.
\bibitem[{Bolusani et~al.(2024)Bolusani, Besan\c{c}on, Bestuzheva, Chmiela,
  Dion\'{i}sio, Donkiewicz, van Doornmalen, Eifler, Ghannam, Gleixner, Graczyk,
  Halbig, Hedtke, Hoen, Hojny, van~der Hulst, Kamp, Koch, Kofler, Lentz, Manns,
  Mexi, M\"{u}hmer, Pfetsch, Schl\"{o}sser, Serrano, Shinano, Turner, Vigerske,
  Weninger \protect\BBA{} Xu}]{scip}
\bibinfo{author}{Bolusani, S.}, \bibinfo{author}{Besan\c{c}on, M.},
  \bibinfo{author}{Bestuzheva, K.}, \bibinfo{author}{Chmiela, A.},
  \bibinfo{author}{Dion\'{i}sio, J.}, \bibinfo{author}{Donkiewicz, T.},
  \bibinfo{author}{van Doornmalen, J.}, \bibinfo{author}{Eifler, L.},
  \bibinfo{author}{Ghannam, M.}, \bibinfo{author}{Gleixner, A.},
  \bibinfo{author}{Graczyk, C.}, \bibinfo{author}{Halbig, K.},
  \bibinfo{author}{Hedtke, I.}, \bibinfo{author}{Hoen, A.},
  \bibinfo{author}{Hojny, C.}, \bibinfo{author}{van~der Hulst, R.},
  \bibinfo{author}{Kamp, D.}, \bibinfo{author}{Koch, T.},
  \bibinfo{author}{Kofler, K.}, \bibinfo{author}{Lentz, J.},
  \bibinfo{author}{Manns, J.}, \bibinfo{author}{Mexi, G.},
  \bibinfo{author}{M\"{u}hmer, E.}, \bibinfo{author}{Pfetsch, M.E.},
  \bibinfo{author}{Schl\"{o}sser, F.}, \bibinfo{author}{Serrano, F.},
  \bibinfo{author}{Shinano, Y.}, \bibinfo{author}{Turner, M.},
  \bibinfo{author}{Vigerske, S.}, \bibinfo{author}{Weninger, D.},
  \bibinfo{author}{Xu, L.}, \bibinfo{year}{2024}.
\newblock \bibinfo{title}{The {SCIP Optimization Suite} 9.0}.
\newblock \bibinfo{howpublished}{Preprint}.
\newblock \DOIprefix\doi{10.48550/arXiv.2402.17702}.
\bibitem[{Bombieri et~al.(2025)Bombieri, Zeng, Tricarico, Lin, Notarnicola,
  Cain, Lukin \protect\BBA{} Pichler}]{bombieri2025}
\bibinfo{author}{Bombieri, L.}, \bibinfo{author}{Zeng, Z.},
  \bibinfo{author}{Tricarico, R.}, \bibinfo{author}{Lin, R.},
  \bibinfo{author}{Notarnicola, S.}, \bibinfo{author}{Cain, M.},
  \bibinfo{author}{Lukin, M.D.}, \bibinfo{author}{Pichler, H.},
  \bibinfo{year}{2025}.
\newblock \bibinfo{title}{Quantum adiabatic optimization with {Rydberg} arrays:
  Localization phenomena and encoding strategies}.
\newblock \bibinfo{journal}{PRX Quantum} \bibinfo{volume}{6}
  \bibinfo{number}{2}.
\newblock \URLprefix \url{http://dx.doi.org/10.1103/PRXQuantum.6.020306},
  \DOIprefix\doi{10.1103/prxquantum.6.020306}.
\bibitem[{Boothby et~al.(2020)Boothby, Bunyk, Raymond \protect\BBA{}
  Roy}]{boothby2020}
\bibinfo{author}{Boothby, K.}, \bibinfo{author}{Bunyk, P.},
  \bibinfo{author}{Raymond, J.}, \bibinfo{author}{Roy, A.},
  \bibinfo{year}{2020}.
\newblock \bibinfo{title}{Next-generation topology of {D-Wave} quantum
  processors}.
\newblock \bibinfo{howpublished}{Preprint}.
\newblock \DOIprefix\doi{10.48550/arXiv.2003.00133}.
\bibitem[{Breu \protect\BBA{} Kirkpatrick(1998)}]{breu1998}
\bibinfo{author}{Breu, H.}, \bibinfo{author}{Kirkpatrick, D.G.},
  \bibinfo{year}{1998}.
\newblock \bibinfo{title}{Unit disk graph recognition is {NP-hard}}.
\newblock \bibinfo{journal}{Comput. Geom.} \bibinfo{volume}{9},
  \bibinfo{pages}{3--24}.
\newblock \DOIprefix\doi{10.1016/S0925-7721(97)00014-X}.
\bibitem[{Bucher et~al.(2025)Bucher, Stein, Feld \protect\BBA{}
  Linnhoff-Popien}]{bucher2025}
\bibinfo{author}{Bucher, D.}, \bibinfo{author}{Stein, J.},
  \bibinfo{author}{Feld, S.}, \bibinfo{author}{Linnhoff-Popien, C.},
  \bibinfo{year}{2025}.
\newblock \bibinfo{title}{{IF-QAOA}: A penalty-free approach to accelerating
  constrained quantum optimization}.
\newblock \bibinfo{howpublished}{Preprint}.
\newblock \DOIprefix\doi{10.48550/arXiv.2504.08663}.
\bibitem[{Cai et~al.(2014)Cai, Macready \protect\BBA{} Roy}]{cai2014}
\bibinfo{author}{Cai, J.}, \bibinfo{author}{Macready, W.G.},
  \bibinfo{author}{Roy, A.}, \bibinfo{year}{2014}.
\newblock \bibinfo{title}{A practical heuristic for finding graph minors}.
\newblock \bibinfo{howpublished}{Preprint}.
\newblock \DOIprefix\doi{10.48550/arXiv.1406.2741}.
\bibitem[{Castelvecchi(2023)}]{castelvecchi2023}
\bibinfo{author}{Castelvecchi, D.}, \bibinfo{year}{2023}.
\newblock \bibinfo{title}{{{IBM}} releases first-ever 1,000-qubit quantum
  chip}.
\newblock \bibinfo{journal}{Nature} \bibinfo{volume}{624},
  \bibinfo{pages}{238}.
\newblock \DOIprefix\doi{10.1038/d41586-023-03854-1}.
\bibitem[{Cerezo et~al.(2021)Cerezo, Arrasmith, Babbush, Benjamin, Endo, Fujii,
  McClean, Mitarai, Yuan, Cincio \protect\BBA{} Coles}]{cerezo2021}
\bibinfo{author}{Cerezo, M.}, \bibinfo{author}{Arrasmith, A.},
  \bibinfo{author}{Babbush, R.}, \bibinfo{author}{Benjamin, S.C.},
  \bibinfo{author}{Endo, S.}, \bibinfo{author}{Fujii, K.},
  \bibinfo{author}{McClean, J.R.}, \bibinfo{author}{Mitarai, K.},
  \bibinfo{author}{Yuan, X.}, \bibinfo{author}{Cincio, L.},
  \bibinfo{author}{Coles, P.J.}, \bibinfo{year}{2021}.
\newblock \bibinfo{title}{Variational quantum algorithms}.
\newblock \bibinfo{journal}{Nat. Rev. Phys.} \bibinfo{volume}{3},
  \bibinfo{pages}{625--644}.
\newblock \DOIprefix\doi{10.1038/s42254-021-00348-9}.
\bibitem[{Chandarana et~al.(2022)Chandarana, Hegade, Paul, Albarrán-Arriagada,
  Solano, del Campo \protect\BBA{} Chen}]{chandarana2022}
\bibinfo{author}{Chandarana, P.}, \bibinfo{author}{Hegade, N.N.},
  \bibinfo{author}{Paul, K.}, \bibinfo{author}{Albarrán-Arriagada, F.},
  \bibinfo{author}{Solano, E.}, \bibinfo{author}{del Campo, A.},
  \bibinfo{author}{Chen, X.}, \bibinfo{year}{2022}.
\newblock \bibinfo{title}{Digitized-counterdiabatic quantum approximate
  optimization algorithm}.
\newblock \bibinfo{journal}{Phys. Rev. Research} \bibinfo{volume}{4},
  \bibinfo{pages}{013141}.
\newblock \DOIprefix\doi{10.1103/physrevresearch.4.013141}.
\bibitem[{Choi(2008)}]{choi2008}
\bibinfo{author}{Choi, V.}, \bibinfo{year}{2008}.
\newblock \bibinfo{title}{Minor-embedding in adiabatic quantum computation: I.
  {The} parameter setting problem}.
\newblock \bibinfo{journal}{Quantum Inf. Process.} \bibinfo{volume}{7},
  \bibinfo{pages}{193--209}.
\newblock \DOIprefix\doi{10.1007/s11128-008-0082-9}.
\bibitem[{Clark et~al.(1990)Clark, Colbourn \protect\BBA{} Johnson}]{clark1990}
\bibinfo{author}{Clark, B.N.}, \bibinfo{author}{Colbourn, C.J.},
  \bibinfo{author}{Johnson, D.S.}, \bibinfo{year}{1990}.
\newblock \bibinfo{title}{Unit disk graphs}.
\newblock \bibinfo{journal}{Discrete Math.} \bibinfo{volume}{86},
  \bibinfo{pages}{165--177}.
\newblock \DOIprefix\doi{10.1016/0012-365X(90)90358-O}.
\bibitem[{Codognet(2024)}]{codognet2024}
\bibinfo{author}{Codognet, P.}, \bibinfo{year}{2024}.
\newblock \bibinfo{title}{Comparing integer encodings in {QUBO} for quantum
  and digital annealing: The travelling salesman problem}, in:
  \bibinfo{editor}{Sevaux, M.}, \bibinfo{editor}{Olteanu, A.L.},
  \bibinfo{editor}{Pardo, E.G.}, \bibinfo{editor}{Sifaleras, A.},
  \bibinfo{editor}{Makboul, S.} (Eds.), \bibinfo{booktitle}{Metaheuristics},
  \bibinfo{publisher}{Springer Nature Switzerland}.
  pp.~\bibinfo{pages}{262--267}.
\newblock \DOIprefix\doi{10.1007/978-3-031-62912-9_25}.
\bibitem[{Creemers \protect\BBA{} Armas(2025)}]{creemers2025}
\bibinfo{author}{Creemers, S.}, \bibinfo{author}{Armas, L.F.P.},
  \bibinfo{year}{2025}.
\newblock \bibinfo{title}{Discrete optimization: A quantum revolution?}
\newblock \bibinfo{journal}{Eur. J. Oper. Res.} \bibinfo{volume}{323},
  \bibinfo{pages}{378--408}.
\newblock \DOIprefix\doi{10.1016/j.ejor.2024.12.016}.
\bibitem[{Crooks(2019)}]{crooks2019}
\bibinfo{author}{Crooks, G.E.}, \bibinfo{year}{2019}.
\newblock \bibinfo{title}{Gradients of parameterized quantum gates using the
  parameter-shift rule and gate decomposition}.
\newblock \bibinfo{howpublished}{Preprint}.
\newblock \DOIprefix\doi{10.48550/arXiv.1905.13311}.
\bibitem[{{D-Wave}(2024)}]{dwave2024}
\bibinfo{author}{{D-Wave}}, \bibinfo{year}{2024}.
\newblock \bibinfo{title}{{D-Wave Leap}}.
\newblock \URLprefix \url{https://cloud.dwavesys.com}.
\bibitem[{Dantzig et~al.(1954)Dantzig, Fulkerson \protect\BBA{}
  Johnson}]{dantzig1954}
\bibinfo{author}{Dantzig, G.}, \bibinfo{author}{Fulkerson, R.},
  \bibinfo{author}{Johnson, S.}, \bibinfo{year}{1954}.
\newblock \bibinfo{title}{Solution of a large-scale traveling-salesman
  problem}.
\newblock \bibinfo{journal}{J. Oper. Res. Soc. Am.} \bibinfo{volume}{2},
  \bibinfo{pages}{393--410}.
\newblock \DOIprefix\doi{10.1287/opre.2.4.393}.
\bibitem[{Das \protect\BBA{} Chakrabarti(2005)}]{das2005}
\bibinfo{author}{Das, A.}, \bibinfo{author}{Chakrabarti, B.K.},
  \bibinfo{year}{2005}.
\newblock \bibinfo{title}{Quantum Annealing and Related Optimization Methods}.
\newblock \bibinfo{publisher}{Springer Science \& Business Media}.
\newblock \DOIprefix\doi{10.1007/11526216}.
\bibitem[{Dattani et~al.(2019)Dattani, Szalay \protect\BBA{}
  Chancellor}]{dattani2019a}
\bibinfo{author}{Dattani, N.}, \bibinfo{author}{Szalay, S.},
  \bibinfo{author}{Chancellor, N.}, \bibinfo{year}{2019}.
\newblock \bibinfo{title}{Pegasus: {{The}} second connectivity graph for
  large-scale quantum annealing hardware}.
\newblock \bibinfo{howpublished}{Preprint}.
\newblock \DOIprefix\doi{10.48550/arxiv.1901.07636}.
\bibitem[{Dominguez et~al.(2023)Dominguez, Unger, Traube, Mant, Ertler
  \protect\BBA{} Lechner}]{dominguez2023}
\bibinfo{author}{Dominguez, F.}, \bibinfo{author}{Unger, J.},
  \bibinfo{author}{Traube, M.}, \bibinfo{author}{Mant, B.},
  \bibinfo{author}{Ertler, C.}, \bibinfo{author}{Lechner, W.},
  \bibinfo{year}{2023}.
\newblock \bibinfo{title}{Encoding-independent optimization problem formulation
  for quantum computing}.
\newblock \bibinfo{journal}{Front. Quantum Sci. Technol.} \bibinfo{volume}{2}
  \DOIprefix\doi{10.3389/frqst.2023.1229471}.
\bibitem[{Ebadi et~al.(2022)Ebadi, Keesling, Cain, Wang, Levine, Bluvstein,
  Semeghini, Omran, Liu, Samajdar, Luo, Nash, Gao, Barak, Farhi, Sachdev,
  Gemelke, Zhou, Choi, Pichler, Wang, Greiner, Vuletić \protect\BBA{}
  Lukin}]{ebadi2022}
\bibinfo{author}{Ebadi, S.}, \bibinfo{author}{Keesling, A.},
  \bibinfo{author}{Cain, M.}, \bibinfo{author}{Wang, T.T.},
  \bibinfo{author}{Levine, H.}, \bibinfo{author}{Bluvstein, D.},
  \bibinfo{author}{Semeghini, G.}, \bibinfo{author}{Omran, A.},
  \bibinfo{author}{Liu, J.}, \bibinfo{author}{Samajdar, R.},
  \bibinfo{author}{Luo, X.Z.}, \bibinfo{author}{Nash, B.},
  \bibinfo{author}{Gao, X.}, \bibinfo{author}{Barak, B.},
  \bibinfo{author}{Farhi, E.}, \bibinfo{author}{Sachdev, S.},
  \bibinfo{author}{Gemelke, N.}, \bibinfo{author}{Zhou, L.},
  \bibinfo{author}{Choi, S.}, \bibinfo{author}{Pichler, H.},
  \bibinfo{author}{Wang, S.}, \bibinfo{author}{Greiner, M.},
  \bibinfo{author}{Vuletić, V.}, \bibinfo{author}{Lukin, M.D.},
  \bibinfo{year}{2022}.
\newblock \bibinfo{title}{Quantum optimization of maximum independent set using
  {Rydberg} atom arrays}.
\newblock \bibinfo{journal}{Science} \bibinfo{volume}{376},
  \bibinfo{pages}{1209--1215}.
\newblock \DOIprefix\doi{10.1126/science.abo6587}.
\bibitem[{Ebadi et~al.(2021)Ebadi, Wang, Levine, Keesling, Semeghini, Omran,
  Bluvstein, Samajdar, Pichler, Ho, Choi, Sachdev, Greiner, Vuletić
  \protect\BBA{} Lukin}]{ebadi2021}
\bibinfo{author}{Ebadi, S.}, \bibinfo{author}{Wang, T.T.},
  \bibinfo{author}{Levine, H.}, \bibinfo{author}{Keesling, A.},
  \bibinfo{author}{Semeghini, G.}, \bibinfo{author}{Omran, A.},
  \bibinfo{author}{Bluvstein, D.}, \bibinfo{author}{Samajdar, R.},
  \bibinfo{author}{Pichler, H.}, \bibinfo{author}{Ho, W.W.},
  \bibinfo{author}{Choi, S.}, \bibinfo{author}{Sachdev, S.},
  \bibinfo{author}{Greiner, M.}, \bibinfo{author}{Vuletić, V.},
  \bibinfo{author}{Lukin, M.D.}, \bibinfo{year}{2021}.
\newblock \bibinfo{title}{Quantum phases of matter on a 256-atom programmable
  quantum simulator}.
\newblock \bibinfo{journal}{Nature} \bibinfo{volume}{595},
  \bibinfo{pages}{227--232}.
\newblock \DOIprefix\doi{10.1038/s41586-021-03582-4}.
\bibitem[{Egger et~al.(2021)Egger, Mareček \protect\BBA{} Woerner}]{egger2021}
\bibinfo{author}{Egger, D.J.}, \bibinfo{author}{Mareček, J.},
  \bibinfo{author}{Woerner, S.}, \bibinfo{year}{2021}.
\newblock \bibinfo{title}{Warm-starting quantum optimization}.
\newblock \bibinfo{journal}{Quantum} \bibinfo{volume}{5}, \bibinfo{pages}{479}.
\newblock \DOIprefix\doi{10.22331/q-2021-06-17-479}.
\bibitem[{Erd\H{o}s \protect\BBA{} Rényi(1959)}]{erdos1959}
\bibinfo{author}{Erd\H{o}s, P.}, \bibinfo{author}{Rényi, A.},
  \bibinfo{year}{1959}.
\newblock \bibinfo{title}{On random graphs {I.}}
\newblock \bibinfo{journal}{Publ. Math. Debrecen} \bibinfo{volume}{6},
  \bibinfo{pages}{290--297}.
\newblock \DOIprefix\doi{10.5486/pmd.1959.6.3-4.12}.
\bibitem[{Farhi et~al.(2014)Farhi, Goldstone \protect\BBA{}
  Gutmann}]{farhi2014}
\bibinfo{author}{Farhi, E.}, \bibinfo{author}{Goldstone, J.},
  \bibinfo{author}{Gutmann, S.}, \bibinfo{year}{2014}.
\newblock \bibinfo{title}{A quantum approximate optimization algorithm}.
\newblock \bibinfo{howpublished}{Preprint}.
\newblock \DOIprefix\doi{10.48550/arXiv.1411.4028}.
\bibitem[{Farhi et~al.(2000)Farhi, Goldstone, Gutmann \protect\BBA{}
  Sipser}]{farhi2000}
\bibinfo{author}{Farhi, E.}, \bibinfo{author}{Goldstone, J.},
  \bibinfo{author}{Gutmann, S.}, \bibinfo{author}{Sipser, M.},
  \bibinfo{year}{2000}.
\newblock \bibinfo{title}{Quantum computation by adiabatic evolution}.
\newblock \bibinfo{howpublished}{Preprint}.
\newblock \DOIprefix\doi{10.48550/arXiv.quant-ph/0001106}.
\bibitem[{Finnila et~al.(1994)Finnila, Gomez, Sebenik, Stenson \protect\BBA{}
  Doll}]{finnila1994}
\bibinfo{author}{Finnila, A.B.}, \bibinfo{author}{Gomez, M.A.},
  \bibinfo{author}{Sebenik, C.}, \bibinfo{author}{Stenson, C.},
  \bibinfo{author}{Doll, J.D.}, \bibinfo{year}{1994}.
\newblock \bibinfo{title}{Quantum annealing: A new method for minimizing
  multidimensional functions}.
\newblock \bibinfo{journal}{Chem. Phys. Lett.} \bibinfo{volume}{219},
  \bibinfo{pages}{343–348}.
\newblock \DOIprefix\doi{10.1016/0009-2614(94)00117-0}.
\bibitem[{Fuchs \protect\BBA{} Bassa(2024)}]{Fuchs2024lxmixersqaoa}
\bibinfo{author}{Fuchs, F.G.}, \bibinfo{author}{Bassa, R.P.},
  \bibinfo{year}{2024}.
\newblock \bibinfo{title}{{LX}-mixers for {QAOA}: {O}ptimal mixers restricted
  to subspaces and the stabilizer formalism}.
\newblock \bibinfo{journal}{{Quantum}} \bibinfo{volume}{8},
  \bibinfo{pages}{1535}.
\newblock \DOIprefix\doi{10.22331/q-2024-11-25-1535}.
\bibitem[{Fuchs et~al.(2022)Fuchs, Lye, Møll~Nilsen, Stasik \protect\BBA{}
  Sartor}]{fuchs2022}
\bibinfo{author}{Fuchs, F.G.}, \bibinfo{author}{Lye, K.O.},
  \bibinfo{author}{Møll~Nilsen, H.}, \bibinfo{author}{Stasik, A.J.},
  \bibinfo{author}{Sartor, G.}, \bibinfo{year}{2022}.
\newblock \bibinfo{title}{Constraint preserving mixers for the quantum
  approximate optimization algorithm}.
\newblock \bibinfo{journal}{Algorithms} \bibinfo{volume}{15},
  \bibinfo{pages}{202}.
\newblock \DOIprefix\doi{10.3390/a15060202}.
\bibitem[{Garfinkel(1985)}]{garfinkel85}
\bibinfo{author}{Garfinkel, R.S.}, \bibinfo{year}{1985}.
\newblock \bibinfo{title}{Motivation and modeling}, in:
  \bibinfo{editor}{Lawler, E.L.}, \bibinfo{editor}{Lenstra, J.K.},
  \bibinfo{editor}{Rinnooy~Kan, A.H.G.}, \bibinfo{editor}{Shmoys, D.B.} (Eds.),
  \bibinfo{booktitle}{The Traveling Salesman Problem}.
  \bibinfo{publisher}{Wiley}, \bibinfo{address}{Chichester, UK}. Wiley Series
  in Discrete Mathematics \& Optimization. chapter~\bibinfo{chapter}{2},
  pp.~\bibinfo{pages}{17--36}.
\bibitem[{Gilbert et~al.(2024)Gilbert, Rodriguez \protect\BBA{}
  Louise}]{gilbert2024conference}
\bibinfo{author}{Gilbert, V.}, \bibinfo{author}{Rodriguez, J.},
  \bibinfo{author}{Louise, S.}, \bibinfo{year}{2024}.
\newblock \bibinfo{title}{Benchmarking quantum annealers with near-optimal
  minor-embedded instances}, in: \bibinfo{booktitle}{2024 IEEE Int. Conf.
  Quantum Computing and Engineering (QCE)}, \bibinfo{publisher}{IEEE}.
  pp.~\bibinfo{pages}{531--537}.
\newblock \DOIprefix\doi{10.1109/QCE60285.2024.00068}.
\bibitem[{Glover et~al.(2022)Glover, Kochenberger, Hennig \protect\BBA{}
  Du}]{glover2022}
\bibinfo{author}{Glover, F.}, \bibinfo{author}{Kochenberger, G.},
  \bibinfo{author}{Hennig, R.}, \bibinfo{author}{Du, Y.}, \bibinfo{year}{2022}.
\newblock \bibinfo{title}{Quantum bridge analytics {I}: a tutorial on
  formulating and using {QUBO} models}.
\newblock \bibinfo{journal}{Ann. Oper. Res.} \bibinfo{volume}{314},
  \bibinfo{pages}{141--183}.
\newblock \DOIprefix\doi{10.1007/s10479-022-04634-2}.
\bibitem[{Gomez-Tejedor et~al.(2025)Gomez-Tejedor, Osaba \protect\BBA{}
  Villar-Rodriguez}]{gomeztejedor2025}
\bibinfo{author}{Gomez-Tejedor, A.}, \bibinfo{author}{Osaba, E.},
  \bibinfo{author}{Villar-Rodriguez, E.}, \bibinfo{year}{2025}.
\newblock \bibinfo{title}{Addressing the minor-embedding problem in quantum
  annealing and evaluating state-of-the-art algorithm performance}.
\newblock \bibinfo{howpublished}{Preprint}.
\newblock \DOIprefix\doi{10.48550/arXiv.2504.13376}.
\bibitem[{Gonzalez-Bermejo et~al.(2022)Gonzalez-Bermejo, Alonso-Linaje
  \protect\BBA{} Atchade-Adelomou}]{gonzalez-bermejo2022}
\bibinfo{author}{Gonzalez-Bermejo, S.}, \bibinfo{author}{Alonso-Linaje, G.},
  \bibinfo{author}{Atchade-Adelomou, P.}, \bibinfo{year}{2022}.
\newblock \bibinfo{title}{{GPS}: A new {TSP} formulation for its
  generalizations type {QUBO}}.
\newblock \bibinfo{journal}{Mathematics} \bibinfo{volume}{10},
  \bibinfo{pages}{416}.
\newblock \DOIprefix\doi{10.3390/math10030416}.
\bibitem[{Goswami et~al.(2024)Goswami, Mukherjee, Ott \protect\BBA{}
  Schmelcher}]{goswami2023}
\bibinfo{author}{Goswami, K.}, \bibinfo{author}{Mukherjee, R.},
  \bibinfo{author}{Ott, H.}, \bibinfo{author}{Schmelcher, P.},
  \bibinfo{year}{2024}.
\newblock \bibinfo{title}{Solving optimization problems with local light shift
  encoding on {Rydberg} quantum annealers}.
\newblock \bibinfo{journal}{Phys. Rev. Research} \bibinfo{volume}{6},
  \bibinfo{pages}{023031}.
\newblock \DOIprefix\doi{10.1103/physrevresearch.6.023031}.
\bibitem[{Gottesman(2022)}]{gottesman2022}
\bibinfo{author}{Gottesman, D.}, \bibinfo{year}{2022}.
\newblock \bibinfo{title}{Opportunities and challenges in fault-tolerant
  quantum computation}.
\newblock \bibinfo{howpublished}{Preprint}.
\newblock \DOIprefix\doi{10.48550/arXiv.2210.15844}.
\bibitem[{Grange et~al.(2023)Grange, Poss \protect\BBA{} Bourreau}]{grange2022}
\bibinfo{author}{Grange, C.}, \bibinfo{author}{Poss, M.},
  \bibinfo{author}{Bourreau, E.}, \bibinfo{year}{2023}.
\newblock \bibinfo{title}{An introduction to variational quantum algorithms for
  combinatorial optimization problems}.
\newblock \bibinfo{journal}{4OR} \bibinfo{volume}{21},
  \bibinfo{pages}{363--403}.
\newblock \DOIprefix\doi{10.1007/s10288-023-00549-1}.
\bibitem[{Grover(1996)}]{GroverSTOC}
\bibinfo{author}{Grover, L.K.}, \bibinfo{year}{1996}.
\newblock \bibinfo{title}{A fast quantum mechanical algorithm for database
  search}, in: \bibinfo{booktitle}{Proc. 28th Annu. ACM Symp. Theory of
  Computing (STOC)}, \bibinfo{publisher}{ACM}. pp.~\bibinfo{pages}{212--219}.
\newblock \DOIprefix\doi{10.1145/237814.237866}.
\bibitem[{{Gurobi}(2024)}]{gurobi}
\bibinfo{author}{{Gurobi}}, \bibinfo{year}{2024}.
\newblock \bibinfo{title}{Gurobi Optimizer Reference Manual}.
\newblock \bibinfo{organization}{Gurobi Optimization, LLC}.
\newblock \URLprefix \url{https://www.gurobi.com}.
\bibitem[{Gyongyosi \protect\BBA{} Imre(2019)}]{Gyongyosi2019}
\bibinfo{author}{Gyongyosi, L.}, \bibinfo{author}{Imre, S.},
  \bibinfo{year}{2019}.
\newblock \bibinfo{title}{A survey on quantum computing technology}.
\newblock \bibinfo{journal}{Comput. Sci. Rev.} \bibinfo{volume}{31},
  \bibinfo{pages}{51--71}.
\newblock \DOIprefix\doi{10.1016/j.cosrev.2018.11.002}.
\bibitem[{Hadfield(2021)}]{hadfield2021}
\bibinfo{author}{Hadfield, S.}, \bibinfo{year}{2021}.
\newblock \bibinfo{title}{On the representation of {{Boolean}} and real
  functions as {{Hamiltonians}} for quantum computing}.
\newblock \bibinfo{journal}{ACM Trans. Quantum Comput.} \bibinfo{volume}{2},
  \bibinfo{pages}{18}.
\newblock \DOIprefix\doi{10.1145/3478519}.
\bibitem[{Hadfield et~al.(2019)Hadfield, Wang, O’Gorman, Rieffel, Venturelli
  \protect\BBA{} Biswas}]{hadfield2019}
\bibinfo{author}{Hadfield, S.}, \bibinfo{author}{Wang, Z.},
  \bibinfo{author}{O’Gorman, B.}, \bibinfo{author}{Rieffel, E.G.},
  \bibinfo{author}{Venturelli, D.}, \bibinfo{author}{Biswas, R.},
  \bibinfo{year}{2019}.
\newblock \bibinfo{title}{From the quantum approximate optimization algorithm
  to a quantum alternating operator ansatz}.
\newblock \bibinfo{journal}{Algorithms} \bibinfo{volume}{12},
  \bibinfo{pages}{34}.
\newblock \DOIprefix\doi{10.3390/a12020034}.
\bibitem[{Hadfield et~al.(2017)Hadfield, Wang, Rieffel, O'Gorman, Venturelli
  \protect\BBA{} Biswas}]{hadfield2017}
\bibinfo{author}{Hadfield, S.}, \bibinfo{author}{Wang, Z.},
  \bibinfo{author}{Rieffel, E.G.}, \bibinfo{author}{O'Gorman, B.},
  \bibinfo{author}{Venturelli, D.}, \bibinfo{author}{Biswas, R.},
  \bibinfo{year}{2017}.
\newblock \bibinfo{title}{Quantum {{Approximate Optimization}} with hard and
  soft constraints}, in: \bibinfo{booktitle}{Proc. 2nd Int. Workshop Post
  {Moores} Era Supercomputing}, \bibinfo{publisher}{{ACM}}.
  pp.~\bibinfo{pages}{15--21}.
\newblock \DOIprefix\doi{10.1145/3149526.3149530}.
\bibitem[{Hadfield(2018)}]{hadfield2018}
\bibinfo{author}{Hadfield, S.A.}, \bibinfo{year}{2018}.
\newblock \bibinfo{title}{Quantum Algorithms for Scientific Computing and
  Approximate Optimization}.
\newblock Ph.D. thesis. Columbia University.
\newblock \DOIprefix\doi{10.7916/D8X650C9}.
\bibitem[{Hangleiter \protect\BBA{} Eisert(2023)}]{hangleiter2023}
\bibinfo{author}{Hangleiter, D.}, \bibinfo{author}{Eisert, J.},
  \bibinfo{year}{2023}.
\newblock \bibinfo{title}{Computational advantage of quantum random sampling}.
\newblock \bibinfo{journal}{Rev. Mod. Phys.} \bibinfo{volume}{95},
  \bibinfo{pages}{035001}.
\newblock \DOIprefix\doi{10.1103/revmodphys.95.035001}.
\bibitem[{Harrow \protect\BBA{} Montanaro(2017)}]{harrow2017}
\bibinfo{author}{Harrow, A.W.}, \bibinfo{author}{Montanaro, A.},
  \bibinfo{year}{2017}.
\newblock \bibinfo{title}{Quantum computational supremacy}.
\newblock \bibinfo{journal}{Nature} \bibinfo{volume}{549},
  \bibinfo{pages}{203--209}.
\newblock \DOIprefix\doi{10.1038/nature23458}.
\bibitem[{Hauke et~al.(2020)Hauke, Katzgraber, Lechner, Nishimori
  \protect\BBA{} Oliver}]{hauke2020}
\bibinfo{author}{Hauke, P.}, \bibinfo{author}{Katzgraber, H.G.},
  \bibinfo{author}{Lechner, W.}, \bibinfo{author}{Nishimori, H.},
  \bibinfo{author}{Oliver, W.D.}, \bibinfo{year}{2020}.
\newblock \bibinfo{title}{Perspectives of quantum annealing: methods and
  implementations}.
\newblock \bibinfo{journal}{Rep. Prog. Phys.} \bibinfo{volume}{83},
  \bibinfo{pages}{054401}.
\newblock \DOIprefix\doi{10.1088/1361-6633/ab85b8}.
\bibitem[{Hoefler et~al.(2023)Hoefler, H\"{a}ner \protect\BBA{}
  Troyer}]{hoefler2023}
\bibinfo{author}{Hoefler, T.}, \bibinfo{author}{H\"{a}ner, T.},
  \bibinfo{author}{Troyer, M.}, \bibinfo{year}{2023}.
\newblock \bibinfo{title}{Disentangling hype from practicality: On
  realistically achieving quantum advantage}.
\newblock \bibinfo{journal}{Commun. ACM} \bibinfo{volume}{66},
  \bibinfo{pages}{82--87}.
\newblock \DOIprefix\doi{10.1145/3571725}.
\bibitem[{Hua et~al.(2023)Hua, Wang, Li, Peng, Liu, Zheng, Stein, Ding, Zhang,
  Humble \protect\BBA{} Li}]{hua2023}
\bibinfo{author}{Hua, F.}, \bibinfo{author}{Wang, M.}, \bibinfo{author}{Li,
  G.}, \bibinfo{author}{Peng, B.}, \bibinfo{author}{Liu, C.},
  \bibinfo{author}{Zheng, M.}, \bibinfo{author}{Stein, S.},
  \bibinfo{author}{Ding, Y.}, \bibinfo{author}{Zhang, E.Z.},
  \bibinfo{author}{Humble, T.}, \bibinfo{author}{Li, A.}, \bibinfo{year}{2023}.
\newblock \bibinfo{title}{{QASMTrans}: {A} {QASM} quantum transpiler framework
  for {NISQ} devices}, in: \bibinfo{booktitle}{Proc. SC'23 Workshops of Int.
  Conf. High Performance Computing, Network, Storage, and Analysis},
  \bibinfo{publisher}{ACM}. pp.~\bibinfo{pages}{1468--1477}.
\newblock \DOIprefix\doi{10.1145/3624062.3624222}.
\bibitem[{{IBM Quantum}(2023a)}]{ibm2023}
\bibinfo{author}{{IBM Quantum}}, \bibinfo{year}{2023}a.
\newblock \URLprefix \url{https://quantum.ibm.com/}.
\bibitem[{{IBM Quantum}(2023b)}]{ibmquantum2023}
\bibinfo{author}{{IBM Quantum}}, \bibinfo{year}{2023}b.
\newblock \bibinfo{title}{IBM Quantum Documentation: Transpiler}.
\newblock \bibinfo{organization}{IBM}.
\newblock \URLprefix \url{https://docs.quantum.ibm.com/api/qiskit/transpiler}.
\bibitem[{Javadi-Abhari et~al.(2024)Javadi-Abhari, Treinish, Krsulich, Wood,
  Lishman, Gacon, Martiel, Nation, Bishop, Cross, Johnson \protect\BBA{}
  Gambetta}]{javadiabhari2024}
\bibinfo{author}{Javadi-Abhari, A.}, \bibinfo{author}{Treinish, M.},
  \bibinfo{author}{Krsulich, K.}, \bibinfo{author}{Wood, C.J.},
  \bibinfo{author}{Lishman, J.}, \bibinfo{author}{Gacon, J.},
  \bibinfo{author}{Martiel, S.}, \bibinfo{author}{Nation, P.D.},
  \bibinfo{author}{Bishop, L.S.}, \bibinfo{author}{Cross, A.W.},
  \bibinfo{author}{Johnson, B.R.}, \bibinfo{author}{Gambetta, J.M.},
  \bibinfo{year}{2024}.
\newblock \bibinfo{title}{Quantum computing with qiskit}.
\newblock \bibinfo{howpublished}{Preprint}.
\newblock \DOIprefix\doi{10.48550/arXiv.2405.08810}.
\bibitem[{Karp(1975)}]{karp1975}
\bibinfo{author}{Karp, R.M.}, \bibinfo{year}{1975}.
\newblock \bibinfo{title}{On the computational complexity of combinatorial
  problems}.
\newblock \bibinfo{journal}{Networks} \bibinfo{volume}{5},
  \bibinfo{pages}{45--68}.
\newblock \DOIprefix\doi{10.1002/net.1975.5.1.45}.
\bibitem[{Karuppasamy et~al.(2025)Karuppasamy, Puram, Johnson \protect\BBA{}
  Thomas}]{quantum7010002}
\bibinfo{author}{Karuppasamy, K.}, \bibinfo{author}{Puram, V.},
  \bibinfo{author}{Johnson, S.}, \bibinfo{author}{Thomas, J.P.},
  \bibinfo{year}{2025}.
\newblock \bibinfo{title}{A comprehensive review of quantum circuit
  optimization: Current trends and future directions}.
\newblock \bibinfo{journal}{Quantum Rep.} \bibinfo{volume}{7},
  \bibinfo{pages}{2}.
\newblock \DOIprefix\doi{10.3390/quantum7010002}.
\bibitem[{Katabarwa et~al.(2024)Katabarwa, Gratsea, Caesura \protect\BBA{}
  Johnson}]{katabarwa2024}
\bibinfo{author}{Katabarwa, A.}, \bibinfo{author}{Gratsea, K.},
  \bibinfo{author}{Caesura, A.}, \bibinfo{author}{Johnson, P.D.},
  \bibinfo{year}{2024}.
\newblock \bibinfo{title}{Early fault-tolerant quantum computing}.
\newblock \bibinfo{journal}{PRX Quantum} \bibinfo{volume}{5},
  \bibinfo{pages}{020101}.
\newblock \DOIprefix\doi{10.1103/PRXQuantum.5.020101}.
\bibitem[{Kim et~al.(2023)Kim, Eddins, Anand, Wei, Van Den~Berg, Rosenblatt,
  Nayfeh, Wu, Zaletel, Temme et~al.}]{kim2023}
\bibinfo{author}{Kim, Y.}, \bibinfo{author}{Eddins, A.},
  \bibinfo{author}{Anand, S.}, \bibinfo{author}{Wei, K.X.},
  \bibinfo{author}{Van Den~Berg, E.}, \bibinfo{author}{Rosenblatt, S.},
  \bibinfo{author}{Nayfeh, H.}, \bibinfo{author}{Wu, Y.},
  \bibinfo{author}{Zaletel, M.}, \bibinfo{author}{Temme, K.}, et~al.,
  \bibinfo{year}{2023}.
\newblock \bibinfo{title}{Evidence for the utility of quantum computing before
  fault tolerance}.
\newblock \bibinfo{journal}{Nature} \bibinfo{volume}{618},
  \bibinfo{pages}{500--505}.
\newblock \DOIprefix\doi{10.1038/s41586-023-06096-3}.
\bibitem[{Klug(2024)}]{klug2024}
\bibinfo{author}{Klug, F.}, \bibinfo{year}{2024}.
\newblock \bibinfo{title}{Quantum optimization algorithms in operations
  research: methods, applications, and implications}.
\newblock \bibinfo{howpublished}{Preprint}.
\newblock \DOIprefix\doi{10.48550/arXiv.2312.13636}.
\bibitem[{Koch et~al.(2025)Koch, Neira, Chen, Cortiana, Egger, Heese, Hegade,
  Cadavid, Huang, Itoko, Kleinert, Xavier, Mohseni, Montanez-Barrera, Nakano,
  Nannicini, O'Meara, Pauckert, Proissl, Ramesh, Schicker, Shimada, Takeori,
  Valls, Bulck, Woerner \protect\BBA{}
  Zoufal}]{koch2025quantumoptimizationbenchmarklibrary}
\bibinfo{author}{Koch, T.}, \bibinfo{author}{Neira, D.E.B.},
  \bibinfo{author}{Chen, Y.}, \bibinfo{author}{Cortiana, G.},
  \bibinfo{author}{Egger, D.J.}, \bibinfo{author}{Heese, R.},
  \bibinfo{author}{Hegade, N.N.}, \bibinfo{author}{Cadavid, A.G.},
  \bibinfo{author}{Huang, R.}, \bibinfo{author}{Itoko, T.},
  \bibinfo{author}{Kleinert, T.}, \bibinfo{author}{Xavier, P.M.},
  \bibinfo{author}{Mohseni, N.}, \bibinfo{author}{Montanez-Barrera, J.A.},
  \bibinfo{author}{Nakano, K.}, \bibinfo{author}{Nannicini, G.},
  \bibinfo{author}{O'Meara, C.}, \bibinfo{author}{Pauckert, J.},
  \bibinfo{author}{Proissl, M.}, \bibinfo{author}{Ramesh, A.},
  \bibinfo{author}{Schicker, M.}, \bibinfo{author}{Shimada, N.},
  \bibinfo{author}{Takeori, M.}, \bibinfo{author}{Valls, V.},
  \bibinfo{author}{Bulck, D.V.}, \bibinfo{author}{Woerner, S.},
  \bibinfo{author}{Zoufal, C.}, \bibinfo{year}{2025}.
\newblock \bibinfo{title}{Quantum optimization benchmark library -- {The}
  {Intractable} {Decathlon}}.
\newblock \bibinfo{howpublished}{Preprint}.
\newblock \DOIprefix\doi{10.48550/arXiv.2504.03832}.
\bibitem[{Laurent(1997)}]{laurent1997a}
\bibinfo{author}{Laurent, M.}, \bibinfo{year}{1997}.
\newblock \bibinfo{title}{{Max-Cut} problem}, in: \bibinfo{editor}{Dell'Amico,
  M.}, \bibinfo{editor}{Maffioli, F.}, \bibinfo{editor}{Martello, S.} (Eds.),
  \bibinfo{booktitle}{Annotated Bibliographies in Combinatorial Optimization}.
  \bibinfo{publisher}{Wiley}, pp.~\bibinfo{pages}{241--259}.
\newblock \URLprefix
  \url{https://www.wiley.com/en-ie/Annotated+Bibliographies+in+Combinatorial+Optimization-p-9780470860700}.
\bibitem[{Lawler(1963)}]{lawler1963}
\bibinfo{author}{Lawler, E.L.}, \bibinfo{year}{1963}.
\newblock \bibinfo{title}{The quadratic assignment problem}.
\newblock \bibinfo{journal}{Manage. Sci.} \bibinfo{volume}{9},
  \bibinfo{pages}{586--599}.
\newblock \URLprefix \url{https://www.jstor.org/stable/2627364},
  \DOIprefix\doi{10.1287/mnsc.9.4.586}.
\bibitem[{Li et~al.(2019)Li, Ding \protect\BBA{} Xie}]{li2019}
\bibinfo{author}{Li, G.}, \bibinfo{author}{Ding, Y.}, \bibinfo{author}{Xie,
  Y.}, \bibinfo{year}{2019}.
\newblock \bibinfo{title}{Tackling the qubit mapping problem for {NISQ}-era
  quantum devices}, in: \bibinfo{booktitle}{Proc. 24th Int. Conf. Architectural
  Support for Programming Languages and Operating Systems},
  \bibinfo{publisher}{{ACM}}. pp.~\bibinfo{pages}{1001--1014}.
\newblock \DOIprefix\doi{10.1145/3297858.3304023}.
\bibitem[{Lobe \protect\BBA{} Lutz(2024)}]{lobe2021a}
\bibinfo{author}{Lobe, E.}, \bibinfo{author}{Lutz, A.}, \bibinfo{year}{2024}.
\newblock \bibinfo{title}{Minor embedding in broken chimera and derived graphs
  is {NP}-complete}.
\newblock \bibinfo{journal}{Theor. Comput. Sci.} \bibinfo{volume}{989},
  \bibinfo{pages}{114369}.
\newblock \DOIprefix\doi{10.1016/j.tcs.2023.114369}.
\bibitem[{Lubinski et~al.(2024)Lubinski, Coffrin, McGeoch, Sathe, Apanavicius
  \protect\BBA{} Bernal~Neira}]{lubinski2024}
\bibinfo{author}{Lubinski, T.}, \bibinfo{author}{Coffrin, C.},
  \bibinfo{author}{McGeoch, C.}, \bibinfo{author}{Sathe, P.},
  \bibinfo{author}{Apanavicius, J.}, \bibinfo{author}{Bernal~Neira, D.},
  \bibinfo{year}{2024}.
\newblock \bibinfo{title}{Optimization applications as quantum performance
  benchmarks}.
\newblock \bibinfo{journal}{ACM Trans. Quantum Comput.}
  \DOIprefix\doi{10.1145/3678184}.
\bibitem[{Lucas(2014)}]{lucas2014}
\bibinfo{author}{Lucas, A.}, \bibinfo{year}{2014}.
\newblock \bibinfo{title}{{Ising} formulations of many {NP} problems}.
\newblock \bibinfo{journal}{Front. Phys.} \bibinfo{volume}{2},
  \bibinfo{pages}{5}.
\newblock \DOIprefix\doi{10.3389/fphy.2014.00005}.
\bibitem[{MacLennan(2009)}]{MacLennan2009}
\bibinfo{author}{MacLennan, B.J.}, \bibinfo{year}{2009}.
\newblock \bibinfo{title}{Analog computation}, in: \bibinfo{editor}{Meyers,
  R.A.} (Ed.), \bibinfo{booktitle}{Encyclopedia of Complexity and Systems
  Science}. \bibinfo{publisher}{Springer New York}, \bibinfo{address}{New York,
  NY}, pp.~\bibinfo{pages}{271--294}.
\newblock \DOIprefix\doi{10.1007/978-0-387-30440-3_19}.
\bibitem[{Markidis(2024)}]{markidis2024}
\bibinfo{author}{Markidis, S.}, \bibinfo{year}{2024}.
\newblock \bibinfo{title}{What is quantum parallelism, anyhow?}
\newblock \bibinfo{howpublished}{Preprint}.
\newblock \DOIprefix\doi{10.48550/arXiv.2405.07222}.
\bibitem[{McGeoch \protect\BBA{} Farré(2020)}]{mcgeoch2020a}
\bibinfo{author}{McGeoch, C.}, \bibinfo{author}{Farré, P.},
  \bibinfo{year}{2020}.
\newblock \bibinfo{title}{The {D-Wave Advantage System}: An Overview}.
\newblock \bibinfo{type}{Technical Report} \bibinfo{number}{14-1049A-A}. D-Wave
  Systems Inc.
\newblock \URLprefix
  \url{https://www.dwavesys.com/resources/white-paper/the-d-wave-advantage-system-an-overview/}.
\bibitem[{Mermin(2007)}]{mermin2007}
\bibinfo{author}{Mermin, N.D.}, \bibinfo{year}{2007}.
\newblock \bibinfo{title}{Quantum Computer Science}.
\newblock \bibinfo{publisher}{Cambridge University Press},
  \bibinfo{address}{Cambridge}.
\newblock \DOIprefix\doi{10.1017/CBO9780511813870}.
\bibitem[{Miller et~al.(1960)Miller, Tucker \protect\BBA{} Zemlin}]{miller1960}
\bibinfo{author}{Miller, C.E.}, \bibinfo{author}{Tucker, A.W.},
  \bibinfo{author}{Zemlin, R.A.}, \bibinfo{year}{1960}.
\newblock \bibinfo{title}{Integer programming formulation of traveling salesman
  problems}.
\newblock \bibinfo{journal}{J. ACM} \bibinfo{volume}{7},
  \bibinfo{pages}{326--329}.
\newblock \DOIprefix\doi{10.1145/321043.321046}.
\bibitem[{Morita \protect\BBA{} Nishimori(2008)}]{morita2008}
\bibinfo{author}{Morita, S.}, \bibinfo{author}{Nishimori, H.},
  \bibinfo{year}{2008}.
\newblock \bibinfo{title}{Mathematical foundation of quantum annealing}.
\newblock \bibinfo{journal}{J. Math. Phys.} \bibinfo{volume}{49}
  \bibinfo{number}{12}.
\newblock \DOIprefix\doi{10.1063/1.2995837}.
\bibitem[{Mücke et~al.(2025)Mücke, Gerlach \protect\BBA{}
  Piatkowski}]{muecke2025}
\bibinfo{author}{Mücke, S.}, \bibinfo{author}{Gerlach, T.},
  \bibinfo{author}{Piatkowski, N.}, \bibinfo{year}{2025}.
\newblock \bibinfo{title}{Optimum-preserving {QUBO} parameter compression}.
\newblock \bibinfo{journal}{Quantum Mach. Intell.} \bibinfo{volume}{7}
  \bibinfo{number}{1}.
\newblock \DOIprefix\doi{10.1007/s42484-024-00219-3}.
\bibitem[{Nannicini(2020)}]{nannicini2020}
\bibinfo{author}{Nannicini, G.}, \bibinfo{year}{2020}.
\newblock \bibinfo{title}{An introduction to quantum computing, without the
  physics}.
\newblock \bibinfo{journal}{SIAM Rev.} \bibinfo{volume}{62},
  \bibinfo{pages}{936–981}.
\newblock \DOIprefix\doi{10.1137/18M1170650}.
\bibitem[{Nation \protect\BBA{} Treinish(2023)}]{nation2023}
\bibinfo{author}{Nation, P.D.}, \bibinfo{author}{Treinish, M.},
  \bibinfo{year}{2023}.
\newblock \bibinfo{title}{Suppressing quantum circuit errors due to system
  variability}.
\newblock \bibinfo{journal}{PRX Quantum} \bibinfo{volume}{4},
  \bibinfo{pages}{010327}.
\newblock \DOIprefix\doi{10.1103/PRXQuantum.4.010327}.
\bibitem[{Nguyen et~al.(2023)Nguyen, Liu, Wurtz, Lukin, Wang \protect\BBA{}
  Pichler}]{nguyen2023}
\bibinfo{author}{Nguyen, M.T.}, \bibinfo{author}{Liu, J.G.},
  \bibinfo{author}{Wurtz, J.}, \bibinfo{author}{Lukin, M.D.},
  \bibinfo{author}{Wang, S.T.}, \bibinfo{author}{Pichler, H.},
  \bibinfo{year}{2023}.
\newblock \bibinfo{title}{Quantum optimization with arbitrary connectivity
  using {Rydberg} atom arrays}.
\newblock \bibinfo{journal}{PRX Quantum} \bibinfo{volume}{4},
  \bibinfo{pages}{010316}.
\newblock \DOIprefix\doi{10.1103/PRXQuantum.4.010316}.
\bibitem[{Nielsen \protect\BBA{} Chuang(2010)}]{nielsen2010}
\bibinfo{author}{Nielsen, M.A.}, \bibinfo{author}{Chuang, I.L.},
  \bibinfo{year}{2010}.
\newblock \bibinfo{title}{Quantum Computation and Quantum Information}.
\newblock \bibinfo{edition}{10th anniversary} ed.,
  \bibinfo{publisher}{Cambridge University Press}, \bibinfo{address}{Cambridge,
  New York}.
\newblock \DOIprefix\doi{10.1017/CBO9780511976667}.
\bibitem[{Padmasola et~al.(2025)Padmasola, Li, Chatterjee \protect\BBA{}
  Dyk}]{padmasola2025solvingtravelingsalesmanproblem}
\bibinfo{author}{Padmasola, V.}, \bibinfo{author}{Li, Z.},
  \bibinfo{author}{Chatterjee, R.}, \bibinfo{author}{Dyk, W.},
  \bibinfo{year}{2025}.
\newblock \bibinfo{title}{Solving the traveling salesman problem via different
  quantum computing architectures}.
\newblock \bibinfo{howpublished}{Preprint}.
\newblock \DOIprefix\doi{10.48550/arXiv.2502.17725}.
\bibitem[{Parekh(2023)}]{parekh2023}
\bibinfo{author}{Parekh, O.}, \bibinfo{year}{2023}.
\newblock \bibinfo{title}{Synergies between operations research and quantum
  information science}.
\newblock \bibinfo{journal}{INFORMS J. Comput.} \bibinfo{volume}{35},
  \bibinfo{pages}{266--273}.
\newblock \DOIprefix\doi{10.1287/ijoc.2023.1268}.
\bibitem[{Pelofske(2024)}]{pelofske2024}
\bibinfo{author}{Pelofske, E.}, \bibinfo{year}{2024}.
\newblock \bibinfo{title}{4-clique network minor embedding for quantum
  annealers}.
\newblock \bibinfo{journal}{Phys. Rev. Applied} \bibinfo{volume}{21},
  \bibinfo{pages}{034023}.
\newblock \DOIprefix\doi{10.1103/PhysRevApplied.21.034023}.
\bibitem[{Pelofske et~al.(2020)Pelofske, Hahn \protect\BBA{}
  Djidjev}]{pelofske2020}
\bibinfo{author}{Pelofske, E.}, \bibinfo{author}{Hahn, G.},
  \bibinfo{author}{Djidjev, H.}, \bibinfo{year}{2020}.
\newblock \bibinfo{title}{Advanced unembedding techniques for quantum
  annealers}, in: \bibinfo{booktitle}{Int. Conf. Rebooting Computing (ICRC)},
  \bibinfo{publisher}{IEEE}. pp.~\bibinfo{pages}{34--41}.
\newblock \DOIprefix\doi{10.1109/ICRC2020.2020.00001}.
\bibitem[{Pichler et~al.(2018)Pichler, Wang, Zhou, Choi \protect\BBA{}
  Lukin}]{pichler2018}
\bibinfo{author}{Pichler, H.}, \bibinfo{author}{Wang, S.T.},
  \bibinfo{author}{Zhou, L.}, \bibinfo{author}{Choi, S.},
  \bibinfo{author}{Lukin, M.D.}, \bibinfo{year}{2018}.
\newblock \bibinfo{title}{Quantum optimization for maximum independent set
  using {Rydberg} atom arrays}.
\newblock \bibinfo{howpublished}{Preprint}.
\newblock \DOIprefix\doi{10.48550/arxiv.1808.10816}.
\bibitem[{Preskill(2018)}]{preskill2018}
\bibinfo{author}{Preskill, J.}, \bibinfo{year}{2018}.
\newblock \bibinfo{title}{Quantum computing in the {{NISQ}} era and beyond}.
\newblock \bibinfo{journal}{Quantum} \bibinfo{volume}{2}, \bibinfo{pages}{79}.
\newblock \DOIprefix\doi{10.22331/q-2018-08-06-79}.
\bibitem[{Rehfeldt et~al.(2023)Rehfeldt, Koch \protect\BBA{}
  Shinano}]{rehfeldt2023}
\bibinfo{author}{Rehfeldt, D.}, \bibinfo{author}{Koch, T.},
  \bibinfo{author}{Shinano, Y.}, \bibinfo{year}{2023}.
\newblock \bibinfo{title}{Faster exact solution of sparse {MaxCut} and {QUBO}
  problems}.
\newblock \bibinfo{journal}{Math. Program. Comput.} \bibinfo{volume}{15},
  \bibinfo{pages}{445--470}.
\newblock \DOIprefix\doi{10.1007/s12532-023-00236-6}.
\bibitem[{Reinelt(1991)}]{reinelt1991}
\bibinfo{author}{Reinelt, G.}, \bibinfo{year}{1991}.
\newblock \bibinfo{title}{{TSPLIB}—a traveling salesman problem library}.
\newblock \bibinfo{journal}{ORSA J. Comput.} \bibinfo{volume}{3},
  \bibinfo{pages}{376--384}.
\newblock \DOIprefix\doi{10.1287/ijoc.3.4.376}.
\bibitem[{Ruan et~al.(2020)Ruan, Marsh, Xue, Liu \protect\BBA{}
  Wang}]{ruan2020b}
\bibinfo{author}{Ruan, Y.}, \bibinfo{author}{Marsh, S.}, \bibinfo{author}{Xue,
  X.}, \bibinfo{author}{Liu, Z.}, \bibinfo{author}{Wang, J.},
  \bibinfo{year}{2020}.
\newblock \bibinfo{title}{The {{Quantum Approximate Algorithm}} for {{Solving
  Traveling Salesman Problem}}}.
\newblock \bibinfo{journal}{Comput. Mater. Contin.} \bibinfo{volume}{63},
  \bibinfo{pages}{1237--1247}.
\newblock \DOIprefix\doi{10.32604/cmc.2020.010001}.
\bibitem[{Sack et~al.(2023)Sack, Medina, Kueng \protect\BBA{}
  Serbyn}]{sack2023}
\bibinfo{author}{Sack, S.H.}, \bibinfo{author}{Medina, R.A.},
  \bibinfo{author}{Kueng, R.}, \bibinfo{author}{Serbyn, M.},
  \bibinfo{year}{2023}.
\newblock \bibinfo{title}{Recursive greedy initialization of the quantum
  approximate optimization algorithm with guaranteed improvement}.
\newblock \bibinfo{journal}{Phys. Rev. A} \bibinfo{volume}{107},
  \bibinfo{pages}{062404}.
\newblock \DOIprefix\doi{10.1103/PhysRevA.107.062404}.
\bibitem[{Sack \protect\BBA{} Serbyn(2021)}]{sack2021}
\bibinfo{author}{Sack, S.H.}, \bibinfo{author}{Serbyn, M.},
  \bibinfo{year}{2021}.
\newblock \bibinfo{title}{Quantum annealing initialization of the quantum
  approximate optimization algorithm}.
\newblock \bibinfo{journal}{Quantum} \bibinfo{volume}{5}, \bibinfo{pages}{491}.
\newblock \DOIprefix\doi{10.22331/q-2021-07-01-491}.
\bibitem[{Salehi et~al.(2022)Salehi, Glos \protect\BBA{} Miszczak}]{salehi2022}
\bibinfo{author}{Salehi, O.}, \bibinfo{author}{Glos, A.},
  \bibinfo{author}{Miszczak, J.A.}, \bibinfo{year}{2022}.
\newblock \bibinfo{title}{Unconstrained binary models of the travelling
  salesman problem variants for quantum optimization}.
\newblock \bibinfo{journal}{Quantum Inf. Process.} \bibinfo{volume}{21},
  \bibinfo{pages}{67}.
\newblock \DOIprefix\doi{10.1007/s11128-021-03405-5}.
\bibitem[{Scholten et~al.(2024)Scholten, Williams, Moody, Mosca, Hurley, Zeng,
  Troyer \protect\BBA{} Gambetta}]{scholten2024}
\bibinfo{author}{Scholten, T.L.}, \bibinfo{author}{Williams, C.J.},
  \bibinfo{author}{Moody, D.}, \bibinfo{author}{Mosca, M.},
  \bibinfo{author}{Hurley, W.}, \bibinfo{author}{Zeng, W.J.},
  \bibinfo{author}{Troyer, M.}, \bibinfo{author}{Gambetta, J.M.},
  \bibinfo{year}{2024}.
\newblock \bibinfo{title}{Assessing the benefits and risks of quantum
  computers}.
\newblock \bibinfo{howpublished}{Preprint}.
\newblock \DOIprefix\doi{10.48550/arXiv.2401.16317}.
\bibitem[{Serret et~al.(2020)Serret, Marchand \protect\BBA{}
  Ayral}]{serret2020}
\bibinfo{author}{Serret, M.F.}, \bibinfo{author}{Marchand, B.},
  \bibinfo{author}{Ayral, T.}, \bibinfo{year}{2020}.
\newblock \bibinfo{title}{Solving optimization problems with {Rydberg} analog
  quantum computers: {Realistic} requirements for quantum advantage using noisy
  simulation and classical benchmarks}.
\newblock \bibinfo{journal}{Phys. Rev. A} \bibinfo{volume}{102},
  \bibinfo{pages}{052617}.
\newblock \DOIprefix\doi{10.1103/PhysRevA.102.052617}.
\bibitem[{Shor(1997)}]{shor1997}
\bibinfo{author}{Shor, P.W.}, \bibinfo{year}{1997}.
\newblock \bibinfo{title}{Polynomial-time algorithms for prime factorization
  and discrete logarithms on a quantum computer}.
\newblock \bibinfo{journal}{SIAM J. Comput.} \bibinfo{volume}{26},
  \bibinfo{pages}{1484--1509}.
\newblock \DOIprefix\doi{10.1137/S0097539795293172}.
\bibitem[{da~Silva~Coelho et~al.(2022)da~Silva~Coelho, D'Arcangelo
  \protect\BBA{} Henry}]{coelho2022}
\bibinfo{author}{da~Silva~Coelho, W.}, \bibinfo{author}{D'Arcangelo, M.},
  \bibinfo{author}{Henry, L.P.}, \bibinfo{year}{2022}.
\newblock \bibinfo{title}{Efficient protocol for solving combinatorial graph
  problems on neutral-atom quantum processors}.
\newblock \bibinfo{howpublished}{Preprint}.
\newblock \DOIprefix\doi{10.48550/arXiv.2207.13030}.
\bibitem[{Sinno et~al.(2025)Sinno, Groß, Chancellor, Bhalgamiya \protect\BBA{}
  Sahoo}]{sinno2025}
\bibinfo{author}{Sinno, S.}, \bibinfo{author}{Groß, T.},
  \bibinfo{author}{Chancellor, N.}, \bibinfo{author}{Bhalgamiya, B.},
  \bibinfo{author}{Sahoo, A.}, \bibinfo{year}{2025}.
\newblock \bibinfo{title}{Optimized quantum embedding: A universal
  minor-embedding framework for large complete bipartite graph}.
\newblock \bibinfo{howpublished}{Preprint}.
\newblock \DOIprefix\doi{10.48550/arXiv.2504.21112}.
\bibitem[{Styer et~al.(2002)Styer, Balkin, Becker, Burns, Dudley, Forth,
  Gaumer, Kramer, Oertel, Park, Rinkoski, Smith \protect\BBA{}
  Wotherspoon}]{styer2002}
\bibinfo{author}{Styer, D.F.}, \bibinfo{author}{Balkin, M.S.},
  \bibinfo{author}{Becker, K.M.}, \bibinfo{author}{Burns, M.R.},
  \bibinfo{author}{Dudley, C.E.}, \bibinfo{author}{Forth, S.T.},
  \bibinfo{author}{Gaumer, J.S.}, \bibinfo{author}{Kramer, M.A.},
  \bibinfo{author}{Oertel, D.C.}, \bibinfo{author}{Park, L.H.},
  \bibinfo{author}{Rinkoski, M.T.}, \bibinfo{author}{Smith, C.T.},
  \bibinfo{author}{Wotherspoon, T.D.}, \bibinfo{year}{2002}.
\newblock \bibinfo{title}{Nine formulations of quantum mechanics}.
\newblock \bibinfo{journal}{Am. J. Phys.} \bibinfo{volume}{70},
  \bibinfo{pages}{288--297}.
\newblock \DOIprefix\doi{10.1119/1.1445404}.
\bibitem[{Sugie et~al.(2021)Sugie, Yoshida, Mertig, Takemoto, Teramoto,
  Nakamura, Takigawa, Minato, Yamaoka \protect\BBA{} Komatsuzaki}]{sugie2021}
\bibinfo{author}{Sugie, Y.}, \bibinfo{author}{Yoshida, Y.},
  \bibinfo{author}{Mertig, N.}, \bibinfo{author}{Takemoto, T.},
  \bibinfo{author}{Teramoto, H.}, \bibinfo{author}{Nakamura, A.},
  \bibinfo{author}{Takigawa, I.}, \bibinfo{author}{Minato, S.i.},
  \bibinfo{author}{Yamaoka, M.}, \bibinfo{author}{Komatsuzaki, T.},
  \bibinfo{year}{2021}.
\newblock \bibinfo{title}{Minor-embedding heuristics for large-scale annealing
  processors with sparse hardware graphs of up to 102,400 nodes}.
\newblock \bibinfo{journal}{Soft Comput.} \bibinfo{volume}{25},
  \bibinfo{pages}{1731–1749}.
\newblock \DOIprefix\doi{10.1007/s00500-020-05502-6}.
\bibitem[{Ulmann(2024)}]{ulmann2024}
\bibinfo{author}{Ulmann, B.}, \bibinfo{year}{2024}.
\newblock \bibinfo{title}{Beyond zeros and ones – analog computing in the
  twenty-first century}.
\newblock \bibinfo{journal}{Int. J. Parallel, Emergent and Distrib. Syst.}
  \bibinfo{volume}{39}, \bibinfo{pages}{139--151}.
\newblock \DOIprefix\doi{10.1080/17445760.2023.2296672}.
\bibitem[{Venegas-Andraca et~al.(2018)Venegas-Andraca, Cruz-Santos, McGeoch
  \protect\BBA{} Lanzagorta}]{venegasandraca2018}
\bibinfo{author}{Venegas-Andraca, S.E.}, \bibinfo{author}{Cruz-Santos, W.},
  \bibinfo{author}{McGeoch, C.}, \bibinfo{author}{Lanzagorta, M.},
  \bibinfo{year}{2018}.
\newblock \bibinfo{title}{A cross-disciplinary introduction to quantum
  annealing-based algorithms}.
\newblock \bibinfo{journal}{Contemp. Phys.} \bibinfo{volume}{59},
  \bibinfo{pages}{174--197}.
\newblock \DOIprefix\doi{10.1080/00107514.2018.1450720}.
\bibitem[{Waring et~al.(2024)Waring, Pere \protect\BBA{} Beux}]{waring2024}
\bibinfo{author}{Waring, J.B.}, \bibinfo{author}{Pere, C.},
  \bibinfo{author}{Beux, S.L.}, \bibinfo{year}{2024}.
\newblock \bibinfo{title}{Noise aware utility optimization of {NISQ} devices}.
\newblock \bibinfo{howpublished}{Preprint}.
\newblock \DOIprefix\doi{10.48550/arXiv.2402.08226}.
\bibitem[{Wilson et~al.(2020)Wilson, Singh \protect\BBA{} Mueller}]{wilson2020}
\bibinfo{author}{Wilson, E.}, \bibinfo{author}{Singh, S.},
  \bibinfo{author}{Mueller, F.}, \bibinfo{year}{2020}.
\newblock \bibinfo{title}{Just-in-time quantum circuit transpilation reduces
  noise}, in: \bibinfo{booktitle}{2020 IEEE Int. Conf. Quantum Computing and
  Engineering (QCE)}, \bibinfo{publisher}{IEEE}. pp.~\bibinfo{pages}{345--355}.
\newblock \DOIprefix\doi{10.1109/QCE49297.2020.00050}.
\bibitem[{Wintersperger et~al.(2023)Wintersperger, Dommert, Ehmer, Hoursanov,
  Klepsch, Mauerer, Reuber, Strohm, Yin \protect\BBA{}
  Luber}]{wintersperger2023}
\bibinfo{author}{Wintersperger, K.}, \bibinfo{author}{Dommert, F.},
  \bibinfo{author}{Ehmer, T.}, \bibinfo{author}{Hoursanov, A.},
  \bibinfo{author}{Klepsch, J.}, \bibinfo{author}{Mauerer, W.},
  \bibinfo{author}{Reuber, G.}, \bibinfo{author}{Strohm, T.},
  \bibinfo{author}{Yin, M.}, \bibinfo{author}{Luber, S.}, \bibinfo{year}{2023}.
\newblock \bibinfo{title}{Neutral atom quantum computing hardware: Performance
  and end-user perspective}.
\newblock \bibinfo{journal}{EPJ Quantum Technol.} \bibinfo{volume}{10},
  \bibinfo{pages}{32}.
\newblock \DOIprefix\doi{10.1140/epjqt/s40507-023-00190-1}.
\bibitem[{Wu et~al.(2022)Wu, Lin, Guo, Liu, Fang, Jiao \protect\BBA{}
  Dai}]{wu2022}
\bibinfo{author}{Wu, J.}, \bibinfo{author}{Lin, X.}, \bibinfo{author}{Guo, Y.},
  \bibinfo{author}{Liu, J.}, \bibinfo{author}{Fang, L.}, \bibinfo{author}{Jiao,
  S.}, \bibinfo{author}{Dai, Q.}, \bibinfo{year}{2022}.
\newblock \bibinfo{title}{Analog optical computing for artificial
  intelligence}.
\newblock \bibinfo{journal}{Engineering} \bibinfo{volume}{10},
  \bibinfo{pages}{133--145}.
\newblock \DOIprefix\doi{10.1016/j.eng.2021.06.021}.
\bibitem[{Wurtz et~al.(2023)Wurtz, Bylinskii, Braverman, Amato-Grill, Cantu,
  Huber, Lukin, Liu, Weinberg, Long, Wang, Gemelke \protect\BBA{}
  Keesling}]{wurtz2023}
\bibinfo{author}{Wurtz, J.}, \bibinfo{author}{Bylinskii, A.},
  \bibinfo{author}{Braverman, B.}, \bibinfo{author}{Amato-Grill, J.},
  \bibinfo{author}{Cantu, S.H.}, \bibinfo{author}{Huber, F.},
  \bibinfo{author}{Lukin, A.}, \bibinfo{author}{Liu, F.},
  \bibinfo{author}{Weinberg, P.}, \bibinfo{author}{Long, J.},
  \bibinfo{author}{Wang, S.T.}, \bibinfo{author}{Gemelke, N.},
  \bibinfo{author}{Keesling, A.}, \bibinfo{year}{2023}.
\newblock \bibinfo{title}{Aquila: {{QuEra}}'s 256-qubit neutral-atom quantum
  computer}.
\newblock \bibinfo{howpublished}{Preprint}.
\newblock \DOIprefix\doi{10.48550/arXiv.2306.11727}.
\bibitem[{Wurtz et~al.(2022)Wurtz, Lopes, Gorgulla, Gemelke, Keesling
  \protect\BBA{} Wang}]{wurtz2022}
\bibinfo{author}{Wurtz, J.}, \bibinfo{author}{Lopes, P.L.S.},
  \bibinfo{author}{Gorgulla, C.}, \bibinfo{author}{Gemelke, N.},
  \bibinfo{author}{Keesling, A.}, \bibinfo{author}{Wang, S.},
  \bibinfo{year}{2022}.
\newblock \bibinfo{title}{Industry applications of neutral-atom quantum
  computing solving independent set problems}.
\newblock \bibinfo{howpublished}{Preprint}.
\newblock \DOIprefix\doi{10.48550/arxiv.2205.08500}.
\bibitem[{Zangeneh-Nejad et~al.(2021)Zangeneh-Nejad, Sounas, Alù
  \protect\BBA{} Fleury}]{zangeneh2021}
\bibinfo{author}{Zangeneh-Nejad, F.}, \bibinfo{author}{Sounas, D.L.},
  \bibinfo{author}{Alù, A.}, \bibinfo{author}{Fleury, R.},
  \bibinfo{year}{2021}.
\newblock \bibinfo{title}{Analogue computing with metamaterials}.
\newblock \bibinfo{journal}{Nat. Rev. Mater.} \bibinfo{volume}{6},
  \bibinfo{pages}{207--225}.
\newblock \DOIprefix\doi{10.1038/s41578-020-00243-2}.
\bibitem[{Zbinden et~al.(2020)Zbinden, B{\"a}rtschi, Djidjev \protect\BBA{}
  Eidenbenz}]{zbinden2020}
\bibinfo{author}{Zbinden, S.}, \bibinfo{author}{B{\"a}rtschi, A.},
  \bibinfo{author}{Djidjev, H.}, \bibinfo{author}{Eidenbenz, S.},
  \bibinfo{year}{2020}.
\newblock \bibinfo{title}{Embedding algorithms for quantum annealers with
  {Chimera} and {Pegasus} connection topologies}, in:
  \bibinfo{editor}{Sadayappan, P.}, \bibinfo{editor}{Chamberlain, B.L.},
  \bibinfo{editor}{Juckeland, G.}, \bibinfo{editor}{Ltaief, H.} (Eds.),
  \bibinfo{booktitle}{High Performance Computing}, \bibinfo{publisher}{Springer
  International Publishing}. pp.~\bibinfo{pages}{187--206}.
\newblock \DOIprefix\doi{10.1007/978-3-030-50743-5_10}.
\bibitem[{Zhang et~al.(2014)Zhang, Yu \protect\BBA{} Tong}]{Zhang2014}
\bibinfo{author}{Zhang, D.J.}, \bibinfo{author}{Yu, X.D.},
  \bibinfo{author}{Tong, D.M.}, \bibinfo{year}{2014}.
\newblock \bibinfo{title}{Theorem on the existence of a nonzero energy gap in
  adiabatic quantum computation}.
\newblock \bibinfo{journal}{Phys. Rev. A} \bibinfo{volume}{90},
  \bibinfo{pages}{042321}.
\newblock \DOIprefix\doi{10.1103/physreva.90.042321}.
\bibitem[{Zhao et~al.(2022)Zhao, Ye, Huang, Zhang, Wu, Guan, Zhu, Wei, He, Cao,
  Chen, Chung, Deng, Fan, Gong, Guo, Guo, Han, Li, Li, Li, Liang, Lin, Qian,
  Rong, Su, Sun, Wang, Wu, Xu, Ying, Yu, Zha, Zhang, Huo, Lu, Peng, Zhu
  \protect\BBA{} Pan}]{zhao2022}
\bibinfo{author}{Zhao, Y.}, \bibinfo{author}{Ye, Y.}, \bibinfo{author}{Huang,
  H.L.}, \bibinfo{author}{Zhang, Y.}, \bibinfo{author}{Wu, D.},
  \bibinfo{author}{Guan, H.}, \bibinfo{author}{Zhu, Q.}, \bibinfo{author}{Wei,
  Z.}, \bibinfo{author}{He, T.}, \bibinfo{author}{Cao, S.},
  \bibinfo{author}{Chen, F.}, \bibinfo{author}{Chung, T.H.},
  \bibinfo{author}{Deng, H.}, \bibinfo{author}{Fan, D.}, \bibinfo{author}{Gong,
  M.}, \bibinfo{author}{Guo, C.}, \bibinfo{author}{Guo, S.},
  \bibinfo{author}{Han, L.}, \bibinfo{author}{Li, N.}, \bibinfo{author}{Li,
  S.}, \bibinfo{author}{Li, Y.}, \bibinfo{author}{Liang, F.},
  \bibinfo{author}{Lin, J.}, \bibinfo{author}{Qian, H.}, \bibinfo{author}{Rong,
  H.}, \bibinfo{author}{Su, H.}, \bibinfo{author}{Sun, L.},
  \bibinfo{author}{Wang, S.}, \bibinfo{author}{Wu, Y.}, \bibinfo{author}{Xu,
  Y.}, \bibinfo{author}{Ying, C.}, \bibinfo{author}{Yu, J.},
  \bibinfo{author}{Zha, C.}, \bibinfo{author}{Zhang, K.}, \bibinfo{author}{Huo,
  Y.H.}, \bibinfo{author}{Lu, C.Y.}, \bibinfo{author}{Peng, C.Z.},
  \bibinfo{author}{Zhu, X.}, \bibinfo{author}{Pan, J.W.}, \bibinfo{year}{2022}.
\newblock \bibinfo{title}{Realization of an error-correcting surface code with
  superconducting qubits}.
\newblock \bibinfo{journal}{Phys. Rev. Lett.} \bibinfo{volume}{129},
  \bibinfo{pages}{030501}.
\newblock \DOIprefix\doi{10.1103/PhysRevLett.129.030501}.
\bibitem[{Zhou et~al.(2020)Zhou, Wang, Choi, Pichler \protect\BBA{}
  Lukin}]{zhou2020a}
\bibinfo{author}{Zhou, L.}, \bibinfo{author}{Wang, S.T.},
  \bibinfo{author}{Choi, S.}, \bibinfo{author}{Pichler, H.},
  \bibinfo{author}{Lukin, M.D.}, \bibinfo{year}{2020}.
\newblock \bibinfo{title}{Quantum approximate optimization algorithm:
  Performance, mechanism, and implementation on near-term devices}.
\newblock \bibinfo{journal}{Phys. Rev. X} \bibinfo{volume}{10},
  \bibinfo{pages}{021067}.
\newblock \DOIprefix\doi{10.1103/PhysRevX.10.021067}.

\end{thebibliography}


\begin{thebibliography}{19}
\expandafter\ifx\csname natexlab\endcsname\relax\def\natexlab#1{#1}\fi
\providecommand{\url}[1]{\texttt{#1}}
\providecommand{\href}[2]{#2}
\providecommand{\path}[1]{#1}
\providecommand{\DOIprefix}{doi:}
\providecommand{\ArXivprefix}{arXiv:}
\providecommand{\URLprefix}{URL: }
\providecommand{\Pubmedprefix}{pmid:}
\providecommand{\doi}[1]{\href{http://dx.doi.org/#1}{\path{#1}}}
\providecommand{\Pubmed}[1]{\href{pmid:#1}{\path{#1}}}
\providecommand{\bibinfo}[2]{#2}
\ifx\xfnm\relax \def\xfnm[#1]{\unskip,\space#1}\fi
\bibitem[{Boothby et~al.(2020)Boothby, Bunyk, Raymond \protect\BBA{}
  Roy}]{boothby2020}
\bibinfo{author}{Boothby, K.}, \bibinfo{author}{Bunyk, P.},
  \bibinfo{author}{Raymond, J.}, \bibinfo{author}{Roy, A.},
  \bibinfo{year}{2020}.
\newblock \bibinfo{title}{Next-generation topology of {D-Wave} quantum
  processors}.
\newblock \bibinfo{howpublished}{Preprint}.
\newblock \DOIprefix\doi{10.48550/arXiv.2003.00133}.
\bibitem[{Boothby et~al.(2016)Boothby, King \protect\BBA{} Roy}]{boothby2016}
\bibinfo{author}{Boothby, T.}, \bibinfo{author}{King, A.D.},
  \bibinfo{author}{Roy, A.}, \bibinfo{year}{2016}.
\newblock \bibinfo{title}{Fast clique minor generation in {Chimera} qubit
  connectivity graphs}.
\newblock \bibinfo{journal}{Quantum Inf. Process.} \bibinfo{volume}{15},
  \bibinfo{pages}{495--508}.
\newblock \DOIprefix\doi{10.1007/s11128-015-1150-6}.
\bibitem[{Choi(2011)}]{choi2011}
\bibinfo{author}{Choi, V.}, \bibinfo{year}{2011}.
\newblock \bibinfo{title}{Minor-embedding in adiabatic quantum computation:
  {II}. {Minor-universal} graph design}.
\newblock \bibinfo{journal}{Quantum Inf. Process.} \bibinfo{volume}{10},
  \bibinfo{pages}{343--353}.
\newblock \DOIprefix\doi{10.1007/s11128-010-0200-3}.
\bibitem[{Chow et~al.(2021)Chow, Dial \protect\BBA{} Gambetta}]{chow2021}
\bibinfo{author}{Chow, J.}, \bibinfo{author}{Dial, O.},
  \bibinfo{author}{Gambetta, J.}, \bibinfo{year}{2021}.
\newblock \bibinfo{title}{{IBM} {Quantum} breaks the 100‑qubit processor
  barrier}.
\newblock \bibinfo{howpublished}{{IBM} Quantum Computing blog}.
\newblock \URLprefix
  \url{https://www.ibm.com/quantum/blog/127-qubit-quantum-processor-eagle}.
  \bibinfo{note}{accessed online 2025-05-25}.
\bibitem[{{D-Wave}(n.d.)}]{dwavetopologies}
\bibinfo{author}{{D-Wave}}, \bibinfo{year}{n.d.}
\newblock \bibinfo{title}{D-Wave QPU Architecture: Topologies}.
\newblock \bibinfo{organization}{D-Wave Systems Inc.}
\newblock \URLprefix \url{https://docs.dwavesys.com/docs/latest/c_gs_4.html}.
  \bibinfo{note}{accessed online 2024-07-22}.
\bibitem[{Dalgaard(2008)}]{dalgaard2008}
\bibinfo{author}{Dalgaard, P.}, \bibinfo{year}{2008}.
\newblock \bibinfo{title}{Linear models}. \bibinfo{publisher}{Springer},
  \bibinfo{address}{New York, NY}.
\newblock p.~\bibinfo{pages}{195–225}.
\newblock \URLprefix \url{https://doi.org/10.1007/978-0-387-79054-1_12},
  \DOIprefix\doi{10.1007/978-0-387-79054-1_12}.
\bibitem[{Date et~al.(2019)Date, Patton, Schuman \protect\BBA{}
  Potok}]{Date2019}
\bibinfo{author}{Date, P.}, \bibinfo{author}{Patton, R.},
  \bibinfo{author}{Schuman, C.}, \bibinfo{author}{Potok, T.},
  \bibinfo{year}{2019}.
\newblock \bibinfo{title}{Efficiently embedding {QUBO} problems on adiabatic
  quantum computers}.
\newblock \bibinfo{journal}{Quantum Inf. Process.} \bibinfo{volume}{18},
  \bibinfo{pages}{117}.
\newblock \DOIprefix\doi{10.1007/s11128-019-2236-3}.
\bibitem[{Dattani et~al.(2019)Dattani, Szalay \protect\BBA{}
  Chancellor}]{dattani2019a}
\bibinfo{author}{Dattani, N.}, \bibinfo{author}{Szalay, S.},
  \bibinfo{author}{Chancellor, N.}, \bibinfo{year}{2019}.
\newblock \bibinfo{title}{Pegasus: {{The}} second connectivity graph for
  large-scale quantum annealing hardware}.
\newblock \bibinfo{howpublished}{Preprint}.
\newblock \DOIprefix\doi{10.48550/arxiv.1901.07636}.
\bibitem[{Erd\H{o}s \protect\BBA{} Rényi(1959)}]{erdos1959}
\bibinfo{author}{Erd\H{o}s, P.}, \bibinfo{author}{Rényi, A.},
  \bibinfo{year}{1959}.
\newblock \bibinfo{title}{On random graphs {I.}}
\newblock \bibinfo{journal}{Publ. Math. Debrecen} \bibinfo{volume}{6},
  \bibinfo{pages}{290--297}.
\newblock \DOIprefix\doi{10.5486/pmd.1959.6.3-4.12}.
\bibitem[{{IBM Quantum}(2023)}]{ibm2023}
\bibinfo{author}{{IBM Quantum}}, \bibinfo{year}{2023}.
\newblock \URLprefix \url{https://quantum.ibm.com/}.
\bibitem[{Jünger et~al.(1997)Jünger, Reinelt \protect\BBA{}
  Rinaldi}]{junger1997}
\bibinfo{author}{Jünger, M.}, \bibinfo{author}{Reinelt, G.},
  \bibinfo{author}{Rinaldi, G.}, \bibinfo{year}{1997}.
\newblock \bibinfo{title}{The traveling salesman problem}, in:
  \bibinfo{booktitle}{Annotated Bibliographies in Combinatorial Optimization}.
  \bibinfo{publisher}{John Wiley}.
\newblock \URLprefix
  \url{https://www.wiley.com/en-ie/Annotated+Bibliographies+in+Combinatorial+Optimization-p-9780470860700}.
\bibitem[{Klymko et~al.(2013)Klymko, Sullivan \protect\BBA{}
  Humble}]{klymko2013}
\bibinfo{author}{Klymko, C.}, \bibinfo{author}{Sullivan, B.D.},
  \bibinfo{author}{Humble, T.S.}, \bibinfo{year}{2013}.
\newblock \bibinfo{title}{Adiabatic quantum programming: minor embedding with
  hard faults}.
\newblock \bibinfo{journal}{Quantum Inf. Process.} \bibinfo{volume}{13},
  \bibinfo{pages}{709--729}.
\newblock \DOIprefix\doi{10.1007/s11128-013-0683-9}.
\bibitem[{Lawler et~al.(1985)Lawler, Lenstra, Rinnooy~Kan \protect\BBA{}
  Shmoys}]{lawler1985}
\bibinfo{editor}{Lawler, E.L.}, \bibinfo{editor}{Lenstra, J.K.},
  \bibinfo{editor}{Rinnooy~Kan, A.H.G.}, \bibinfo{editor}{Shmoys, D.B.} (Eds.),
  \bibinfo{year}{1985}.
\newblock \bibinfo{title}{The Travelling Salesman Problem}.
\newblock Wiley Series in Discrete Mathematics \& Optimization,
  \bibinfo{publisher}{Wiley}, \bibinfo{address}{Chichester, UK}.
\bibitem[{Lobe et~al.(2021)Lobe, Schürmann \protect\BBA{}
  Stollenwerk}]{lobe2021}
\bibinfo{author}{Lobe, E.}, \bibinfo{author}{Schürmann, L.},
  \bibinfo{author}{Stollenwerk, T.}, \bibinfo{year}{2021}.
\newblock \bibinfo{title}{Embedding of complete graphs in broken {Chimera}
  graphs}.
\newblock \bibinfo{journal}{Quantum Inf. Process.} \bibinfo{volume}{20},
  \bibinfo{pages}{234}.
\newblock \DOIprefix\doi{10.1007/s11128-021-03168-z}.
\bibitem[{McGeoch \protect\BBA{} Farré(2022)}]{mcgeoch2022}
\bibinfo{author}{McGeoch, C.}, \bibinfo{author}{Farré, P.},
  \bibinfo{year}{2022}.
\newblock \bibinfo{title}{Advantage processor overview}.
\newblock \bibinfo{type}{Technical Report} \bibinfo{number}{14-1058A-A}. D-Wave
  Systems Inc.
\newblock \URLprefix
  \url{https://www.dwavequantum.com/media/3xvdipcn/14-1058a-a_advantage_processor_overview.pdf}.
\bibitem[{Nguyen et~al.(2023)Nguyen, Liu, Wurtz, Lukin, Wang \protect\BBA{}
  Pichler}]{nguyen2023}
\bibinfo{author}{Nguyen, M.T.}, \bibinfo{author}{Liu, J.G.},
  \bibinfo{author}{Wurtz, J.}, \bibinfo{author}{Lukin, M.D.},
  \bibinfo{author}{Wang, S.T.}, \bibinfo{author}{Pichler, H.},
  \bibinfo{year}{2023}.
\newblock \bibinfo{title}{Quantum optimization with arbitrary connectivity
  using {Rydberg} atom arrays}.
\newblock \bibinfo{journal}{PRX Quantum} \bibinfo{volume}{4},
  \bibinfo{pages}{010316}.
\newblock \DOIprefix\doi{10.1103/PRXQuantum.4.010316}.
\bibitem[{Pelofske(2023)}]{pelofske2023}
\bibinfo{author}{Pelofske, E.}, \bibinfo{year}{2023}.
\newblock \bibinfo{title}{Comparing three generations of {D-Wave} quantum
  annealers for minor embedded combinatorial optimization problems}.
\newblock \bibinfo{howpublished}{Preprint}.
\newblock \DOIprefix\doi{10.48550/arXiv.2301.03009}.
\bibitem[{Reinelt(1991)}]{reinelt1991}
\bibinfo{author}{Reinelt, G.}, \bibinfo{year}{1991}.
\newblock \bibinfo{title}{{TSPLIB}—a traveling salesman problem library}.
\newblock \bibinfo{journal}{ORSA J. Comput.} \bibinfo{volume}{3},
  \bibinfo{pages}{376--384}.
\newblock \DOIprefix\doi{10.1287/ijoc.3.4.376}.
\bibitem[{Wurtz et~al.(2023)Wurtz, Bylinskii, Braverman, Amato-Grill, Cantu,
  Huber, Lukin, Liu, Weinberg, Long, Wang, Gemelke \protect\BBA{}
  Keesling}]{wurtz2023}
\bibinfo{author}{Wurtz, J.}, \bibinfo{author}{Bylinskii, A.},
  \bibinfo{author}{Braverman, B.}, \bibinfo{author}{Amato-Grill, J.},
  \bibinfo{author}{Cantu, S.H.}, \bibinfo{author}{Huber, F.},
  \bibinfo{author}{Lukin, A.}, \bibinfo{author}{Liu, F.},
  \bibinfo{author}{Weinberg, P.}, \bibinfo{author}{Long, J.},
  \bibinfo{author}{Wang, S.T.}, \bibinfo{author}{Gemelke, N.},
  \bibinfo{author}{Keesling, A.}, \bibinfo{year}{2023}.
\newblock \bibinfo{title}{Aquila: {{QuEra}}'s 256-qubit neutral-atom quantum
  computer}.
\newblock \bibinfo{howpublished}{Preprint}.
\newblock \DOIprefix\doi{10.48550/arXiv.2306.11727}.

\end{thebibliography}

\end{document}


\maketitle
\vspace{-2.5\baselineskip}\begin{center}\adforn{21} \end{center}


This document supplements the main text by providing additional technical
details. Note that to~avoid confusion, labels (\eg, for the figures or tables)
in these supplementary materials are prefixed with the letter ``S.'' For
example, ``Figure 5'' refers to a figure in the main text, while ``Figure
S.5'' points to a supplementary figure below.\medskip

The source code along with the data used to obtain the results presented in the
paper can be downloaded from:

\href{https://github.com/alex-bochkarev/qopt-overview}{https://github.com/alex-bochkarev/qopt-overview}\medskip

\noindent Corresponding technical documentation, including the discussion of
relevant data formats and auxiliary programs, is available via:

\href{https://alex-bochkarev.github.io/qopt-overview}{https://alex-bochkarev.github.io/qopt-overview}

\section{Instance generation}\zlabel{app:instances}\label{app:instances}

As presented in \cref{sec:setup}, we use randomized procedures to generate our collection of problem instances, which comprises three problem classes: \Gls{UD-MIS}, \Gls{MaxCut}, and \Gls{TSP}. For each class, we create instances of
varying sizes and problem structures with the following procedures:

\begin{itemize}
	\item \textbf{\Gls{UD-MIS}:} To ensure that all generated instances are
	hardware-compliant (for \emph{Aquila}), we consider the generation of
	unit disk graphs with nodes on a fixed grid. To that end, we use a fixed
	coordinate window $W \times H$, specifying the maximum integer values
	for each node coordinate on a two-dimensional plane. Then we create a
	grid of nodes with all possible integer coordinates $(x,y)$ for
	$0\leq x \leq W$ and $0\leq y \leq H$. Finally, the nodes are deleted
	one by one\footnote{The candidate for deletion being selected uniformly at random.} until the
	desired total number of nodes is reached. The unit disk graph is then
	specified by this set of nodes and a real parameter $R$ as the unit disk
	radius. Variations in the coordinate window size allow us to generate
	instances of different node density on the coordinate plane, leading to
	different graph connectivity. Namely, for each studied problem size~$N$,
	we fix the window to
	$W_{N} \times H_{N}\defeq \bigl(\lceil \sqrt{N} \rceil + 1\bigr) \times \bigl(\lceil \sqrt{N}\rceil + 1\bigr)$
	to generate a set of points. We use three different values of $R$ to
	create different connectivity patterns in the grid of points\footnote{Specifically, we used values of \num{1.25}, \num{1.42}, and \num{1.85} --- which are, respectively lower, higher, and comparable to the diagonal in the unit grid of $\sqrt{2}\approx\num{1.42}$. See the code documentation and specifically module \texttt{MIS\_inst.py} for further implementation details.},
	and repeat the procedure to generate
	three instances for each set of parameters. In addition to this
	procedure, we also build $9$ instances with hand-picked parameters for
	manual inspection: with a fixed value of $R=1.5$, three different window
	sizes, and varying number of nodes between $8$ and $150$. The provided
	unit disk radius $R$ for each instance is used as the blockade radius
	for \emph{Aquila} within the \gls{QuEra-OPT} approach.
	\item \textbf{\Gls{MaxCut}:} The collection of instances is created by generating a set of random graphs
	$G(N, p)$ of different sizes using the Erd\H{o}s-R\'enyi random graph model
	\citep{erdos1959}, \ie, given $N$ nodes, each pair of nodes is
	connected by an edge with the probability $p$. This procedure results in graphs of different connectivity depending on the parameter $p$.
	Across the studied problem sizes $N$, we generate graphs for $p \in \{0.25,0.5,0.75\}$. Weights are generated uniformly at random, within given limits.\footnote{For each instance, we fix the maximum weight $C_{\max}$ uniformly at random between \num{1} and \num{10}, and then pick edge weights uniformly at random between zero and $C_{\max}$. See code documentation and, specifically, module \texttt{MWC\_inst.py} for specific implementation.}
	\item \textbf{\Gls{TSP}:} We pick $15$ instances (uniformly) at random
	from TSPLIB~\citep{reinelt1991}, and sample the desired number of nodes from each one randomly,
	preserving edge weights.\footnote{The original instances represent complete graphs, which, however, might contain fewer or more nodes than we require. In the former case we discard the instance, in the latter --- pick a uniformly random subset of nodes of the necessary cardinality. See the code documentation and, specifically, module \texttt{TSP\_inst.py} for further implementation details.}
	Each sampling procedure results in a complete sub-graph of the original graph.
	Therefore, each selected TSPLIB instance yields a
	collection of \gls{TSP} instances constituting complete graphs of the given
	sizes, with the same distances structure as compared to the original instance.
\end{itemize}

A summary of the instance sizes is shown in \zcref{fig:inst sizes}. For \gls{UD-MIS} and \gls{MaxCut}, example instances are shown in \cref{fig:UDMISes,fig:MaxCuts}, respectively.

\begin{figure}[ht]
	\centering
	\includegraphics[width=.95\textwidth]{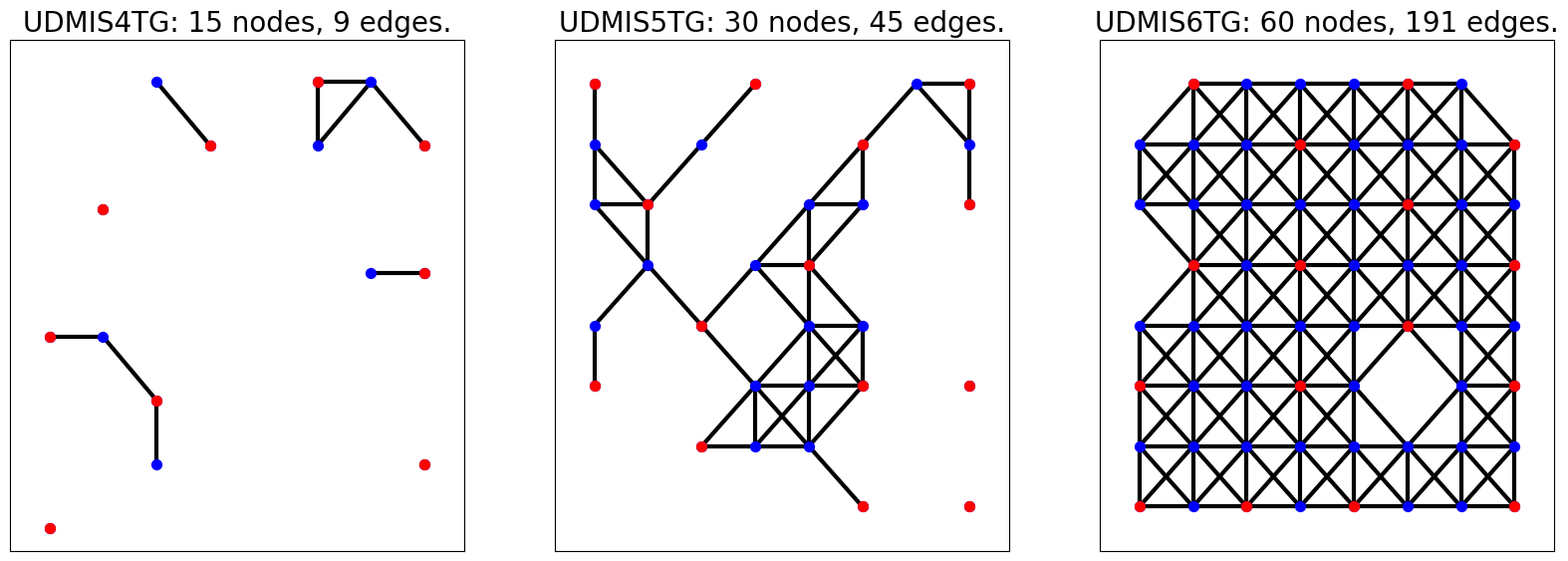}
	\caption{\label{fig:UDMISes} Exemplary \gls{UD-MIS} instances with the same unit disk radius
		and window size, but a varying number of nodes. According to the definition of a unit disk graph, nodes within the radius of a unit disk are connected by an edge.}
\end{figure}

\begin{figure}[ht]
	\centering
	\includegraphics[width=.95\textwidth]{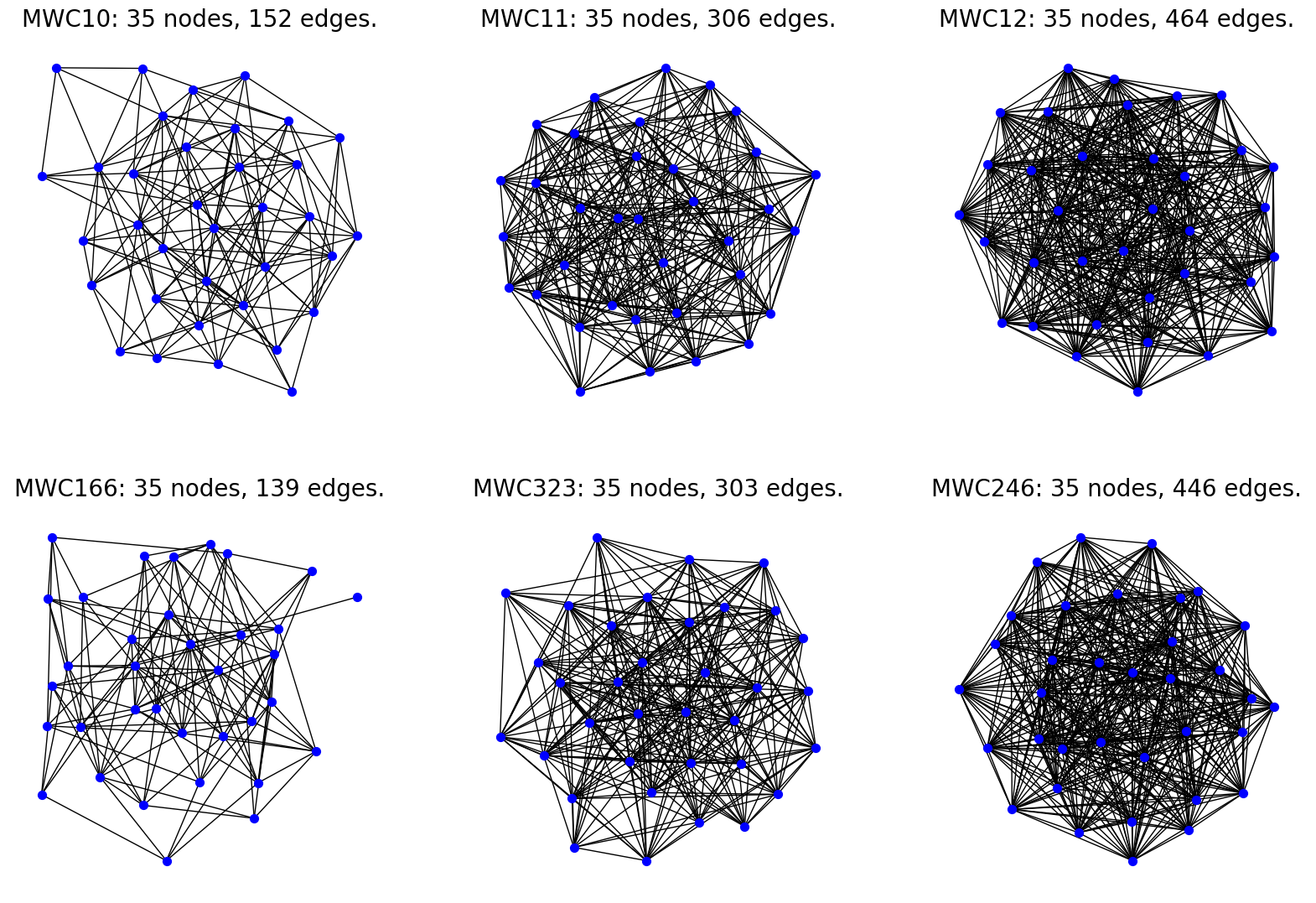}
	\caption{\label{fig:MaxCuts} Exemplary \gls{MaxCut} instances. The graphs in the
		left column correspond to $p=0.25$, in the middle column to $p=0.5$, and in
		the right column to $p=0.75$ (with the same number of nodes).}
\end{figure}

A randomized instance generation allows us to consider different problem structures with
respect to the resulting \gls{QUBO} matrices, as visualized in \cref{fig:share of nonzeros}.
In particular, the generation procedure for \gls{MaxCut} instances allows us to vary the density of
\gls{QUBO} matrix in our instance collection. The three values of the node connectivity
parameter $p$ correspond to
the three groups of \gls{MaxCut} instances (dark triangles) in the plot. For
large enough instances, these values approximately equal the shares of nonzero
entries in the matrix. Our \gls{UD-MIS} instances (dark circles) imply
significantly less dense \gls{QUBO} matrices, mostly due to the node degree
restriction from the hardware requirements. \gls{TSP} instances
(light squares) occupy the middle position in terms of the matrix sparsity.
In the following, we provide a brief theoretical illustration for the share of nonzero entries for each of the three problem classes of interest.

\begin{figure}[ht]
	\centering
	\includegraphics[width=0.7\textwidth]{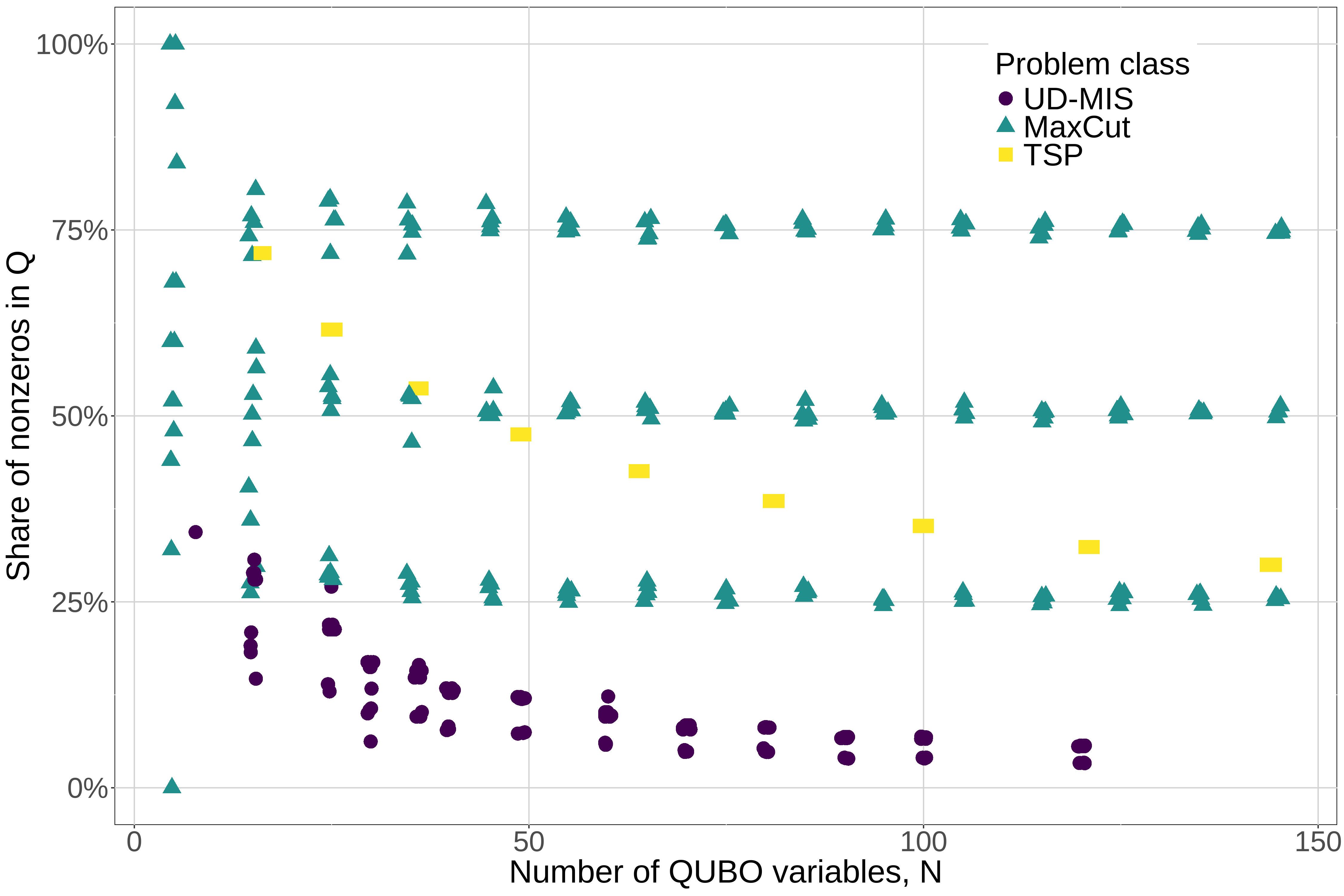}
	\caption{\zlabel{fig:share of nonzeros}\label{fig:share of nonzeros} Share of nonzero entries in the \gls{QUBO}
		matrix $Q$, formulation~\zref{eq:QUBO}, for different instance sizes and different classes of problems. The three bands of points for
		\gls{MaxCut} instances correspond to different parameter
		values $p=0.25$, $p=0.5$, and $p=0.75$ (bottom to top).}
\end{figure}

\paragraph{\texorpdfstring{\gls{UD-MIS}}{UD-MIS}.} The \gls{QUBO} matrix $Q$
defined by formulation~\eqref*{eq:UDMIS-QUBO} has $2|E|+|V|$ nonzero entries. If the node
degree is bounded from above by some value $d$ that is independent of $N$ (as it
is the case in our experiments), the total number of nonzero entries in $Q$ is
at most $2 N(d/2) + N = N(d+1)$. Since the total number of entries in $Q$ is
$N^{2}$, the share of nonzero entries is $N(d+1)/N^{2} = (d+1)/N$, decreasing as
the problem scales up.

\paragraph{\texorpdfstring{\gls{MaxCut}}{MaxCut}.} The number of nonzero entries
in matrix $Q$ given by~\eqref*{eq:MaxCut-QUBO} is $2|E| + |V|$. Note that
we deliberately generate instances with different node connectivity by varying
node connectivity parameter $p$ in Erd\H{o}s-R\'enyi model, taking values of $0.25$,
$0.5$, and $0.75$. If the number of edges is $q N(N-1)/2 \in O(N^{2})$ for some
real number $q$ between zero and one, the share of nonzero entries in $Q$ will
constitute:
\[ \frac{2qN(N-1)/2 + N}{N^{2}} = q + (1-q)/N \in O(1), \] as the problem scales
up with fixed $q$. Namely, the share of nonzero entries in $Q$ will approach the
parameter~$p$ used during the instance generation, as the number of nodes $N$
grows.

\paragraph{\texorpdfstring{\gls{TSP}}{TSP}.} The number of nonzero entries in
the matrix $Q$ defined in formulation \eqref*{eq:TSP-QUBO} is in $O(N^{3})$, out of
$(N-1)^{4}$ entries in $Q$ in total, and hence the share of nonzero entries
belongs to $O(1/N)$.

\revIIIcomment{The following section is new.}
\section{On the choice of penalty coefficients $M$ for \texorpdfstring{\gls{QUBO}}{QUBO} formulations}\zlabel{app:big-M}
Let us briefly revisit the choice of the penalty coefficients (denoted $M$) that
we used to formulate \gls{QUBO} instances in \zcref{sec:qubits}. Specifically,
for each of the problem classes under consideration, we create an equivalent \gls{QUBO}
formulation, at least in the sense that optimal solutions of the original problem and
the \gls{QUBO} coincide.

  \begin{itemize}
    \item \textbf{\gls{UD-MIS}} (Section~{\zref{sec:UD-MIS}}, page~\pageref*{eq:MIS-ILP}).
          Formulation (\ref*{eq:UDMIS-QUBO}) is equivalent to (\ref*{eq:MIS-ILP}) for
          $M = |V| + 1$, since in this case a single independent set constraint
          violation ($x_{i}=x_{j}=1$ for some $\{i,j\}\in E$) makes this
          infeasible solution worse than a trivial feasible solution of
          $x_{1}=x_{2}=\cdots=x_{|V|}=0$.
    \item \textbf{\gls{MaxCut}} (Section~{\ref*{sec:MaxCut}},
		  page~\pageref*{eq:MaxCut-QUBO}). We discuss a natural quadratic
		  formulation that does not involve penalty terms.
	\item \textbf{\gls{TSP}--\ref*{obj:DFJ}} (Section~{\ref*{sec:TSP}},
		  page~\pageref*{obj:DFJ}). For both families of constraints, it
		  suffices to set the penalty term to $M = \max c_{ij} + 1$. Assume for
		  a contradiction that there is an optimal solution that violates some
		  constraint. If there is a node of degree $\ge 3$ whose $k$ neighbors
		  all have degree~$1$, then replace this star by an arbitrary cycle
		  through the set of involved vertices. This adds at most $k-1$ new
		  edges and reduces the penalty of $k+1$ vertices. If there is a
		  node~$v$ of degree $\ge 3$ with a neighbor~$w$ of degree~$\ge 2$, then
		  removing the edge $(v,w)$ will reduce the penalty of $v$ and add the
		  penalty for $w$, while losing some positive cost $c_{vw}$. Assume now
		  that all nodes have degree $\le 2$. If there are at least two nodes of
		  degree smaller than $2$, we can add an edge between them, while
		  reducing the penalty by at least $2M$. If there is a single node~$v$
		  of degree $\le 1$, then this node must have degree~$0$ and all other
		  nodes have degree~$2$. Then replacing an arbitrary edge $(u,w)$ by the
		  edges $(u,v)$ and $(v,w)$ inserts two edges, and reduces the penalty
		  twice. When all degree constraints are satisfied, then the solution is
		  a union of cycles. For any two cycles, we can merge the two cycles by
		  replacing two edges by two potentially longer edges, while correcting
		  two violated subtour constraints, which reduces the penalty by at
		  least $2M$.
		\item \textbf{\gls{TSP}--\ref*{obj:MTZ}} (Section~\ref*{sec:TSP}, page~\pageref*{obj:MTZ}).
		  In the penalty terms corresponding to the in-degree and out-degree
		  constraints, we can still use the penalty $M = \max c_{ij} + 1$ by a
		  similar argument as for the \ref*{obj:DFJ} formulation. For the node
		  ordering constraints, we can use the penalty~$M = 2\max c_{ij} + 1$.
		  If there is a violation of an ordering constraint, this gives the
		  possibility to have a subtour. This can be repaired by replacing two
		  edges by two other edges. Hence, for the selected $M$, such a merge
		  reduces the objective value.

		\item \textbf{\gls{TSP}--\ref*{obj:TSP-QUBO}} (Section~\ref*{sec:TSP}, page~\pageref*{obj:TSP-QUBO}).
          One specific value to ensure the equivalence is
		  $M=\frac{N(N-1)}{2}\max c_{ij}+1$: In this case, violation of a single
		  constraint makes the objective value larger (worse) than any of the
		  feasible solutions.
      \end{itemize}
\section{Required number of logical qubits for the \texorpdfstring{\gls{QUBO}}{QUBOs} formulation of \texorpdfstring{\gls{TSP}}{TSP}}\zlabel{app:TSP-logical-qubits}

We derive the result of \zcref{lem:no_qubits_DFJ}, stating the necessary
number of logical qubits for the \gls{QUBO} reformulation of the \gls{TSP}
integer program~\eqref*{obj:DFJ}. This formulation involves binary variables
only, but we still need to introduce penalty terms in order to transform it to
an equivalent \gls{QUBO}. The equality constraints can be directly translated into
quadratic penalty terms: For each $i \in V$, the constraint
$\sum_{j: j>i} x_{ij} + \sum_{j: j<i}x_{ji} = 2$ induces the term
$M (\sum_{j: j>i} x_{ij} + \sum_{j: j<i}x_{ji}-2)^2$ in the \gls{QUBO} objective
function, where $M$ must be sufficiently large (\eg,
$M = N^{2} \max_{\{i,j\}\in E} c_{ij}$) to separate feasible and infeasible
solutions in terms of the objective values. Inequality
constraints~\eqref*{DFJ:subtour} have to be converted to equalities by
introducing integer slack variables. So, we replace every inequality constraint
of the form
\[
\sum_{\{i,j\} \in E(S)} x_{ij} \leq |S|-1
\]
with the corresponding pair of constraints
\[
\sum_{\{i,j\} \in E(S)} x_{ij} + s_S = |S|-1, \quad s_S\geq 0.
\]
This transformation introduces new integer variables~$s_S$, and we further
substitute them with their binary representations. For every subset~$S$ with
$3 \le |S| \le \frac N 2$ the sum $\sum_{\{i,j\} \in E(S)} x_{ij}$ can take values
between $0$ (if no two nodes from $S$ are visited consecutively) and $|S|-1$ (if
all nodes from $Q$ are visited consecutively). Therefore, we need binary
variables $b_{S,r}$ for $r = 0,\dotsc,\lfloor \log_2(|S|-1) \rfloor$, so that we replace
$s_S$ by $\sum_{r=0}^{\lfloor \log (|S|-1) \rfloor} 2^r b_{S,r}$, which can take all values
between $0$ and $|S|-1$. After this replacement, we can again derive quadratic
penalty terms of the form
\[M \cdot \bigg(\sum_{\{i,j\} \in E(S)} x_{ij} + \sum_{r=0}^{\lfloor \log(|S|-1)\rfloor} 2^r b_{S,r} - (|S| - 1)\bigg)^2\]
for $M$ large enough.

Therefore, for a Dantzig-Fulkerson-Johnson model, we have a corresponding \gls{QUBO} with
$\frac{N(N-1)}{2} + O(2^N \log N)$ binary variables and
$2^{N-1}-\frac{N(N-1)}{2}-1$ penalty terms in the objective.
To calculate the number of slack variables more precisely, note that we have
$\binom{N}{3}$ constraints with slack at most two, $\binom{N}{4}$
constraints with slack at most three, and so on. The resulting number of
slack variables is then
\begin{align*}
	&\sum_{k=3}^{(N-1)/2} \binom{N}{k} \cdot \bigl\lfloor \log_2(k-1)+1 \bigr\rfloor && \textrm{ if $N$ is odd,}\\
	&\sum_{k=3}^{N/2-1} \binom{N}{k} \cdot \bigl\lfloor \log_2(k-1) +1 \bigr\rfloor + \frac{1}{2}\binom{N}{N/2}\cdot \bigl\lfloor \log_2(N/2-1)+1 \bigr\rfloor && \textrm{ if $N$ is even.}
\end{align*}

Therefore, we arrive at the desired result: When applying the standard procedure to reformulate the Dantzig-Fulkerson-Johnson model \eqref*{obj:DFJ}, as a \gls{QUBO}, the result has
\[\frac{N(N-1)}{2} + \sum_{k=3}^{\lceil N/2 \rceil-1} \binom N k \cdot \bigl\lfloor \log_2(k-1) + 1 \bigr\rfloor + \frac{1+(-1)^N}{4} \cdot \binom{N}{\lfloor N/2 \rfloor} \cdot \bigl\lfloor \log_2(N/2-1) + 1 \bigr\rfloor\]
binary variables. The first summand here gives the number of binary variables in the integer linear program. The next two summands give the number of new binary variables to represent the possible slacks. They generalize the two terms given above for odd or even $N$ and are derived by summing over all considered subset sizes and taking the number of such subsets times the necessary number of binary variables to represent the possible slack.

\section{Regression model for physical qubit requirements}\zlabel{app:regression}\label{app:regression}

In \cref{fig:dwave-qubit-cost-logs}, we present the observed relation between
the numbers of logical qubits $N$ (from the problem instance) and physical
qubits $N_{e}$ (from the embedding) for the \gls{D-Wave-OPT} approach. Each point
in the figure represents a solved instance, with random horizontal jitter added
for visibility and the color and shape indicating the problem class. Both axes
are on a natural logarithm scale. The dashed lines represent ordinary least
squares regression in logarithmic scale.

\begin{figure}[ht]
  \centering
  \includegraphics[width=0.7\textwidth]{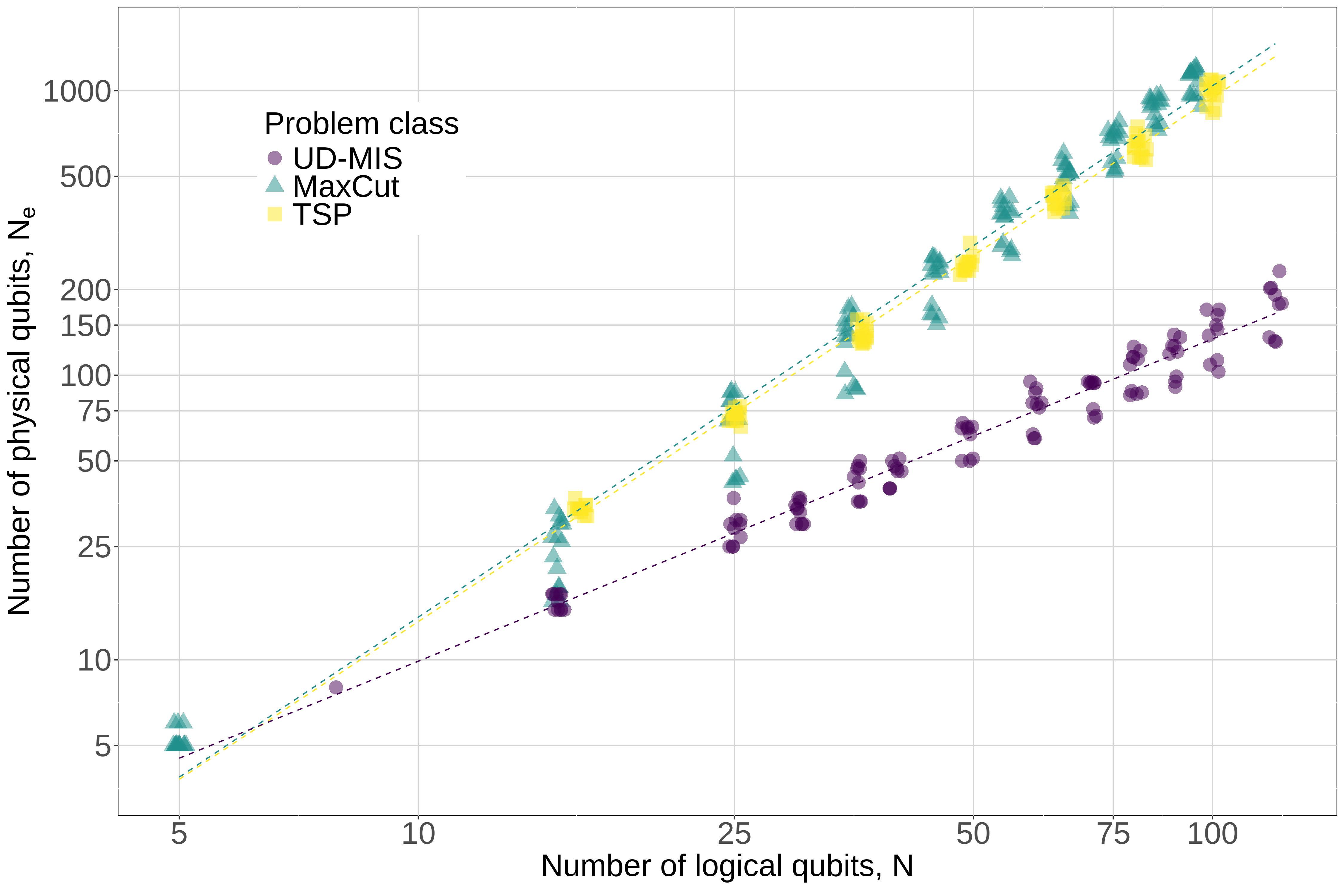}
  \caption{\label{fig:dwave-qubit-cost-logs}\zlabel{fig:dwave-qubit-cost-logs} Number of logical qubits
  	(from the problem instance)	and physical qubits (from the embedding) for the \gls{D-Wave-OPT} approach, shown on a logarithmic scale.}
\end{figure}

\pagebreak 
To perform this regression, consider first a model equivalent to the power law
for the number of physical qubits $N_{e}$ used after embedding as a function of
the number of variables in the original \gls{QUBO} model $N$, \ie, hypothesizing
that $N_{e} \sim N^{\beta}$. Taking the logarithm of both parts suggests the
following statistical model:
\begin{equation} \label{eqn:embeddingreg} \zlabel{eqn:embeddingreg}
	\log N_{e} = \beta_0 + \beta\log N + \varepsilon,
\end{equation}
which depends on two parameters, the slope $\beta \in \mathbb{R}$ and the
constant offset $\beta_0 \in \mathbb{R}$, and where the error term
$\varepsilon\defeq \log N_{e} - \beta_{0} - \beta\log N$ captures the
discrepancies of the predicted values
$\hat{N_{e}}\defeq \beta_{0} + \beta\log N$ with the true values $N_{e}$. This
model can be fitted to our embedding data, logarithms of the number of physical
qubits and logarithms of the respective numbers of binary variables, by
minimizing the sum of squared errors using the ordinary least squares regression
technique \citep{dalgaard2008}. The resulting standard summary table for the
model is shown in \cref{tab:regressions}.

\begin{table}[!htbp] \centering
	\caption{Regression results for the relation between logical qubits $N$ and physical qubits $N_{e}$ for the \gls{D-Wave-OPT} approach according to \cref{eqn:embeddingreg}. Standard errors are in parentheses, \texttt{df} denotes degrees of freedom, $^{***}~p<0.01$. The resulting relations (lines) are visualized in \cref{fig:dwave-qubit-cost-logs}. \label{tab:regressions}}
	\begin{tabular}{@{\extracolsep{5pt}}lccc}
		\\[-1.8ex]\hline
		\hline \\[-1.8ex]
		& \multicolumn{3}{c}{\textit{Dependent variable: $\log(N_{e})$ in different regressions}} \\
		\cline{2-4} \\[-1.8ex]
		& \gls{UD-MIS} & \gls{MaxCut} & \gls{TSP} \\
		\hline \\[-1.8ex]
		$\beta$: $\log(N)$ & $1.132^{***}$ & $1.867^{***}$ & $1.840^{***}$ \\
		& (0.020) & (0.023) & (0.012) \\
		& & & \\
		$\beta_0$ & $-$0.316$^{***}$ & $-$1.652$^{***}$ & $-$1.626$^{***}$ \\
		& (0.080) & (0.085) & (0.044) \\
		& & & \\
		\hline \\[-1.8ex]
		Observations & 116 & 150 & 105 \\
		R$^{2}$ & 0.964 & 0.978 & 0.996 \\
		Adjusted R$^{2}$ & 0.964 & 0.978 & 0.996 \\
		Residual Std. Error & 0.139 (\texttt{df} = 114) & 0.243 (\texttt{df} = 148) & 0.072 (\texttt{df} = 103) \\
		F Statistic & 3,064.768$^{***}$ & 6,679.321$^{***}$ & 25,468.300$^{***}$\\
		& $(\texttt{df} = 1;~114)$  & $(\texttt{df} = 1;~148)$ & $(\texttt{df} = 1;~103)$ \\
		\hline
	\end{tabular}
\end{table}

\section{Computational workflows summary}
Details for the three key steps of the computational workflow mentioned in
\zcref{fig:workflows} are summarized in \cref{tab:comp-steps}. First, the
discrete optimization problem at hand must be reformulated into a suitable problem class: \gls{QuEra-OPT} admits only \gls{UD-MIS}, while
\gls{D-Wave-OPT} and \gls{IBM-OPT} allow only \gls{QUBO}. Then,
the \gls{QC} and classical host machine (defining the computational framework)
need to be configured. The quantum device configuration is usually prepared in a
device-independent way, and then translated into the language of device-specific
instructions. Different technologies require different amounts of computational
overhead for this step. For the machines from \gls{IBM} and \gls{QuEra}, device
configuration is relatively straightforward (depending on how much
optional hardware-dependent optimization of the instructions is considered),
while the \gls{D-Wave} quantum
annealer configuration involves a computationally difficult task related to graph
embedding. Finally, the solution step is different
across the three technologies. The neutral-atom-based device and the quantum
annealer function in a similar (analog) mode, where the initial state is
prepared, then a pre-defined evolution schedule is executed, and the result is
read out. This process is repeated multiple times to obtain a distribution of
solutions and try to alleviate the possible errors due to hardware limitations.
\Gls{QAOA}, on the other hand, is a \gls{VQA}, a hybrid quantum-classical method.
As such, its computation stage is different as
it comprises a classical outer black-box optimization loop. However, after the
solution is found, the processing is usually similar: the quantum state
corresponding to the best found ansatz parameters is constructed and measured
multiple times to obtain a sample of solutions to the original problem.

\begin{landscape}
	\begin{threeparttable}
		\caption{Key high-level steps of the computational workflow.\label{tab:comp-steps}}
		\setlength{\extrarowheight}{7pt}
		\footnotesize
		\begin{tabularx}{\linewidth}{%
				>{\hsize=0.025\hsize}X
				>{\hsize=0.475\hsize}X
				>{\hsize=1.3\hsize}X
				>{\hsize=1.6\hsize}X
				>{\hsize=1.6\hsize}X}
			\toprule
			\textbf{Step} & & \textbf{\gls{QuEra-OPT}} & \textbf{\gls{D-Wave-OPT}} & \textbf{\gls{IBM-OPT}}\\
			\midrule
			&\textbf{\mystep\label{s:model}}~Formulation &  The problem is reformulated as \gls{UD-MIS}\tnote{a}.& \mergecol{2}{>{\hsize=1.5\hsize}X}{The problem is reformulated as \gls{QUBO}\tnote{b}.}\\
			\midrule
			\multirow{4}{*}[-4em]{\rotatebox[origin=c]{90}{\textbf{CONFIGURATION}}}& \textbf{\mystep\label{s:QPU-config}}~\gls{QC} setup & \mergecol{3}{>{\hsize=1.5\hsize}X}{Describing the initial state and key parameters of the quantum device:}\\
			& \textbf{\thesteps.1)}~Device-independent configuration &  Prepare problem-dependent spatial configuration for the atoms (the atoms array) and define the blockade radius.&
			Set up annealing time. &
			\begin{tabular}[t]{l}
				Design the quantum circuit (ansatz):\\[-5pt]
				\textbullet~derive the problem-specific circuit form,\\[-5pt]
				\textbullet~choose circuit depth.\end{tabular}\\
			& \textbf{\thesteps.2)}\label{step:device setup}.~Device-dependent configuration &  Translate the setup to physical device commands (automatically).&%
			\begin{tabular}[t]{p{1.6\linewidth}}
				Embedding: map logical qubits to the device nodes: \\[-5pt]
				\textbullet~choose required ``chain strength'',\\[-5pt]
				\textbullet~find the mapping.\end{tabular}&%
			\begin{tabular}[t]{l} The circuit is ``transpiled'' for the specific device: \\[-5pt]
				\textbullet~map the gates to device-native gates,\\[-5pt]
				\textbullet~add \texttt{SWAP} gates to ensure necessary qubit connectivity.\\[-5pt]
			\end{tabular}\\
			
			& \textbf{\mystep\label{s:cpu-config}}~Framework parameters. &\begin{tabular}[t]{p{1.3\linewidth}} \textbullet~Set up the system evolution schedule.\\[-5pt]
				\textbullet~Choose the number of shots.\\[-5pt]
			\end{tabular} &\begin{tabular}[t]{p{1.6\linewidth}} \textbullet~Choose the number of shots (sample size),\\[-5pt]
				\textbullet~implement the objective calculation procedure.\end{tabular}&%
			\begin{tabular}[t]{p{1.6\linewidth}} \textbullet~Choose initial circuit parameters,\\[-5pt]
				\textbullet~implement objective function estimation,\\[-5pt]
				\textbullet~set up a classical solver to find parameters.\end{tabular}\\
			\midrule
			\multirow{2}{*}[-2.5em]{\rotatebox[origin=c]{90}{\textbf{SOLUTION / PROCESSING}}} & \textbf{\mystep\label{s:run}}~Computation & \begin{tabular}[t]{p{1.3\linewidth}} Candidate solutions are sampled from the multiple shots of the \gls{QC}. During each step: \\
				\textbullet~prepare and initialize the device,\\[-5pt]
				\textbullet~realize the evolution schedule,\\[-5pt]
				\textbullet~sample a (single) solution.\\[-5pt]
			\end{tabular}&%
			\begin{tabular}[t]{p{1.6\linewidth}} Candidate solutions are sampled from the multiple shots of the \gls{QC}, each one comprising the following steps: \\[-5pt]
				\textbullet~prepare and initialize the device,\\[-5pt]
				\textbullet~anneal (converges the state to a low-energy one),
				\\[-5pt] \textbullet~sample a (single) solution.\end{tabular}&%
			\begin{tabular}[t]{p{1.6\linewidth}}%
				\textbullet~Given the circuit parameters, the \gls{QC} is set up to output a candidate solution. \\[-5pt]
				\textbullet~An auxiliary objective is set up as a function that takes circuit parameters, samples candidate solutions from the \gls{QC}, and returns the \gls{QUBO} objective.\\[-5pt]
				\textbullet~A classical black-box optimizer searches for circuit parameter values that minimize the auxiliary objective, iteratively querying the \gls{QC}.\\[-5pt]
				\textbullet~A set of solutions is sampled from multiple shots from the \gls{QC}, given the best circuit parameters found.\end{tabular}\\
			& \textbf{\mystep\label{s:pp}}~Post-processing & Recover \gls{MIS} solution(s): \eg, the most frequently sampled one.&%
			\begin{tabular}[t]{p{1.6\linewidth}}Recover \gls{QUBO} solution(s): broken chains resolved (e.g., majority vote) $\rightarrow$ \gls{QUBO} solution for each shot.\\[-5pt]
			\end{tabular} &%
			Recover \gls{QUBO} solution(s), analyze convergence data.\\
			\bottomrule
		\end{tabularx}
		\begin{tablenotes}
			\item[a] A weighted version of the problem and the respective \gls{UD-MWIS} reformulation \citep{nguyen2023} can also be used, if atom-specific detuning is allowed by the hardware.
			\item[b] The conversion to \gls{QUBO} (\eg, incorporating
			constraints, integer variables etc.) can be automatized to some extent by existing software tools.
		\end{tablenotes}
	\end{threeparttable}
\end{landscape}

\section{Summary of the different runs across instances}\zlabel{app:runs summary}

Our experimental setup involves an exploration phase, in which we test the
feasibility of our approaches with regard to the proposed instance generation
methods. Key information regarding the considered quantum hardware is summarized
in \cref{tab:qc-hardware}.

\begin{threeparttable}[t]
	\centering\small
	\caption{Quantum hardware used for our quantum-powered approaches.}\label{tab:qc-hardware}
	\begin{tabular}{ccccc}
		\toprule
		Hardware          & Technology                        & Qubits & Used in approach & References  \\
		\midrule
		\gls{Aquila}	  & neutral-atom, analog  & \num{256}	   & \gls{QuEra-OPT}   & \citet{wurtz2023} \\
		\gls{Advantage}	  & superconducting, analog, chip v4.1& $\sim \num{5000}$\tnote{(a)}      & \gls{D-Wave-OPT} & \citet{mcgeoch2022} \\
	  \gls{IBMCusco},	  & superconducting, gate-based,&\multirow{2}{*}{\num{127}}       & \multirow{2}{*}{\gls{IBM-OPT}}    &\citet{ibm2023},\\
		\gls{IBMNazca}	  & chip family \emph{Eagle} &&&\citet{chow2021}\\
		\bottomrule
	\end{tabular}
  \begin{tablenotes}
    \item[a] Device topology contains \num{5760} physical qubits, out of which at least \num{5000} must be available, according to the referenced technical report. The exact number depends on the specific device configuration and calibration, and may vary over time.
  \end{tablenotes}
\end{threeparttable}

Calculations for some of the instances were repeated several times, to possibly
obtain a feasible solution, or for technical reasons related to the interactions
with the remote machines. A summary of all attempts is presented in \cref{fig:runs
	summary}. Each bar corresponds to a single instance, and counts correspond to
the number of times the respective instance of the given type (in columns) was
solved by each approach (in rows). Colors mark the run resultsm \ie, whether we were
able to retrieve a feasible solution, only infeasible ones, or no solutions at
all. The instances are sorted by size in the number of \gls{QUBO} variables for each of the three problem classes,
which is summarized into three groups (no more than 25 variables, between 26 and
100, and more than 100 variables) for readability.

For our analysis in the main text of the paper, we select one run per instance
(which is summarized in \zcref{fig:inst sizes}), as follows. If we were able to
obtain a feasible solution, we used the latest run that yielded one. Otherwise,
we used the latest run among those that yielded at least an infeasible solution,
or just the latest run if all of them failed.

\begin{landscape}
	\begin{figure}[ht]
		\centering
		\includegraphics[width=\linewidth]{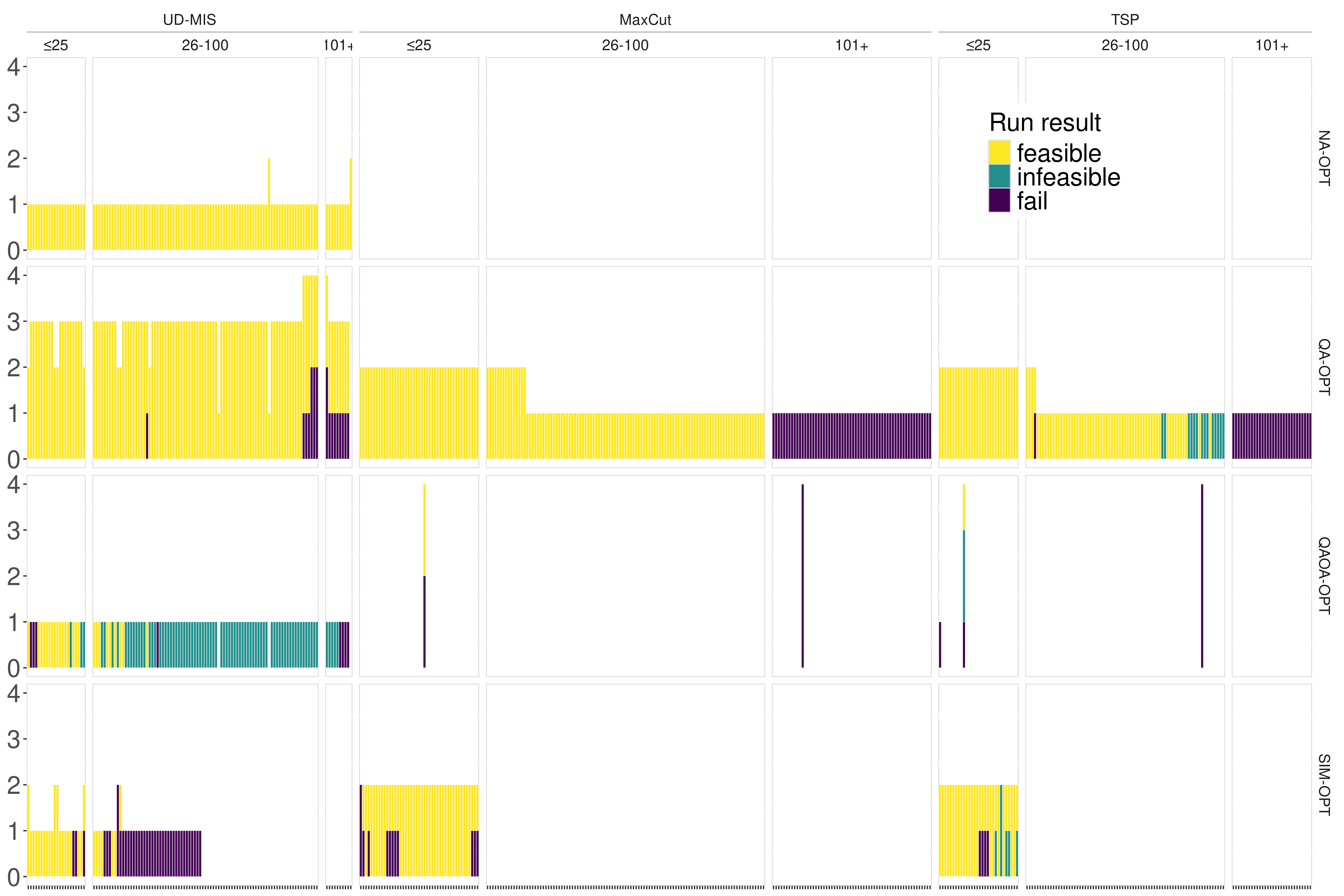}
		\caption{Summary of all runs across all problem instances. {\protect%
				\revIII{Each panel represents the number of runs (counts), and each vertical bar corresponds to a single instance. Columns correspond to problem types and sizes (number of binary variables), rows correspond to solution approaches.}%
			}\label{fig:runs summary} }
	\end{figure}
\end{landscape}

\section{Example outputs from \texorpdfstring{\glspl{QC}}{QC}}\zlabel{app:sample-outputs}

Typically, the output of a \gls{QC} constitutes a sequence of bitstrings, each of which is obtained from a single shot.\footnote{The number of shots is a parameter chosen at the configuration step. In our experiments, we aim for a sufficiently large number of shots, which still maintains reasonable total runtimes.}
For quantum optimization, each bitstring can usually be transformed into a binary solution vector of the considered optimization problem.
For demonstration purposes, we present exemplary \gls{QC} outputs in the following for the different quantum-powered approaches, \ie, \gls{D-Wave-OPT}, \gls{QuEra-OPT}, and \gls{IBM-OPT} (including \gls{IBM-SIM-OPT}).

\paragraph{\texorpdfstring{\gls{D-Wave-OPT}}{QAOA-OPT}.}
\Cref{fig:DWave samples} summarizes three output examples for the
\gls{D-Wave} device \gls{Advantage} in our experiment. Each bar represents a single solution,
``solution count'' denotes the number of times this solution was sampled, and
``energy'' reflects the objective representation for the solution within the
\gls{QC} (which is an actual energy of the physical system in the corresponding
state). ``QUBO objective'' presents an objective value calculated from a
\gls{QUBO} formulation, and ``original objective'' corresponds to the objective
for the original problem.\footnote{For feasible solutions, the latter two might
differ only in sign, and no original objective values correspond to infeasible
solutions.} Thick horizontal line in the bottom panel represents the
optimal objective value. The panel entitled ``ch.~breaks'' represents the share
of chain breaks---a measure of the quality of the measured solution.\footnote{Chain breaks appear only for the largest instance in the figure.}

We see a varying quality of the representation for the true objective with the
quantum state. The top panel of \cref{fig:DWave samples} shows a typical
``favorable case:'' all the solutions are feasible (there are no feasibility
constraints for \gls{MaxCut}), and most of the sampled solutions are optimal or
close to optimal. The middle panel, highlighting a five-nodes \gls{TSP} instance
with internal ID \texttt{TSP53} (having $(N-1)^{2}=16$ binary variables in the
\gls{QUBO}), paints a more mixed picture. Most of the sampled solutions were
feasible (although not all), and the algorithm was able to find a true optimum.
However, it was not the most frequently sampled solution. The bottom panel
presents an even more unfavorable case of a larger \gls{TSP} with ID
\texttt{TSP82} and 49 \gls{QUBO} variables, an 8-node \gls{TSP} instance. First
of all, most of the sampled solutions were actually infeasible for the original
problem (hence, representing significantly suboptimal solutions to the
corresponding \gls{QUBO}). The solution frequency profile is not very
pronounced, without clear maxima and most of the solutions sampled once or
twice. The quality of the solution is worse, and there is a strictly positive
absolute gap between the best found solution and a true optimum.

\begin{figure}[ht]
	\centering
	\includegraphics[width=0.7\textwidth]{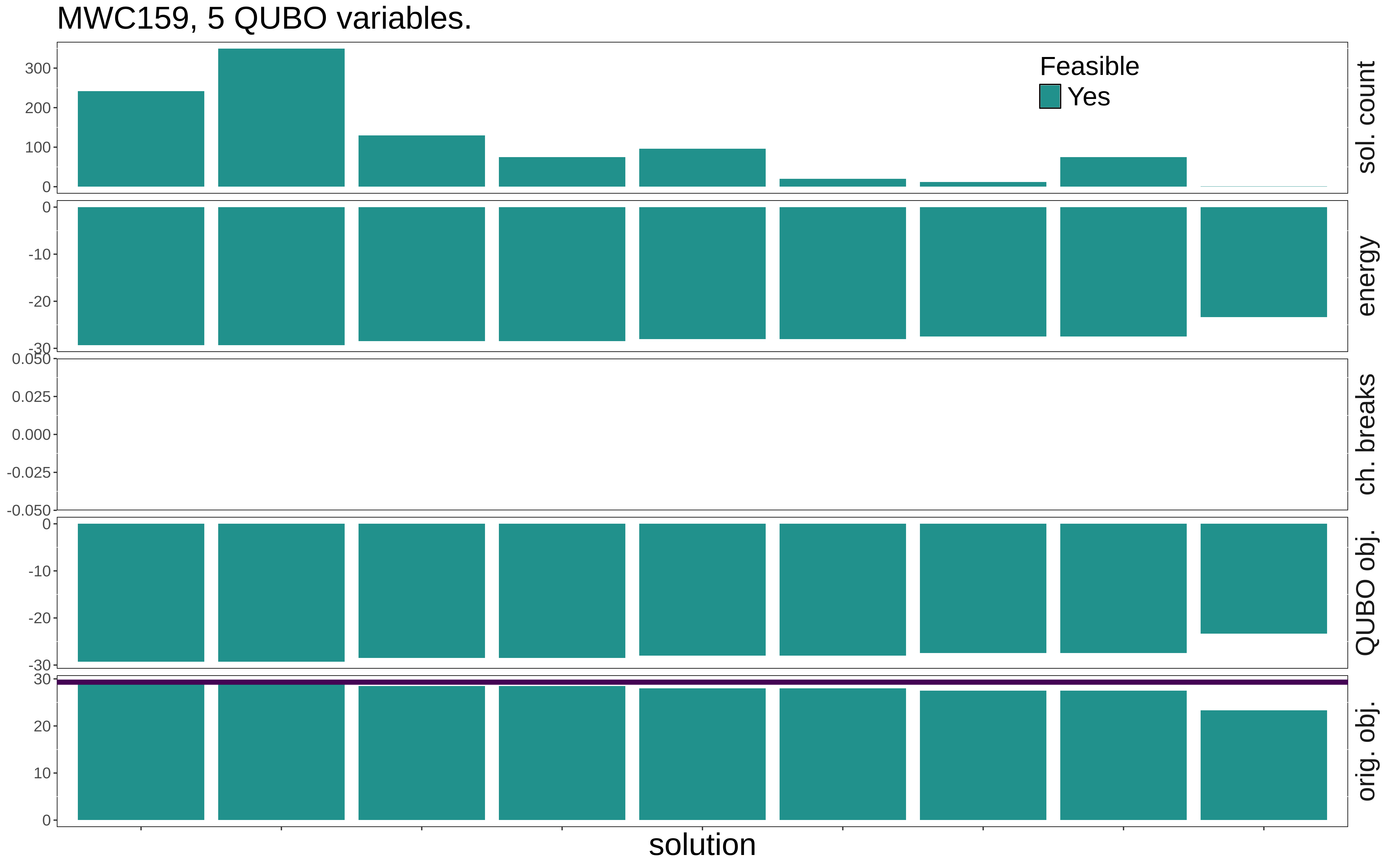}\vspace{\baselineskip}
	\includegraphics[width=0.7\textwidth]{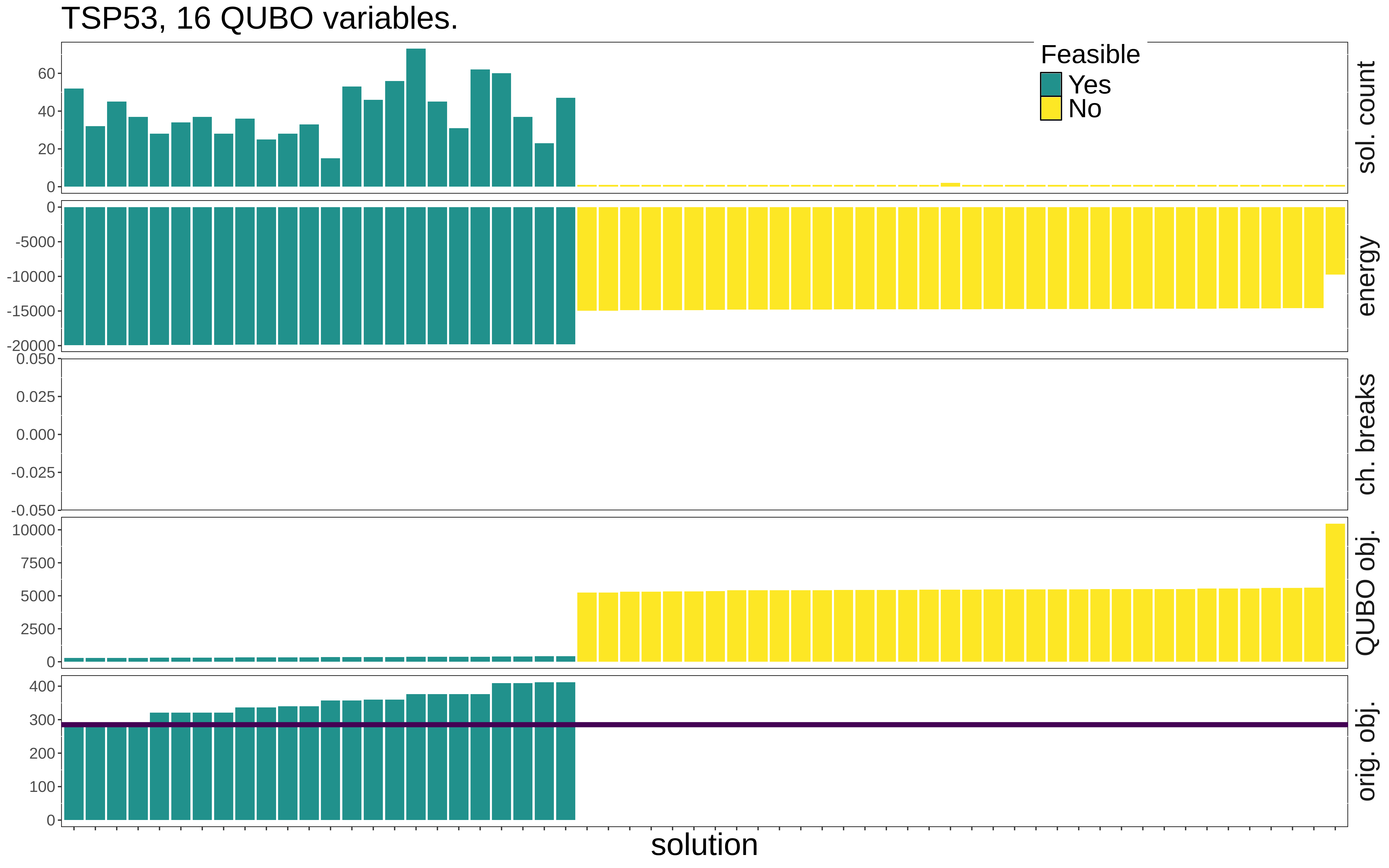}\vspace{\baselineskip}
	\includegraphics[width=0.7\textwidth]{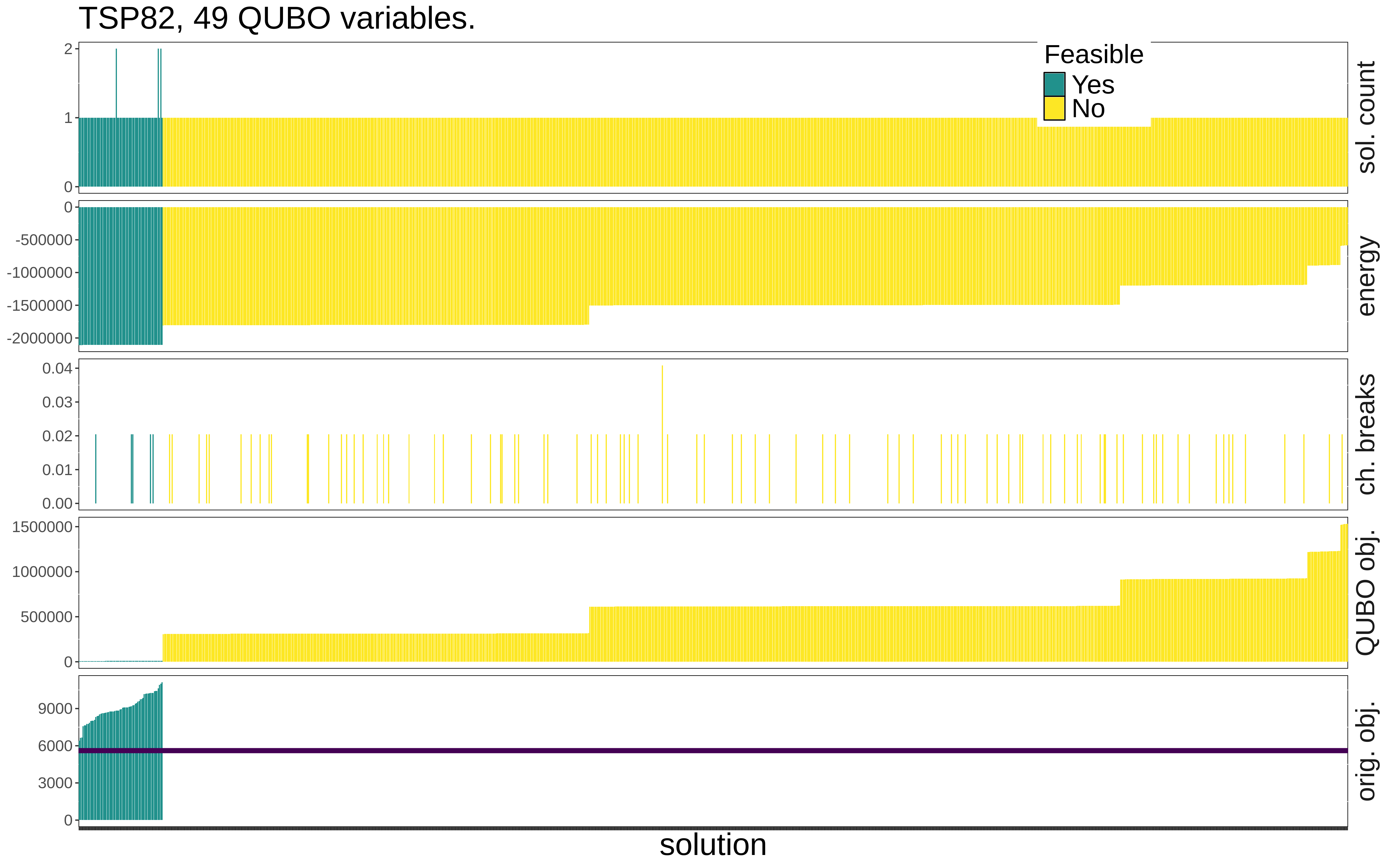}
	\caption{\label{fig:DWave samples} Output examples for the \gls{D-Wave-OPT} approach. Final sample for three selected instances on \gls{Advantage}. Dark color marks feasible solutions.}
\end{figure}

\paragraph{\texorpdfstring{\gls{QuEra-OPT}}{NA-OPT}.}
A similar situation is illustrated in \cref{fig:QuEra samples} for the
\gls{Aquila} device. Again, the top panel represents a relatively favorable
situation where more than half of the solutions are feasible, and the most
frequently sampled one corresponds to a true optimum. In the middle panel, a
significant number of solutions were sampled one or two times, but the frequency
profile still has a maximum around a true optimal solution. The bottom panel
represents yet another instance (which is approximately 4 times as large as
the one corresponding to the top panel), and here we see a completely flat
frequency profile of the sample: each solution was sampled exactly once. Such a
difference in output quality is somewhat surprising, as the three instances
mentioned here, \cref{fig:UDMIS1-4-7}, are very close in terms of the structure
and in fact represent different numbers of similar, but
unrelated \gls{UD-MIS} problems, which are solved in parallel within a single
shot. This highlights the fact that quantum-powered algorithms are complex and
sometimes might require additional fine-tuning. Some ``best practices'' and
practical considerations aiming specifically at the \gls{Aquila} device are discussed
by \citet{wurtz2023}.

\begin{figure}[ht]
	\centering
	\includegraphics[width=0.7\textwidth]{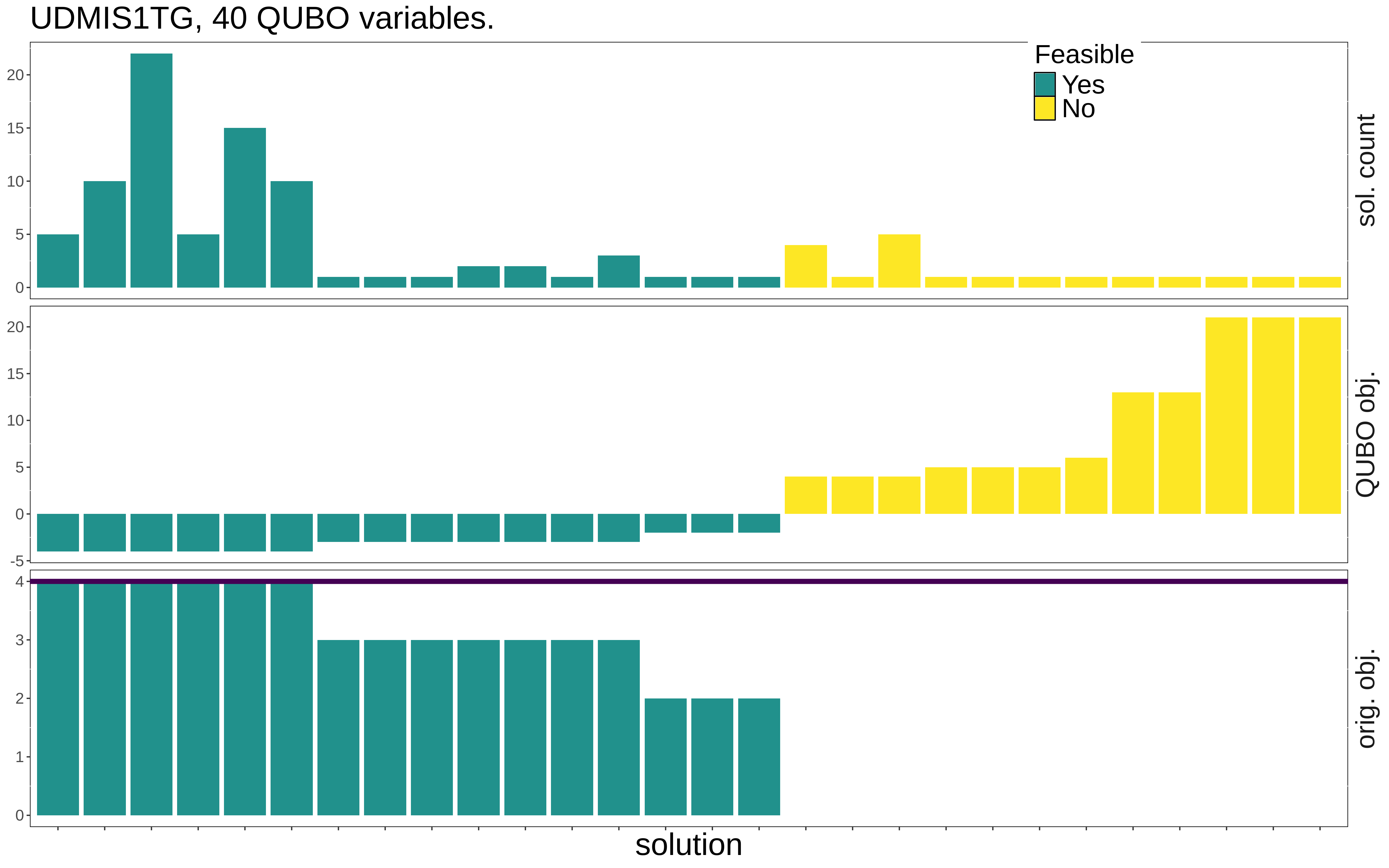}\vspace{\baselineskip}
	\includegraphics[width=0.7\textwidth]{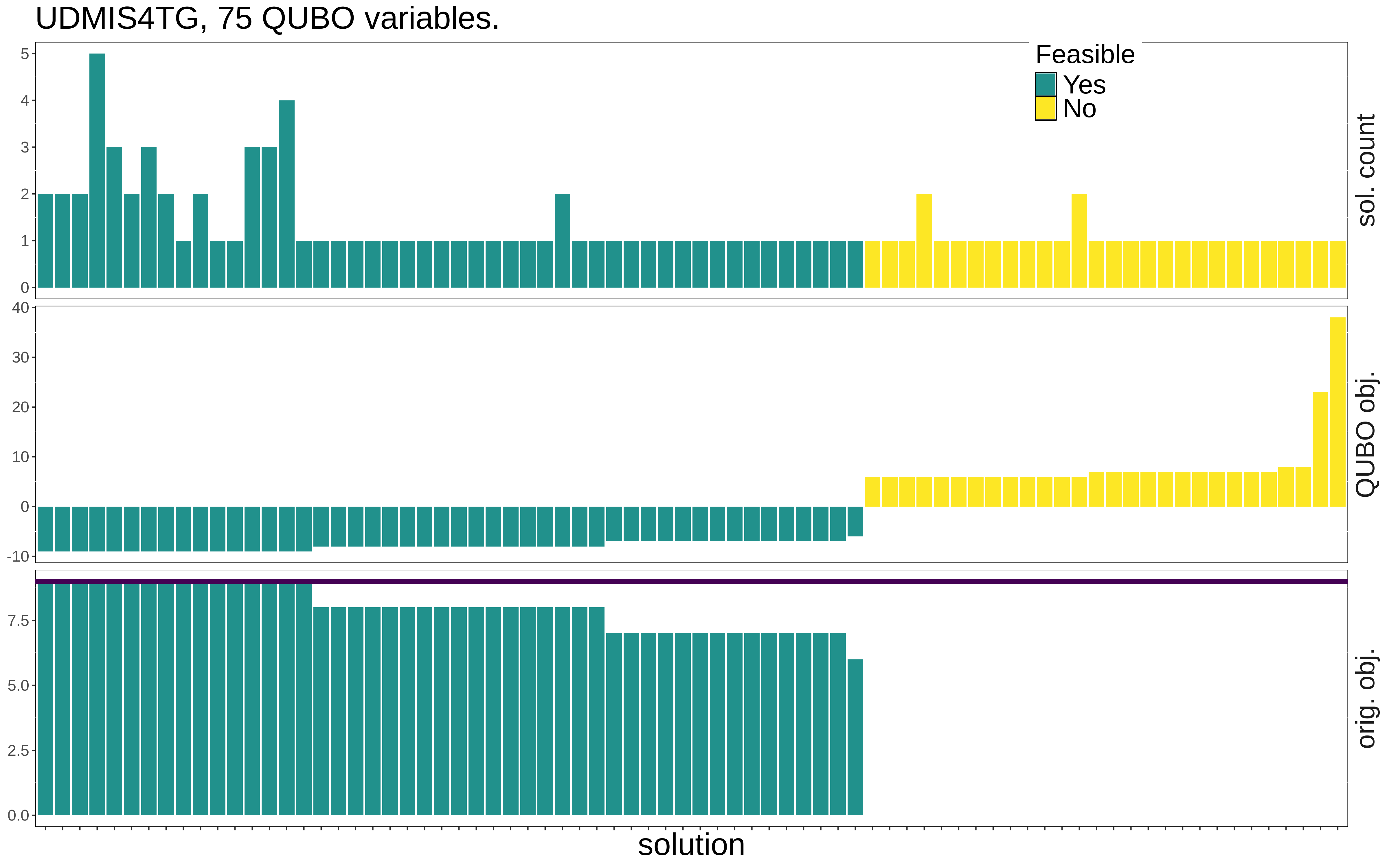}\vspace{\baselineskip}
	\includegraphics[width=0.7\textwidth]{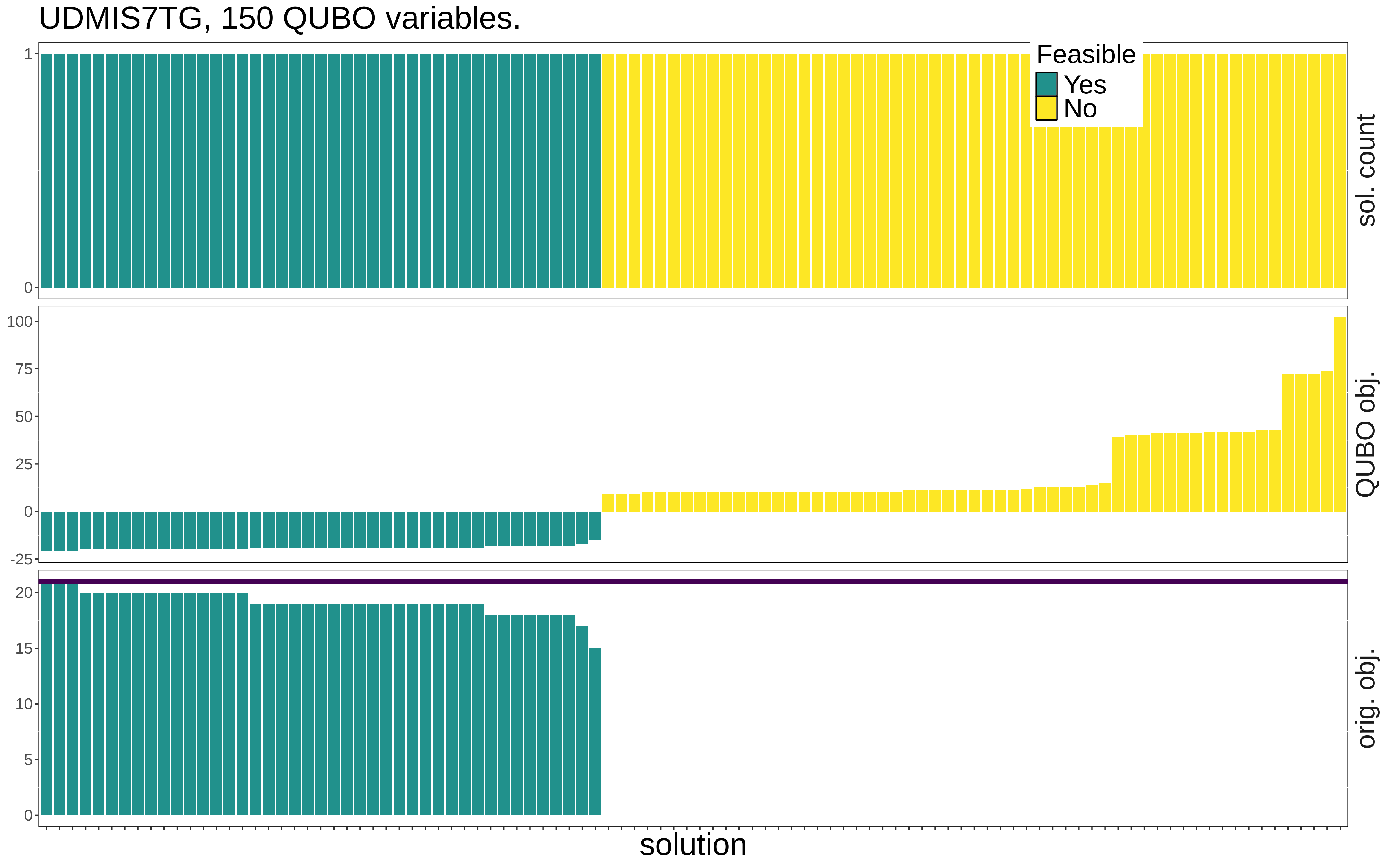}
	\caption{\label{fig:QuEra samples} Output examples for the \gls{QuEra-OPT} approach. Final sample for three selected instances (see \cref{fig:UDMIS1-4-7}) on \gls{Aquila}. Dark color marks feasible solutions.}
\end{figure}

\begin{figure}[ht]
	\centering
	\includegraphics[width=0.95\textwidth]{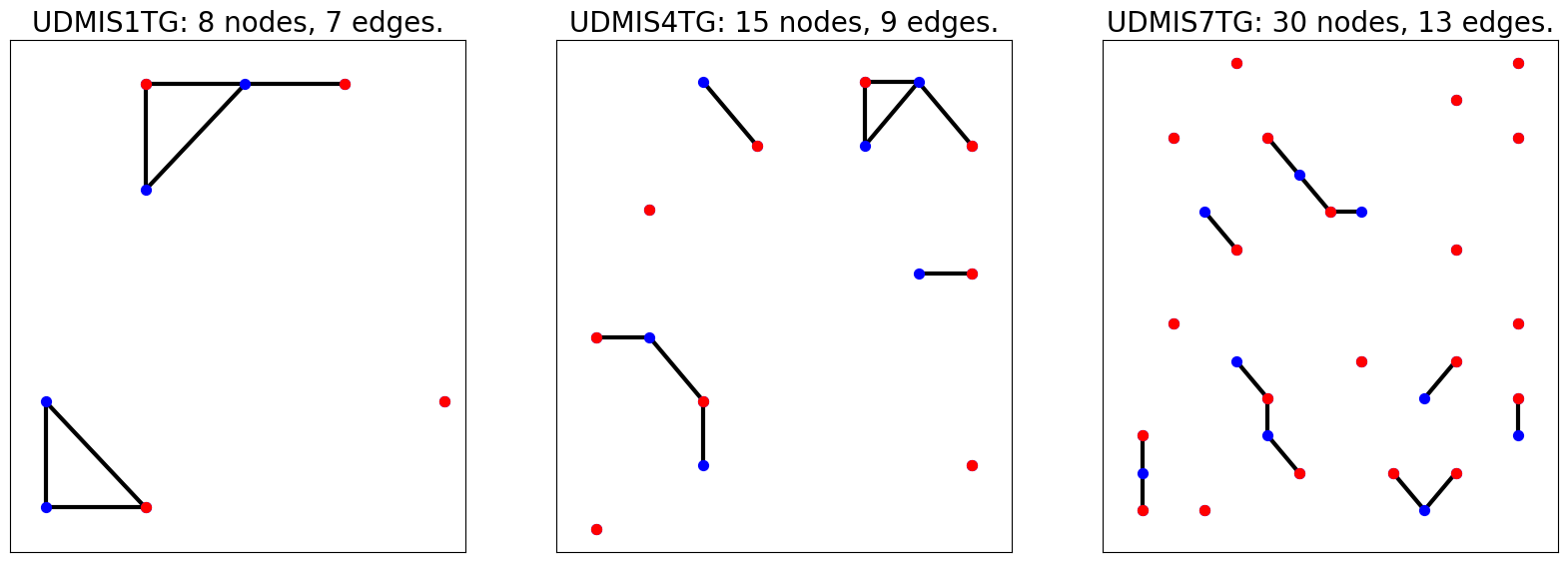}
	\caption{\label{fig:UDMIS1-4-7} Selected instances for \cref{fig:QuEra samples}.
		Original \Gls{MIS} graphs, colored points represent an optimal solution.}
\end{figure}

\paragraph{\texorpdfstring{\gls{IBM-OPT}}{QAOA-OPT} and \texorpdfstring{\gls{IBM-SIM-OPT}}{SIM-OPT}.}
Outputs for the same three instances from \cref{fig:UDMIS1-4-7}
on the \gls{IBM} device \gls{IBMNazca} and a noise-free simulator are presented in
\cref{fig:IBM samples}. For these instance sizes, the simulator was able to find
more feasible solutions, but overall the picture is the same. The smallest
instance, \texttt{UDMIS1TG}, yields a reasonable solution frequency profile.
Approximately doubling the number of variables results in most of the solutions
being sampled once or twice, and doubling the number of variables again yields a
completely flat frequency profile (\num{1000} different solutions sampled, out of \num{1000}
attempts), with noticeable number of infeasible solutions (more so for the
quantum device in comparison with the simulator).
In this direct comparison (based on our naive implementations), we find that the \gls{QuEra-OPT} approach, while offering less flexibility than the \gls{IBM-OPT} approach, leads to better solutions for the considered \gls{UD-MIS}
instances.

\begin{figure}[ht]
	\begin{minipage}{0.45\linewidth}
		\includegraphics[width=\textwidth]{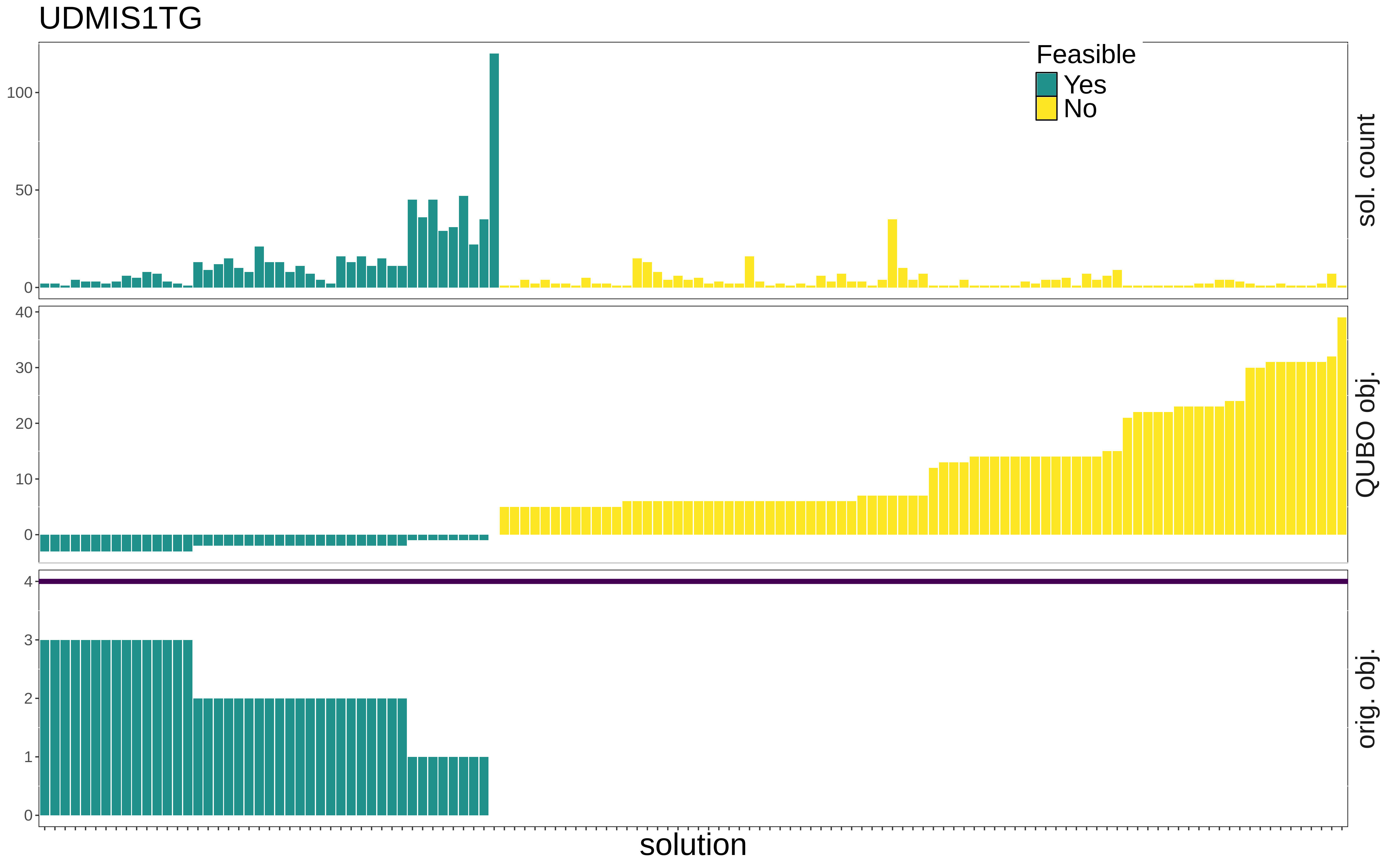}\vspace{\baselineskip}
		\includegraphics[width=\textwidth]{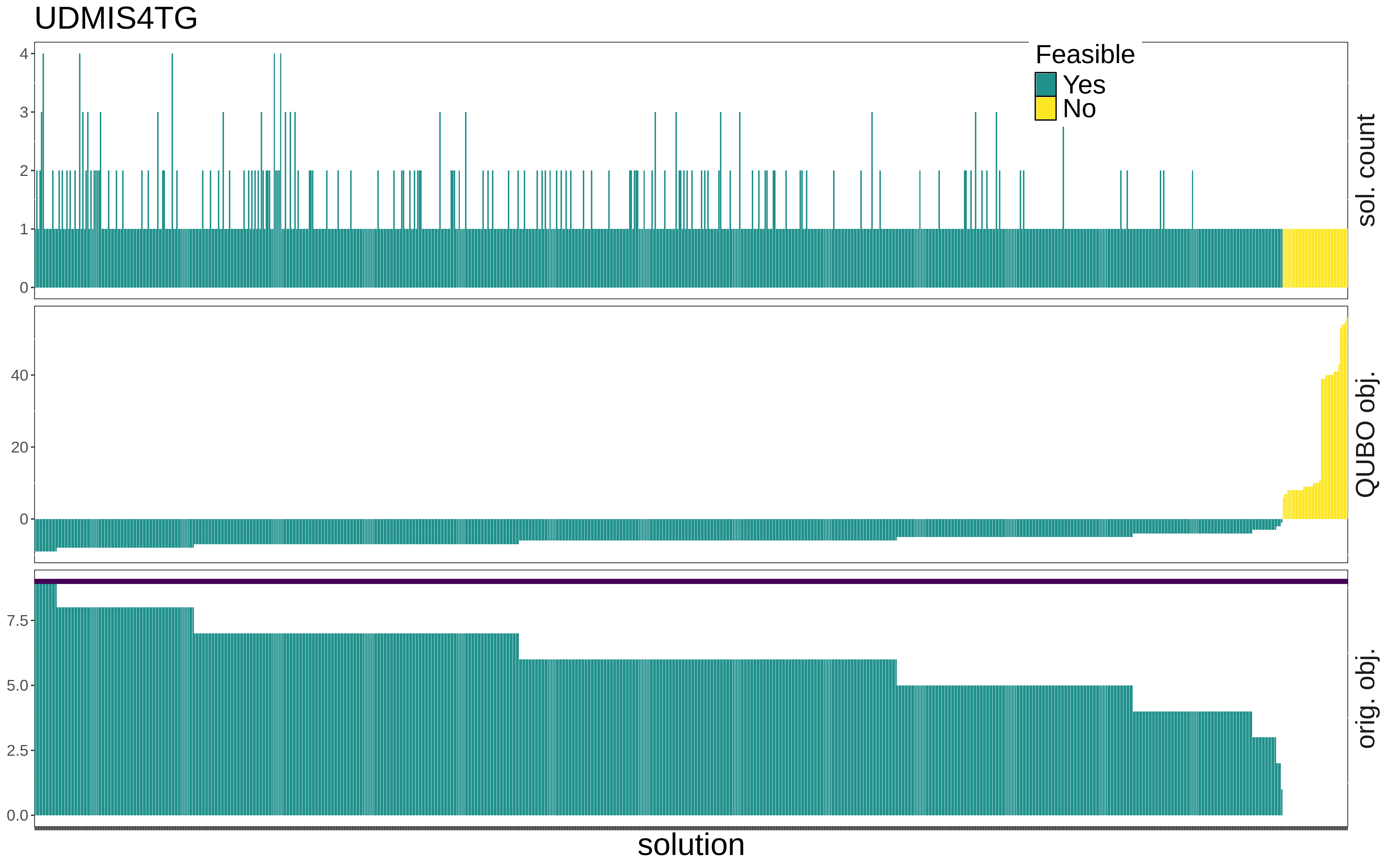}\vspace{\baselineskip}
		\includegraphics[width=\textwidth]{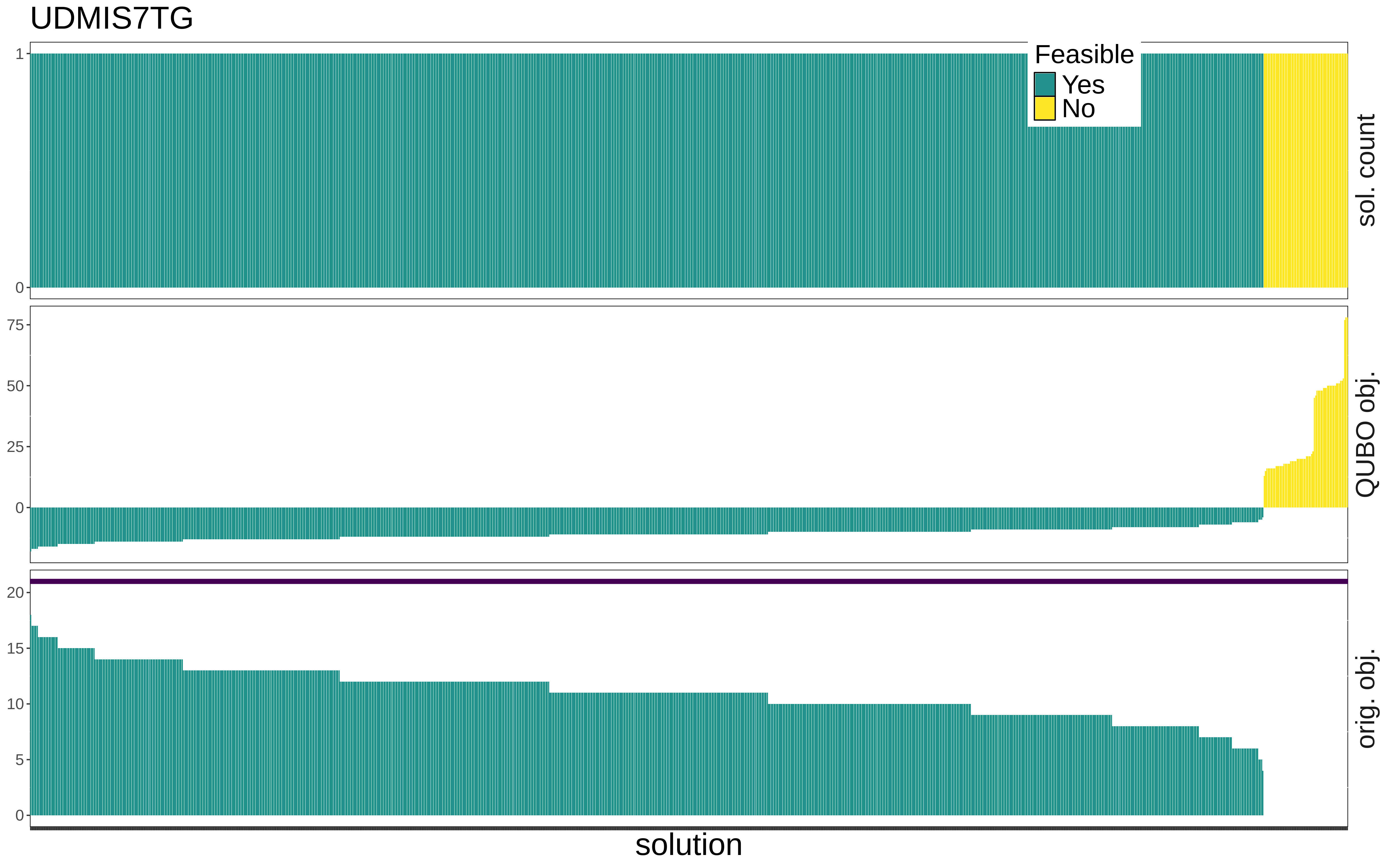}
	\end{minipage}\hfill%
	\begin{minipage}{0.45\linewidth}
		\includegraphics[width=\textwidth]{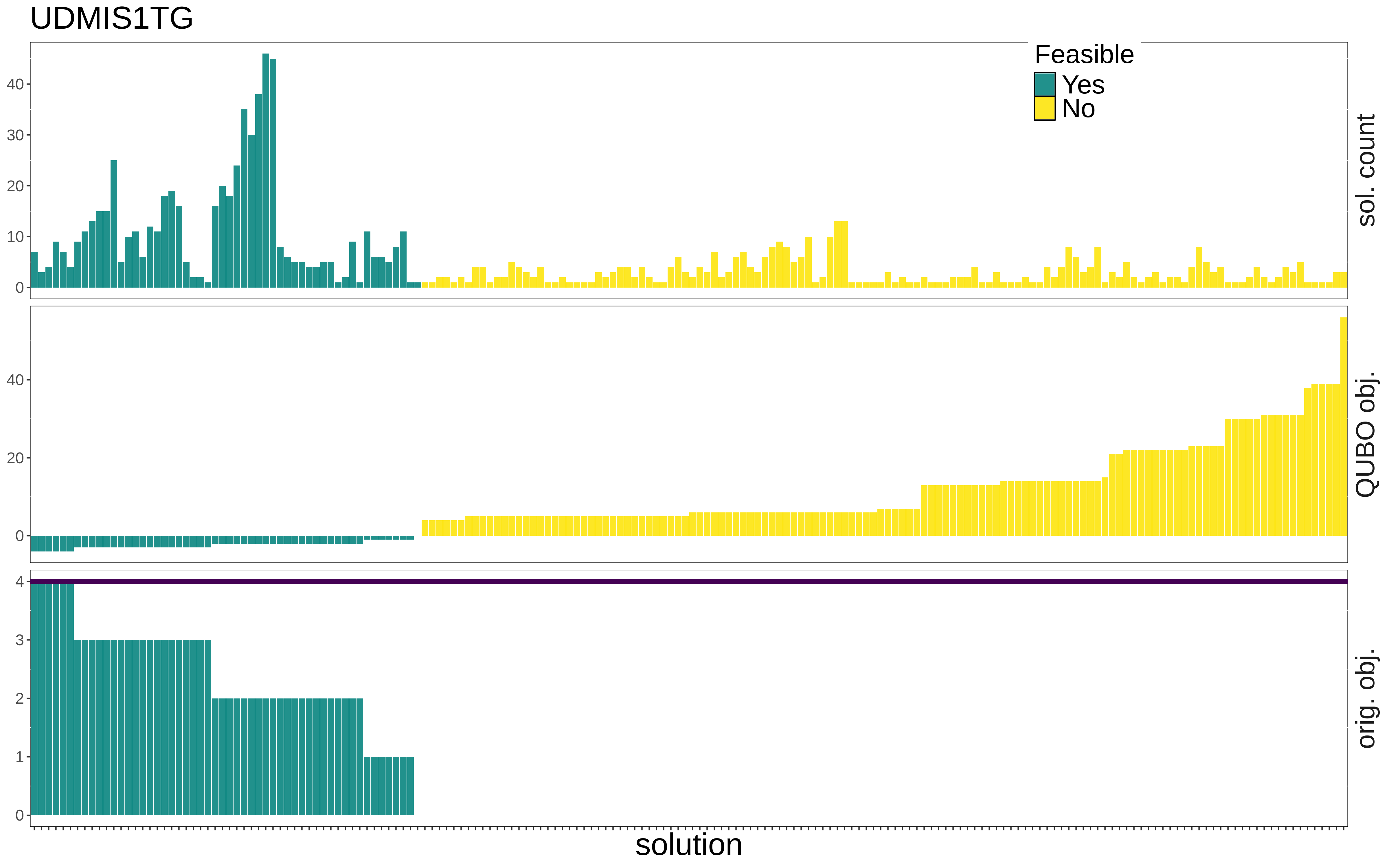}\vspace{\baselineskip}
		\includegraphics[width=\textwidth]{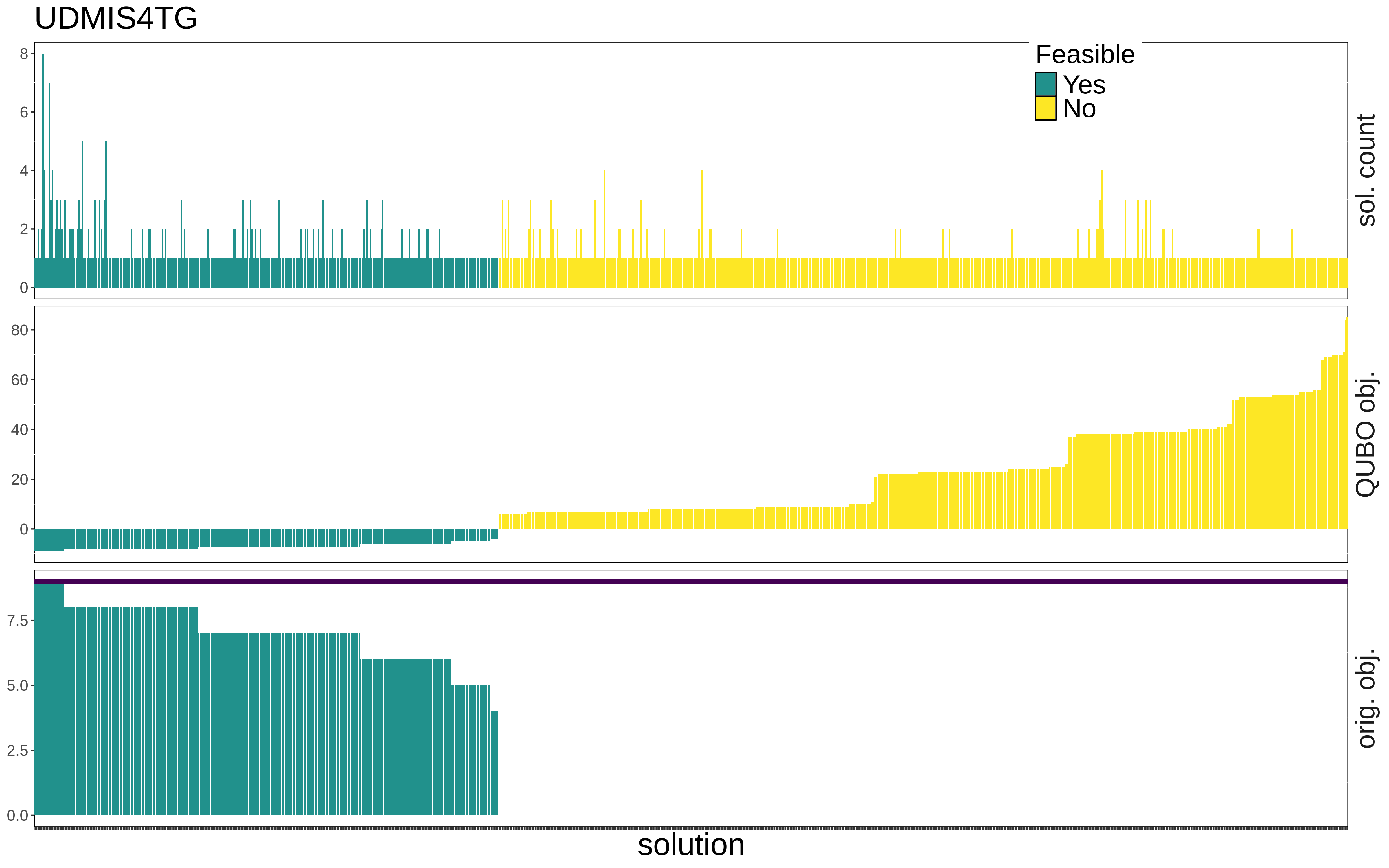}\vspace{\baselineskip}
		\includegraphics[width=\textwidth]{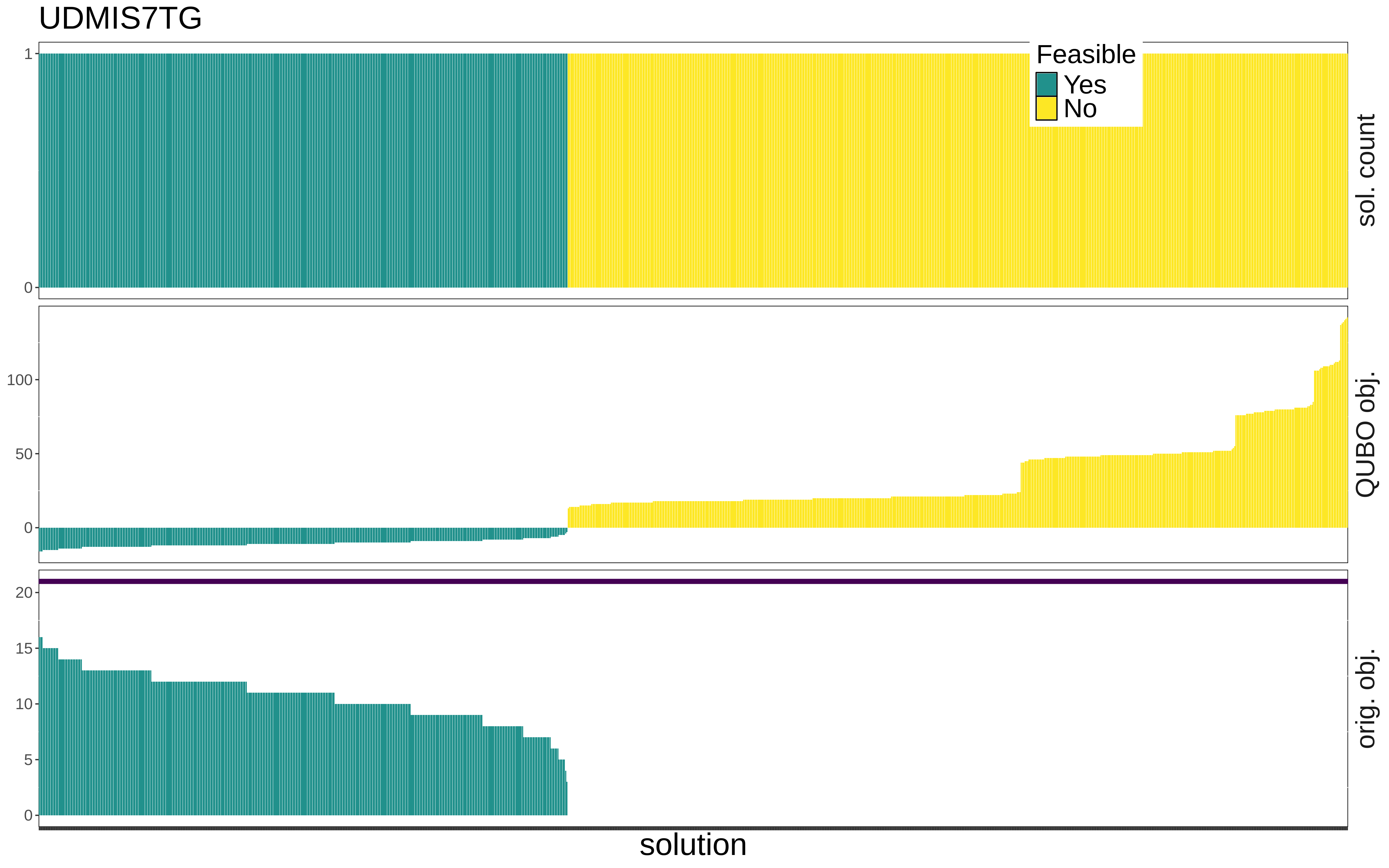}
	\end{minipage}
	\caption{\label{fig:IBM samples} Output examples for the \gls{IBM-SIM-OPT} approach (left) and the \gls{IBM-OPT} approach (right).
		Final sample for three selected instances
		on a noise-free simulator and \gls{IBMNazca}, respectively. Dark color marks feasible solutions.}
\end{figure}

\section{D-Wave embeddings: Chimera and Pegasus topology graphs}\zlabel{app:Pegasus}\label{app:Pegasus}

The topology graphs of \gls{D-Wave} quantum annealers are intersection graphs of axis-parallel segments.
An example, the so-called \emph{Chimera} topology is shown in \cref{fig:chimera-embeddings}: 
the top left panel shows the intersecting segments (thick lines), where
intersections of two neighboring horizontal or vertical qubits are represented
by thin lines between them. The resulting topology graph~$G_T$ is shown in the top right panel.

One safe way to obtain an embedding of the \gls{QUBO} graph~$G_Q$ into $G_T$ is to
search for an embedding of a clique~$K_n$ into $G_T$, which allows realizing all possible
interactions. The problem of finding the largest clique minor in a
broken topology graph is fixed-parameter tractable in the number of broken
qubits \citep{lobe2021}, \ie, it can be solved exactly with running time polynomial in
the size of the topology graph (but exponential in the number of broken qubits).
In this paper, for our upper bounds on the number of physical qubits required,
we consider clique embeddings into the non-broken \emph{Pegasus} graph, described below. In practice,
broken qubits may turn these embeddings invalid and lead to a larger number of
needed qubits, but on the other hand, the graphs~$G_Q$ resulting from our application
problems are usually rather sparse, which allows for a significant reduction of
qubits required as compared to our bound, which is confirmed by our experimental
results, see~\zcref{sec:results}.

To explain the clique embedding, we first consider the Chimera topology,
shown in \cref{fig:chimera-embeddings}, which was used by a previous generation of \gls{D-Wave} devices. It
consists of eight-qubit cells, tiled vertically and horizontally, and connected
to each other as presented in the top right panel
of \cref{fig:chimera-embeddings}. Each cell constitutes a complete bipartite
graph with four nodes on each side, denoted $K_{4,4}$. The nodes from one of
the bipartition classes (``horizontal qubits'', corresponding to horizontal segments)
are connected to their counterparts in the horizontally
neighboring cells, while the nodes from the other bipartition class (``vertical qubits'') are
connected to the vertically neighboring cells. Every node has degree at most
$6$, which makes it in particular necessary to use multiple nodes (\ie, physical qubits) to
represent any variable interacting with more than six other variables.
\Citet{choi2011} described a variant of clique embedding where always four
chains are grouped together and each $K_{4,4}$ realizes the interactions within
one such group or between two such groups \citep[see also][]{klymko2013,boothby2016,Date2019}.
For example, in 
\cref{fig:chimera-embeddings}, a quadruple of binary variables, such as $x_1,\dotsc,x_{4}$, is involved in five cells: one modeling the interactions within the quadruple and four for the interactions with each of the other variable quadruples.
Distinct quadruples are represented by different colors in the figure; the high-level
logic is shown in the bottom panel.

\begin{figure}
	\includegraphics[width=\textwidth]{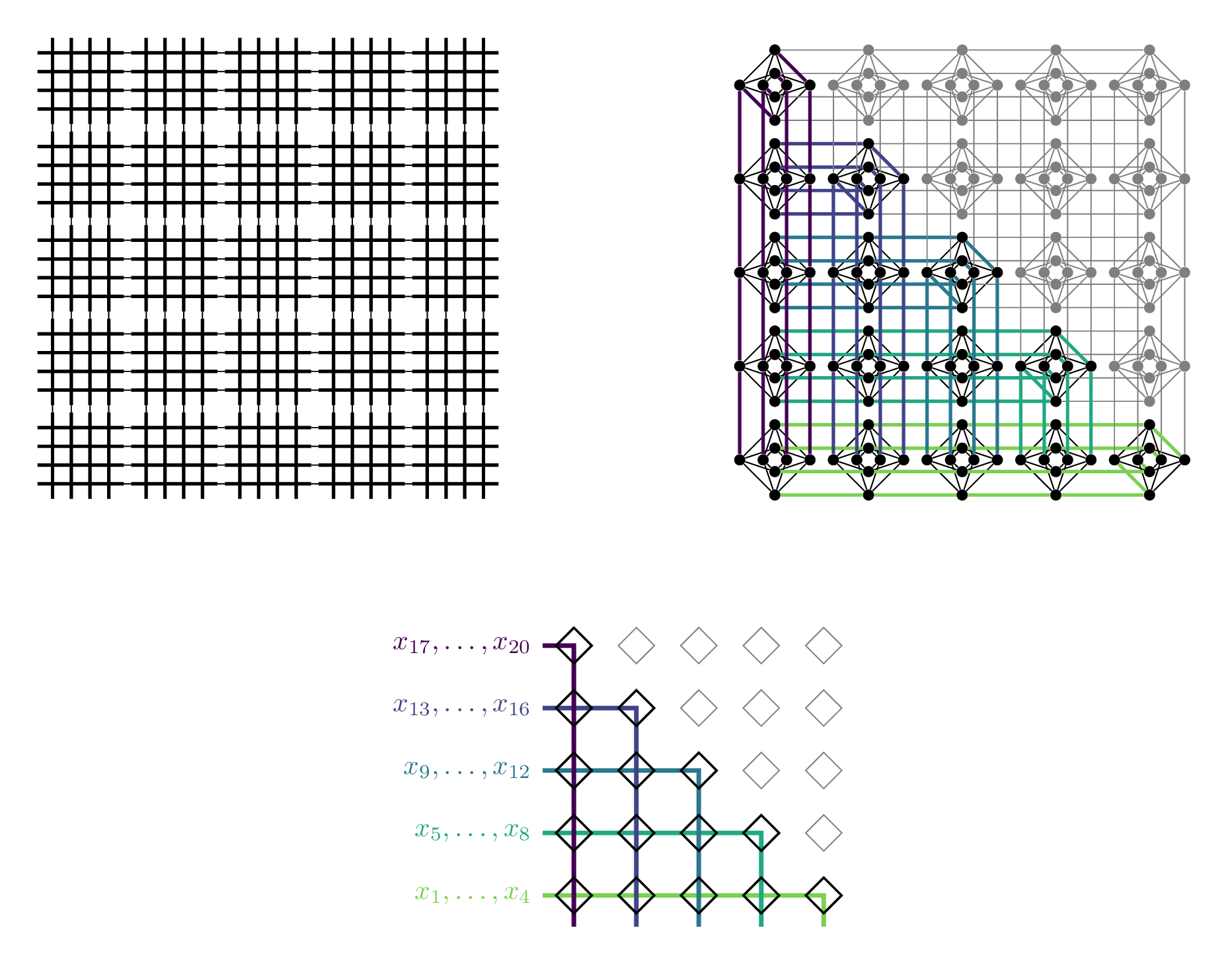}
	\caption{\Gls{D-Wave}'s Chimera topology: segments corresponding to
		superconducting loops (top left) and their intersection graph, the Chimera
		graph (top right); groups used in embedding of $K_{20}$ represented by
		colors; high-level logic of clique embedding (bottom) and explicit embedding
		(top right).\label{fig:chimera-embeddings}}
\end{figure}

The topology graph of the \gls{Advantage} device, the so-called \emph{Pegasus} graph
\citep{boothby2020,dattani2019a,dwavetopologies}, consists of three Chimera
graphs with additional connections, see \cref{fig:pegasus-embeddings}. %
%
\begin{figure}
	\includegraphics[width=\textwidth]{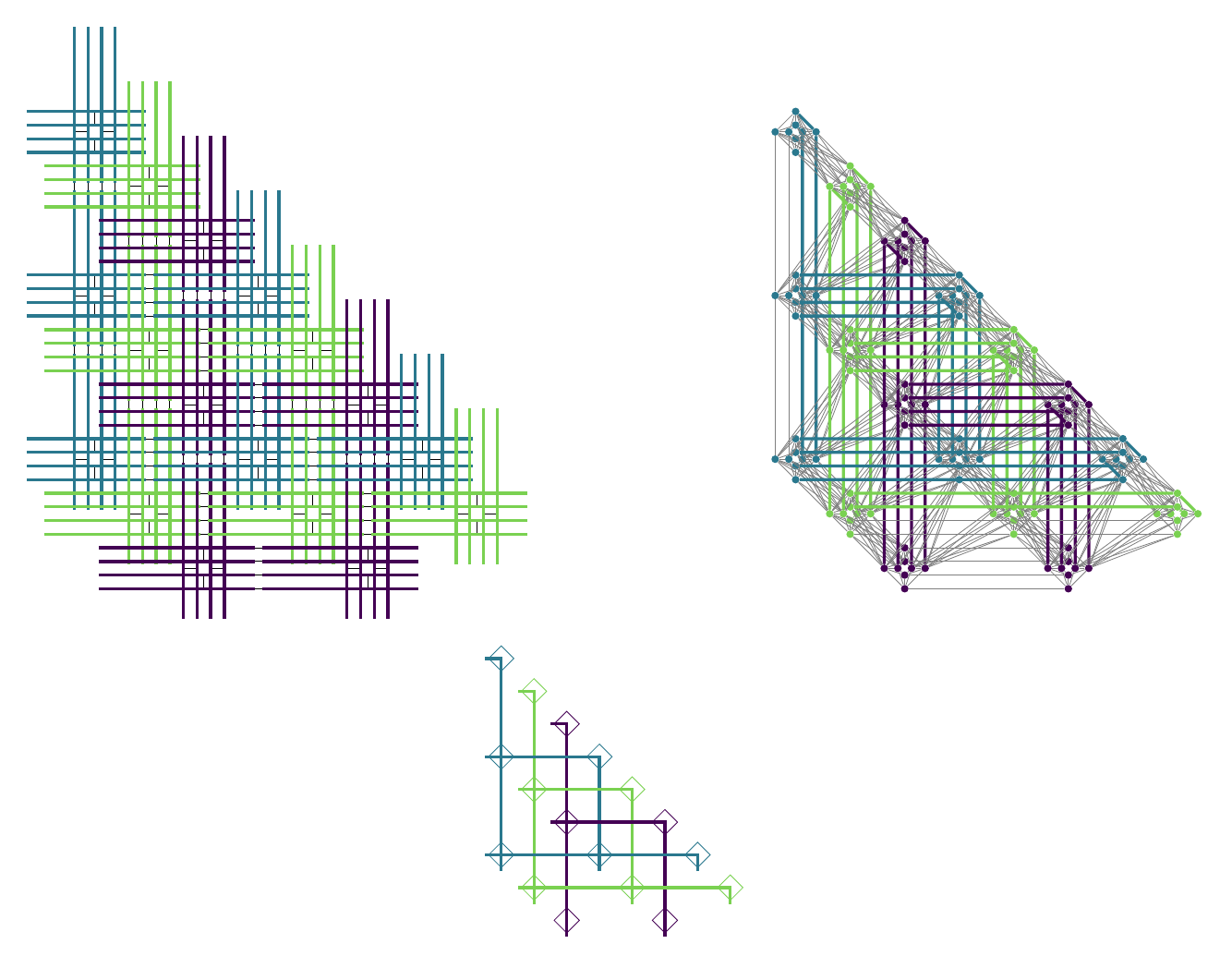}
	\caption{%
		Section of \gls{D-Wave}'s Pegasus topology: segments corresponding to
		superconducting loops (top left) and their intersection graph (top right),
		colors representing the three Chimera subgraphs. In the clique embedding,
		only intersections of chain groups of the same color are realized by
		$K_{4,4}$ cells; the others are realized by additional edges (bottom).
		Embedding of $K_{28}$ (top right, thick colored
		lines).}\label{fig:pegasus-embeddings}
\end{figure}%
Each inner node has, in addition to the six incident edges from its Chimera graph, one
extra edge to a neighboring node in its cell (breaking the
bipartiteness) and eight edges to nodes from the other two Chimera graphs, giving a
total node degree of~$15$. The connections between different
Chimera graphs within the Pegasus graph are designed in a way that combining the
Chimera embeddings of three cliques~$K_n$ yields an embedding of $K_{3n}$ (with
some caution at the boundaries). This means that in order to embed a clique of
$12n$ logical qubits, we can subdivide this into three cliques of size $4n$ and
embed each into a Chimera consisting of $n^2$ $K_{4,4}$-cells, resulting in
$3 n^2$ cells \revII{in total}, while an embedding into a single Chimera graph would require
$(3n)^2$ cells. Hence, (ignoring boundary effects), the number of required
physical qubits to embed a clique into the new graph is divided by three.
For either of the two topology graphs, an arbitrary \gls{QUBO} with
$n$ variables can thus be encoded using $O(n^{2})$ physical qubits, although with
different hidden constants. \Citet{pelofske2023} provides a detailed comparison
of \gls{D-Wave}'s device topologies. The exact number for Pegasus is given in
the following lemma.

\begin{lemma}[\citealp{boothby2020}]
	For every $M \in \mathbb N$ the clique $K_{12M - 10}$ is a minor of a Pegasus
	graph consisting of $24M(M-1)$ nodes corresponding to physical qubits.
\end{lemma}

This result immediately gives us that if we want to embed $n$ logical qubits, then we can apply the lemma to $M = \lceil\frac{n+10}{12}\rceil \le \frac{n+21}{12}$, resulting in at most $\frac{(n+9)(n+21)}{6}$ physical qubits for \revII{a} connected \gls{QUBO} instance, as presented in~\zcref{lm:pegasus-qubits}.

\revIII{As outlined in \zcref{sec:QA}, in our experiments the
	embeddings were calculated heuristically using the standard procedure offered
	by \gls{D-Wave}. For example, the actual embedding 
	for instance \texttt{MWC3}, a small \gls{MaxCut} instance, is presented in
	\cref{fig:MWC3-emb}. Specifically, the complete graph with five nodes was
	embedded into a subgraph of Pegasus topology comprising six, and not
	$(5+9)(5+21)/6 \approx 60$ nodes: The variable indexed by $2$ in the figure
	is represented by a chain of two nodes in the device graph. Across our
	collection of instances, we needed slightly less than quadratically
	many physical qubits, which depended on the problem structure (see~\zcref{sec:results}).
	Finding embeddings was very
	resource-intensive for larger instances, and constructing fast and reasonably
	effective heuristic algorithms might constitute an interesting direction for
	further research. While discussing the embeddings in more detail is beyond the
	scope of this article, we refer the reader to the works mentioned in \zcref{sec:QA} devoted to the topic for more details and remark that the
	embeddings for our considered problem instances are available for further analysis in the materials
	accompanying the paper (in the folder \texttt{run\_logs/dwave/embeddings}).}

\begin{figure}
	\centering
	\begin{minipage}{0.3\textwidth}
		\includegraphics{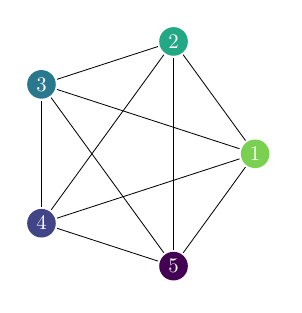}
	\end{minipage}%
	$\xrightarrow{\text{ embedding }}$ 
	\begin{minipage}{0.5\textwidth}
		\includegraphics{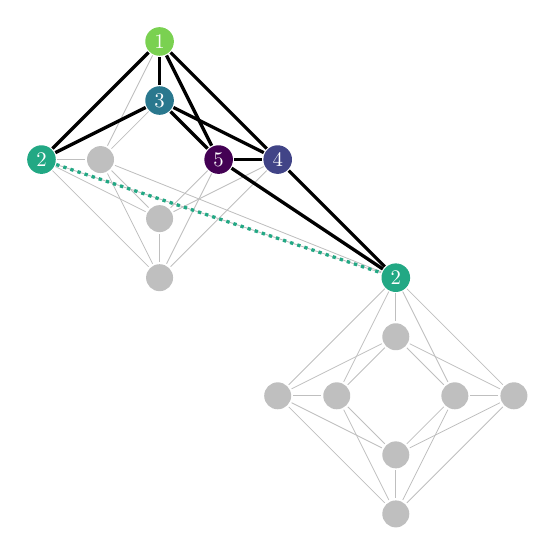}
	\end{minipage}
	\caption{Illustration for a small instance: embedding of a complete graph with
		five nodes (left) into Pegasus graph (right). A chain is represented by a dotted line between nodes labelled \textcircled{2}. Couplers used in the embedding besides the chain are depicted with solid thick lines. Most unused couplers between the two $4\times 4$ gadgets are omitted in the picture for readability.\label{fig:MWC3-emb}}
\end{figure}

Another representation of the experimental results that shows the annealing time
and embedding time separately is given in \cref{fig:embedding-time-shares}. The
left panel presents annealing time in seconds (without the embedding), while the
embedding time shares across all considered instances are summarized in
the right panel.

\begin{figure}[ht]
	\begin{minipage}[t]{0.475\linewidth}
		\centering
		\includegraphics[width=\linewidth]{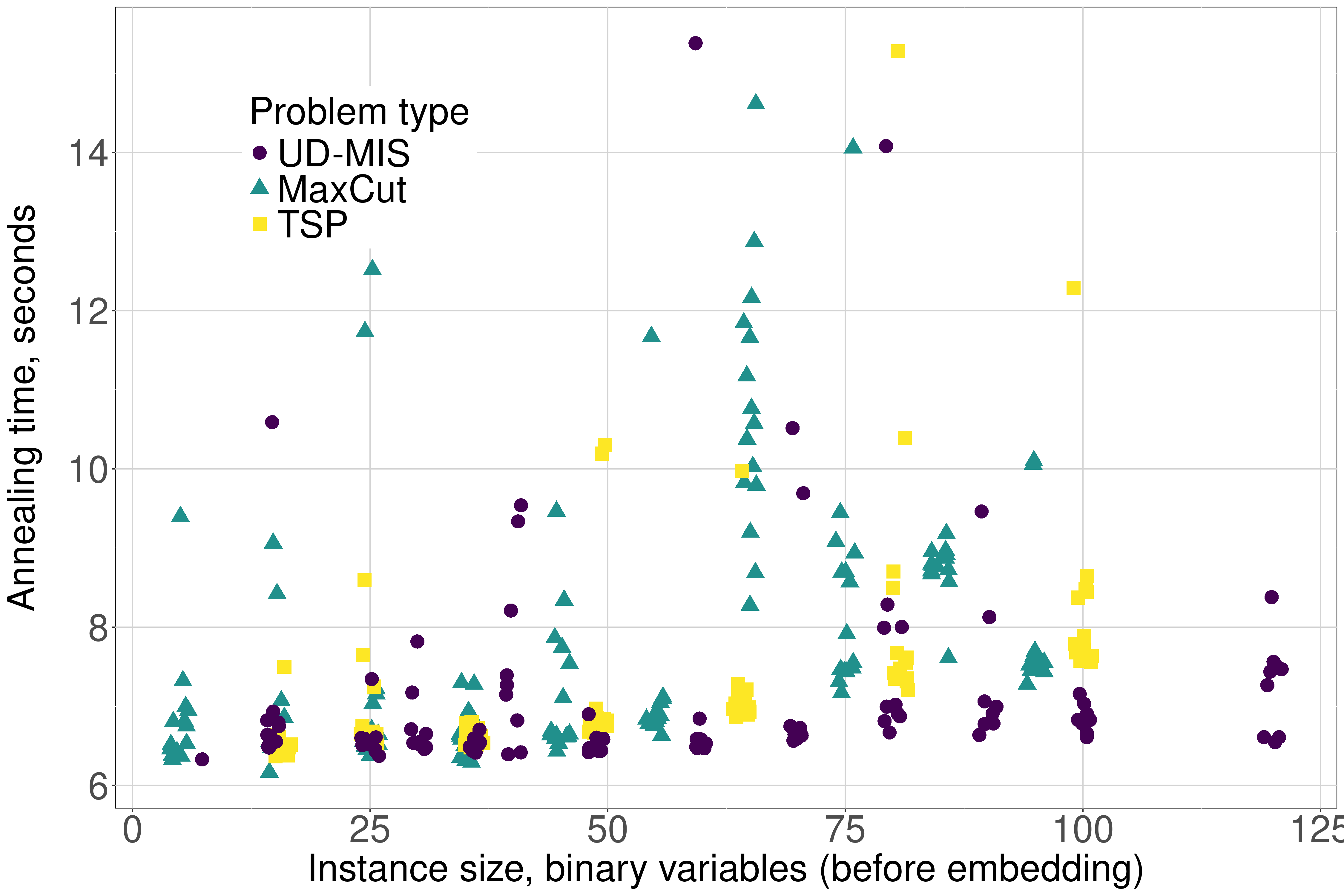}
	\end{minipage}\hfill%
	\begin{minipage}[t]{0.475\linewidth}
		\centering
		\includegraphics[width=\linewidth]{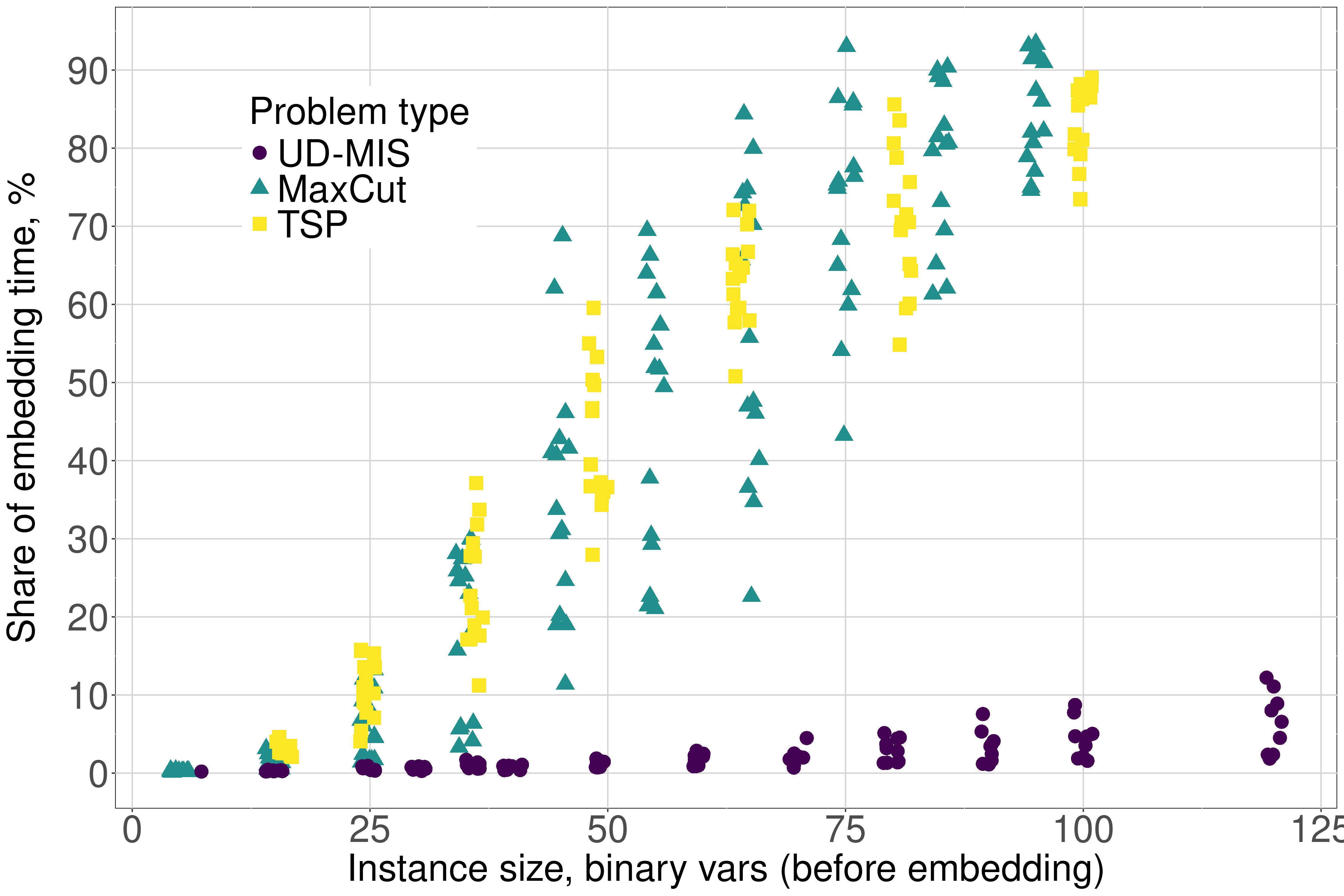}
	\end{minipage}
	\caption{\label{fig:embedding-time-shares} Annealing time (left) and the share of
		embedding time in the total runtime (right).}
\end{figure}
\clearpage

\section{On baseline model selection}\zlabel{app:baselines}

\revIIcomment{This section is new.}

Since numerical benchmarking is not the primary focus of this paper, we aimed to select baseline models that are as simple as possible while reflecting a natural first choice from the perspective of an Operations Research scientist. In this section, we elaborate further on our selection.

For {\protect\gls{TSP}}, many ways to model the problem are known, and
many algorithms exist, including exact, approximate, and heuristic approaches
{\protect\citep{lawler1985, junger1997}}. We consider three well-known
alternative formulations: {\protect\gls{DFJ}}, {\protect\gls{MTZ}}, and
{\protect\gls{QAP}}. Since the {\protect\gls{DFJ}} formulation has an exponential
number of constraints, we choose the next simplest linear model,
{\protect\gls{MTZ}}. Note that we have chosen another formulation (\gls{QAP}) as
a basis for the {\protect\gls{QUBO}} to be solved on the quantum annealer, as it
has lighter requirements on the number of qubits (see \cref{tab:formulations}). This way, we could have larger instances solved on a quantum
computer. Our experiments indicate that for the considered problem sizes and
structures, \gls{Gurobi} was essentially able to outperform our quantum annealing
pipeline, even as we restrict it to finding exact solutions. Therefore, we think
that our ``natural choice'' of {\protect\gls{MTZ}} model as a baseline was
enough to support the discussion in the paper.

Exactly the same logic applies for the {\protect\gls{UD-MIS}} problem set, where we
just used the natural binary program formulation as a baseline. However, for
{\protect\gls{MaxCut}} instances the formulation that we would consider the
first natural choice, the linear binary optimization problem (LBOP, denoted by
formulation~\ref*{eq:MaxCut-ILP}) was not fast enough to allow comfortable work
with larger instances in our collection of instances. Therefore, we improved the baseline
model: the {\protect\gls{QUBO}} formulation, denoted by (\ref*{eq:MaxCut-QUBO}),
somewhat counter-intuitively demonstrated better results with \gls{Gurobi} for
almost all the instances. These preliminary experiments are summarized in
\cref{fig:prelim-maxcut}. Each point corresponds to a single
{\protect\gls{MaxCut}} instance. For the instances where the exact optimum was
found for both formulations (left panel), we depict wall-clock runtimes for both
formulations. Where the exact optimum was not found for both due to the time
limit, we depict the respective values of the optimality gaps (right panel).
The tilted line indicates a situation when the {\protect\gls{QUBO}} and
the {\protect\gls{LBOP}} yielded the same runtime, and a point above (below) the
line indicates that for the respective instance the {\protect\gls{QUBO}}
(respectively, {\protect\gls{LBOP}}) formulation yielded faster convergence. We
see that both in terms of the runtimes and optimality gaps, the
{\protect\gls{QUBO}} formulation was beneficial. Therefore, for our
experiments, we considered the quadratic formulation also for our
classical baseline.

\begin{figure}[ht]
	\centering
	\begin{minipage}[t]{0.45\textwidth}
		\includegraphics[width=\textwidth]{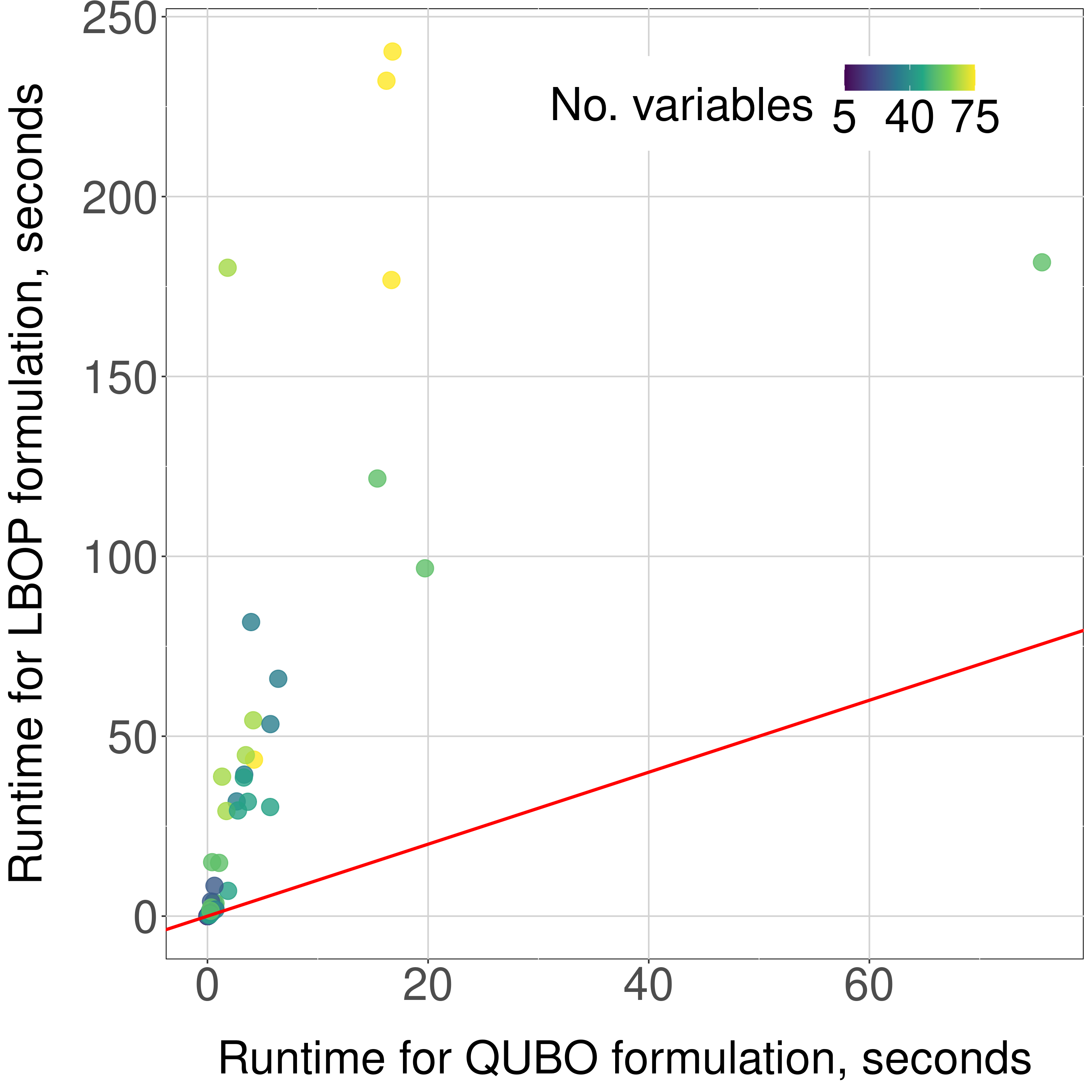}
	\end{minipage}\hfill%
	\begin{minipage}[t]{0.45\textwidth}
		\includegraphics[width=\textwidth]{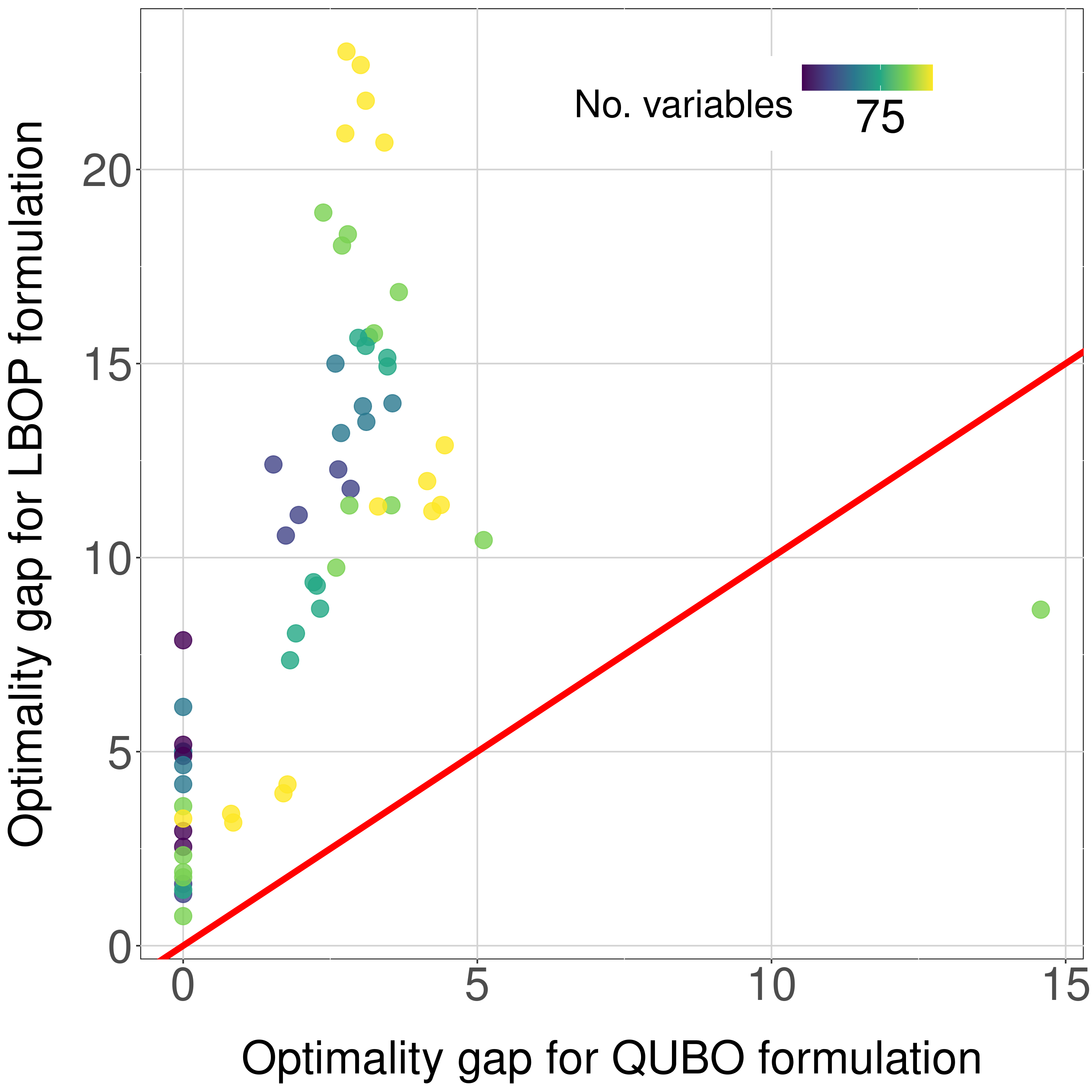}
	\end{minipage}
	\caption{Runtime (left) and optimality gap (right) comparison for
		{\protect\gls{MaxCut}} instances: {\protect\gls{LBOP}} against
		{\protect\gls{QUBO}} formulations.\label{fig:prelim-maxcut}}
\end{figure}

In fact, the classical solver was often able to guess a good solution early on.
We illustrate the branch-and-cut convergence data obtained from \gls{Gurobi} 
for three selected \gls{MaxCut} instances in \cref{fig:Gurobi-conv}.
A usual situation is
depicted in the left two panels: A good solution is obtained heuristically from
the very beginning, and then, most of the time is spent for improving the upper
bound to prove optimality, or for getting a relatively minor improvement of the
solution (as it is the case on the left panel). While this situation holds for
the majority of the instances, certainly, it is not true in general. For
example, for one of the instances (the right-most panel in the figure) the
solver was able neither to improve the bound, nor to update the solution in the
given timeframe.

\begin{figure}[ht]
	\centering
	\begin{minipage}[t]{0.3\textwidth}
		\includegraphics[width=\textwidth]{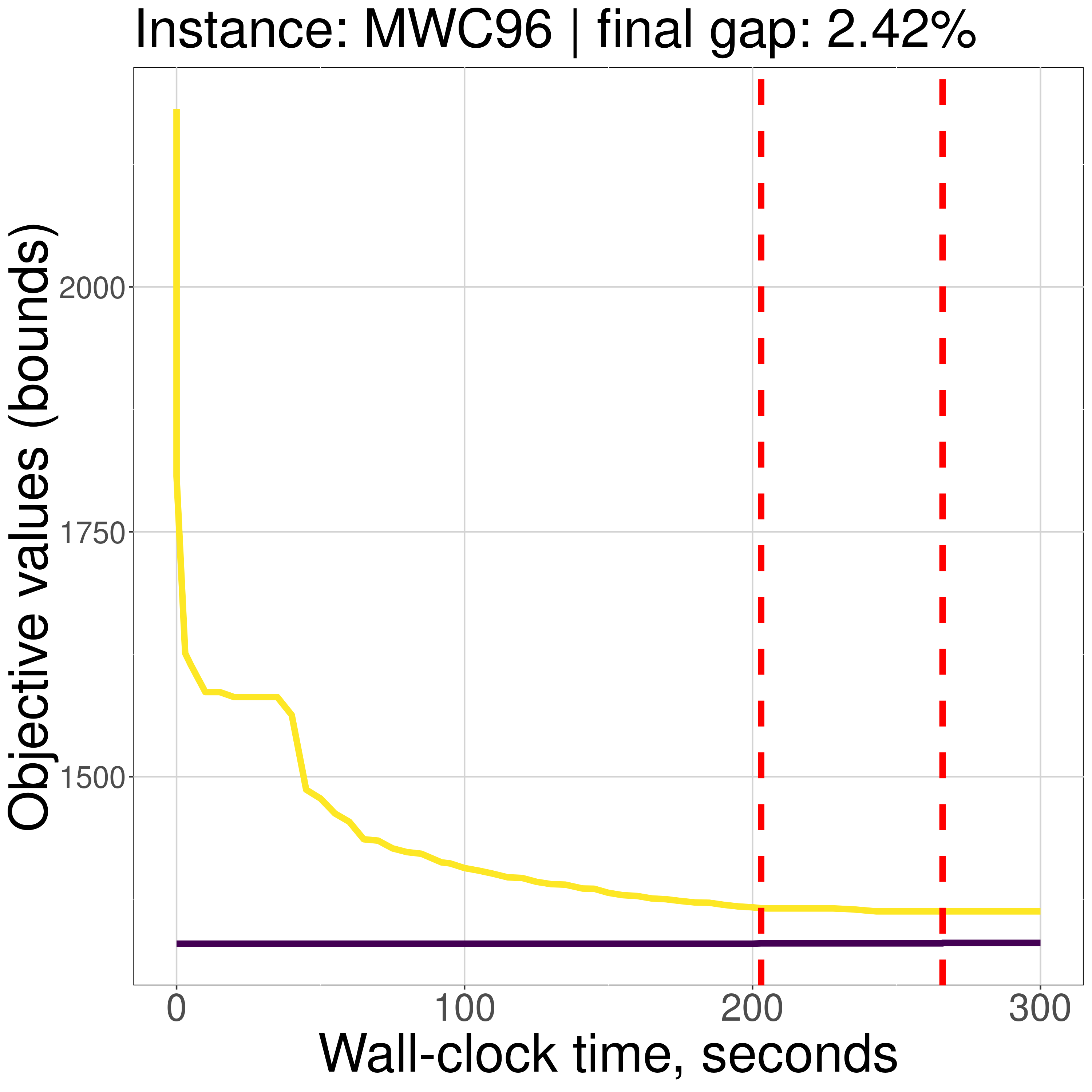}
	\end{minipage}\hfill%
	\begin{minipage}[t]{0.3\textwidth}
		\includegraphics[width=\textwidth]{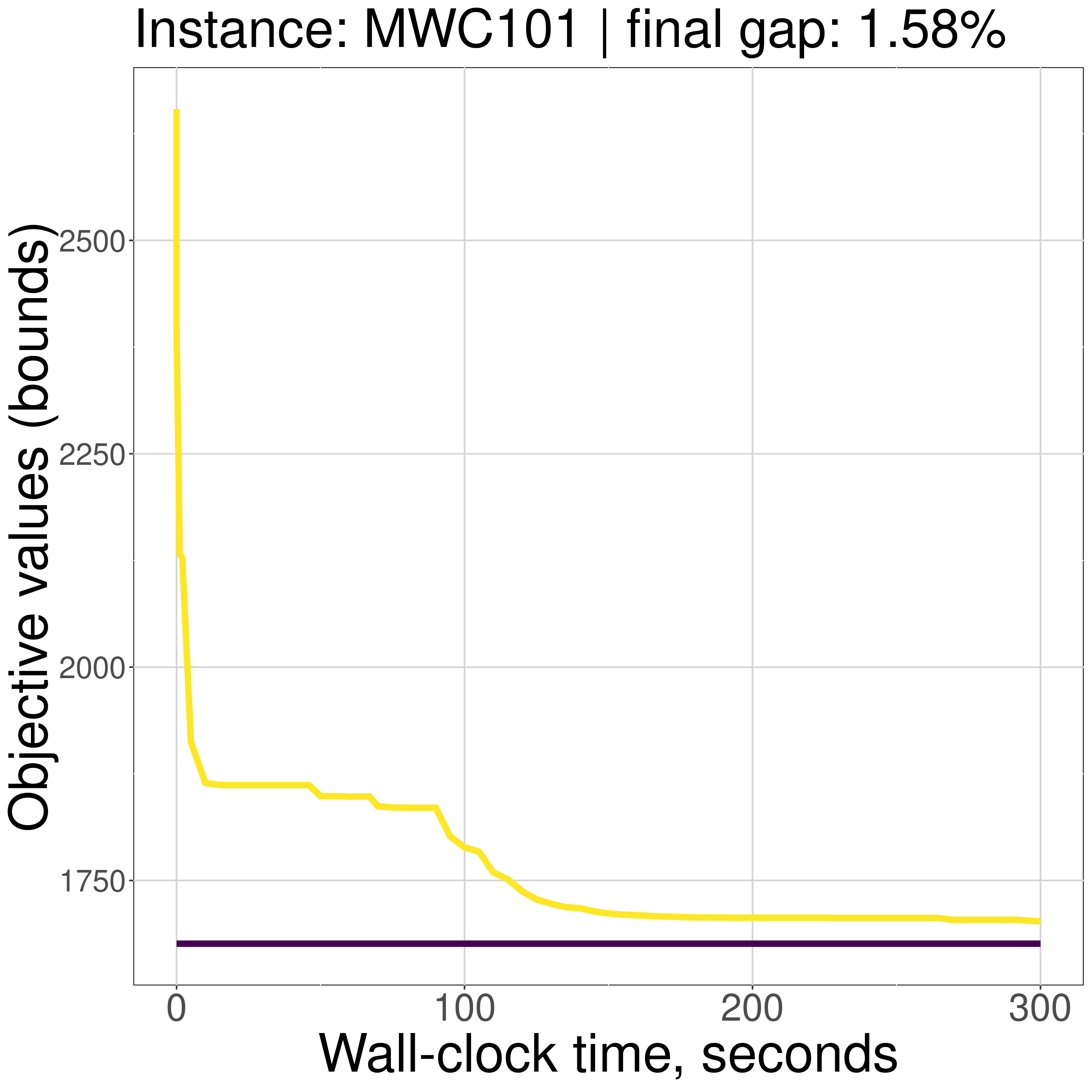}
	\end{minipage}\hfill%
	\begin{minipage}[t]{0.3\textwidth}
		\includegraphics[width=\textwidth]{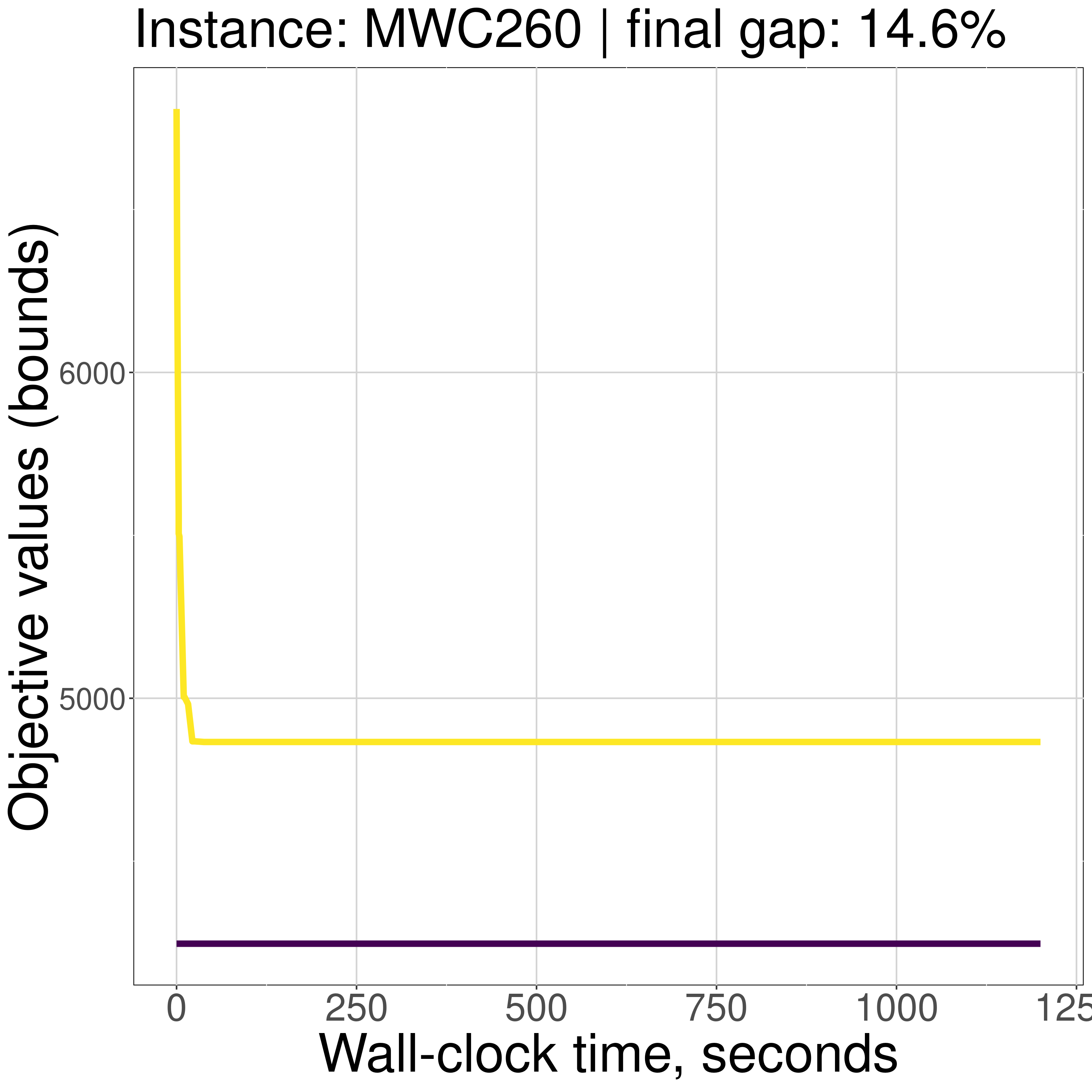}
	\end{minipage}\hfill%
	\caption{Classical solution convergence for \gls{QUBO} (maximization)
		formulations of three selected instances, left-to-right: 55, 75, and 85
		binary variables. Upper line corresponds to the best bound, lower line
		reflects the current incumbent solution in the branch-and-cut tree. Vertical
		dashed lines denote updates on the current best (incumbent)
		solution.\label{fig:Gurobi-conv}}
\end{figure}

Such a convergence picture immediately suggests a benchmarking methodology.
We first solve an instance using the quantum annealer and record the runtime,
including the embedding time and the actual annealing time. Further, we run 
\gls{Gurobi} using the recorded time as a timeout, and compare the objective
values. The convergence patterns depicted in \cref{fig:Gurobi-conv} suggest that
such heuristic would be comparable in objective quality to the baseline we used
in the main text of the paper. The results of such an experiment for
\gls{MaxCut} instances are presented in \cref{fig:MaxCut-gtimeouts}. The
relative objective deviations, when they are far from zero, are essentially
similar to the ones depicted in \cref{fig:runtime-vs-classic-a}, but the
relative runtime deviation is forced to zero.

Therefore, we would like to emphasize it again that we cannot claim that our
quantum-powered heuristic \emph{outperforms} the classical methods. However, we
identified a group of instances, which are non-trivial enough (judging from the
time it takes to find and prove optimality using our usual methods), where the
annealer is capable of producing solutions of reasonable quality. Certainly,
careful benchmarking would require implementing both better classical and better
quantum algorithms, and would constitute an interesting further research
direction.

\begin{figure}[ht]
	\centering
	\includegraphics[width=0.7\textwidth]{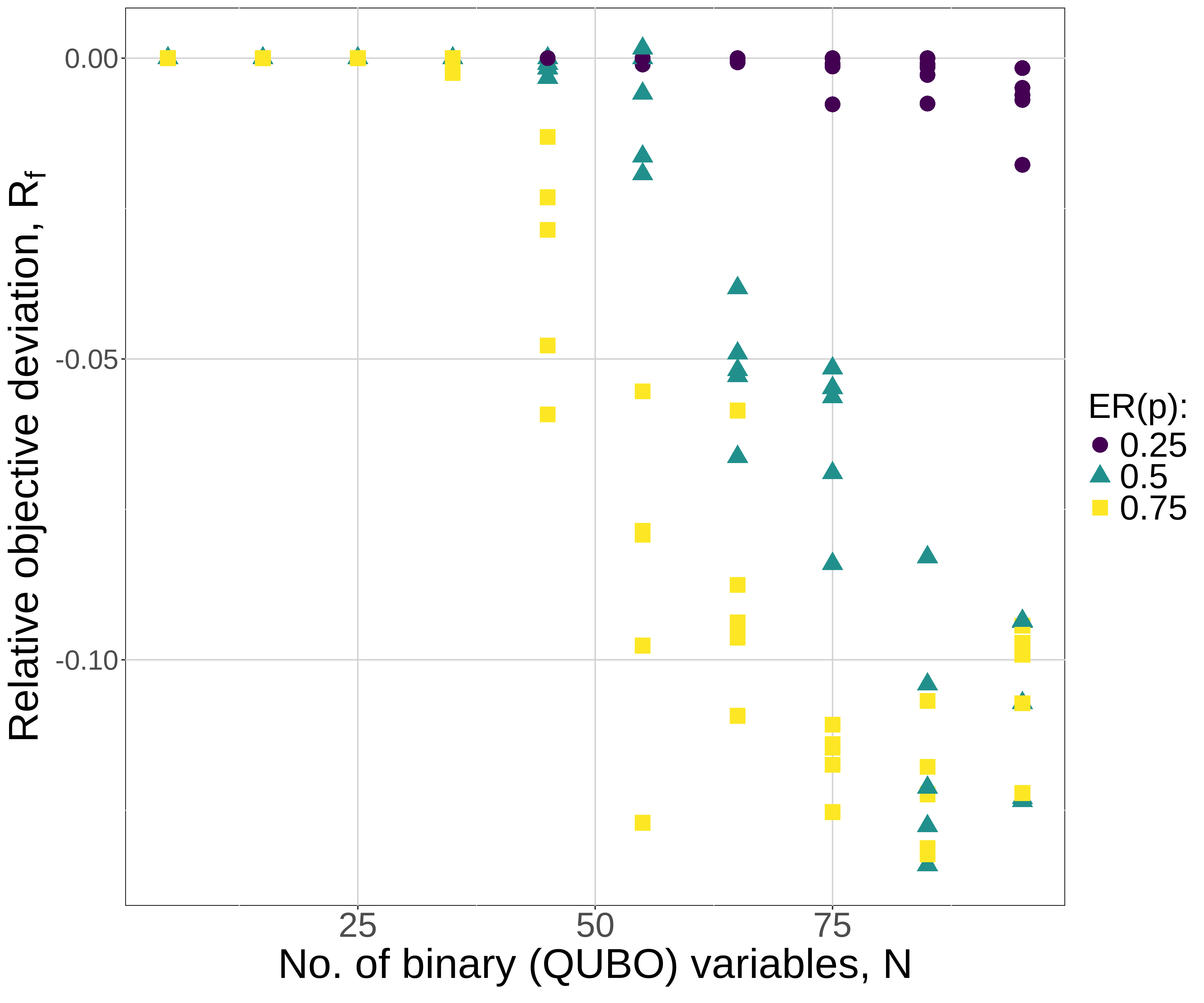}
	\caption{\Gls{Gurobi}-based heuristic with the timeout set after the
		\gls{D-Wave-OPT} runtime for the \gls{MaxCut} instances. Colors and shapes
		denote the respective instance generation parameter values $p$ of the
		Erd\H{o}s--R\'enyi model. On the average, and the number of nodes being
		equal, larger values of $p$ correspond to graphs with more
		edges. Objective deviations are calculated according to \zcref{eqn:rel-obj}, \ie, negative deviations indicate that the classical baseline yielded higher objective value. In this case, the figure implies that the classical heuristic outperforms the quantum-based approach essentially for all instances.\label{fig:MaxCut-gtimeouts}}
\end{figure}
\clearpage
\bibliographystyle{elsarticle-harv-sven}
\bibliography{references}